\theoremstyle{plain}
\newtheorem{lem}{Lemma}[section]
\newtheorem{prop}[lem]{Proposition}
\newtheorem{thm}[lem]{Theorem}
\newtheorem{cor}[lem]{Corollary}
\theoremstyle{definition}
\newtheorem{defn}[lem]{Definition}
\newtheorem*{defn*}{Definition}
\newtheorem*{ex*}{Example}
\newtheorem{rem}[lem]{Remark}
\newtheorem*{rem*}{Remark}
\newtheorem*{ack}{Acknowledgement}
\theoremstyle{remark}
\newtheorem*{notation}{Notation}
\DeclareMathOperator{\id}{id}
\DeclareMathOperator{\Leb}{Leb}
\DeclareMathOperator{\Ber}{Ber}
\DeclareMathOperator{\supp}{supp}
\DeclareMathOperator{\sgn}{sgn}
\DeclareMathOperator{\conv}{conv}
\DeclareMathOperator{\PSL}{PSL}
\newcommand{\C}{\mathbb C}
\newcommand{\R}{\mathbb R}
\newcommand{\Z}{\mathbb Z}
\newcommand{\N}{\mathbb N}
\newcommand{\Q}{\mathbb Q}
\newcommand{\PP}{\mathbb P}
\newcommand{\DD}{\mathcal D}
\newcommand{\MM}{\mathcal M}
\newcommand{\TT}{\mathcal T}
\newcommand{\FF}{\mathcal F}
\newcommand{\II}{\mathcal I}
\newcommand{\JJ}{\mathcal J}
\newcommand{\RR}{\mathcal R}
\newcommand{\ii}{\underline{i}}
\newcommand{\jj}{{\bf j}}
\newcommand{\rr}{{\bf r}}
\newcommand{\FFF}{{\bf F}}
\newcommand{\onto}{\xrightarrow[\text{onto}]{}}
\begin{document}

\title[Singular stationary measures for random interval homeomorphisms]{Singular stationary measures for random piecewise affine interval homeomorphisms}

\author{Krzysztof Bara\'nski}
\address{Institute of Mathematics, University of Warsaw,
ul.~Banacha~2, 02-097 Warszawa, Poland}
\email{baranski@mimuw.edu.pl}

\author{Adam \'Spiewak}
\address{Institute of Mathematics, University of Warsaw,
ul.~Banacha~2, 02-097 Warszawa, Poland}
\email{A.Spiewak@mimuw.edu.pl}

\thanks{Supported by the National Science
Centre, Poland, grant no 2018/31/B/ST1/02495.}

\subjclass[2010]{Primary 37E05, 37E10, 37H10, 37H15.}

\dedicatory{Dedicated to Llu\'{\i}s Alsed\`a on his 65th birthday and Micha\l{} Misiurewicz on his 70th birthday}

\bibliographystyle{amsalpha}

\begin{abstract} We show that the stationary measure for some random systems of two piecewise affine homeomorphisms of the interval is singular, verifying partially a conjecture by Alsed\`a and Misiurewicz and contributing to a question of Navas on the absolute continuity of stationary measures, considered in the setup of semigroups of piecewise affine circle homeomorphisms. We focus on the case of resonant boundary derivatives.
\end{abstract}

\maketitle

\section{Introduction}\label{sec:intro}

For the last forty years there has been an intensive interest in the study of non-autonomous real one-dimensional dynamical systems, especially in the context of the theory of groups of smooth diffeomorphisms acting on the unit circle (see e.g.~\cite{ghys-survey,navas-book} and the references therein). In a probabilistic approach, such a system equipped with an appropriate probability distribution generates in a natural way a Markov process on the circle (see e.g.~\cite{arnold-book,kifer-book1} as general references on random dynamical systems).

Recently, a continuously growing interest in random dynamics has led to an intensive study of random systems given by groups or semigroups of one-dimensional non-smooth maps, for instance interval or circle homeomorphisms (see e.g.~\cite{alseda-misiurewicz,SZ1,homburg16,homburg,malicet,gelfert}). In this paper we consider the properties of stationary measures for a certain class of such systems. 

Let $f_1, \ldots, f_m$, $m \geq 2$, be homeomorphisms of a $1$-dimensional compact  manifold $X$ (the closed interval or the unit circle). Such a system of maps generates a semigroup consisting of iterates $f_{i_n} \circ \cdots \circ f_{i_1}$ for $i_1, \ldots, i_n \in \{1, \dots, m\}$, $n \in \{0, 1, 2,\ldots\}$. Let $(p_1, \ldots, p_m)$ be a probability vector. 
A Borel probability measure on $X$ is called \emph{stationary}, if 
\[
\mu(A) = \sum_{i = 1}^m p_i \mu(f_i^{-1}(A))
\]
for every Borel set $A \subset X$. The Krylov--Bogolyubov Theorem shows that such a measure always exists (but is non-necessarily unique). However, in most cases little is known about its properties. Assuming some regularity of the system (e.g.~forward and backward non-singularity of the transformations) and the uniqueness of the stationary measure, which occur for a wide class of systems (see e.g.~\cite{DKN1}), we know that the stationary measure is either absolutely continuous or singular with respect to the Lebesgue measure. Determining which of the two possibilities occur is a well-known problem, especially in the context of groups of smooth diffeomorphisms acting on the circle (see e.g.~\cite[Question~18]{navas}). Up to now, an answer has been given only in some particular cases. For instance, a conjecture by Y.~Guivarc'h, V.~Kaimanovich and F.~Ledrappier (see \cite[Conjecture 1.21]{deroin_kleptsyn_navas09} states that for a finitely generated subgroup of $\PSL(2, \R)$ acting smoothly on the circle, the stationary measure is singular. The conjecture was proved by Y.~Guivarc'h and Y.~Le Jan in \cite{LeJan} for non-cocompact subgroups and by B.~Deroin, V.~Kleptsyn and A.~Navas in \cite{deroin_kleptsyn_navas09} for some minimal actions of the Thompson group and subgroups of $\PSL(2, \R)$ by $C^2$-diffeomorphisms. On the other hand, the absolute continuity of the stationary measure was proved to hold for a number of random systems of non-homeomorphic maps of the interval (usually expanding at least at average), see e.g.~\cite{pelikan,Buzzi,Araujo-Solano}.

Let us note that the question of determining singularity or absolute continuity of the stationary measure is non-trivial even in the apparently simple case of two contracting similarities $f_1, f_2$ of the unit interval $[0,1]$, given by $f_1(x) = \lambda x$, $f_2(x) = \lambda x + 1 - \lambda$ for $\lambda \in (0, 1)$. Then the unique stationary measure $\nu_{\lambda}$ for the probability vector $(1/2, 1/2)$ is called the \emph{symmetric Bernoulli convolution} and is always either singular or absolutely continuous. It is known (see \cite{shmerkin}) that the set of parameters $\lambda > 1/2$ for which $\nu_{\lambda}$ is singular has Hausdorff dimension zero, and the only known values of ``singular'' parameters are the reciprocals of the Pisot numbers, as proved in \cite{Erdos}. It is a long-standing open question whether these are the only examples of singular Bernoulli convolutions. Despite many results in this direction, a complete answer is still unknown and stimulates an active research. See e.g.~\cite{SixtyBern,Varju} for comprehensive surveys on the subject.

In this paper we consider a random system of two piecewise affine orientation-preserving homeomorphisms of the circle with a unique common fixed point. We look at it as a system of two piecewise affine increasing homeomorphisms $f_-$, $f_+$ of the interval $[0, 1]$. We assume that $f_i(0) = 0$, $f_i(1) = 1$ for $i = -, +$, each $f_i$ has one point of non-differentiability $x_i \in (0,1)$ and $f_-(x) < x < f_+(x)$ for $x \in (0,1)$. See Definition~\ref{defn:AM} and Figure~\ref{fig:graph} for a precise description. Since systems of this type were introduced in \cite{alseda-misiurewicz} by Alsed\`a and Misiurewicz, we call them \emph{Alsed\`a--Misiurewicz systems}, or \emph{AM-systems}. 

We consider $\{f_-, f_+\}$ as a random system with given probabilities $p_-, p_+$, where $p_\pm > 0$, $p_- + p_+ = 1$. Formally, it means that $\{f_-, f_+\}$ defines a \emph{step skew product}
\[
\FF^+ \colon \Sigma_2^+ \times [0,1] \to \Sigma_2^+ \times [0,1], \qquad \FF^+(\ii, x) = (\sigma(\ii), f_{i_0}(x)),
\]
where $\ii = (i_n)_{n \in \N}$ and $\sigma$ is shift on the space $\Sigma_2^+$ of infinite one-sided sequences of two symbols $\{-, +\}$, with the Bernoulli probability distribution given by $(p_-, p_+)$ (see Section~\ref{sec:prelim}). However, in this paper we are mainly interested in the behaviour of the system 
in the \emph{phase space} $[0,1]$, studying trajectories of points $x \in [0,1]$ under $\{f_-, f_+\}$, i.e.~$\{f_{i_n} \circ \cdots \circ f_{i_1}(x)\}_{n=0}^\infty$ for $i_1, i_2, \ldots \in \{-,+\}$.

Note that on the intervals $(0,\min(x_-, x_+))$ and $(\max(x_-, x_+),1)$ the system $\{f_-, f_+\}$ is equivalent, respectively, to two (typically different and non-symmetric) one-dimensional random walks, which are glued in a continuous way. This makes such systems interesting from a probabilistic point of view and we believe they can serve as models for many stochastic phenomena which appear in random one-dimensional dynamics. 

The behaviour of an AM-system depends on the values of the \emph{Lyapunov exponents} at the interval endpoints, i.e.
\[
\Lambda(0) = p_- \ln f_-'(0) + p_+ \ln f_+'(0) +,\qquad \Lambda(1) =
p_- \ln f_-'(1) + p_+ \ln f_+'(1).
\]
For instance, if $\Lambda(0)$, $\Lambda(1)$ are negative, then the endpoints of the interval are attracting in average, so a typical trajectory converges to one of them, which can give rise to two \emph{intermingled basins} for the step skew product $\FF^+$ (see e.g.~\cite{kan,bonifant_milnor,homburg}). In this paper we assume that the Lyapunov exponents $\Lambda(0), \Lambda(1)$ are positive. Then for almost all paths $\underline{i} = (i_n)_n \in \Sigma^+_2$, any two trajectories defined by $\underline{i}$ converge to each other, i.e.~$|f_{i_n} \circ \cdots \circ f_{i_1}(x) - f_{i_n} \circ \cdots \circ f_{i_1}(y)| \to \infty$ as $n \to \infty$ for $x,y \in [0,1]$. This phenomenon is called \emph{synchronization} (see e.g.~\cite{antonov,baxendale,synchronization,kleptsyn_volk14}). 
Moreover, apart from purely atomic stationary measures supported at the common fixed points $0$ and $1$, there exists a (unique) stationary measure $\mu$ on $[0,1]$ such that $\mu(\{0,1\}) = 0$.
In this paper we study the properties of the measure $\mu$, which we call the \emph{stationary measure for the AM-system}.

In \cite{alseda-misiurewicz} L.~Alsed\`a and M.~Misiurewicz showed that for some parameters of an AM-system the stationary measure $\mu$ is equal to the Lebesgue measure and conjectured that  $\mu$ should be singular for typical parameters. In this paper we provide a precise condition under which the stationary measure is equal to the Lebesgue measure (Theorem~\ref{thm:Leb}) and verify the conjecture on singularity for some set of parameters, showing that for a class of AM-systems with \emph{resonant} boundary derivatives (i.e.~ with $\ln f'_+(0)/\ln f'_-(0) = \ln f'_+(1)/\ln f'_-(1) = - k / l\in \Q$) the measure $\mu$ is indeed singular and supported on an \emph{exceptional minimal set}, which is a Cantor set of dimension smaller than $1$. See Theorem~\ref{thm:(k:l)} for details. We also determine the value of the Hausdorff dimension of $\mu$ in the case $l=1$ (see Theorem~\ref{thm:(k:1)}). Furthermore, we present an interesting example of an AM-system with a singular stationary measure of full support $[0,1]$ (Theorem~\ref{thm:res-full}). 
Finally, we show that the considered systems with the same resonance are topologically conjugate (Theorem~\ref{thm:conjugacy}). 

To our knowledge, these are the first examples of singular stationary measures for non-expanding random systems generated by semigroups of homeomorphisms of the circle of that type. The fact that the maps are piecewise affine is especially interesting, since such systems are studied intensively and often serve as models for smooth systems (see e.g.~\cite[Questions~12 and~16]{navas}).
In a forthcoming paper \cite{kac_lemma} we prove that the stationary $\mu$ for an AM-system is singular and has Hausdorff dimension smaller than $1$ for an open set of parameters, including also non-resonant cases.

Notice that in the resonant case mentioned above, the stationary measure is supported on an exceptional minimal set (i.e.~invariant Cantor sets where the systems is minimal), while in the non-resonant one, its support is equal to the entire interval $[0,1]$ (see Proposition~\ref{prop:min,supp}). It should be noted that the properties of exceptional minimal sets are a well-known subject of interest, especially in the context of the groups of diffeomorphisms. For instance, a conjecture of Ghys and Sullivan says that exceptional minimal sets for groups of $C^2$-diffeomorphisms have Lebesgue measure zero. The hypothesis has been recently verified by B.~Deroin, V.~Kleptsyn and A.~Navas \cite{DKN3} for real-analytic diffeomorphisms, while the question remains open in the smooth case. Our paper contributes to the study of such sets for piecewise-linear systems.

The plan of the paper is as follows. In Section~\ref{sec:main_results} we describe the AM-systems  and state the results of the paper in a precise way. 
Section~\ref{sec:prelim} contains preliminaries, while Section~\ref{sec:other_results} is devoted to the proofs of the minor results and Theorem~\ref{thm:Leb}. The proofs of the main results (Theorems~\ref{thm:(k:l)} and~\ref{thm:(k:1)}) are split into Section~\ref{sec:l=1} (case $l=1$) and Section~\ref{sec:l>1} (case $l>1$). Sections~\ref{sec:conj} and~\ref{sec:res-full} contain, respectively, the proofs of Theorems~\ref{thm:conjugacy} and~\ref{thm:res-full}.

\begin{ack} We thank Bal\'azs B\'ar\'any, Micha\l{} Misiurewicz, K\'aroly Simon and Anna Zdunik for useful discussions.
\end{ack}

\section{Main results}\label{sec:main_results}

We begin with a precise description of an Alsed\`a--Misiurewicz system.

\begin{defn} \label{defn:AM}
An AM-system is the system $\{f_-, f_+\}$ of increasing homeomorphisms of the interval $[0,1]$ of the form
\[
f_-(x)=
\begin{cases}
a_- x &\text{for }x\in[0,x_-]\\
1 - b_-(1 - x) &\text{for }x\in (x_-, 1]
\end{cases}, \qquad
f_+(x)=
\begin{cases}
b_+ x &\text{for }x\in[0,x_+]\\
1 - a_+(1 - x) &\text{for }x\in (x_+, 1]
\end{cases},
\]
where $0 < a_- < 1 < b_-$, $0 < a_+ < 1 < b_+$ and
\[
x_- = \frac{b_- - 1}{b_- - a_-}, \qquad x_+ = \frac{1 - a_+}{b_+ - a_+}.
\]
See Figure~\ref{fig:graph}. 
\end{defn}

\begin{figure}[ht!]
\begin{center}
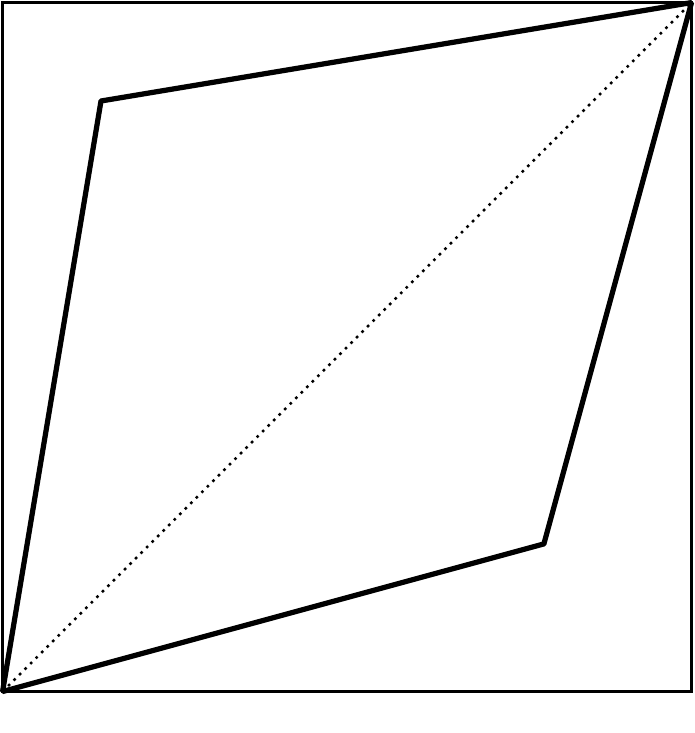
\end{center}
\caption{An example of an AM-system.}\label{fig:graph}
\end{figure}

We consider an AM-system as a random system with probabilities $p_-, p_+$, where $p_-, p_+ > 0$, $p_- + p_+ = 1$.

\begin{defn}\label{defn:Lyap}
The \emph{Lyapunov exponents} of an AM-system $\{f_-, f_+\}$ with probabilities $p_-, p_+$ are defined as
\[
\Lambda(0) = p_- \ln f_-'(0) + p_+ \ln f_+'(0),\qquad
\Lambda(1) = p_- \ln f_-'(1) + p_+ \ln f_+'(1).
\]
\end{defn}

It is known (see \cite{alseda-misiurewicz,homburg16,homburg}) that if the Lyapunov exponents are positive, then there exists a unique \emph{stationary measure} without atoms at the endpoints of $[0,1]$, i.e.~a Borel probability measure $\mu$ on $[0,1]$, such that
\[
\mu = p_- \, (f_-)_* \mu + p_+ \, (f_+)_* \mu,
\]
with $\mu(\{0,1\}) = 0$. For details, see Theorem~\ref{thm:stationary}. Throughout the paper, by a stationary measure for an AM-system with positive Lyapunov exponents we will mean the measure $\mu$.
It is known that if the Lyapunov exponents are positive, then the measure $\mu$ is non-atomic and is either absolutely continuous or singular with respect to the Lebesgue measure (see Propositions~\ref{prop:either} and~\ref{prop:nonatom}).

\begin{defn} We say that an AM-system $\{f_-, f_+\}$ is of:
\begin{itemize}
\item[--] \emph{disjoint type}, if the intervals $[0, f_-(x_-)]$, $[f_+(x_+),1]$ are disjoint, i.e.~$f_-(x_-) < f_+(x_+)$,
\item[--] \emph{border type}, if the intervals $[0, f_-(x_-)]$, $[f_+(x_+),1]$ touch each other, i.e.~ $f_-(x_-) = f_+(x_+)$,
\item[--] \emph{overlapping type}, if the intervals $[0, f_-(x_-)]$, $[f_+(x_+),1]$ overlap, i.e.~$f_-(x_-) > f_+(x_+)$.
\end{itemize}
See Figure \ref{fig:three_types}.
\end{defn}

\begin{figure}[ht!]
\begin{center}
\begingroup%
  \makeatletter%
  \providecommand\color[2][]{%
    \errmessage{(Inkscape) Color is used for the text in Inkscape, but the package 'color.sty' is not loaded}%
    \renewcommand\color[2][]{}%
  }%
  \providecommand\transparent[1]{%
    \errmessage{(Inkscape) Transparency is used (non-zero) for the text in Inkscape, but the package 'transparent.sty' is not loaded}%
    \renewcommand\transparent[1]{}%
  }%
  \providecommand\rotatebox[2]{#2}%
  \newcommand*\fsize{\dimexpr\f@size pt\relax}%
  \newcommand*\lineheight[1]{\fontsize{\fsize}{#1\fsize}\selectfont}%
  \ifx\svgwidth\undefined%
    \setlength{\unitlength}{421.53538524bp}%
    \ifx\svgscale\undefined%
      \relax%
    \else%
      \setlength{\unitlength}{\unitlength * \real{\svgscale}}%
    \fi%
  \else%
    \setlength{\unitlength}{\svgwidth}%
  \fi%
  \global\let\svgwidth\undefined%
  \global\let\svgscale\undefined%
  \makeatother%
  \begin{picture}(1,0.19114315)%
    \lineheight{1}%
    \setlength\tabcolsep{0pt}%
    \put(0,0){\includegraphics[width=\unitlength,page=1]{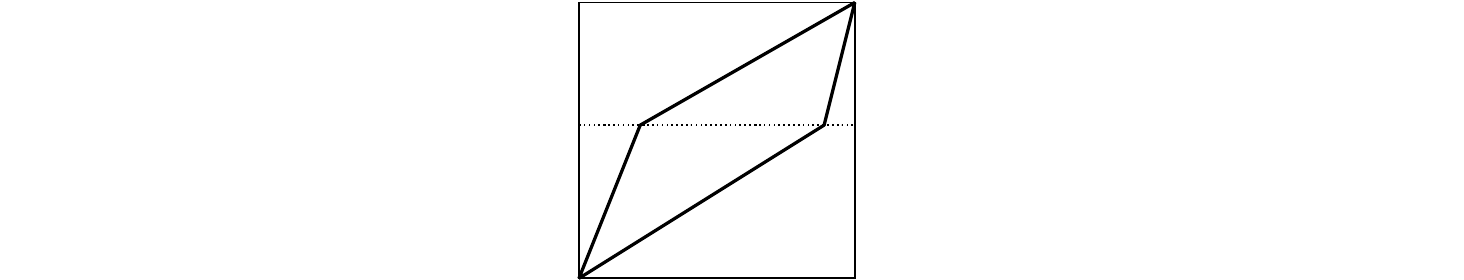}}%
    \put(0.59020678,0.09758452){\color[rgb]{0,0,0}\makebox(0,0)[lt]{\lineheight{1.25}\smash{\begin{tabular}[t]{l}$f_-(x_-)$\end{tabular}}}}%
    \put(0.30943295,0.0996086){\color[rgb]{0,0,0}\makebox(0,0)[lt]{\lineheight{1.25}\smash{\begin{tabular}[t]{l}$f_+(x_+)$\end{tabular}}}}%
    \put(0,0){\includegraphics[width=\unitlength,page=2]{three_types.pdf}}%
    \put(-0.00147187,0.12181706){\color[rgb]{0,0,0}\makebox(0,0)[lt]{\lineheight{1.25}\smash{\begin{tabular}[t]{l}$f_+(x_+)$\end{tabular}}}}%
    \put(-0.00079941,0.05732175){\color[rgb]{0,0,0}\makebox(0,0)[lt]{\lineheight{1.25}\smash{\begin{tabular}[t]{l}$f_-(x_-)$\end{tabular}}}}%
    \put(0,0){\includegraphics[width=\unitlength,page=3]{three_types.pdf}}%
    \put(0.90087388,0.07861455){\color[rgb]{0,0,0}\makebox(0,0)[lt]{\lineheight{1.25}\smash{\begin{tabular}[t]{l}$f_+(x_+)$\end{tabular}}}}%
    \put(0.90027475,0.12035493){\color[rgb]{0,0,0}\makebox(0,0)[lt]{\lineheight{1.25}\smash{\begin{tabular}[t]{l}$f_-(x_-)$\end{tabular}}}}%
  \end{picture}%
\endgroup%

\end{center}
\caption{Three types of AM-systems: disjoint, border and overlapping.}\label{fig:three_types}
\end{figure}

Note that in the case $x_+ < x_-$ (which will be assumed in most of the paper, see Lemma~\ref{lem:x_1}), the system is of 
\begin{itemize}
\item[--] disjoint type, if  $f_-([x_+, x_-])$, $f_+([x_+, x_-])$ are disjoint,
\item[--] border type, if $f_-([x_+, x_-]) \cap f_+([x_+, x_-]) = \{f_-(x_-)\} = \{f_+(x_+)\}$,
\item[--] overlapping type, if $f_-([x_+, x_-])$, $f_+([x_+, x_-])$ overlap.
\end{itemize}

In \cite[Theorem~6.1]{alseda-misiurewicz} Alsed\`a and Misiurewicz showed that if $a_- = a_+ = a$, $b_- = b_+ = b$, $1/a + 1/b = 2$, $p_- = p_- = 1/2$, then the measure $\mu$ is the Lebesgue measure on $[0,1]$. The first result of our paper, presented below, gives an exact condition for an AM-system to have a stationary Lebesgue measure. 

\begin{thm}\label{thm:Leb} Let $\{f_-, f_+\}$ be an AM-system  with probabilities $p_-, p_+$, such that the Lyapunov exponents $\Lambda(0), \Lambda(1)$ are positive. Then the stationary measure $\mu$ is the Lebesgue measure on $[0,1]$ if and only if the system is of border type and 
\[
\frac{p_-}{a_-} + \frac{p_+}{b_+} = 1.
\]
In this case we also have $\frac{p_-}{b_-} + \frac{p_+}{a_+} = 1$.
\end{thm}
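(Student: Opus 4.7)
The plan is to replace the question ``does $\mu$ equal Lebesgue measure?'' with ``does Lebesgue measure itself satisfy the stationarity equation?''. By Theorem~\ref{thm:stationary} there is a unique stationary measure without atoms at the endpoints, so Lebesgue measure coincides with $\mu$ if and only if it is stationary. Writing out the pushforward densities for the piecewise affine maps $f_\pm$, this reduces to the pointwise identity
\[
g(y) \;:=\; \frac{p_-}{f_-'(f_-^{-1}(y))} + \frac{p_+}{f_+'(f_+^{-1}(y))} \;=\; 1 \qquad \text{for a.e.\ } y\in[0,1].
\]

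The function $g$ is piecewise constant with at most two discontinuity points, located at $f_-(x_-)$ and $f_+(x_+)$. Below both it equals $p_-/a_-+p_+/b_+$, above both it equals $p_-/b_-+p_+/a_+$. In the disjoint case ($f_-(x_-)<f_+(x_+)$) the middle value is $p_-/b_-+p_+/b_+$, and matching it with the lower value forces $p_-(1/a_--1/b_-)=0$, which is impossible since $p_->0$ and $a_-<1<b_-$. The overlapping case is excluded symmetrically on the upper side. Hence $g\equiv 1$ forces the system to be of border type, and in that case the condition reduces to the pair
\[
\text{(A)}\quad \frac{p_-}{a_-}+\frac{p_+}{b_+}=1, \qquad \text{(B)}\quad \frac{p_-}{b_-}+\frac{p_+}{a_+}=1.
\]

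The final step is to show that under the border condition $f_-(x_-)=f_+(x_+)$, conditions (A) and (B) are equivalent. Substituting the formulas of Definition~\ref{defn:AM} for $x_\pm$ into $a_-x_-=b_+x_+$ and clearing denominators rewrites the border equation as the symmetric identity
\[
b_-b_+\,(a_-+a_+-1) \;=\; a_-a_+\,(b_-+b_+-1).
\]
On the other hand, solving (A) and (B) for $p_-$ with $p_++p_-=1$ yields $p_-=a_-(b_+-1)/(b_+-a_-)$ and $p_-=b_-(1-a_+)/(b_--a_+)$ respectively; a direct expansion shows that equality of these two expressions reduces to exactly the same symmetric identity. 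Consequently, under border type (A) and (B) are equivalent, which simultaneously proves both directions of the biconditional and the ``in this case also'' clause. The main technical obstacle is matching the two algebraic expansions in the last step, but each is a routine bookkeeping of terms in a product of four linear factors, with the relevant cross terms cancelling to leave precisely the symmetric identity above.
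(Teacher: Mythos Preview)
Your proof is correct and follows essentially the same approach as the paper: both reduce the question to the fixed-point equation $T\mathds{1}=\mathds{1}$ for the transfer operator, compute the resulting piecewise-constant function, and then invoke the algebraic identity $b_-b_+(a_-+a_+-1)=a_-a_+(b_-+b_+-1)$ (which is the content of Lemma~\ref{lem:hor}) to link the border condition with the equivalence of (A) and (B). The only organisational difference is that you eliminate the disjoint and overlapping cases directly via the middle value of $g$, whereas the paper first reads off (A) and (B) from the behaviour near the endpoints and then appeals to the converse part of Lemma~\ref{lem:hor} to force border type.
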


In \cite{alseda-misiurewicz} the authors conjectured that the stationary measure $\mu$ for an AM-system with positive Lyapunov exponents is typically singular. The main result of this paper verifies this conjecture for some set of the system parameters. First, we split the AM-systems into two kinds: resonant and non-resonant, which have different kinds of behaviour.

\begin{defn}
We say that that an AM-system $\{f_-, f_+\}$ with probabilities $p_-, p_+$ exhibits a \emph{resonance} at the point $0$, if
\[
\frac{\ln f_+'(0)}{\ln f_-'(0)} \in \Q.
\]
More precisely, a \emph{$(k:l)$-resonance} at $0$ occurs for $k,l \in \N$ if
\[
(f_-'(0))^k(f_+'(0))^l =  a_-^k b_+^l = 1,
\]
which is equivalent to $a_- = f_-'(0) = \rho^l,\ b_+ = f_+'(0) = \rho^{-k}$ for some $\rho \in (0,1)$ and also to $\frac{\ln f_+'(0)}{\ln f_-'(0)}=-\frac{k}{l}$.

Analogously, a $(k:l)$-resonance at $1$ occurs if
\[
(f_-'(1))^l(f_+'(1))^k = a_+^k b_-^l = 1.
\]

Without loss of generality, we always assume that $k, l$ are relatively prime.
\end{defn}

We will show that in the resonant case the (topological) support of the stationary measure $\mu$ for some parameters is a Cantor set in $[0,1]$ of Hausdorff dimension smaller than $1$ (see Theorems~\ref{thm:(k:l)} and~\ref{thm:(k:1)}). A different situation occurs in the non-resonant case, as shown in the following proposition (for the proof see Proposition~\ref{prop:minimal} and Corollary~\ref{cor:min-supp}).

\begin{prop}\label{prop:min,supp}
If an AM-system with positive Lyapunov exponents has no resonance at one of the endpoints $0, 1$, then it is minimal in $(0,1)$ and the support of $\mu$ is equal to $[0,1]$.
\end{prop}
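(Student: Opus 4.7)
The plan is to show that for every $x\in(0,1)$ the forward semigroup orbit
\[
O(x)=\{f_{i_n}\circ\cdots\circ f_{i_1}(x):n\ge 0,\ i_j\in\{-,+\}\}
\]
is dense in $[0,1]$; this gives minimality on $(0,1)$ immediately. For the support statement, the stationarity relation $\mu=p_-(f_-)_*\mu+p_+(f_+)_*\mu$ forces $\supp(\mu)$ to be closed and forward invariant: if $x\in\supp(\mu)$ and $V$ is an open neighborhood of $f_i(x)$, then $f_i^{-1}(V)$ is an open neighborhood of $x$ with $\mu(f_i^{-1}(V))>0$, hence $\mu(V)\ge p_i\mu(f_i^{-1}(V))>0$. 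Since $\mu(\{0,1\})=0$, $\supp(\mu)$ meets $(0,1)$, and density of the orbit of any such point forces $\supp(\mu)=[0,1]$. By the symmetry $x\mapsto 1-x$ (which conjugates an AM-system to another AM-system with $f_-,f_+$ and the endpoints swapped), I may assume the non-resonance holds at $0$, i.e.\ $\ln a_-/\ln b_+\notin\mathbb{Q}$.

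The key ingredient is that both generators are exactly linear in a neighborhood of $0$: $f_-(y)=a_- y$ on $[0,x_-]$ and $f_+(y)=b_+ y$ on $[0,x_+]$. Consequently, for any $k,l\ge 0$, composing all $f_-$-factors before all $f_+$-factors gives $f_+^l\circ f_-^k(y_0)=s_{k,l}\,y_0$ with $s_{k,l}:=a_-^k b_+^l$, provided the largest intermediate value $b_+^{l-1}a_-^k y_0=s_{k,l}y_0/b_+$ lies in $[0,x_+]$. This ordering is crucial, since the controlling bound is then the target value $s_{k,l}y_0$ itself, rather than the potentially much larger $b_+^l y_0$ that would arise with the opposite order of factors. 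Kronecker's theorem combined with non-resonance at $0$ implies that $S:=\{a_-^k b_+^l:k,l\ge 0\}$ is dense in $(0,\infty)$.

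Density of $O(x)$ now proceeds in two steps. Since $f_-(y)<y$ on $(0,1)$ and $0$ is the only fixed point of $f_-$ in $[0,1)$, the sequence $f_-^n(x)$ decreases to $0$; I pick $N$ with $y_0:=f_-^N(x)\in O(x)$ as small as desired. Setting $\epsilon_0:=b_+ x_+\in(0,1)$, for any $z\in(0,\epsilon_0)$ and $\delta>0$, density of $S$ produces $k,l\ge 0$ with $s_{k,l}y_0\in(z-\delta,z+\delta)\cap(0,\epsilon_0)$; the bookkeeping above then gives $f_+^l\circ f_-^k(y_0)=s_{k,l}y_0\in O(x)$ within $\delta$ of $z$, so $[0,\epsilon_0]\subseteq\overline{O(x)}$. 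Forward invariance of $\overline{O(x)}$ under $f_+$ then yields $[0,f_+^n(\epsilon_0)]\subseteq\overline{O(x)}$ for every $n$; since $f_+(y)>y$ on $(0,1)$ with $1$ the only fixed point in $(\epsilon_0,1]$, the sequence $f_+^n(\epsilon_0)$ increases to $1$, giving $\overline{O(x)}=[0,1]$. The main obstacle is precisely this bookkeeping: mere density of $S$ is not enough; one must carefully order the factors so that the intermediate orbit points do not leave the linear regions $[0,x_\pm]$ during the composition.
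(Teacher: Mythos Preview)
Your proof is correct and follows essentially the same approach as the paper: both exploit the linearity of $f_\pm$ near $0$, order the compositions as $f_+^l\circ f_-^k$ so that all intermediate values remain in $[0,x_+]$, and invoke the irrationality of $\ln a_-/\ln b_+$ via a Kronecker-type density argument. The only cosmetic difference is that the paper reduces minimality to a return property on a single fundamental domain $K\subset(0,x_+)$ for $f_+$, whereas you first establish density of the orbit on $[0,b_+x_+]$ and then push forward by iterates of $f_+$; the support statement is handled identically via forward invariance of $\supp\mu$.
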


Before stating the main results of this paper, we need to present some definitions. Let
\[
\II\colon [0, 1] \to [0, 1], \qquad \II(x) = \II^{-1} (x) = 1 - x
\]
be the symmetry of $[0,1]$ with respect to its center.

\begin{defn} An AM-system $\{f_-, f_+\}$ is called \emph{symmetric}, if $\II \circ f_- = f_+\circ \II$. 
\end{defn}

Obviously, a system $\{f_-, f_+\}$ is symmetric if and only if $a_- = a_+$ and $b_- = b_+$. It is straightforward that for symmetric systems we have $x_+ = \II(x_-)$ and $f_+(x_+) = \II(f_-(x_-))$. Moreover, for symmetric systems the existence of $(k:l)$-resonance at $0$ is equivalent to the existence of $(k:l)$-resonance at $1$. Note also that if a symmetric systems exhibits $(k:l)$-resonance, then 
the condition $k > l$ is equivalent to the positivity of the exponents $\Lambda(0), \Lambda(1)$ for $p_- = p_+  =1/2$ (see the proof of Lemma~\ref{lem:x_1}).

\begin{defn}
For an AM-system of disjoint type, we call the interval $(f_-(x_-), f_+(x_+))$ \emph{the central interval} of the system $\{f_-, f_+\}$.
\end{defn}

\begin{defn}\label{defn:jumps}
Let $x \in (0,1)$ and $i_1, i_2, \ldots \in \{-,+\}$. We say that a trajectory $\{f_{i_n}\circ \cdots \circ f_{i_1}(x)\}_{n = 0}^\infty$ \emph{jumps over the central interval} at the time $s$, for $s \ge 0$, if $f_{i_s}\circ \cdots \circ f_{i_1}(x)$ and $f_{i_{s+1}}\circ \cdots \circ f_{i_1}(x)$ are in different components of the complement of the central interval in $[0,1]$.
\end{defn}

The main results of this paper shows the singularity of the stationary measure $\mu$ for some symmetric AM-systems of disjoint type, which exhibit a resonance. 

\begin{thm}\label{thm:(k:l)} Let $\{f_-, f_+\}$ be a symmetric AM-system of disjoint type with positive Lyapunov exponents. If the system exhibits $(k:l)$-resonance for some relatively prime $k, l \in \N$, $k > l$, and satisfies $\rho < \eta$, where
\[
\rho = (f_-'(0))^{1/l} = (f_+'(0))^{-1/k} = (f_+'(1))^{1/l} = (f_-'(1))^{-1/k}
\]
and $\eta \in (1/2,1)$ is the unique solution of the equation $\eta^{k+l} -2 \eta^{k+1} + 2\eta - 1 = 0$, then the stationary measure $\mu$ is singular with
\[
\dim_H (\supp \mu) = \frac{\log \eta}{\log \rho} < 1,
\]
where $\supp \mu$ denotes the topological support of $\mu$. Moreover, $\supp \mu$ is a nowhere dense perfect set consisting of all limit points of trajectories of any point $x \in (0,1)$ under $\{f_-, f_+\}$, which jump over the central interval  infinitely many times.
\end{thm}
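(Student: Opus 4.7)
My strategy is to realize $\supp\mu$ explicitly as the attractor of a contractive (graph-directed) iterated function system built out of certain \emph{excursion compositions} of $f_-$ and $f_+$ whose contraction ratios are powers of $\rho$, then to compute its Hausdorff dimension as the corresponding similarity dimension and deduce singularity from the fact that this dimension is strictly less than $1$. The overall skeleton is: (i) topologically identify $\supp\mu$, (ii) code it by a finite combinatorial graph coming from the $(k:l)$-resonance, (iii) verify the open set condition under the assumption $\rho<\eta$, (iv) extract the dimension equation.

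The first step is the topological description of the support. Using synchronization (which follows from $\Lambda(0),\Lambda(1)>0$) together with the dichotomy for stationary measures recalled in Propositions~\ref{prop:either} and~\ref{prop:nonatom}, I would show that $\mu$-a.e.\ trajectory jumps over the central interval $C=(f_-(x_-),f_+(x_+))$ infinitely many times: a trajectory that stops jumping must stay in one of the closed wings $[0,f_-(x_-)]$ or $[f_+(x_+),1]$ forever, but then the positivity of the endpoint Lyapunov exponent forces convergence to $\{0,1\}$, a set of $\mu$-measure zero. The converse inclusion (that every limit point of any jumping trajectory lies in $\supp\mu$) follows from the invariance relation $\supp\mu=f_-(\supp\mu)\cup f_+(\supp\mu)$ together with topological minimality on the jumping set, itself a consequence of the disjoint-type assumption.

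The core of the argument is the IFS/graph-directed construction coming from the $(k:l)$-resonance. Near $0$, $f_-$ acts as $x\mapsto\rho^l x$ and $f_+$ as $x\mapsto\rho^{-k}x$, and by symmetry the analogous picture holds near $1$. Following the orbit of a small neighborhood of $0$ until it first returns to a small neighborhood of $0$ (after an excursion to the right side) yields a finite family of affine excursion maps whose contraction ratios are integer powers of $\rho$, and a symmetric family near $1$. These form a two-vertex graph-directed IFS on the "left" and "right" cells, whose attractor must coincide with the set of accumulation points of jumping trajectories, i.e.\ with $\supp\mu$. The edge multiplicities, together with the left-right symmetry, yield a $2\times 2$ transfer matrix whose spectral radius equals $1$ at the dimension $s$; evaluating this condition and clearing the trivial factor $(\eta-1)$ (with $\eta=\rho^s$) produces exactly the equation $\eta^{k+l}-2\eta^{k+1}+2\eta-1=0$.

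The main obstacle, and the precise point where the hypothesis $\rho<\eta$ is used, is the verification of the open set condition for this graph-directed system. Geometrically, one must check that the images of the excursion maps are pairwise non-overlapping inside a fixed pair of fundamental cells near $0$ and $1$; the critical threshold $\eta$ appears as the sharp value at which the outermost excursion images first touch, so $\rho<\eta$ ensures strict separation. Once the open set condition is in hand, graph-directed self-similar dimension theory gives $\dim_H(\supp\mu)=\log\eta/\log\rho$, which lies in $(0,1)$ since $\rho<\eta<1$. Because any Borel set of Hausdorff dimension less than $1$ has Lebesgue measure zero, $\mu$ is automatically singular with respect to Lebesgue, and the perfect, nowhere-dense Cantor structure of $\supp\mu$ (as well as its characterization in terms of jumping orbits) reads off directly from the attractor description.
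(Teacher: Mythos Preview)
Your strategy is close in spirit to the paper's, but there is a genuine gap in the case $l>1$: the ``finite family of affine excursion maps'' on a two-vertex (left/right) graph that you postulate does not exist. When $l>1$, the map $f_+$ sends only the intervals $I_{-k},\ldots,I_{-l}$ from the left wing cleanly into the right cell $I_l$; the remaining intervals $I_{-l+1},\ldots,I_{-1}$ land in the gap between $I_l$ and $I_{l+1}$ (inside a larger interval $J_l$), and successive iterates can loiter there for an arbitrary number of steps before entering any fixed fundamental cell. The paper handles this by building a \emph{countably infinite} IFS $\{\Phi_{\rr}\}_{\rr\in\RR}$ on the single interval $I_{-1}$, where $\RR=\{l,\ldots,k\}\cup\bigcup_{n\ge1}\{l,\ldots,k-1\}\times\{1,\ldots,l-1\}^n$, and then computes the dimension via the pressure function of this infinite system; the equation $\eta^{k+l}-2\eta^{k+1}+2\eta-1=0$ emerges only after summing the geometric series coming from those arbitrarily long detours. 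A two-vertex graph cannot encode this: you would need at least $l-1$ additional vertices (one for each intermediate scale), or else accept an infinite edge set. Your assertion that the spectral-radius condition of a $2\times2$ matrix yields the stated polynomial after ``clearing the trivial factor $(\eta-1)$'' is therefore unsupported for $l>1$.

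A secondary issue is your identification of $\supp\mu$ with the limit set of jumping trajectories. The sentence ``a trajectory that stops jumping must stay in one wing forever, and then the positivity of the endpoint Lyapunov exponent forces convergence to $\{0,1\}$'' conflates almost-sure statements with deterministic ones; in a wing the system acts as a biased random walk, and individual paths need not converge to an endpoint. The paper avoids ergodic-theoretic reasoning here entirely: it first constructs the invariant Cantor set $\Lambda$ explicitly, shows directly that $\Lambda=f_-(\Lambda)=f_+(\Lambda)$ and that the system is minimal on $\Lambda$, and then invokes the general Lemma~\ref{lem:supp} (pushing forward a Dirac mass and using weak-$*$ convergence) to conclude $\supp\mu=\Lambda\cup\{0,1\}$. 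Your synchronization/ergodic sketch would need substantial work to reach the same conclusion. For $l=1$ your outline is essentially correct and matches the paper's finite IFS $\{\phi_r\}_{r=1}^k$ with ratios $\rho,\rho^2,\ldots,\rho^k$; the difficulty is entirely in the $l>1$ case.
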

\begin{rem}\label{rem:l=1}
The condition $\rho < \eta$ is equivalent to $\rho x_- < \frac 1 2$ and implies that the system is of disjoint type. In the case $l=1$ it holds for all  systems of disjoint type. 
\end{rem}

In the case $l = 1$ we give a more precise description of the measure $\mu$.

\begin{thm}\label{thm:(k:1)} Let $\{f_-, f_+\}$ be a symmetric AM-system of disjoint type with probabilities $p_-, p_+$, such that the Lyapunov exponents are positive. If the system exhibits $(k:1)$-resonance for some $k \in \{2, 3, \ldots\}$, then 
\[
\dim_H\mu =  \frac{\sum \limits_{r=1}^k r\Big(\frac{p_+}{p_-} \eta_-^r\log \eta_-  + \frac{p_-}{p_+} \eta_+^r\log \eta_+\Big)}{\sum \limits_{r=1}^k r\Big( \frac{p_+}{p_-}  \eta_-^r + \frac{p_-}{p_+}\eta_+^r \Big)\log \rho},
\]
where $\rho$ is defined as above and $\eta_-, \eta_+ \in (0,1)$ are, respectively, the unique solutions of the equations 
\[
p_+\eta_-^{k+1} - \eta_-  + p_- = 0, \qquad p_-\eta_+^{k+1} - \eta_+  + p_+ = 0.
\]
In particular, if $p_- = p_+ = 1/2$, then
\[
\dim_H\mu = \dim_H (\supp \mu) = \frac{\log \eta}{\log \rho} < 1.
\]
\end{thm}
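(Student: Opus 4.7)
The strategy is to compute $\dim_H \mu$ using a dimension formula of Ledrappier--Young type, $\dim_H \mu = h(\mu)/\chi(\mu)$, where $h(\mu)$ and $\chi(\mu)$ are the entropy and Lyapunov exponent of the skew product $\FF^+$ with respect to its natural invariant measure, and then to evaluate $h(\mu)$ and $\chi(\mu)$ explicitly using the symbolic / random-walk structure forced by the $(k{:}1)$-resonance.

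First I would exploit the resonance. In the logarithmic coordinates $y = \log_\rho x$ near $0$ and $z = \log_\rho(1-x)$ near $1$, the linear branches of $f_-, f_+$ act as integer translations: $y \mapsto y + 1$ with probability $p_-$ (via $f_-$) and $y \mapsto y - k$ with probability $p_+$ (via $f_+$) near $0$, and symmetrically for $z$ near $1$ with the roles of $\pm$ swapped. By Theorem~\ref{thm:(k:l)}, $\supp \mu$ is the set of limit points of trajectories jumping over the central interval infinitely often, so each trajectory on $\supp\mu$ is an alternating concatenation of excursions on $L = [0, f_-(x_-)]$ and $R = [f_+(x_+), 1]$, separated by single jumps. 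Jumps from $L$ to $R$ originate in the jump zone $L_j = [x_+, f_-(x_-)] \subset L$ (where $f_+$ uses its non-linear branch sending $L_j$ into $R$), and $L_j$ splits naturally into $k$ sub-cells indexed by the log-level $r \in \{1, \ldots, k\}$ of the walk at the moment immediately before the jump; symmetrically for the $R$-side jump zone $R_j = [f_+(x_+), 1-x_+]$.

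Next I would identify the $\mu$-masses of the Markov cells via the probabilistic meaning of $\eta_\pm$. By first-step analysis, $\eta_-$ is the probability that the auxiliary walk on $\N$ with step $-1$ (prob.~$p_-$) and step $+k$ (prob.~$p_+$), started at $1$, ever hits $0$; this walk is transient because $\Lambda(0) > 0$ forces a strictly positive drift, and the identity $\eta_- = p_- + p_+\eta_-^{k+1}$ is immediate. More generally, $\eta_-^r$ is the hitting probability from level $r$. Solving the stationary equation $\mu = p_-(f_-)_* \mu + p_+(f_+)_*\mu$ cell by cell on this induced Markov partition, and matching entering and exiting intensities via the excursion structure, one finds that the $\mu$-mass of the exit-level-$r$ sub-cell of $L_j$ (resp.~$R_j$) is proportional to $\tfrac{p_+}{p_-}\eta_-^r$ (resp.~$\tfrac{p_-}{p_+}\eta_+^r$); the reciprocal factors $p_\pm/p_\mp$ reflect the ratio between the probability of the single jump-inducing step and that of the in-side step preceding it.

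With the cell masses in hand, the formula follows by direct computation. The Lyapunov exponent
\[
\chi(\mu) = \int_K \bigl(p_-\log f_-'(x) + p_+\log f_+'(x)\bigr)\, d\mu(x)
\]
is piecewise constant in $\{\log\rho, -k\log\rho\}$ on the Markov partition, and summing over cells yields, up to a common normalization, the denominator $\sum_{r=1}^k r \bigl(\tfrac{p_+}{p_-}\eta_-^r + \tfrac{p_-}{p_+}\eta_+^r\bigr)\log\rho$, where the factor $r$ captures the net log-expansion accumulated during an excursion of exit level $r$. A parallel Shannon--McMillan computation of $h(\mu)$, applied to the symbolic coding of $\mu$ by excursions whose tail probabilities are $\eta_\pm^r$, produces the analogous sum with $\log\rho$ replaced by $\log\eta_\pm$. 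Taking the ratio, the normalization cancels and we obtain the formula in the statement. In the symmetric case $p_-=p_+=1/2$, both $\eta_-$ and $\eta_+$ coincide with the unique $\eta$ solving $\eta^{k+1}-2\eta+1=0$; the formula collapses to $\log\eta/\log\rho$, which by Theorem~\ref{thm:(k:l)} equals $\dim_H(\supp\mu)$, showing that $\mu$ carries the full Hausdorff dimension of its support.

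The main obstacle is rigorously establishing the Ledrappier--Young formula $\dim_H\mu = h(\mu)/\chi(\mu)$ and the exact-dimensionality of $\mu$ in this setting, as well as solving for the cell masses, since the random system is neither a contractive IFS nor a smooth uniformly expanding map, so the standard machinery does not apply directly. The way around is to pass to the first-return map of $\FF^+$ on a suitable reference cross-section (say a single cell of $L_j$): this induces a piecewise affine expanding map on a Cantor set with finitely many contracting inverse branches of ratios $\rho^r$ ($r=1,\ldots,k$), which satisfies strong separation under the disjoint-type hypothesis and to which classical exact-dimensionality and $h/\chi$-type results apply. The stationary distribution on this induced system is the Bernoulli-like measure whose weights unfold precisely to $\tfrac{p_+}{p_-}\eta_-^r$ and $\tfrac{p_-}{p_+}\eta_+^r$, completing the identification.
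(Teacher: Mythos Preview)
Your strategy is broadly sound and ends up in the same place as the paper, but the paper's execution is considerably more direct and avoids the obstacle you flag. Rather than invoking a Ledrappier--Young formula for the full skew product and then inducing, the paper builds an explicit symbolic coding $\pi\colon \Z^*\times\Sigma_k^+\to\Lambda$ (via the IFS $\phi_r(x)=\rho-\rho^r x$, $r=1,\ldots,k$, on $I_{-1}$, which satisfies the Strong Separation Condition under the disjoint-type hypothesis) and then solves the stationarity equations for $\tilde\mu=\pi^{-1}_*\mu$ directly on cylinders. This yields a closed form: $\tilde\mu=\sum_{j\in\Z^*}\eta_j\,\delta_j\otimes\nu_j$ with $\nu_j$ an \emph{alternating} product measure $\PP_{\beta^-}\otimes\PP_{\beta^+}\otimes\cdots$ (or the shifted version), where $\beta_r^-=\tfrac{p_-}{p_+}\eta_+^r$ and $\beta_r^+=\tfrac{p_+}{p_-}\eta_-^r$. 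Thus $\mu|_{I_{-1}}$, normalized, is exactly the self-similar measure of the \emph{squared} IFS $\{\phi_r\circ\phi_s\}_{r,s=1}^k$ with probabilities $(\beta_r^-\beta_s^+)$, and the classical formula $\dim_H=\text{entropy}/\text{Lyapunov exponent}$ for self-similar measures under SSC applies immediately---no Ledrappier--Young, no Abramov, no first-return inducing is needed. Your random-walk interpretation of $\eta_\pm$ as hitting probabilities is correct and gives nice intuition, but the paper derives $\beta_r^\pm$ purely algebraically from the stationarity equations, which is faster. One point your sketch glosses over: the measure on $\Sigma_k^+$ is not Bernoulli but alternating in $\beta^-,\beta^+$; this is why the paper passes to the squared IFS before applying the self-similar dimension formula (the $\log\tfrac{p_-}{p_+}$ and $\log\tfrac{p_+}{p_-}$ contributions then cancel, yielding exactly the stated numerator). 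Your integral expression for $\chi(\mu)$ over the full system is not what is actually used; the relevant Lyapunov exponent is that of the finite self-similar IFS, namely $-\sum_{r,s}\beta_r^-\beta_s^+\log\rho^{r+s}$.
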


\begin{rem}\label{rem:struct}
Under the assumptions of Theorem~\ref{thm:(k:l)}, if $l=1$ or $l > 1$, $p_-=p_+ = 1/2$, then the stationary measure $\mu$ is a countable sum of (geometrically) similar copies, with disjoint supports, of a self-similar measure of an iterated function system with the Strong Separation Condition. In the case $l=1$ this iterated function system consists of $k$ maps, while in the case $l>1$, $p_-=p_+ = 1/2$ it is infinite. See Propositions~\ref{prop:symbolic_measure} and~\ref{prop:struct}.
\end{rem}

\begin{rem}\label{rem:(k:1)}
For every $k \in \{2, 3, \ldots\}$ and $\rho \in (0, \eta)$ and probability vector $(p_-, p_+)$ with $p_-, p_+ \in (1/(k+1), k/(k+1))$, the assumptions of Theorem~\ref{thm:(k:1)} are fulfilled for some  AM-system with $\rho = f_-'(0) = f_+'(1)$ and probabilities $p_-, p_+$. In particular, the theorem gives examples of AM-systems with $\dim_H \mu = d$ for arbitrary $d \in (0, 1)$.    
\end{rem}

The next result shows that the considered resonant systems are uniquely determined (up to topological conjugacy) by their resonance data.

\begin{thm}\label{thm:conjugacy}
Let $\{f_-, f_+\}$, $\{g_-, g_+ \}$ be symmetric AM-systems of disjoint type. If both system exhibit $(k:l)$-resonance for some relatively prime $k, l \in \N$, $k > l$, and satisfy $\rho < \eta$, then they are topologically conjugated, i.e.~there exists an increasing homeomorphism $h\colon [0,1] \to [0,1]$ such that
\[
g_- \circ h = h \circ f_-, \qquad g_+ \circ h = h \circ f_+.
\]
\end{thm}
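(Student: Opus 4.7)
My strategy is to use the Cantor set structure from Theorem~\ref{thm:(k:l)} to build $h$ in two stages, first on $\supp\mu$ via a combinatorial/symbolic conjugacy, then affinely across the complementary gaps. For both systems, the support $K_f, K_g \subset [0,1]$ is an exceptional minimal Cantor set, described (cf.~Theorem~\ref{thm:(k:l)} and Remark~\ref{rem:struct}) as the attractor of an iterated function system with the Strong Separation Condition whose cylinders are labelled by a symbol space $\Sigma$ depending only on the resonance data $(k,l)$. Since the combinatorial action of $f_\pm$ on the cylinder tree coincides with that of $g_\pm$, one obtains coding maps $\pi_f, \pi_g\colon \Sigma \to K_f, K_g$ with identical order structure. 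Setting $h|_{K_f} = \pi_g \circ \pi_f^{-1}$ yields an increasing homeomorphism between the two Cantor sets, which by construction conjugates the restriction of $f_\pm$ to $K_f$ with that of $g_\pm$ to $K_g$.

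Next, the complement $[0,1] \setminus K_f$ is a countable disjoint union of open intervals, one of which is the central interval $(f_-(x_-),f_+(x_+))$; the others are its forward images under $\{f_-,f_+\}$, and the analogous description applies to $K_g$. Via the symbolic labelling, there is a canonical order-preserving bijection between the gaps of $K_f$ and those of $K_g$, with the central gap of one system going to the central gap of the other. On any gap $I$ that avoids both break points $\{x_-^f, x_+^f\}$, each of $f_-,f_+$ acts affinely and sends $I$ onto another gap; the same holds on the $g$-side. I would extend $h$ affinely on each such gap, with endpoint values provided by $h|_{K_f}$. On such a gap both $g_i\circ h$ and $h\circ f_i$ are affine and share endpoint values in $K_g$, hence coincide.

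The main obstacle is handling the (finitely many) gaps containing a break point $x_-^f$ or $x_+^f$ --- equivalently, the gaps $I$ with $f_\pm(I)$ crossing the central interval. On such a gap I would define $h$ as a two-piece increasing affine map, breaking at $x_\pm^f$ and sending it to $x_\pm^g$; for this to produce a genuine conjugacy, one needs that the symbolic address of $x_\pm^f$ inside its gap matches that of $x_\pm^g$. This should follow from the symmetry of both systems under $\II$ (which rigidly pins down the central gap and the location of the break points relative to its forward orbit) together with the fact that both systems have the same $(k:l)$-resonance, which dictates how many affine pieces of $f_\pm$ (resp.~$g_\pm$) meet each gap and in what order. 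Once this combinatorial placement is verified, the piecewise affine extension conjugates $f_\pm$ to $g_\pm$ on these exceptional gaps as well, and one obtains an increasing homeomorphism $h\colon [0,1]\to[0,1]$ satisfying $g_\pm \circ h = h \circ f_\pm$, as required.
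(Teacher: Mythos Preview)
Your approach is essentially the same as the paper's: define $h$ on $\Lambda$ via the symbolic coding (which depends only on $(k,l)$), then extend affinely across each complementary gap. The paper carries this out explicitly for $l=1$ in Section~\ref{sec:conj} by enumerating the two families of gaps $U_j$ and $U^s_{j;r_1,\ldots,r_n}$ and checking directly (equations \eqref{eq:Uj}--\eqref{eq:Ujl}) that $f_\pm$ and $g_\pm$ act on them with identical combinatorics.

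The ``main obstacle'' you single out, however, is not an obstacle at all: no gap of $[0,1]\setminus\Lambda$ contains a break point in its interior. Indeed, by Lemma~\ref{lem:intervals}(c) (resp.\ Lemma~\ref{lem:intervals-kl}(g)) one has $x_+=\inf I_{-k}$ and $x_-=\sup I_k$, and the endpoints of each $I_j$ belong to $\Lambda$: for instance $\inf I_{-1}$ and $\sup I_{-1}$ are fixed by $\phi_l\circ\phi_k$ and $\phi_k\circ\phi_l$ respectively (a direct computation using $x_-(1-\rho^{k+l})=1-\rho^k$), hence lie in $\Lambda_{-1}$, and this transports to all $I_j$ by similarity. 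Consequently every gap is contained in a single linearity domain of both $f_-$ and $f_+$, so the plain affine extension already gives $g_\pm\circ h = h\circ f_\pm$ on each gap, and your proposed two-piece modification and the accompanying ``symbolic address matching'' verification are unnecessary. Recognizing this is precisely what makes the paper's argument go through cleanly.
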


The last result shows that there exist symmetric resonant AM-systems with singular stationary measure of full support.

\begin{thm}\label{thm:res-full}
If a symmetric AM-system with probabilities $p_- = p_+ = 1/2$ and positive Lyapunov exponents exhibits $(5:2)$-resonance and satisfies $\rho  = \eta$, with $\rho,\eta$ defined as above, then $\mu$ is singular with 
\[
\dim_H \mu < 1, \qquad \supp \mu = [0,1].
\]
\end{thm}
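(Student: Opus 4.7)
The plan is to push the analysis behind Theorem~\ref{thm:(k:l)} and Remark~\ref{rem:struct} for the $l>1$, $p_-=p_+=1/2$ case (Section~\ref{sec:l>1}) through the critical parameter $\rho=\eta$. By Remark~\ref{rem:l=1}, the condition $\rho=\eta$ is equivalent to $\rho x_-=\tfrac{1}{2}$, which in turn is the algebraic identity $\rho^{k+l}-2\rho^{k+1}+2\rho-1=0$ defining $\eta$. Since $l=2>1$ we still have $f_-(x_-)=\rho^l x_-=\eta/2<\tfrac{1}{2}$, so the system remains of disjoint type (in particular, not of border type), and the symbolic/combinatorial framework used in the case $\rho<\eta$ carries over unchanged. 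What degenerates is only the separation property: the Strong Separation Condition of the associated countable IFS from Remark~\ref{rem:struct} weakens at $\rho=\eta$ to a just-touching Open Set Condition.

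Concretely, I would first reproduce the structural decomposition of $\mu$ from the $l>1$, $p_-=p_+=1/2$ argument: write $\mu$ as a countable sum of geometrically similar copies of a single self-similar measure $\nu$ for a countable affine IFS $\{T_n,p_n\}_{n\in\N}$ on a base interval $J$, where each $T_n$ is a composition of $f_-,f_+$ coded by a first-return excursion, with contraction ratio $r_n=\rho^{c_n}$ and weight $p_n=2^{-d_n}$ determined by the integer statistics $(c_n,d_n)$ of the excursion. The key step is to verify that at $\rho=\eta$ the images $T_n(J)$ tile $J$ up to common endpoints, i.e.\ $\sum_n |T_n(J)|=|J|$; this length identity is precisely $\eta^{k+l}-2\eta^{k+1}+2\eta-1=0$ rewritten as the corresponding geometric series over return-time data. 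Consequently the attractor of the IFS is all of $J$, and assembling the similar copies yields $\supp\mu=[0,1]$.

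For dimension and singularity, the OSC together with the exponential decay of symmetric-random-walk return times (giving finiteness of $-\sum p_n\log p_n$ and $-\sum p_n\log r_n$) yields the standard self-similar-measure formula
\[
\dim_H\mu=\dim_H\nu=\frac{\sum_n p_n\log p_n}{\sum_n p_n\log r_n}\le 1,
\]
with equality iff $p_n=r_n$ for every $n$ (by the Gibbs inequality, using $\sum_n r_n=1$ from just-touching). If all $p_n=r_n$ held, then $\nu$ would be Lebesgue on $J$ and hence $\mu$ would be Lebesgue on $[0,1]$; but Theorem~\ref{thm:Leb} forces this to happen only in border type, which is excluded here since the system is of disjoint type at $\rho=\eta$ for $(k,l)=(5,2)$. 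Hence $p_n\neq r_n$ for some $n$, so $\dim_H\mu<1$, and $\mu$ is singular with respect to Lebesgue measure.

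The principal obstacle is the length-conservation identity in Step~2: making the enumeration of first-return excursions and their weighted lengths match precisely the polynomial $\eta^{k+l}-2\eta^{k+1}+2\eta-1$ at $\rho=\eta$, thus establishing the just-touching property of the IFS. This extends the combinatorial counting of Section~\ref{sec:l>1}, where the same enumeration produces the strict inequality $\sum_n|T_n(J)|<|J|$ that opens the Cantor gaps of Theorem~\ref{thm:(k:l)}. A secondary technical point is justifying the dimension formula in the countable OSC setting; this follows from Mauldin--Urbański-type results once the summability hypotheses (which come from the exponential tails of the symmetric random walk) are checked.
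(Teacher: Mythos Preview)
Your overall architecture matches the paper: push the Section~\ref{sec:l>1} construction to $\rho=\eta$, observe that the countable IFS $\{\Phi_\rr\}_{\rr\in\RR}$ on $I_{-1}$ becomes just-touching (so $\Lambda=(0,1)$ and $\supp\mu=[0,1]$), identify $\mu_{-1}=\mu|_{I_{-1}}/\mu(I_{-1})$ as the self-similar measure with weights $\beta_\rr$, and reduce $\dim_H\mu<1$ to showing $\mu_{-1}\neq\mathcal L$. The paper does exactly this.

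The gap is in your final step. From $p_n=r_n$ for all $n$ you correctly get that $\mu_{-1}$ is Lebesgue on $I_{-1}$, and hence each $\mu|_{I_\jj}$ is a constant multiple of Lebesgue on $I_\jj$. But you cannot conclude that $\mu$ is Lebesgue on $[0,1]$: the constants $m_\jj=\mu(I_\jj)$ are governed by the \emph{separate} linear recurrence with characteristic polynomial $x^{k+l}-2x^l+1$ (Proposition~\ref{prop:struct}), and $\rho$ is a root of $x^{k+l}-2x^{k+1}+2x-1$, not of $x^{k+l}-2x^l+1$. So there is no reason for $m_\jj$ to be proportional to $|I_\jj|$; $\mu$ would merely be absolutely continuous with a genuinely piecewise-constant density. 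Theorem~\ref{thm:Leb} only excludes the \emph{constant} density, so it does not give a contradiction. (Relatedly, your weights are not $2^{-d_n}$: the $\beta_\rr$ come from $m_\rr/(2m_l-m_{l+k})$ and are implicit in $\mu$, so the ``exponential tails of return times'' heuristic does not directly control $-\sum\beta_\rr\log\beta_\rr$; the paper in fact treats the finite- and infinite-entropy cases separately, the latter via a Kifer--Peres/Baker--Jurga bound giving $\dim_H\mu_{-1}\le t_0<1$.)

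What the paper does instead, in the finite-entropy case, is rule out $\mu_{-1}=\mathcal L$ by a direct algebraic computation specific to $(k,l)=(5,2)$: if $\mu_{-1}=\mathcal L$ then $\mu(I_r)/\mu(I_{-1})=\rho^r$ for $r=2,3,4,5$, while Proposition~\ref{prop:struct} forces $\mu(I_j)=A\alpha^{|j|}+B\beta^{|j|}$ with $\alpha,\beta$ the real roots of $x^3+x^2-1$ and $x^3+x+1$ (the factors of $x^7-2x^2+1$ with roots of modulus $<1$). These two expressions for $\mu(I_r)$, $r=2,3,4,5$, are shown to be incompatible by an elementary argument. Your route via Theorem~\ref{thm:Leb} would become valid only if you first prove that an absolutely continuous stationary measure for this system must have constant density; that is not in the paper and would itself require essentially the same recurrence analysis.
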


Note that in this case the condition $\rho = \eta$ is equivalent to
\[
\rho^7 - 2\rho^6 +2\rho - 1 = 0,
\]
which gives $\rho \approx 0.513649$.

\begin{rem} The resonance $(5:2)$ was chosen because the proof is relatively short in this case. Similar arguments work also for some other values of the resonance $(k:l)$ with $l > 1$. 
 
\end{rem}

\section{Preliminaries}\label{sec:prelim}

\begin{notation} We write $\Z^* = \Z \setminus \{ 0 \}$. 
For $j \in \Z^*$ we set
\[
\sgn(j) = 
\begin{cases}
- & \text{for } j < 0\\
+ & \text{for } j > 0
\end{cases}.
\]
For $x \in \R$, $A \subset \R$ we use the notation
\[
xA = \{xy: y \in A\}.
\]
The convex hull of a set $A$ is denoted by $\conv A$. We write $|I|$ for the length of an interval $I$. 
The symbol $\Leb$ denotes the Lebesgue measure.
By $\dim_H$ (resp.~$\dim_B$) we denote the Hausdorff (resp.~box) dimension. 
The Hausdorff dimension of a Borel measure $\nu$ in $\R^n$ is defined as
\[
\dim_H \nu = \inf\{\dim_H A: A \text{ is a Borel set and } \nu(\R^n \setminus A) = 0\}. 
\]
\end{notation}

Throughout this section we assume that $f_1, \ldots, f_m$, $m \geq 2$, are piecewise $C^1$ increasing homeomorphisms of the interval $[0, 1]$, such that $f_i(0) = 0$, $f_i(1) = 1$ and $f_i(x) \neq x$ for $x \in (0,1)$, $i = 1, \ldots, m$.

For a set $A \subset [0,1]$ we define
\[
f(A) = f_1(A) \cup \ldots \cup f_m(A), \qquad  f^{-1}(A) = f_1^{-1}(A) \cup \ldots \cup f_m^{-1}(A)
\]
and, inductively,
\[
f^0(A) = A, \qquad f^n(A) = f(f^{n-1}(A)), \qquad f^{-n}(A) = f^{-1}(f^{-(n-1)}(A))
\]
for $n \in \N$.

\begin{defn} Suppose $f(X) \subset X$ for some $X \subset [0,1]$. We say that the system $\{f_1, \ldots, f_m\}$ is (forward) \emph{minimal} in $X$, if the union of forward trajectories under $\{f_1, \ldots, f_m\}$ of every point in $X$ is dense in $X$, i.e.~for every $x \in X$ and every non-empty open subset $U$ of $X$ there exist $i_1, \ldots, i_n \in \{1, \ldots, m\}$, $n \ge 0$, such that $f_{i_n} \circ \cdots \circ f_{i_1}(x) \in U$. 
\end{defn}

Let $(p_1, \ldots, p_m)$ be a probability vector, i.e.~$p_1,\ldots, p_m \in (0, 1)$ and $p_1 + \cdots + p_m = 1$.  
We consider the symbolic space
\[
\Sigma_m^+ = \{1,\ldots, m\}^\N
\]
equipped with the Bernoulli measure 
\[
\Ber_{p_1, \ldots, p_m}^+ = \bigotimes_\N \PP_{p_1, \ldots, p_m},
\]
where $\PP_{p_1, \ldots, p_m}$ is the probability distribution on $\{1,\ldots, m\}$ given by $\PP_{p_1, \ldots, p_m}(\{i\}) = p_i$, $i = 1, \ldots, m$.

We study $\{f_1, \ldots, f_m\}$ as the random systems of maps, given by the \emph{step skew product}
\[
\FF^+\colon \Sigma_m^+ \times [0,1] \to \Sigma_m^+ \times [0,1], \qquad \FF^+(\ii, x) = (\sigma(\ii), f_{i_0}(x)),
\]
where $\ii = (i_n)_{n \in \N}$ and $\sigma\colon \Sigma_m^+ \to \Sigma_m^+$ is the left-side shift, i.e.~$\sigma((i_n)_{n \in \N}) = (i_{n+1})_{n \in \N}$.

Let $\MM$ be the space of all Borel probability measures on $[0,1]$.
For $\nu \in \MM$ we denote by $\supp \nu$ the \emph{topological support} of $\nu$, i.e.~the intersection of all closed sets in $[0,1]$ of full measure $\nu$.

\begin{defn} A measure $\mu \in \MM$ is called a \emph{stationary measure} of the system $\{f_1, \ldots, f_m\}$ with probabilities $p_1, \ldots, p_m$, if 
\[
\mu = p_1 \, (f_1)_* \mu + \cdots + p_m \, (f_m)_* \mu.
\]
\end{defn}

\begin{defn} 
The \emph{Markov} (or \emph{transfer}) \emph{operator} $\TT\colon \MM \to \MM$ is defined as
\[
\TT \nu = p_1 \, (f_1)_* \nu + \cdots + p_m \, (f_m)_* \nu.
\]
Analogously, the transfer operator $T\colon L^1([0, 1], \Leb) \to L^1([0, 1], \Leb)$ is defined as
\[
T g  = p_1 \, (f_1^{-1})' \, g \circ f_1^{-1}  + \cdots + p_m \, (f_m^{-1})' \, g \circ f_m^{-1}.
\]
\end{defn}

It is clear that the stationary measures of the system $\{f_1, \ldots, f_m\}$ with probabilities $p_1, \ldots, p_m$ coincide with the fixed points of the transfer operator $\TT$, while the stationary densities (densities of stationary measures with respect to the Lebesgue measure) are the fixed points of the transfer operator $T$. 

\begin{prop}\label{prop:min->supp} Suppose that $f(X) \subset X$ for some $X \subset [0,1]$ and the system $\{f_1, \ldots, f_m\}$ is minimal in $X$. If $\mu$ is a stationary measure for the system and $\supp \mu \subset X$, then $\supp \mu = X$. 
\end{prop}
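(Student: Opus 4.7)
The plan is to show the reverse inclusion $X \subseteq \supp \mu$ by first establishing that $\supp \mu$ is forward invariant under each $f_i$, and then exploiting the minimality hypothesis. Since $\mu$ is a probability measure on the compact space $[0,1]$, $\supp \mu$ is a nonempty compact set, so there is at least one point to start an orbit from.

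The first step is the forward invariance $f_i(\supp \mu) \subseteq \supp \mu$ for every $i \in \{1,\ldots,m\}$. This follows from the stationarity identity $\mu = p_1 (f_1)_*\mu + \cdots + p_m (f_m)_*\mu$ together with $p_i > 0$: each summand $p_i (f_i)_*\mu$ is dominated by $\mu$, hence $\supp((f_i)_*\mu) \subseteq \supp\mu$, and since $f_i$ is continuous and $\supp\mu$ is compact, $\supp((f_i)_*\mu) = f_i(\supp\mu)$. Consequently $f(\supp\mu) \subseteq \supp\mu$, and by iteration the entire forward orbit under $\{f_1,\ldots,f_m\}$ of any point of $\supp\mu$ remains in $\supp\mu$.

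Now I would pick any $x_0 \in \supp\mu$, which is allowed since $\supp\mu \subseteq X$ and $\supp\mu \neq \emptyset$. The orbit $\{f_{i_n}\circ\cdots\circ f_{i_1}(x_0) : n \geq 0,\ i_j \in \{1,\ldots,m\}\}$ is contained in $\supp\mu$ by the previous step. On the other hand, minimality of $\{f_1,\ldots,f_m\}$ in $X$ says that this orbit is dense in $X$. Since $\supp\mu$ is closed, we conclude $X \subseteq \supp\mu$, which combined with the hypothesis $\supp\mu \subseteq X$ yields $\supp\mu = X$.

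There is no real obstacle here: the only subtlety is remembering that in our setting $\supp\mu$ is a \emph{closed} subset of $[0,1]$, which is what lets us pass from the density of a single orbit in $X$ to the inclusion $X\subseteq \supp\mu$. The argument uses neither the piecewise-$C^1$ structure nor the boundary conditions $f_i(0)=0$, $f_i(1)=1$ — it works for any system of continuous maps admitting a stationary probability measure and a minimal forward-invariant set.
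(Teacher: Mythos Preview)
Your argument is correct and is precisely the standard proof the paper alludes to (it does not spell out its own proof but refers to \cite[Lemme~5.1]{DKN1} and \cite[Lemma~2]{gelfert}). The key steps --- forward invariance of $\supp\mu$ from stationarity with positive weights, followed by density of a single orbit via minimality and closedness of $\supp\mu$ --- are exactly what those references contain.
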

The proof of this proposition is standard and can found e.g.~in 
\cite[Lemme~5.1]{DKN1} or \cite[Lemma 2]{gelfert}.

Note that since the maps $f_i$ fix the endpoints of the interval, the Dirac measures at $0$ and $1$ are stationary for any probabilities $p_i$. If we assume that the endpoints are repelling in average, then there exists a stationary measure with no atoms at $0, 1$. More precisely, we have the following.

\begin{defn} Assuming $f'_i(0), f'_i(1) > 0$, $i = 1, \ldots, m$, the \emph{Lyapunov exponents} of the system $\{f_1, \ldots, f_m\}$ with probabilities $p_1, \ldots, p_m$ are defined as
\[
\Lambda(0) = p_1 \ln f_1'(0) + \cdots + p_m\ln f_m'(0),\qquad
\Lambda(1) = p_1 \ln f_1'(1) + \cdots + p_m\ln f_m'(1).
\]
\end{defn}

\begin{thm}[{\cite[Proposition 4.1]{homburg16}, \cite[Lemmas 3.2--3.4]{homburg}}]\label{thm:stationary} If $\Lambda(0), \Lambda(1) > 0$, then there exists a unique probability stationary measure $\mu$ for the system $\{f_1, \ldots, f_m\}$ with probabilities $p_1, \ldots, p_m$, such that $\mu(\{0, 1\}) = 0$. 
Moreover, there exist positive constants $c, \alpha_0, \delta_0$ such that for every $\alpha \in (0, \alpha_0)$, $\delta \in (0, \delta_0)$ and for
\[
\DD_{c, \alpha, \delta} = \{\nu \in \MM: \nu([0, x]), \nu([1-x,1]) < c x^\alpha \text{ for every } x \in (0,\delta)\},
\]
we have $\TT (\DD_{c, \alpha, \delta}) \subset \DD_{c, \alpha, \delta}$ and $\mu \in \DD_{c, \alpha, \delta}$.
\end{thm}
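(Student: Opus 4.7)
The plan is to find a weak-$*$-closed convex subset of $\MM$ preserved by $\TT$ that automatically forbids mass near $\{0,1\}$, run a fixed-point theorem inside it, and then derive uniqueness from the pathwise synchronization implied by the positive Lyapunov exponents. The cone $\DD_{c,\alpha,\delta}$ is the natural candidate, so the central task is to verify the invariance $\TT(\DD_{c,\alpha,\delta}) \subset \DD_{c,\alpha,\delta}$ and extract existence from it.

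For the invariance, I would start with the direct computation: for $\nu\in\DD_{c,\alpha,\delta}$ and $x\in(0,\delta)$,
\[
\TT\nu([0,x]) \;=\; \sum_{i=1}^m p_i\,\nu\bigl([0,f_i^{-1}(x)]\bigr) \;<\; c\sum_{i=1}^m p_i\,\bigl(f_i^{-1}(x)\bigr)^{\alpha}.
\]
Since $f_i$ is piecewise $C^1$ near $0$ with slope $f_i'(0)$, one has $f_i^{-1}(x)/x \to 1/f_i'(0)$, so after absorbing the $o(1)$ error into $\delta_0$ the right-hand side is at most $cx^\alpha\phi(\alpha)$, where
\[
\phi(\alpha) \;=\; \sum_{i=1}^m p_i\,\bigl(f_i'(0)\bigr)^{-\alpha}.
\]
Since $\phi(0)=1$ and $\phi'(0)=-\Lambda(0)<0$, there is $\alpha_0>0$ with $\phi(\alpha)<1$ on $(0,\alpha_0]$, so $\TT\nu([0,x])<cx^\alpha$ for $x\in(0,\delta_0)$; the symmetric bound at $1$ uses $\Lambda(1)>0$. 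Existence then follows by a standard fixed-point argument: the set $\DD_{c,\alpha,\delta}$ is nonempty (Lebesgue measure lies in it for $c\ge1$), convex, and weak-$*$ closed after replacing the strict inequality by $\leq$ and recovering strictness from a single application of $\TT$. Since $\TT$ is weak-$*$ continuous, Schauder--Tychonoff (or Markov--Kakutani) yields a fixed point $\mu\in\DD_{c,\alpha,\delta}$, which is automatically free of atoms at the endpoints.

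Uniqueness is the real obstacle. I would first show that every stationary measure $\mu$ with $\mu(\{0,1\})=0$ lies in some $\DD_{c,\alpha,\delta}$: iterating the transfer inequality $n$ times one obtains
\[
\mu([0,x]) \;\leq\; \phi(\alpha)^n \cdot \mu([0,\delta]) \;+\; c\,x^\alpha,
\]
and letting $n\to\infty$ gives the polynomial boundary decay. Once all candidate measures live in this cone, I would invoke synchronization: the positive Lyapunov exponents at both endpoints, together with polynomial boundary control, imply that for $\Ber^+_{p_1,\ldots,p_m}$-a.e.\ $\ii\in\Sigma_m^+$,
\[
|f_{i_n}\circ\cdots\circ f_{i_1}(x)-f_{i_n}\circ\cdots\circ f_{i_1}(y)| \longrightarrow 0 \quad (n\to\infty)
\]
for every $x,y\in(0,1)$. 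For two stationary measures $\mu_1,\mu_2$, the product $\mu_1\otimes\mu_2$ is stationary for the diagonal random system on $[0,1]^2$; synchronization forces its orbit-averaged limit to be concentrated on the diagonal, whence $\mu_1=\mu_2$.

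The most delicate step is proving the synchronization itself on the full open interval from exponents at the endpoints alone. Orbits stay long stretches away from $\{0,1\}$, where the linearization gives no contraction information, so one needs a Borel--Cantelli argument on $\Sigma_m^+$ controlling the fraction of iterates spent in a small neighborhood of the endpoints and combining this with a Birkhoff-type estimate for $\sum\ln f_i'$ over those visits; the polynomial boundary decay for $\mu_1,\mu_2$ is precisely what ensures the visits are frequent enough. This is the place where the cone invariance feeds back into uniqueness, and where I expect the main technical effort of the proof to lie.
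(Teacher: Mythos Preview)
The paper does not prove this statement: it is quoted from \cite{homburg16,homburg}, with only the remark that the $C^1$-diffeomorphism hypothesis there can be relaxed to smoothness near the endpoints. So there is no in-paper argument to compare against. Your outline is nonetheless faithful to those references: the cone $\DD_{c,\alpha,\delta}$, its $\TT$-invariance via $\phi(\alpha)=\sum_i p_i f_i'(0)^{-\alpha}<1$ for small $\alpha$, a Krylov--Bogolyubov argument inside the (closed) cone for existence, and pathwise synchronization for uniqueness are precisely the ingredients used there.

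One step does not work as written. The displayed inequality
\[
\mu([0,x]) \le \phi(\alpha)^n\,\mu([0,\delta]) + c\,x^\alpha
\]
does not follow from iterating $\mu=\TT\mu$: stationarity is an \emph{equality}, so no contraction factor appears, and since some $f_i'(0)<1$ the inverse images $f_{i_1}^{-1}\!\cdots f_{i_n}^{-1}(x)$ spread to both sides of $x$ rather than shrink uniformly into $(0,\delta)$. The actual argument (see \cite[Lemma~3.3]{homburg}) uses that $\Lambda(0)>0$ gives the random walk $\log f_{i_1}^{-1}\!\cdots f_{i_n}^{-1}(x)\approx\log x-\sum_j\log f_{i_j}'(0)$ a negative drift, and bounds $\mu([0,x])$ through an exponential estimate on the probability that this walk has not yet exited $(-\infty,\log\delta)$; this is what yields the $x^\alpha$ tail for \emph{every} stationary $\mu$ with $\mu(\{0,1\})=0$. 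A smaller point: in the invariance computation you implicitly assume $f_i^{-1}(x)<\delta$ for all $i$, which fails when $f_i'(0)<1$; this is handled by choosing $c$ large enough that $cx^\alpha\ge1$ on the range where some $f_i^{-1}(x)\ge\delta$, so the bound becomes vacuous there.
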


\begin{rem} Actually, in \cite{homburg16,homburg} the theorem was proved for 
systems of $C^1$-diffeo\-morphisms, but the proof goes through if we only assume that the map are smooth in some neighbourhoods of $0, 1$. 
\end{rem}

\begin{rem}\label{rem:stability}
The uniqueness of the stationary measure $\mu \in  \DD_{c, \alpha, \delta}$ implies
\[
\frac{1}{N} \sum \limits_{n=0}^{N-1} \TT^n \nu \to \mu \quad \text{as } N \to \infty \quad \text{in weak-* topology} \quad \text{for every }  \nu \in \DD_{c, \alpha, \delta}.
\]
\end{rem}

\begin{rem}
The measure $\mathbb \Ber_{p_1, \ldots, p_m}^+ \times \mu$ is an $\FF^+$-invariant measure on $\Sigma_m^+ \times [0,1]$. Moreover, there is a Borel probability measure on $\Sigma_m \times [0,1]$, where $\Sigma_m = \{1, \ldots, m\}^\Z$, invariant with respect to the (extended) step skew product, which is associated to $\mu$ in a unique way (see \cite{arnold-book,homburg}). 
\end{rem}

It is well-known (see e.g.~\cite[Theorem 2.5]{dubins_freedman}) that whenever the operator $\TT$ preserves absolute continuity and singularity of measures (with respect to the Lebesgue measure) and the stationary measure is unique, then it is of pure type (i.e.~is either absolutely continuous or singular with respect to the Lebesgue measure). It is easy to see that the same holds for the measure $\mu$ from Theorem~\ref{thm:stationary}. Hence, we have the following.

\begin{prop}\label{prop:either}
The stationary measure $\mu$ is either absolutely continuous or singular with respect to the Lebesgue measure.
\end{prop}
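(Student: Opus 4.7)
The plan is to apply the Lebesgue decomposition to $\mu$, show that both the absolutely continuous and singular parts are $\TT$-invariant, and then use the uniqueness statement in Theorem~\ref{thm:stationary} to rule out the case where both parts are nonzero.

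First I would write the Lebesgue decomposition $\mu = \mu_{ac} + \mu_s$, where $\mu_{ac}$ is absolutely continuous and $\mu_s$ is singular with respect to Lebesgue measure. The next step is to verify that the transfer operator $\TT$ preserves each type separately. Since each $f_i$ is a piecewise $C^1$ homeomorphism with positive derivative on each piece, the pushforward $(f_i)_*\nu$ of an absolutely continuous measure $\nu$ is absolutely continuous (via the change of variables formula on each linearity piece), and the pushforward of a measure supported on a Lebesgue null set is again supported on a Lebesgue null set. Consequently $\TT\mu_{ac}$ is absolutely continuous and $\TT\mu_s$ is singular. From $\mu = \TT\mu = \TT\mu_{ac} + \TT\mu_s$ and the uniqueness of the Lebesgue decomposition we deduce
\[
\TT\mu_{ac} = \mu_{ac}, \qquad \TT\mu_s = \mu_s.
\]

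Now assume for contradiction that both $\mu_{ac}$ and $\mu_s$ have positive mass. Set $\mu_1 = \mu_{ac}/\|\mu_{ac}\|$ and $\mu_2 = \mu_s/\|\mu_s\|$; these are $\TT$-invariant Borel probability measures on $[0,1]$. Since $\mu(\{0,1\}) = 0$, both $\mu_1$ and $\mu_2$ satisfy $\mu_i(\{0,1\}) = 0$. Moreover, $\mu \in \DD_{c,\alpha,\delta}$ by Theorem~\ref{thm:stationary}, and from $\mu_i \leq \mu/\min(\|\mu_{ac}\|, \|\mu_s\|)$ it follows that
\[
\mu_i([0,x]), \ \mu_i([1-x,1]) \ \leq \ \frac{c}{\min(\|\mu_{ac}\|, \|\mu_s\|)}\, x^\alpha
\]
for $x \in (0,\delta)$. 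Hence both $\mu_1$ and $\mu_2$ lie in $\DD_{c',\alpha,\delta}$ with $c' = c/\min(\|\mu_{ac}\|, \|\mu_s\|)$. The uniqueness part of Theorem~\ref{thm:stationary} (applied with the enlarged constant $c'$, which leaves the conclusion valid since the uniqueness of the nonatomic stationary measure holds among all such measures, not depending on the specific $c$) forces $\mu_1 = \mu_2 = \mu$. This is a contradiction, since $\mu_1$ is absolutely continuous while $\mu_2$ is singular and they cannot coincide unless one of them is zero.

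The only delicate point is ensuring that the uniqueness provided by Theorem~\ref{thm:stationary} applies to both normalized components. This reduces to the observation that Theorem~\ref{thm:stationary} asserts uniqueness of the nonatomic stationary probability measure among all Borel probability measures with no mass at $\{0,1\}$ (the class $\DD_{c,\alpha,\delta}$ is merely a convenient invariant set used to establish existence and boundary decay), so once we know $\mu_1(\{0,1\}) = \mu_2(\{0,1\}) = 0$ the conclusion follows and the proposition is proved.
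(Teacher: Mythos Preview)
Your argument is correct and is exactly the standard Lebesgue-decomposition-plus-uniqueness argument that the paper has in mind; the paper simply cites \cite[Theorem 2.5]{dubins_freedman} rather than writing it out. One small remark: the detour through $\DD_{c',\alpha,\delta}$ in the middle paragraph is unnecessary, as you yourself note at the end --- the uniqueness in Theorem~\ref{thm:stationary} is stated among all stationary probability measures with no mass at $\{0,1\}$, so once $\mu_1(\{0,1\}) = \mu_2(\{0,1\}) = 0$ you are done immediately.
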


Another standard fact is that $\mu$ cannot have atoms.

\begin{prop}\label{prop:nonatom} The stationary measure $\mu$ is non-atomic.
\end{prop}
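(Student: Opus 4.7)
My plan is to argue by contradiction, extracting a finite set of ``maximal atoms'' which must be invariant under each $f_i$, and then to derive a contradiction from the assumption $f_i(x) \ne x$ for $x \in (0,1)$.

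Specifically, I would first suppose that $\mu$ has an atom and set $\alpha = \sup_{x \in [0,1]} \mu(\{x\}) > 0$. Since $\mu$ is a probability measure, the set $\{x : \mu(\{x\}) \ge \alpha/2\}$ has cardinality at most $2/\alpha$, so the supremum is attained on a finite (nonempty) set
\[
A = \{x \in [0,1] : \mu(\{x\}) = \alpha\}.
\]
By Theorem~\ref{thm:stationary} we have $\mu(\{0,1\}) = 0$, so $A \subset (0,1)$.

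Next, I would use stationarity to show that $A$ is invariant under every $f_i$. Since each $f_i$ is a homeomorphism, $f_i^{-1}(\{x\}) = \{f_i^{-1}(x)\}$, so the identity $\mu = \sum_i p_i (f_i)_* \mu$ applied to a singleton $\{x\}$ with $x \in A$ gives
\[
\alpha = \mu(\{x\}) = \sum_{i=1}^m p_i\, \mu(\{f_i^{-1}(x)\}).
\]
Since $p_i > 0$ and every summand is bounded above by $\alpha$, equality forces $\mu(\{f_i^{-1}(x)\}) = \alpha$ for each $i$, i.e.\ $f_i^{-1}(x) \in A$. Hence $A \subseteq f_i(A)$ for each $i$, and because $f_i$ is injective and $A$ is finite this yields $f_i(A) = A$.

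Finally, I would derive the contradiction from the assumption $f_i(x) \ne x$ for $x \in (0,1)$. Fix any $i$; by continuity the function $f_i - \mathrm{id}$ has constant sign on $(0,1)$, so $f_i^n - \mathrm{id}$ has the same constant sign on $(0,1)$ for every $n \ge 1$. On the other hand, $f_i$ restricts to a bijection of the finite set $A$, so every point of $A$ is periodic: there exist $x \in A$ and $n \ge 1$ with $f_i^n(x) = x$. Since $x \in A \subset (0,1)$, this contradicts the fact that $f_i^n$ has no fixed points in $(0,1)$. Therefore $\mu$ has no atoms, completing the proof. I do not expect any genuinely hard step here; the only subtlety is remembering to use $p_i > 0$ (forcing equality in the stationarity relation) and to extract the maximal-mass stratum rather than working with an arbitrary atom.
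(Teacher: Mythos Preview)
Your proof is correct and follows essentially the same approach as the paper: both pick an atom of maximal mass, use stationarity together with $p_i>0$ to propagate maximality to $f_i^{-1}(x)$, and then obtain a contradiction from the absence of fixed points in $(0,1)$. The only cosmetic difference is that the paper iterates $f_i^{-1}$ on a single maximal atom to produce an infinite strictly monotone sequence of equal-mass atoms (contradicting $\mu([0,1])<\infty$), whereas you encapsulate this as ``$A$ is finite and $f_i$-invariant, hence contains a periodic point''; these are equivalent formulations of the same contradiction.
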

\begin{proof} The proof follows \cite[proof of Lemma 2]{gelfert} (see also \cite[Lemme~5.1]{DKN1}). By Theorem~\ref{thm:stationary}, $\mu$ has no atoms at $0, 1$. Suppose there exists an atom in $(0,1)$ and take $x \in (0,1)$ such that $\mu(\{x\}) = \max\{\mu(\{y\}): y \in (0,1)\}$. Then, by the definition of stationary measure, $\mu(\{f_i^{-1}(x)\}) = \mu(\{x\})$ for every $i = 1, \ldots, m$ and, consequently, $\mu(\{f_i^{-n}(x)\}) = \mu(\{x\}) > 0$ for every $n > 0$. Since $f_i$ has no fixed points in $(0,1)$, the trajectory $\{f_i^{-n}(x)\}_{n=0}^\infty$ is strictly monotonic and thus infinite, which contradicts the finiteness of $\mu$.
\end{proof}

The following lemma is useful in determining singularity of the measure.

\begin{lem}\label{lem:supp} If $X \subset (0,1)$ is non-empty, closed as a subset of $(0,1)$, and $f(X) \subset X$, then $\supp \mu \subset X \cup \{0, 1\}$ and $\mu(X) = 1$. Consequently, if there exists such a set $X$ of Lebesgue measure $0$, then $\mu$ is singular.
\end{lem}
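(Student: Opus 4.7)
The plan is to exhibit a measure supported on $X$ whose Cesaro averages under $\TT$ converge to $\mu$, and then use the closedness of the closure $\bar X$ of $X$ in $[0,1]$ to deduce $\supp\mu\subset\bar X\subset X\cup\{0,1\}$. The key input will be the stability statement in Remark~\ref{rem:stability}, which identifies $\mu$ as the weak-$*$ Cesaro limit of $\TT^n\nu$ for any initial $\nu\in\DD_{c,\alpha,\delta}$.

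First I would pick a point $x_0\in X$ (which exists since $X\neq\emptyset$) and set $\nu_0=\delta_{x_0}$. Since $x_0\in(0,1)$, for any $\alpha\in(0,\alpha_0)$ and any $\delta<\min(x_0,1-x_0,\delta_0)$ we have $\nu_0([0,x])=\nu_0([1-x,1])=0<cx^{\alpha}$ for every $x\in(0,\delta)$, hence $\nu_0\in\DD_{c,\alpha,\delta}$. Next observe that
\[
\TT^n\nu_0 \;=\; \sum_{i_1,\ldots,i_n}p_{i_1}\cdots p_{i_n}\,\delta_{f_{i_n}\circ\cdots\circ f_{i_1}(x_0)},
\]
and the hypothesis $f(X)\subset X$ iterates to give $f_{i_n}\circ\cdots\circ f_{i_1}(x_0)\in X$ for every word $(i_1,\ldots,i_n)$. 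Thus each $\TT^n\nu_0$ is supported on $X$, and therefore so is every Cesaro average $\frac{1}{N}\sum_{n=0}^{N-1}\TT^n\nu_0$.

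By Remark~\ref{rem:stability} these Cesaro averages converge weakly to $\mu$. Since $\bar X$ is closed in $[0,1]$ and the set of Borel probability measures supported on a fixed closed set is closed under weak-$*$ convergence, the limit $\mu$ must be supported on $\bar X$. The hypothesis that $X$ is closed in $(0,1)$ ensures its closure in $[0,1]$ can only add the endpoints $0$ and $1$, so $\bar X\subset X\cup\{0,1\}$ and consequently $\supp\mu\subset X\cup\{0,1\}$. Combining this with $\mu(\{0,1\})=0$ from Theorem~\ref{thm:stationary} yields $\mu(X)=\mu(\bar X)=1$. Finally, if in addition $\Leb(X)=0$, then $\mu$ is concentrated on a Lebesgue-null set and so is mutually singular with $\Leb$.

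There is no serious obstacle in this argument; the only mildly technical point is verifying membership $\delta_{x_0}\in\DD_{c,\alpha,\delta}$, which just requires shrinking $\delta$ so that the boundary windows $[0,\delta]$ and $[1-\delta,1]$ miss $x_0$. Everything else is a direct consequence of the forward invariance $f(X)\subset X$ combined with stability of the unique stationary measure under Cesaro averaging.
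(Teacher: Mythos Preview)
Your proof is correct and follows essentially the same approach as the paper: pick a Dirac mass at a point of $X$, verify it lies in $\DD_{c,\alpha,\delta}$, use forward invariance to see that all Cesaro averages of $\TT^n\delta_{x_0}$ are supported on $X$, and pass to the weak-$*$ limit via Remark~\ref{rem:stability}. Your handling of the closure is in fact slightly more careful than the paper's, which writes $\overline X = X\cup\{0,1\}$ where you correctly write $\overline X\subset X\cup\{0,1\}$.
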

\begin{proof}
Take $x \in X$. Since $x \in (0,1)$, the Dirac measure $\delta_x$ at $x$ is in $\DD_{c, \alpha, \delta}$ for sufficiently small $\delta > 0$, so by Remark \ref{rem:stability} we have
\[
\frac{1}{N} \sum \limits_{n=0}^{N-1} \TT^n \delta_x \to \mu \quad \text{as } N \to \infty \quad \text{in weak-* topology}.
\]
Since
\[
\TT^n \delta_x = \sum_{i_1, \ldots, i_n \in \{1,\ldots, m\}} p_{i_1} \cdots p_{i_n} \delta_{f_{i_n} \circ \cdots \circ f_{i_1}(x)},
\]
the measures $\frac{1}{N} \sum \limits_{n=0}^{N-1} \TT^n \delta_x$ have topological support in $\overline{\bigcup \limits_{n=0}^{N-1}f^{n}(\{x\})}$, which is contained in $\overline X$, as $f(X) \subset X$. Since $\overline{X} = X \cup \{0, 1\}$ and $\mu(\{0,1\}) = 0$, we have $\supp \mu \subset X \cup \{0, 1\}$ and $\mu(X) = 1$. 
\end{proof}

\section{Preliminary results and proof of Theorem~\ref{thm:Leb}}\label{sec:other_results}

In this section we prove Theorem~\ref{thm:Leb} together with other preliminary results on the AM-systems. We begin with the following observation.

\begin{lem}\label{lem:x_1} Let $\{f_-, f_+\}$ be an AM-system. If the Lyapunov exponents $\Lambda(0), \Lambda(1)$ are positive for the probabilities $p_- = p_+ = 1/2$, then $x_+ < x_-$. In particular, $x_+ < x_-$ holds if the system is symmetric and exhibits a $(k:l)$-resonance for $k,l \in \N$, $k > l$.
\end{lem}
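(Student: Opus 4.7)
The plan is to reduce the inequality $x_+ < x_-$ to a symmetric comparison of the slopes that then falls out of the Lyapunov positivity assumption. First I would insert the explicit formulas from Definition~\ref{defn:AM}. Since $b_- > 1 > a_-$ and $b_+ > 1 > a_+$, both denominators $b_- - a_-$ and $b_+ - a_+$ are positive, so cross-multiplying the inequality
\[
\frac{1-a_+}{b_+ - a_+} \;<\; \frac{b_- - 1}{b_- - a_-}
\]
and expanding (and collecting symmetric terms) reduces the claim to
\[
(b_- + b_+) - (a_- + a_+) + a_- a_+ \;<\; b_- b_+.
\]
Adding and subtracting $1$ on each side rewrites this in the clean factored form
\[
(1-a_-)(1-a_+) \;<\; (b_- - 1)(b_+ - 1),
\]
which will be the inequality I actually verify.

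Next, I would unpack what the hypothesis on the Lyapunov exponents gives. For $p_-=p_+=1/2$ the definition yields
\[
\Lambda(0) = \tfrac{1}{2}\ln(a_- b_+), \qquad \Lambda(1) = \tfrac{1}{2}\ln(a_+ b_-),
\]
so positivity of both is equivalent to $b_+ > 1/a_-$ and $b_- > 1/a_+$. These imply
\[
b_+ - 1 > \frac{1-a_-}{a_-}, \qquad b_- - 1 > \frac{1-a_+}{a_+},
\]
and multiplying the two (all quantities positive) together with $a_-a_+ < 1$ gives
\[
(b_- - 1)(b_+ - 1) \;>\; \frac{(1-a_-)(1-a_+)}{a_- a_+} \;>\; (1-a_-)(1-a_+),
\]
which is exactly the reduced inequality. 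So the general statement holds.

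For the last sentence I would note that in the symmetric case, $a_-=a_+=:a$ and $b_-=b_+=:b$, and a $(k:l)$-resonance at $0$ reads $a^k b^l = 1$; writing $a = \rho^l$, $b = \rho^{-k}$ with $\rho \in (0,1)$ we obtain $ab = \rho^{\,l-k}$, so for $p_-=p_+=1/2$
\[
\Lambda(0) = \tfrac{1}{2}\ln(ab) = \tfrac{1}{2}(l-k)\ln\rho,
\]
which is positive as soon as $k>l$ (since both factors are negative), and similarly $\Lambda(1)>0$. The first part of the lemma then yields $x_+ < x_-$.

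There is no essential obstacle: the only substantive step is the algebraic identity collapsing $x_+ < x_-$ to a symmetric product inequality in the slopes, after which the Lyapunov hypothesis closes everything via an elementary arithmetic-mean type bound.
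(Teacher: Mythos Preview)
Your proposal is correct and follows essentially the same route as the paper: both reduce $x_+<x_-$ to the factored inequality $(1-a_-)(1-a_+)<(b_--1)(b_+-1)$, derive $b_+>1/a_-$ and $b_->1/a_+$ from the Lyapunov positivity at $p_-=p_+=1/2$, and multiply these bounds to conclude; the symmetric resonant case is handled identically by computing $\Lambda(0)=\Lambda(1)=\tfrac12(l-k)\ln\rho>0$ for $k>l$.
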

\begin{proof}
The inequality $x_+ < x_-$ can be written as
\[
\frac{1 - a_+}{b_+ - a_+} < \frac{b_- - 1}{b_- - a_-},
\]
which is equivalent to
\begin{equation}\label{eq:x<x}
(1-a_-)(1-a_+) < (b_--1)(b_+-1).
\end{equation}
By the positivity of the Lyapunov exponents for $p_- = p_+ = 1/2$, 
\[
b_- > \frac{1}{a_+}, \qquad b_+ > \frac{1}{a_-},
\]
so
\[
(b_--1)(b_+-1) > \left(\frac{1}{a_+} - 1\right) \left(\frac{1}{a_-} - 1\right) = \frac{(1-a_-)(1-a_+)}{a_-a_+} > (1-a_-)(1-a_+),
\]
which gives \eqref{eq:x<x}. As already noted, if the system is symmetric and exhibits a $(k:l)$-resonance for $k > l$, then the assumption on the positivity of the Lyapunov exponents for $p_- = p_+ = 1/2$ is satisfied. Indeed, in this case we have $a_- = a_+ = a \in (0,1)$ and $b_- = b_+  = a^{-k/l}$, so 
\[
\frac{1}{2}\ln f'_-(x) + \frac{1}{2}\ln f'_+(x)) =  \frac{1-k/l}{2}\ln a > 0.
\]
for $x = 0, 1$.
\end{proof}

The following lemma is used in the proof of Theorem~\ref{thm:Leb}.

\begin{lem}\label{lem:hor} If an AM-system $\{f_-, f_+\}$ with probabilities $p_-, p_+$ is of border type and $\frac{p_-}{a_-} + \frac{p_+}{b_+} = 1$, then $\frac{p_-}{b_-} + \frac{p_+}{a_+} = 1$. Conversely, if 
\[
\frac{p_-}{a_-} + \frac{p_+}{b_+} = \frac{p_-}{b_-} + \frac{p_+}{a_+} = 1, 
\]
then the AM-system $\{f_-, f_+\}$ with probabilities $p_-, p_+$ is of border type.
\end{lem}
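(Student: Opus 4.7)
The plan is to reduce both conditions---border type and the pair of equalities (*) $\frac{p_-}{a_-} + \frac{p_+}{b_+} = 1$ and (**) $\frac{p_-}{b_-} + \frac{p_+}{a_+} = 1$---to a single algebraic identity among the slopes $a_\pm, b_\pm$, and then to read off the two implications from that common form.

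First I would compute the endpoints of the central gap explicitly. Substituting $x_- = (b_- - 1)/(b_- - a_-)$ gives
\[
f_-(x_-) = a_- x_- = \frac{a_-(b_- - 1)}{b_- - a_-},
\]
and similarly $f_+(x_+) = b_+(1-a_+)/(b_+ - a_+)$. Equating these and cross-multiplying, the border condition $f_-(x_-) = f_+(x_+)$ rearranges (after expansion and cancellation) to the symmetric identity
\[
b_-b_+(a_- + a_+ - 1) = a_-a_+(b_- + b_+ - 1). \qquad (\star)
\]

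Next I would solve each of (*) and (**) for $p_-$ using $p_+ = 1 - p_-$. Equation (*) rewrites as $p_-(b_+ - a_-) = a_-(b_+ - 1)$, hence
\[
p_- = \frac{a_-(b_+ - 1)}{b_+ - a_-},
\]
and this value lies in $(0,1)$ thanks to $0<a_-<1<b_+$. Analogously (**) becomes $p_-(a_+ - b_-) = b_-(a_+ - 1)$, giving
\[
p_- = \frac{b_-(1 - a_+)}{b_- - a_+}.
\]
Both equations hold for the same probability vector $(p_-, p_+)$ if and only if these two expressions for $p_-$ coincide; cross-multiplying this equality and expanding, the resulting identity reduces (after the same cancellations as before) to exactly $(\star)$.

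Both assertions of the lemma now drop out. For the first, assume the border condition (equivalently $(\star)$) and that (*) holds; then the unique $p_-$ given by (*) also satisfies (**) by the equivalence just proved, so (**) holds. For the converse, if both (*) and (**) hold, then their respective formulas for $p_-$ must agree, which forces $(\star)$ and hence the border condition. The only real work is the algebraic verification that the cross-multiplied forms of ``$f_-(x_-) = f_+(x_+)$'' and ``the two formulas for $p_-$ agree'' both collapse to $(\star)$; this is the minor bookkeeping step but presents no conceptual difficulty.
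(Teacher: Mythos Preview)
Your proof is correct and follows essentially the same approach as the paper's: both solve each of the equalities $(\ast)$ and $(\ast\ast)$ for $p_-$ and observe that the two resulting expressions for $p_-$ coincide precisely when the border identity $(\star)$ holds. The paper phrases the intermediate steps in terms of the reciprocals $1/a_\pm,\,1/b_\pm$ (writing $(\star)$ as $\frac{a_-+a_+-1}{a_-a_+}=\frac{b_-+b_+-1}{b_-b_+}$ and $p_-=\frac{1-1/b_+}{1/a_--1/b_+}$), but this is cosmetically different only; your formulas are obtained from theirs by clearing denominators.
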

\begin{proof}
An elementary calculation shows that the system is of border type if and only if
\[
\frac{a_-+a_+-1}{a_- a_+} = \frac{b_-+b_+-1}{b_- b_+},
\]
which is equivalent to
\begin{equation}\label{eq:hor}
\frac{1 - 1/b_+}{1/a_- - 1/b_+} = \frac{1 - 1/a_+}{1/b_- - 1/a_+}. 
\end{equation}

Suppose that the system is of border type and 
\[
\frac{p_-}{a_-} + \frac{p_+}{b_+} = 1.
\]
Then 
\[
p_- = \frac{1 - 1/b_+}{1/a_- - 1/b_+},
\]
so by \eqref{eq:hor},
\[
p_- = \frac{1 - 1/a_+}{1/b_- - 1/a_+},
\]
which gives
\[
\frac{p_-}{b_-} + \frac{p_+}{a_+} = 1.
\]

Conversely, suppose 
\[
\frac{p_-}{a_-} + \frac{p_+}{b_+} =  \frac{p_-}{b_-} + \frac{p_+}{a_+} = 1. 
\]
Then
\[
p_- = \frac{1 - 1/b_+}{1/a_- - 1/b_+} = \frac{1 - 1/a_+}{1/b_- - 1/a_+},
\]
which gives \eqref{eq:hor}.
\end{proof}

The following proposition, which gives the first part of Proposition~\ref{prop:min,supp}, is essentially proved in \cite[Lemma 3]{ilyashenko10} and \cite[Proposition~2.1]{homburg} (formally, in the case of diffeomorphisms). For completeness, we present the proof suited to our setup.

\begin{prop}\label{prop:minimal} If an AM-system $\{f_-, f_+\}$ has no resonance at one of the endpoints $0, 1$, then it is minimal in $(0,1)$.
\end{prop}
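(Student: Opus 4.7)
The plan is to adapt the classical strategy of \cite{ilyashenko10,homburg} to the piecewise-affine setting. First, observe that the symmetry $x\mapsto 1-x$ conjugates an AM-system with no resonance at $1$ to one with no resonance at $0$ (after swapping the roles of $f_-$ and $f_+$), and minimality is preserved by conjugacy, so I would assume without loss of generality that there is no resonance at $0$. Writing $\alpha = -\ln a_-$ and $\beta = \ln b_+$, this means $\alpha,\beta > 0$ and $\alpha/\beta\notin\Q$. Fix an arbitrary $x_0\in(0,1)$ and let $C$ be the closure of its forward orbit under $\{f_-,f_+\}$; then $C$ is closed, forward invariant, and contains $0$, since $f_-^n(x_0)$ decreases monotonically to $0$. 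The goal is $C = [0,1]$.

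The heart of the argument is to prove $[0,m^*]\subset C$, where $m^* := \min(x_-, x_+)$. On $[0,m^*]$ the maps $f_-$ and $f_+$ act linearly, as multiplication by $a_-$ and $b_+$ respectively. Choose $n$ large enough that $y_n := f_-^n(x_0) \in (0,m^*)$ and set
\[
K_n := \Big\lfloor\frac{\ln m^* - \ln y_n}{\beta}\Big\rfloor,
\]
which tends to $\infty$ as $n\to\infty$. For every $m\ge 0$ and $0\le k\le K_n$, each intermediate point $b_+^j y_n$ with $0\le j\le k$ lies in $[0,m^*]$, so the composition stays in the linear region and
\[
f_-^m \circ f_+^k(y_n) = a_-^m b_+^k y_n \in C,
\]
with logarithm $-\ln y_n + m\alpha - k\beta$. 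Since $\alpha/\beta$ is irrational, Weyl equidistribution makes $\{-k\beta \bmod \alpha : 0\le k\le K_n\}$ arbitrarily dense in $[0,\alpha)$ once $K_n$ is large. Combined with the freedom to pick $m\ge 0$ (unbounded from above), this lets me approximate any target value in $[-\ln m^*,\infty)$: first select $k$ to fix the residue class modulo $\alpha$, then choose $m\ge 0$ to finish; the requirement $m\ge 0$ is met because the integer $\lfloor k\beta/\alpha\rfloor$ is large for large $k$. Letting $n\to\infty$, the points $a_-^m b_+^k y_n$ become dense in $(0,m^*]$, and closedness yields $[0,m^*]\subset C$.

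To conclude, I would propagate density to all of $[0,1]$ by forward invariance. The sequence $z_j := f_+^j(m^*)$ is strictly increasing (because $f_+(x)>x$ on $(0,1)$), bounded, hence converges to a fixed point of $f_+$ in $[m^*,1]$, which can only be $1$. Therefore
\[
[0,z_j] = f_+^j([0,m^*]) \subset C \quad \text{for every } j\ge 0,
\]
giving $[0,1)\subset C$; and $1\in C$ because $f_+^n(x_0)\to 1$. Since $x_0\in(0,1)$ was arbitrary, this establishes minimality in $(0,1)$.

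The main obstacle is the combinatorial accounting in the middle paragraph: the number of admissible exponents $k$ is capped by $K_n$ to ensure the composition stays linear, while the standard irrationality argument naïvely wants $k$ unconstrained. The saving point is that $K_n\to\infty$, but one must verify that $k$ can be chosen simultaneously large enough to equidistribute modulo $\alpha$ and small enough to respect the linearity constraint, while keeping the compensating $m\ge 0$. The reduction to density near $0$ (via $f_-^n$) and the spreading step (via $f_+^j$) are routine consequences of the AM geometry.
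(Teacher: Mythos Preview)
Your proposal is correct and follows essentially the same strategy as the paper: exploit the irrationality of $\ln b_+/\ln a_-$ to make compositions $f_+^k\circ f_-^l$ dense in a neighbourhood of $0$ where both maps are linear, then propagate to all of $(0,1)$ using forward iterates. The paper organises the reduction slightly differently (it fixes a single fundamental interval $K=[f_+^{n_0}(x_0),f_+^{n_0+1}(x_0))\subset(0,x_+)$ and shows orbit-density within $K$), but the core mechanism is identical.

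One remark on the obstacle you flag: the extra $f_-^m$ at the end is redundant and is precisely what makes your accounting awkward. Since $y_n=f_-^n(x_0)$ already contributes a factor $a_-^n$, the point $f_-^m\circ f_+^k(y_n)$ has logarithm $-(m+n)\alpha+k\beta+C$, so $m$ and $n$ play the same role. Taking $m=0$ and letting $n$ vary is optimal: the constraint $k\le K_n$ then reads $-n\alpha+k\beta+C\le\ln m^*$, which is automatic when the point approximates a target $z\le m^*$, while the constraint $k\ge 0$ is met by choosing $n$ large via the standard Weyl argument. This is exactly what the paper does (with $l$ in place of your $n$), and it dissolves the worry about simultaneously bounding $k$ above and below.
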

\begin{proof} To fix notation, assume that the system has no resonance at $0$ (in the other case the proof is analogous). Choose $x_0 \in (0,1)$. Since both families of intervals $[f^{n+1}_-(x_0), f^n_-(x_0))$, $n \in \Z$, and $[f^n_+(x_0), f^{n+1}_+(x_0))$, $n \in \Z$, cover $(0,1)$, it is sufficient to prove that for every $x,y \in K$, where 
\[
K = [f^{n_0}_+(x_0), f^{n_0+1}_+(x_0))
\]
with some chosen $n_0 \in \Z$ and every $\varepsilon > 0$ there exist $n \in \N$ and $i_1, \ldots, i_n \in \{-,+\}$ such that 
\begin{equation}\label{eq:<eps}
f_{i_n} \circ \cdots \circ f_{i_1}(x) \in K \qquad \text{and} \qquad |f_{i_n} \circ \cdots \circ f_{i_1}(x) - y| < \varepsilon. 
\end{equation}
To show \eqref{eq:<eps}, we choose $n_0$ so that $K \subset (0, x_+)$ and let
\[
\alpha = -\frac{\ln a_-}{\ln b_+},
\]
Since we assume that $\{f_-, f_+\}$ has no resonance at $0$, we have $\alpha \in \R^+ \setminus \Q$. Hence, for any $y \in K$ and $\delta > 0$ we can find $k, l \in \N$ such that 
\begin{equation}\label{eq:<delta}
0 < k - \alpha l  - \frac{\ln(y/x)}{\ln b_+} < \delta.
\end{equation}
As
\[
b_+^k a_-^l x = e^{(k - \alpha l) \ln b_+ + \ln x} = y b_+^{k - \alpha l -\ln(y/ x)/\ln b_+},
\]
\eqref{eq:<delta} implies
\[
y < b_+^k a_-^lx < y b_+^\delta = y + y(b_+^\delta - 1) < y + b_+^\delta - 1 < y + \min(\varepsilon, \sup K - y),
\]
if $\delta$ is chosen sufficiently small. In particular,
\begin{equation}\label{eq:ab}
b_+^k a_-^l x \in K \qquad \text{and} \qquad |b_+^k a_-^l x  - y| < \varepsilon.
\end{equation}
Since $x \in K \subset (0, x_+)$, we have $f_-^l(x) = a_-^l x$. Moreover, \eqref{eq:ab} implies $b_+^j a_-^l x \in (0, x_+)$ for $ j = 0, \ldots, k$, which gives $f_+^k(f_-^l(x)) = b_+^k a_-^l x$. This together with \eqref{eq:ab} shows \eqref{eq:<eps} and ends the proof.
\end{proof}

Assume now that an AM-system $\{f_-, f_+\}$ with probabilities $p_-, p_+$ has positive Lyapunov exponents, which is equivalent to 
\[
a_-^{p_-} b_+^{p_+} > 1, \qquad b_-^{p_-} a_+^{p_+} > 1.
\]
Then, by Theorem~\ref{thm:stationary}, there exists a unique probability stationary measure $\mu$ for the system, such that $\mu(\{0, 1\}) = 0$. By Propositions~\ref{prop:either} and~\ref{prop:nonatom}, we have the following.

\begin{prop}\label{prop:AM-either}
The stationary measure $\mu$ is non-atomic. Moreover, it is either absolutely continuous or singular with respect to the Lebesgue measure.
\end{prop}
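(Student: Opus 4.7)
The plan is to derive Proposition~\ref{prop:AM-either} by reducing it to the two general propositions in Section~\ref{sec:prelim}, namely Propositions~\ref{prop:either} and~\ref{prop:nonatom}, both stated for an arbitrary system of piecewise $C^1$ increasing homeomorphisms of $[0,1]$ fixing the endpoints and having no fixed points in $(0,1)$. So the first step is simply to verify that an AM-system $\{f_-, f_+\}$ fits that abstract framework: by Definition~\ref{defn:AM} the maps are piecewise affine increasing homeomorphisms with $f_\pm(0)=0$, $f_\pm(1)=1$, and the condition $f_-(x) < x < f_+(x)$ for $x \in (0,1)$ (imposed from the outset in Section~\ref{sec:intro}) says that neither map has an interior fixed point. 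The existence and uniqueness of the stationary $\mu$ with $\mu(\{0,1\})=0$ is then exactly Theorem~\ref{thm:stationary}, since $\Lambda(0), \Lambda(1) > 0$ by hypothesis.

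Non-atomicity is then an immediate application of Proposition~\ref{prop:nonatom}: the argument given there uses only uniqueness of $\mu$, the fact that $\mu$ has no atoms at the endpoints, and the absence of interior fixed points of the generators — all three conditions just verified for our AM-system.

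For the dichotomy, I would invoke Proposition~\ref{prop:either}. The one thing to check is that the transfer operator $\TT = p_-(f_-)_* + p_+(f_+)_*$ preserves both absolute continuity and singularity with respect to $\Leb$. Because $f_-, f_+$ are piecewise affine homeomorphisms of $[0,1]$, both $f_\pm$ and $f_\pm^{-1}$ are bi-Lipschitz, so they send $\Leb$-null sets to $\Leb$-null sets. Hence each pushforward $(f_\pm)_* \nu$ is absolutely continuous whenever $\nu$ is, and singular whenever $\nu$ is; a convex combination of two absolutely continuous (respectively, singular) measures is again absolutely continuous (respectively, singular), so $\TT$ preserves both classes. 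Combined with the uniqueness assertion of Theorem~\ref{thm:stationary}, Proposition~\ref{prop:either} then yields the dichotomy for $\mu$.

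No step here is really an obstacle: the proposition is a direct specialisation of the general results of Section~\ref{sec:prelim}, the only genuine content being the routine verification that an AM-system fulfils the regularity hypotheses of the abstract setup.
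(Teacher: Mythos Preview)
Your proposal is correct and matches the paper's approach exactly: the paper simply invokes Propositions~\ref{prop:either} and~\ref{prop:nonatom} (together with Theorem~\ref{thm:stationary}) without further argument, and your verification that the AM-system satisfies the standing hypotheses of Section~\ref{sec:prelim} fills in the only missing routine check.
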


Propositions~\ref{prop:min->supp} and~\ref{prop:minimal} imply the following corollary, which completes the proof of Proposition~\ref{prop:min,supp}.

\begin{cor}\label{cor:min-supp} If the system has no resonance at one of the endpoints $0, 1$, then $\supp \mu = [0, 1]$.
\end{cor}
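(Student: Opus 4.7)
The plan is to combine Propositions~\ref{prop:minimal} and~\ref{prop:min->supp}, both of which have already been established. Since the AM-system has no resonance at one of the endpoints, Proposition~\ref{prop:minimal} tells us that $\{f_-, f_+\}$ is minimal on $(0,1)$. By Theorem~\ref{thm:stationary}, the stationary measure $\mu$ satisfies $\mu(\{0,1\}) = 0$, and hence $\mu((0,1)) = 1$, so in particular $\supp\mu \cap (0,1)$ is nonempty.

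Next I would record the standard observation that $\supp\mu$ is forward invariant under $\{f_-, f_+\}$: if $x \in \supp\mu$ and $V$ is any open neighbourhood of $f_i(x)$ in $[0,1]$, then $f_i^{-1}(V)$ is an open neighbourhood of $x$, so $\mu(f_i^{-1}(V)) > 0$, and stationarity gives
\[
\mu(V) \;\geq\; p_i\,\mu(f_i^{-1}(V)) \;>\; 0,
\]
so $f_i(x) \in \supp\mu$. Since each $f_i$ maps $(0,1)$ into itself, the set $S := \supp\mu \cap (0,1)$ is a nonempty subset of $(0,1)$, relatively closed in $(0,1)$, and satisfies $f_\pm(S) \subset S$.

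Finally, pick any $x_0 \in S$. Minimality from Proposition~\ref{prop:minimal} says that the forward semigroup orbit of $x_0$ is dense in $(0,1)$, while forward invariance of $S$ confines this orbit to $S$. Since $S$ is relatively closed in $(0,1)$, we conclude $S = (0,1)$; taking closures in $[0,1]$ then yields $\supp\mu \supset [0,1]$, hence $\supp\mu = [0,1]$. Alternatively, one can phrase the last two paragraphs as a direct application of Proposition~\ref{prop:min->supp} with $X$ taken to be the relative closure of $\supp\mu \cap (0,1)$ in $(0,1)$, which is forward invariant, minimal under $\{f_-, f_+\}$ by what was just shown, and contains $\supp\mu$ up to the negligible set $\{0,1\}$.

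There is really no obstacle here: the corollary is an assembly step, and the only care required is the passage between the open interval $(0,1)$, where minimality lives, and the compact interval $[0,1]$, where the closed topological support lives.
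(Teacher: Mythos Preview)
Your proof is correct and follows the same approach as the paper, which simply cites Propositions~\ref{prop:min->supp} and~\ref{prop:minimal}; you have additionally spelled out the forward invariance of $\supp\mu$ and the passage between $(0,1)$ and $[0,1]$, which the paper leaves implicit. The ``alternative'' phrasing at the end is slightly circular (you would need minimality on $X$, not just on $(0,1)$, to invoke Proposition~\ref{prop:min->supp} as stated), but your main argument already does the job cleanly.
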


We end the section by proving Theorem~\ref{thm:Leb}.

\begin{proof}[Proof of Theorem~\rm \ref{thm:Leb}] The transfer operator $T$ on $L^1([0,1], \Leb)$ has the form
\[
Tg = p_- \, (f_-^{-1})' \, g \circ f_-^{-1}  + p_+ \, (f_+^{-1})' \, g \circ f_+^{-1},
\]
The measure $\mu$ is the Lebesgue measure if and only if
\begin{equation}\label{eq:T(1)}
T{\mathds 1} = {\mathds 1}
\end{equation}
for the constant unity function ${\mathds 1}$. 
If the system is of border type, then
\[
T{\mathds 1}(x) =
\begin{cases}
\frac{p_-}{a_-} + \frac{p_+}{b_+}& \text{for } x \le f_-(x_-)\\
\frac{p_-}{b_-} + \frac{p_+}{a_+}& \text{for } x >f_-(x_-)
\end{cases},
\]
so \eqref{eq:T(1)} is equivalent to 
\begin{equation}\label{eq:frac}
\frac{p_-}{a_-} + \frac{p_+}{b_+} = \frac{p_-}{b_-} + \frac{p_+}{a_+} = 1. 
\end{equation}
Conversely, if \eqref{eq:T(1)} holds, then applying it to points $x \in [0,1]$ close to the endpoints of $[0, 1]$ we get \eqref{eq:frac}. To end the proof, it is enough to use Lemma~\ref{lem:hor}.
\end{proof}

\begin{rem} As noted in the introduction, for the case $a_- = a_+ = a$, $b_- = b_+ = b$, $1/a + 1/b = 2$, $p_- = p_- = 1/2$, Theorem~\ref{thm:Leb} was proved in \cite[Theorem~6.1]{alseda-misiurewicz}.
\end{rem}

\section{Proofs of Theorems~\ref{thm:(k:l)} (case $l = 1$) and~\ref{thm:(k:1)}.}
\label{sec:l=1}

In Theorems~\ref{thm:(k:l)} and~\ref{thm:(k:1)} we consider a symmetric AM-system of disjoint type $\{f_-, f_+\}$ with probabilities $p_-, p_+$, positive Lyapunov exponents and a $(k:l)$-resonance for some relatively prime $k,l\in\N$, $k > l$. 
In this section we prove the results in the case $l=1$. The proof is divided into several parts concerning consecutive assertions of the theorems. 

\subsection*{Preliminaries}
By assumption, $a_-=a_+=\rho$, $b_-=b_+=\rho^{-k}$, so the maps have the form
\[
f_-(x)=
\begin{cases}
\rho x &\text{for }x\in[0,x_-]\\
\II(\rho^{-k}\II(x)) &\text{for }x\in (x_-, 1]
\end{cases}, \qquad
f_+(x)=
\begin{cases}
\rho^{-k} x &\text{for }x\in[0,x_+]\\
\II(\rho\II(x)) &\text{for }x\in (x_+, 1]
\end{cases},
\]
where $\rho \in (0,1)$ and
\begin{alignat*}{2}
x_- &= \frac{1 - \rho^k}{1 - \rho^{k+1}}, &\qquad x_+ &= \II(x_-) = \frac{\rho^k - \rho^{k+1}}{1 - \rho^{k+1}},\\
f_-(x_-) &= \frac{\rho - \rho^{k+1}}{1 - \rho^{k+1}}, &\qquad f_+(x_+) &= \II(f_-(x_-)) = \frac{1 - \rho}{1 - \rho^{k+1}}.
\end{alignat*}
Note that $x_+ < x_-$ (see Lemma~\ref{lem:x_1}) and $x_+ < f_-(x_-)$.
The assumption that the system is of disjoint type, i.e.~the condition $f_-(x_-) < f_+(x_+)$, is equivalent to 
\begin{equation}\label{eq:desc}
\rho^{k+1} - 2\rho +1 > 0
\end{equation}
and also to $\rho x_-<\frac 1 2$. 
For the function $h(\rho) = \rho^{k+1} - 2\rho +1$, $\rho \ge 0$ we have  $h(1/2) > 0$, $h(1) = 0$, $h'(\rho) < 0$ for $\rho < \rho_0$ and $h'(\rho) > 0$ for $\rho > \rho_0$, where $\rho_0 = (2/(k+1))^{1/k}\in (1/2, 1)$. This implies that $h$ on $(0, 1)$ has a unique zero $\eta \in (1/2, 1)$, i.e. 
\[
\eta^{k+1} - 2 \eta + 1 = 0
\]
and the condition $\rho < \eta$ is equivalent to \eqref{eq:desc} (this shows Remark~\ref{rem:l=1} in the case $l=1$).

Since the system is symmetric, in fact we have
\begin{equation}\label{eq:1/2}
x_+ < f_-(x_-) < \frac{1}{2} < f_+(x_+) < x_+.
\end{equation}

A simple computation shows that the condition of the positivity of the Lyapunov exponents is equivalent to
\begin{equation}\label{eq:p12}
p_-, p_+ \in \Big( \frac{1}{k+1}, \frac{k}{k+1}\Big).
\end{equation}
Note that the above considerations prove Remark~\ref{rem:(k:1)}.

\subsection*{Construction of the set \boldmath $\Lambda$}
Now we construct a set $\Lambda \in (0,1)$ which will be shown later to be the support of the measure $\mu$ in $(0,1)$. Our strategy is the following. First, we construct a family of disjoint closed intervals $I_j$, $j\in\Z^*$, with the union $I= \bigcup_{j \in \Z^*} I_j$ being forward-invariant under $\{f_-, f_+\}$. The disjointness of $I_j$ follows from the assumption that the system is of disjoint type. We check that the intervals $I_{-k}, \ldots , I_{-1}$ are mapped by $f_+$ into $I_{1}$ with separation gaps, i.e.~$f_+(I_{-k}), \ldots , f_+(I_{-1})$ are disjoint subsets of $I_{1}$ (see Lemma~\ref{lem:intervals} and Figure~\ref{fig:intervals}). Further iterates of these images and their similar copies generate an infinite collection of disjoint Cantor sets, whose union $\Lambda$ is fully invariant and minimal under the action of $\{ f_-, f_+\}$ (see Proposition \ref{prop:X}). As we wish to calculate the dimension of $\Lambda$, it is convenient to describe $\Lambda$ as the union of the attractor $\Lambda_{-1}$ of a self-similar iterated function system $\{ \phi_r\}_{r=1}^k$ on $I_{-1}$ and its similar copies. Moreover, as the successive levels of the Cantor set $\Lambda_{-1}$ are produced during jumps over the central interval, we obtain a characterization of $\Lambda$ in terms of limit points of trajectories jumping over the central interval infinitely many times (see Proposition \ref{prop:omega}).

Let
\[
I_{-1} = [\rho f_+(x_+), \rho x_-] = [\rho f_+(x_+), f_-(x_-)] = [\rho\II(f_-(x_-)), f_-(x_-)] =  
\left[ \frac{\rho-\rho^2}{1 - \rho^{k+1}}, \frac{\rho - \rho^{k+1}}{1 - \rho^{k+1}} \right]
\]
and for $j \in \Z^*$ define
\[
I_j = 
\begin{cases}
\rho^{-j-1} I_{-1} & \text{for } j < 0\\
\II(\rho^{j-1}I_{-1}) &\text{for } j > 0
\end{cases}.
\]
The following lemma is elementary and describes the combinatorics of the intervals $I_j, j \in \Z^*$.

\begin{lem}\label{lem:intervals} The following statements hold.

\begin{enumerate}[\rm (a)]
\item $I_{-j} = \II(I_j)$ for $j \in \Z^*$.
\item The sets $I_j$, $j \in \Z^*$ are pairwise disjoint and situated in $(0,1)$ in the increasing order with respect to $j$.
\item $\inf I_{-k} = x_+$, $\sup I_k = x_-$, $\sup I_{-1} = f_-(x_-)$, $\inf I_1 = f_+(x_+)$. In particular,
\[
f_-(x) = 
\begin{cases}
\rho x & \text{for } x \in \bigcup_{j = -\infty}^k I_j\\
\II(\rho^{-k} \II(x)) & \text{for } x \in \bigcup_{j = k+1}^\infty I_j
\end{cases},
\qquad 
f_+(x) = 
\begin{cases}
\rho^{-k} x & \text{for } x \in \bigcup_{j = -\infty}^{-k-1} I_j\\
\II(\rho \II(x)) & \text{for } x \in \bigcup_{j = -k}^\infty I_j
\end{cases}.
\]
\item $f_-(I_j) = I_{j-1}$ for $j \le -1$, $f_-(\conv(I_1 \cup \cdots \cup I_k)) = I_{-1}$, $f_-(I_j) = I_{j-k}$ for $j \ge k + 1$.
\item $f_+(I_j) = I_{j+k}$ for $j \le -k - 1$,
$f_+(\conv(I_{-k} \cup \cdots \cup I_{-1})) = I_1$,
$f_+(I_j) = I_{j+1}$ for $j \ge 1$.

\end{enumerate}
See Figure~{\rm \ref{fig:intervals}}.

\begin{figure}[ht!]
\begin{center}
\scalebox{0.97}{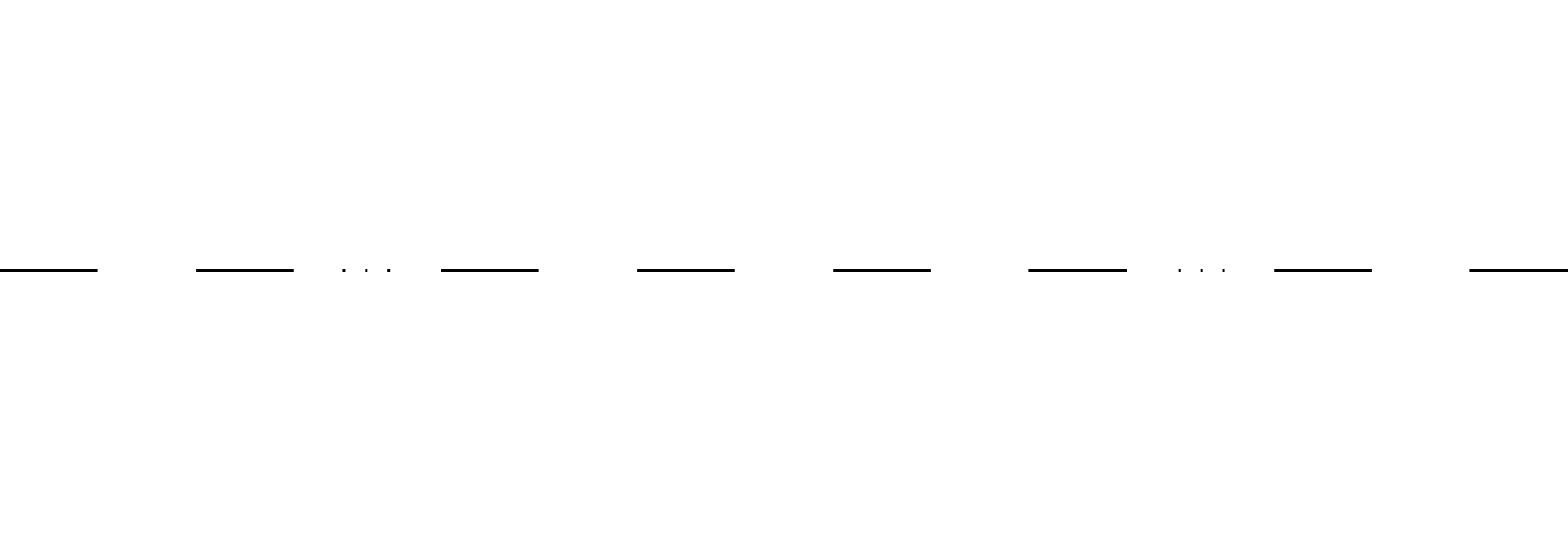}
\end{center}
\caption{A schematic view of the action of $\{f_-, f_+\}$ on the intervals $I_j$.}\label{fig:intervals}
\end{figure}

\end{lem}
\begin{proof} 
The assertion (a) follows directly from the definition of $I_j$. 
To show (b), we first check $\sup I_{-2}  < \inf I_{-1}$. This is equivalent to 
\[
\rho\frac{\rho - \rho^{k+1}}{1 - \rho^{k+1}} < \frac{\rho - \rho^2}{1 - \rho^{k+1}},
\]
which boils down to \eqref{eq:desc}. By \eqref{eq:1/2}, $\sup I_{-1} < \inf I_1$. 
The rest of the assertion (b) follows directly from the above facts and the definition of $I_j$. 

The assertions (c)--(e) are easy consequences of the definition of $I_j$, the symmetry of the system and the fact
\[
f_-^{-1}(x) = \rho^{-1} x, \qquad f_+^{-1}(x) = \rho^k x \qquad\text{for } x \in \bigcup_{j < 0} I_j,
\]
which follows from the definition of $f_\pm$.

\end{proof}

Let 
\[
I = \bigcup_{j\in\Z} I_j, \qquad I^- = \bigcup_{j < 0} I_j, \qquad I^+ = \bigcup_{j>0} I_j.
\]
Note that Lemma~\ref{lem:intervals} implies $f(I) \subset I$. More precisely, for every $i \in \{-,+\}$ and $j \in \Z^*$ we have 
\[
f_i(I_j) \subset I_{j'} \qquad\text{for some } j' = j'(i,j) \in \Z^*. 
\]

\begin{lem}\label{lem:inI} For every $x \in (0,1)$ there exists $i_1, \ldots, i_n \in \{-,+\}$, $n \ge 0$, such that $f_{i_n} \circ \cdots \circ f_{i_1}(x) \in I$. 
\end{lem}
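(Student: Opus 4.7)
My plan is to exploit the fact that $(0,1) \setminus I$ decomposes as a countable disjoint union of open ``gap'' intervals, and to show by case analysis that every gap reaches $I$ in a bounded number of iterates of $f_\pm$. Let $G_0 = (f_-(x_-), f_+(x_+))$ denote the central gap (between $I_{-1}$ and $I_1$), let $G_{-n} = (\rho^n f_-(x_-), \rho^n f_+(x_+))$ be the gap between $I_{-(n+1)}$ and $I_{-n}$ for $n \geq 1$, and let $G_n = \II(G_{-n})$ for $n \geq 1$ be the symmetric gaps on the right half of the interval. Then $(0,1) = I \cup \bigcup_{j \in \Z} G_j$, so the lemma reduces to showing that from every $G_j$ some finite composition of $f_\pm$ reaches $I$.

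The key technical step is to compute the action of $f_-$ on each $G_j$, using Lemma~\ref{lem:intervals} and the piecewise formulas for $f_\pm$. The outcomes fall into three regimes: (i) for $1 \leq j \leq k-1$ the gap $G_j$ lies inside $\conv(I_1 \cup \cdots \cup I_k)$, which by Lemma~\ref{lem:intervals}(d) is mapped onto $I_{-1}$, whence $f_-(G_j) \subset I_{-1}$; (ii) for $j \geq k$ the gap $G_j$ lies above $x_-$, in the expanding branch of $f_-$, and one obtains $f_-(G_k) = G_0$ and $f_-(G_{k+m}) = G_m$ for $m \geq 1$; (iii) for $j \leq 0$ one has $G_j \subset [0, x_-]$, where $f_-$ is multiplication by $\rho$, yielding $f_-(G_j) = G_{j-1}$. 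The symmetric rules for $f_+$ follow from the system symmetry $f_- = \II \circ f_+ \circ \II$.

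With these transition rules, the proof concludes as follows. If $x \in I$ we are done. Otherwise $x \in G_j$ for some $j \in \Z$. Assume $j > 0$ and write $j = kq + r$ with $0 \leq r < k$. Iterating $f_-$, each step decreases the index by $k$ via rule (ii), so after $q$ applications $x$ lies in $G_r$. If $r \neq 0$, one further $f_-$ sends $x$ into $I_{-1}$ by (i); if $r = 0$ then $x$ reaches $G_0$, from which the composition $f_+ \circ f_-$ sends it first into $G_{-1}$ (by (iii)) and then into $I_1$ (by the symmetric version of (i)). Gaps with $j < 0$ are treated symmetrically using $f_+$ in place of $f_-$, and the case $j = 0$ is handled by the same final two-step composition.

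The main obstacle is verifying rule (ii), and in particular the identity $f_-(G_k) = G_0$. Since $\sup I_k = x_-$, the gap $G_k = (x_-, \inf I_{k+1})$ lies in the expanding branch where $f_-(x) = 1 - \rho^{-k}(1-x)$. The computation reduces to checking that this branch sends $\inf I_{k+1} = 1 - \rho^k f_-(x_-)$ to $\inf I_1 = f_+(x_+)$, which in turn follows from the symmetry identity $1 - f_-(x_-) = f_+(x_+)$ together with the explicit expressions for the endpoints of $I_j$ provided by Lemma~\ref{lem:intervals} and the relation $\rho^{-k} x_+ = f_+(x_+)$.
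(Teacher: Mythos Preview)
Your proof is correct and follows essentially the same approach as the paper's. Both arguments enumerate the gaps of $(0,1)\setminus I$, use Lemma~\ref{lem:intervals} to determine how $f_\pm$ permute these gaps, and then compose a bounded number of maps to reach $I$; the paper's version is simply terser, handling only the case $j\le 0$ by symmetry and writing the required composition as the single formula $f_-\circ f_+^{\lfloor -j/k\rfloor+1}$ rather than spelling out the three transition regimes.
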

\begin{proof} Enumerate the components of $(0,1) \setminus I$ by $U_j$, $j \in \Z$, such that $U_j$ is the gap between $I_{j-1}$ and $I_j$ for $j < 0$, $U_0$ is the gap between $I_{-1}$ and $I_1$, and $U_j$ is the gap between $I_j$ and $I_{j+1}$ for $j > 0$. Take $x \in (0,1) \setminus I$. Since the system is symmetric, we can assume $x \in U_j$, $j \le 0$. Then to prove the lemma it is enough to notice that by Lemma~\ref{lem:intervals}, we have $f_-\circ f_+^{\lfloor \frac{-j}{k} \rfloor +1}(x) \in I_{-1}$.
\end{proof}

Consider the maps
\[
\phi_r\colon I_{-1} \to I_{-1}, \qquad \phi_r(x) = \rho - \rho^r x , \quad r = 1, \ldots, k.
\]
Note that
\begin{equation}\label{eq:phi-f}
\phi_r(x) = \rho\II(\rho^{r-1}x) = f_-(\II(\rho^{r-1}x)) = \II(f_+(\rho^{r-1}x))
\end{equation}
for $x \in I_{-1}$. Obviously, the maps $\phi_r$ are contracting similarities with $\|\phi_r'\| = \rho^r$. 

Let
\[
\Lambda_{-1} = \bigcap_{n = 1}^\infty \; \bigcup_{r_1, \ldots, r_n = 1}^k \phi_{r_1} \circ \cdots \circ \phi_{r_n} (I_{-1})
\]
be the limit set of the iterated function system generated by  $\{\phi_r\}_{r=1}^k$ on $I_{-1}$. Recall that it is the unique non-empty compact set in $I_{-1}$ satisfying
\[
\Lambda_{-1} = \bigcup_{r=1}^k \phi_r(\Lambda_{-1})
\]
(see e.g.~\cite[Theorem 9.1]{falconer}). For $j \in \Z^*$ define
\[
\Lambda_j = 
\begin{cases}
\rho^{-j-1} \Lambda_{-1} &\text{for } j < 0\\ 
\II(\rho^{j-1} \Lambda_{-1} ) &\text{for } j > 0 
\end{cases},
\qquad \Lambda = \bigcup_{j\in \Z^*} \Lambda_j.
\]
Obviously, $\Lambda_j$ are pairwise disjoint compact sets and $\Lambda_j \subset I_j$. Furthermore, for $n \ge 0$, $r_1, \ldots, r_n \in \{1, \ldots, k\}$ let
\[
I_{j; r_1, \ldots, r_n} = 
\begin{cases}
\rho^{-j-1}\phi_{r_1} \circ \cdots \circ \phi_{r_n} (I_{-1}) &\text{for } j < 0\\
\II(\rho^{j-1}\phi_{r_1} \circ \cdots \circ \phi_{r_n} (I_{-1})) &\text{for } j > 0
\end{cases},
\]
where for $n = 0$ we set $I_{j; r_1, \ldots, r_n} = I_j$, $\phi_{r_1} \circ \cdots \circ \phi_{r_n} = \id$.
Since $|\phi_r'| = \rho^r$, for every $j \in \Z^*$ and an infinite sequence $r_1, r_2, \ldots \in \{1, \ldots, k\}$ the segments $I_{j; r_1, \ldots, r_n}$, $n \ge 0$, form a nested sequence of sets, such that
\[
|I_{j; r_1, \ldots, r_n}| = \rho^{|j| - 1 + r_1 + \cdots + r_n} \le \rho^n \to 0 \text{ as } n \to \infty,
\]
so
\[
\bigcap_{n=1}^\infty I_{j; r_1, \ldots, r_n} = \{x_{j; r_1, r_2, \ldots}\}
\]
for a point $x_{j; r_1, r_2, \ldots} \in \Lambda$ and
\[
\Lambda = \bigcup_{j \in \Z^*} \bigcap_{n=1}^\infty \bigcup_{r_1, \ldots, r_n = 1}^k 
I_{j; r_1, \ldots, r_n} =
\{ x_{j; r_1, r_2, \ldots}: \: j \in \Z^*, r_1, r_2, \ldots \in \{1, \ldots, k\}\}.
\]

\subsection*{Description of trajectories}

Lemma~\ref{lem:intervals} and \eqref{eq:phi-f} imply immediately the following.

\begin{lem}\label{lem:f(I)} 
For $j \in \Z^*$, $r_1, r_2, \ldots \in \{1, \ldots, k\}$, $n \ge 0$,
\[
\begin{aligned}
f_-(I_{j; r_1, \ldots, r_n}) &= 
\begin{cases}
I_{j-1; r_1, \ldots, r_n} &\text{for } j < 0\\
I_{-1; j, r_1, \ldots, r_n} &\text{for } 1 \le j \le k\\
I_{j-k; r_1, \ldots, r_n} &\text{for } j > k
\end{cases},\\
f_+(I_{j; r_1, \ldots, r_n}) &= 
\begin{cases}
I_{j+k; r_1, \ldots, r_n} &\text{for } j < -k\\
I_{1; -j, r_1, \ldots, r_n} &\text{for } -k \le j \le -1\\
I_{j+1; r_1, \ldots, r_n} &\text{for } j > 0
\end{cases}
\end{aligned}
\]
and
\[
\begin{aligned}
f_-(x_{j; r_1, r_2, \ldots}) &= 
\begin{cases}
x_{j-1; r_1, r_2, \ldots} &\text{for } j < 0\\
x_{-1; j, r_1, r_2, \ldots} &\text{for } 1 \le j \le k\\
x_{j-k; r_1, r_2, \ldots} &\text{for } j > k
\end{cases},\\
f_+(x_{j; r_1, r_2, \ldots}) &= 
\begin{cases}
x_{j+k; r_1, r_2, \ldots} &\text{for } j < -k\\
x_{1; -j, r_1, r_2, \ldots} &\text{for } -k \le j \le -1\\
x_{j+1; r_1, r_2, \ldots} &\text{for } j > 0
\end{cases}.
\end{aligned}
\]
\end{lem}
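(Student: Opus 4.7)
The plan is to prove both parts of the lemma by a direct case-by-case computation that combines the explicit piecewise formulas for $f_\pm$ restricted to each interval $I_j$ (given by Lemma~\ref{lem:intervals}(c)) with the algebraic identity \eqref{eq:phi-f}, namely $\phi_r(z) = \rho\II(\rho^{r-1}z) = \II(f_+(\rho^{r-1}z))$. The statement for coded points $x_{j;r_1,r_2,\ldots}$ will follow at the end as an immediate corollary, since $\{x_{j;r_1,r_2,\ldots}\} = \bigcap_{n\ge 0} I_{j;r_1,\ldots,r_n}$ and the interval formulas map such nested sequences term-by-term to nested sequences of the same form. It is convenient to split the six subcases into two groups: \emph{shift} cases, where the acting map restricted to $I_j$ is a pure similarity that simply rescales the prefactor $\rho^{-j-1}$ or $\rho^{j-1}$; and \emph{jumping} cases ($1\le j \le k$ under $f_-$ and $-k\le j\le -1$ under $f_+$), where the map crosses the center of the interval and the factor $\phi_{|j|}$ is created on top of the word $r_1\ldots r_n$.

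For the shift cases on the left I will use $f_-(x)=\rho x$ on $I^-$ and $f_+(x)=\rho^{-k}x$ on $I_j$ for $j\le -k-1$ (both provided by Lemma~\ref{lem:intervals}(c)). Applying these to $I_{j;r_1,\ldots,r_n}=\rho^{-j-1}\phi_{r_1}\circ\cdots\circ\phi_{r_n}(I_{-1})$ absorbs the scalar into the prefactor, replacing $-j-1$ by $-(j-1)-1$ or $-(j+k)-1$ respectively, which is precisely the definition of $I_{j-1;r_1,\ldots,r_n}$ or $I_{j+k;r_1,\ldots,r_n}$. The symmetric right-side shift cases ($j\ge k+1$ under $f_-$, $j\ge 1$ under $f_+$) are handled identically after conjugation by $\II$, since on the right $f_-(x)=\II(\rho^{-k}\II(x))$, $f_+(x)=\II(\rho\II(x))$, and the intervals have the form $\II(\rho^{j-1}\phi_{r_1}\circ\cdots\circ\phi_{r_n}(I_{-1}))$.

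The substantive computation is in the jumping cases. For $1\le j \le k$ the interval $I_{j;r_1,\ldots,r_n}$ sits to the right of $1/2$ but, by Lemma~\ref{lem:intervals}(c), the relevant branch of $f_-$ is still $f_-(x)=\rho x$. A point of $I_{j;r_1,\ldots,r_n}$ has the form $x=\II(\rho^{j-1}y)$ with $y\in\phi_{r_1}\circ\cdots\circ\phi_{r_n}(I_{-1})$, so
\[
f_-(x) = \rho\II(\rho^{j-1}y) = \phi_j(y)
\]
by \eqref{eq:phi-f}; thus $f_-(I_{j;r_1,\ldots,r_n})=\phi_j\circ\phi_{r_1}\circ\cdots\circ\phi_{r_n}(I_{-1})=I_{-1;j,r_1,\ldots,r_n}$. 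Symmetrically, for $-k\le j\le -1$ I set $r=-j$, write $x=\rho^{r-1}y\in I_j$, and use the second equality in \eqref{eq:phi-f} to obtain
\[
f_+(x)=\II(\phi_{-j}(y)),
\]
so $f_+(I_{j;r_1,\ldots,r_n})=\II(\phi_{-j}\circ\phi_{r_1}\circ\cdots\circ\phi_{r_n}(I_{-1}))=I_{1;-j,r_1,\ldots,r_n}$, as claimed.

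Finally, the formulas for the points $x_{j;r_1,r_2,\ldots}$ follow by applying the interval identities to each finite truncation $I_{j;r_1,\ldots,r_n}$, passing to the nested intersection, and using continuity of $f_\pm$. I do not foresee a serious obstacle: the whole argument is bookkeeping. The only delicate point is to keep track consistently of the $\II$-twist that distinguishes the definitions of $I_j$ for $j<0$ and $j>0$ and to choose the correct branch of $f_\pm$ on each $I_j$, which is exactly what Lemma~\ref{lem:intervals}(c) and the identity \eqref{eq:phi-f} are designed to handle.
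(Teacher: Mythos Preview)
Your proof is correct and follows exactly the approach indicated in the paper, which simply states that the lemma follows immediately from Lemma~\ref{lem:intervals} and \eqref{eq:phi-f}; you have written out in detail precisely the case-by-case verification that those two ingredients encode. One minor remark: what you call ``the second equality in \eqref{eq:phi-f}'' is actually the last one, $\phi_r(x)=\II(f_+(\rho^{r-1}x))$, but this does not affect the argument.
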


The following lemmas characterize trajectories jumping over the central interval. The first one follows directly from Lemma~\ref{lem:intervals}.

\begin{lem}\label{lem:jump} The following statements hold.
\begin{itemize}
\item[(a)] If a trajectory $\{f_{i_n}\circ \cdots \circ f_{i_1}(x)\}_{n = 0}^\infty$, for $x \in (0,1)$, jumps over the central interval at the time $s$, for $s \ge 0$, then $f_{i_{s+1}} \circ \cdots \circ f_{i_1}(x) \in I_{-1} \cup I_1$.
\item[(b)] A trajectory $\{f_{i_n}\circ \cdots \circ f_{i_1}(x)\}_{n = 0}^\infty$, for $x \in I$, jumps over the central interval at the time $s$, for $s \ge 0$, if and only if
\begin{alignat*}{3}
&f_{i_s} \circ \cdots \circ f_{i_1}(x) \in \bigcup_{j=-k}^{-1} I_j, \qquad &i_{s+1} &= +, \qquad
&f_{i_{s+1}} \circ \cdots \circ f_{i_1}(x) &\in I_1\\
&\text{or}\\
&f_{i_s} \circ \cdots \circ f_{i_1}(x) \in \bigcup_{j=1}^{k} I_j, \qquad &i_{s+1} &= -,\qquad 
&f_{i_{s+1}} \circ \cdots \circ f_{i_1}(x) &\in I_{-1}.
\end{alignat*}
\end{itemize}
In particular, for given $j \in \Z^*$ and $i_1, i_2, \ldots \in \{-, +\}$, for all $x \in I_j$ the trajectories $\{f_{i_n}\circ \cdots \circ f_{i_1}(x)\}_{n = 0}^\infty$ jump over the central interval at the same times.
\end{lem}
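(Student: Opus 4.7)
My plan is to exploit the combinatorics established in Lemma~\ref{lem:intervals}. The key initial observation, from part~(c) of that lemma, is that the central interval $(f_-(x_-), f_+(x_+))$ coincides with the maximal gap between $I_{-1}$ and $I_1$ in $[0,1]$. Hence a jump at time $s$ means that $y := f_{i_s} \circ \cdots \circ f_{i_1}(x)$ and $z := f_{i_{s+1}}(y)$ lie in opposite components of $[0,1] \setminus (f_-(x_-), f_+(x_+))$.

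For part~(a), I would argue by symmetry and assume $y \in [0, f_-(x_-)]$ and $z \in [f_+(x_+), 1]$. Since $f_-$ is increasing with $f_-(u) < u$, the interval $[0, f_-(x_-)]$ is forward invariant under $f_-$, so producing a jump requires $i_{s+1} = +$. Monotonicity of $f_+$ together with $z \ge f_+(x_+)$ forces $y \ge x_+$, hence $y \in [x_+, f_-(x_-)]$. On this interval $f_+$ is affine of slope $\rho$, and a short computation using $I_1 = \II(I_{-1})$ together with the symmetry $\II \circ f_- = f_+ \circ \II$ shows that $f_+([x_+, f_-(x_-)]) = I_1$ exactly; in particular $z \in I_1$. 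The mirror case yields $z \in I_{-1}$.

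For part~(b), the backward implication is immediate: in each listed scenario $y$ and $z$ lie on opposite sides of the central interval. For the forward implication, let $x \in I$ and assume a jump occurs at time $s$; by part~(a) and symmetry assume $z \in I_1$, so $y$ lies to the left of the central interval. Since $I$ is forward invariant by Lemma~\ref{lem:intervals}(d)--(e), we have $y \in I_{j_s}$ for some $j_s < 0$. Inspecting the formulas for $f_\pm(I_j)$: applying $f_-$ yields $I_{j_s - 1}$, still with negative index; applying $f_+$ to $I_{j_s}$ with $j_s \le -k-1$ yields $I_{j_s + k}$ with $j_s + k \le -1$; only $f_+$ applied to $I_{j_s}$ with $-k \le j_s \le -1$ lands in $I_1$, which is exactly the first scenario in the statement.

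The ``in particular'' assertion is now essentially free: Lemma~\ref{lem:intervals}(d)--(e) shows that each $f_\pm$ sends an entire $I_j$ into a single $I_{j'}$, so the itinerary $(j_s)_{s \ge 0}$ of a trajectory starting in $I_j$ depends only on $j$ and on the symbolic sequence $(i_n)_n$, not on the starting point within $I_j$; and by~(b) the jump times are determined by this itinerary alone. The only non-bookkeeping step is the modest identification $f_+([x_+, f_-(x_-)]) = I_1$ in~(a), which uses the explicit endpoint $\II(\rho f_+(x_+)) = f_+(f_-(x_-))$ provided by the symmetry. Everything else follows directly from Lemma~\ref{lem:intervals}.
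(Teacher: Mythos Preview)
Your proof is correct and follows essentially the same route as the paper, which simply records that the lemma follows directly from Lemma~\ref{lem:intervals}. Your only redundancy is the separate endpoint computation $f_+([x_+, f_-(x_-)]) = I_1$: since $[x_+, f_-(x_-)] = \conv(I_{-k} \cup \cdots \cup I_{-1})$ by Lemma~\ref{lem:intervals}(c), this is already Lemma~\ref{lem:intervals}(e) verbatim, so no additional calculation via the symmetry is needed.
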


For $j, j' \in \Z^*$ such that $\sgn(j) = \sgn(j')$, define
\begin{align*}
F_{j,j'}\colon I_j \onto I_{j'}, \qquad F_{j, j'} =
\begin{cases}
f_-^{j-j'}|_{I_j} &\text{for } j < 0, \; j' \leq j\\
f_+^{\lceil (j'-j)/k\rceil} \circ f_-^{j-j'+k\lceil (j'-j)/k\rceil}|_{I_j} &\text{for } j < 0, \; j' > j\\
f_+^{j'-j}|_{I_j} &\text{for } j > 0, \; j' \geq j\\
f_-^{\lceil (j-j')/k\rceil} \circ f_+^{j'-j+k\lceil (j-j')/k\rceil}|_{I_j} &\text{for } j > 0, \; j' < j
\end{cases}.
\end{align*}
Note that $F_{j,j'} = f_{i_n}\circ \cdots \circ f_{i_1}|_{I_j}$ for some $i_1, \ldots, i_n \in \{-,+\}$, $n \ge 0$, and, by Lemma~\ref{lem:intervals},
\begin{equation}\label{eq:F-phi}
F_{j, j'}(x) =
\begin{cases}
\rho^{j-j'}x  &\text{for } j < 0\\
\II(\rho^{-j+j'}\II(x)) &\text{for } j > 0
\end{cases}
\end{equation}
for $x \in I_j$. In particular, this implies
\[
F_{j, j} = \id|_{I_j}, \qquad F_{j', j''}  \circ  F_{j, j'} = F_{j, j''}
\]
for $j, j', j'' \in \Z^*$ such that $\sgn(j) = \sgn(j') =\sgn(j'')$.
By Lemma~\ref{lem:f(I)},
\begin{equation}\label{eq:F}
F_{j, j'}(I_{j; r_1, \ldots, r_n}) = I_{j'; r_1, \ldots, r_n}, \qquad 
F_{j, j'}(x_{j; r_1, r_2, \ldots}) = x_{j'; r_1, r_2, \ldots}
\end{equation}
for $r_1, r_2, \ldots \in \{1, \ldots, k\}$, $n \ge 0$.

\begin{lem}\label{lem:F}
A trajectory $\{f_{i_n} \circ \cdots \circ f_{i_1}(x)\}_{n=0}^\infty$ of a point $x \in I_j$, $j \in \Z^*$, does not jump over the central interval at any time $0 \le s < n$, for some $n \ge 0$, if and only if
\[
f_{i_n} \circ \cdots \circ f_{i_1}|_{I_j} = F_{j, j'}
\]
for $j' \in \Z^*$ such that $f_{i_n} \circ \cdots \circ f_{i_1}(x) \in I_{j'}$ and $\sgn(j) = \sgn(j')$.
\end{lem}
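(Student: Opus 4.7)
The plan is to prove both directions by induction on $n$, using the combinatorial structure of the intervals $I_j$ from Lemma~\ref{lem:intervals} and the tree of refined intervals $I_{j; r_1, \ldots, r_n}$ from Lemma~\ref{lem:f(I)}.

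For the forward implication ($\Rightarrow$), the base case $n=0$ is immediate with $j'=j$, since by the first line of the definition, $F_{j,j}=\id|_{I_j}$. For the inductive step, suppose no jumps occur at any $0\le s<n+1$. In particular, no jumps occur at $0\le s<n$, so by the inductive hypothesis there is $j^*\in\Z^*$ with $\sgn(j^*)=\sgn(j)$ such that $f_{i_n}\circ\cdots\circ f_{i_1}|_{I_j}=F_{j,j^*}|_{I_j}$ and $f_{i_n}\circ\cdots\circ f_{i_1}(x)\in I_{j^*}$. The absence of a jump at time $s=n$, combined with Lemma~\ref{lem:jump}(b) and Lemma~\ref{lem:intervals}(d),(e), forces $f_{i_{n+1}}(I_{j^*})=I_{j^{**}}$ (as an affine bijection of intervals), with $j^{**}\in\{j^*-1,\,j^*-k,\,j^*+1,\,j^*+k\}$ according to the case, and $\sgn(j^{**})=\sgn(j^*)=\sgn(j)$. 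Setting $j'=j^{**}$, a short slope computation using \eqref{eq:F-phi} shows that both $f_{i_{n+1}}\circ F_{j,j^*}|_{I_j}$ and $F_{j,j'}|_{I_j}$ are the increasing affine bijection from $I_j$ onto $I_{j'}$ of slope $\rho^{j-j'}$ (for $j<0$, and analogously for $j>0$), hence they coincide.

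For the converse ($\Leftarrow$), the strategy is to track the image of the composition inside the refinement tree. A direct induction on $n$ using Lemma~\ref{lem:f(I)} case-by-case yields
\[
f_{i_n}\circ\cdots\circ f_{i_1}(I_j)=I_{j';\,r_1,\ldots,r_m}
\]
for some refinement indices $r_1,\ldots,r_m\in\{1,\ldots,k\}$, where $m$ is precisely the total number of jumps made at times $0\le s<n$: non-jump steps preserve the refinement depth, while each jump prepends a fresh coordinate to the refinement tuple. Now assume $f_{i_n}\circ\cdots\circ f_{i_1}|_{I_j}=F_{j,j'}|_{I_j}$ with $\sgn(j)=\sgn(j')$. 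Since $F_{j,j'}$ sends $I_j$ onto $I_{j'}$, the image of the composition must coincide with $I_{j'}$, so $I_{j';\,r_1,\ldots,r_m}=I_{j'}$. Because each $\phi_r$ is a strict contraction of $I_{-1}$ with $|\phi_r'|=\rho^r<1$, the refined interval $I_{j';\,r_1,\ldots,r_m}$ is a proper subset of $I_{j'}$ whenever $m\ge 1$. Hence $m=0$, i.e., no jumps occur.

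The main obstacle is largely bookkeeping rather than any deep idea: one must carefully distinguish the three cases for each of $f_-$ and $f_+$ in Lemma~\ref{lem:f(I)}, simultaneously tracking the current index, its sign, and the current refinement depth, and must verify that in every non-jump case the sign is preserved. With these case distinctions in hand, the forward direction reduces to a one-line slope check via \eqref{eq:F-phi} and the converse reduces to the strict containment $I_{j';\,r_1,\ldots,r_m}\subsetneq I_{j'}$ for $m\ge 1$, so no ingredients beyond Lemmas~\ref{lem:intervals}, \ref{lem:f(I)} and \ref{lem:jump} are needed.
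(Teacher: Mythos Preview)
Your proof is correct and follows essentially the same approach as the paper. The forward direction matches the paper's argument (compute the explicit affine form via Lemmas~\ref{lem:intervals} and~\ref{lem:jump}, then identify it with $F_{j,j'}$ through \eqref{eq:F-phi}); for the converse the paper simply says it ``follows directly from Lemmas~\ref{lem:intervals} and~\ref{lem:jump}'', while you make this explicit by tracking the refinement depth via Lemma~\ref{lem:f(I)} and observing that a jump would force $f_{i_n}\circ\cdots\circ f_{i_1}(I_j)$ to be a proper subinterval of $I_{j'}$, contradicting surjectivity of $F_{j,j'}$---this is precisely the content behind the paper's one-line appeal.
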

\begin{proof}
If a trajectory $\{f_{i_n} \circ \cdots \circ f_{i_1}(x)\}_{n=0}^\infty$ of $x \in I_j$ does not jump over the central interval at any time $0 \le s < n$, then by Lemmas~\ref{lem:intervals} and~\ref{lem:jump},
\[
f_{i_n} \circ \cdots \circ f_{i_1}(x) =
\begin{cases}
\rho^{j-j'}x  &\text{for } j < 0\\
\II(\rho^{-j+j'}\II(x)) &\text{for } j > 0
\end{cases}
\]
for $j' \in \Z^*$ such that $f_{i_n} \circ \cdots \circ f_{i_1}(x) \in I_{j'}$.
Therefore, $f_{i_n} \circ \cdots \circ f_{i_1}|_{I_j} = F_{j, j'}$ by \eqref{eq:F-phi}. The other implication follows directly from Lemmas~\ref{lem:intervals} and~\ref{lem:jump}. 
\end{proof}

Define
\[
G^-_r\colon I_1 \to I_{-1}, \qquad G^+_r\colon I_{-1} \to I_1, \qquad r \in \{1, \ldots, k\}
\]
setting
\[
G^-_r = f_- \circ F_{1,r}, \qquad G^+_r = f_+ \circ F_{-1,-r}.
\]
We have $G^\pm_r = f_{i_n}\circ \cdots \circ f_{i_1}|_{I_{\mp 1}}$ for some $i_1, \ldots, i_n \in \{-,+\}$, $n \ge 0$. Moreover, by \eqref{eq:phi-f} and \eqref{eq:F-phi}, 
\begin{equation}\label{eq:G-phi}
G^-_r = \phi_r \circ \II|_{I_1}, \qquad G^+_r = \II \circ \phi_r, 
\end{equation}
while Lemma~\ref{lem:f(I)} and \eqref{eq:F} imply
\begin{equation}\label{eq:G}
\begin{aligned}
G^-_r(I_{1; r_1, \ldots, r_n}) &= I_{-1; r, r_1, \ldots, r_n}, &\quad 
G^-_r(x_{1; r_1, r_2, \ldots}) &= x_{-1; r, r_1, r_2, \ldots},\\
G^+_r(I_{-1; r_1, \ldots, r_n}) &= I_{1; r, r_1, \ldots, r_n}, &\quad 
G^+_r(x_{-1; r_1, r_2, \ldots}) &= x_{1; r, r_1, r_2, \ldots}
\end{aligned}
\end{equation}
for $r_1, r_2, \ldots \in \{1, \ldots, k\}$, $n \ge 0$. 

\begin{lem}\label{lem:G}
A trajectory $\{f_{i_n} \circ \cdots \circ f_{i_1}(x)\}_{n=0}^\infty$ of a point $x \in I$ jumps over the central interval at the time $s$, for some $s \ge 0$, if and only if
\begin{alignat*}{3}
&\qquad f_{i_s} \circ \cdots \circ f_{i_1}(x) &&\in I_{-r}, \qquad
&f_{i_{s+1}}|_{I_{-r}} &= G^+_r \circ F_{-r,-1}\\
&\text{or}\\
&\qquad f_{i_s} \circ \cdots \circ f_{i_1}(x) &&\in I_r, \qquad
&f_{i_{s+1}}|_{I_r} &= G^-_r \circ F_{r,1}
\end{alignat*}
for some $r \in \{1, \ldots, k\}$.
\end{lem}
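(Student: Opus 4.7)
The plan is to reduce the statement directly to Lemma~\ref{lem:jump}(b). The essential observation is that the compositions $G^+_r \circ F_{-r,-1}$ and $G^-_r \circ F_{r,1}$ coincide, respectively, with the restrictions $f_+|_{I_{-r}}$ and $f_-|_{I_r}$; once this is established, the condition $f_{i_{s+1}}|_{I_{-r}} = G^+_r \circ F_{-r,-1}$ is simply a rephrasing of $i_{s+1}=+$, and symmetrically for the other branch.

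Concretely, I would first compute the two compositions. Recall the definitions $G^+_r = f_+ \circ F_{-1,-r}$ and $G^-_r = f_- \circ F_{1,r}$, together with the functorial identity $F_{j',j''} \circ F_{j,j'} = F_{j,j''}$ and $F_{j,j} = \id|_{I_j}$ noted in the text just after the definition of $F_{j,j'}$. Applied with the appropriate indices this gives
\[
G^+_r \circ F_{-r,-1} = f_+ \circ F_{-1,-r} \circ F_{-r,-1} = f_+ \circ F_{-r,-r} = f_+|_{I_{-r}},
\]
and analogously $G^-_r \circ F_{r,1} = f_-|_{I_r}$. One can also verify this by direct computation using \eqref{eq:F-phi} and \eqref{eq:G-phi}, which yields $G^+_r \circ F_{-r,-1}(x) = \II(\rho \II(\rho^{r-1}\rho^{1-r}x)) = \II(\rho \II(x)) = f_+(x)$ for $x \in I_{-r}$, agreeing with the formula for $f_+$ on $I_{-r}$ from Lemma~\ref{lem:intervals}(c).

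With these identities in hand, the lemma is immediate from Lemma~\ref{lem:jump}(b). Indeed, by that lemma the trajectory jumps over the central interval at time $s$ if and only if $f_{i_s}\circ\cdots\circ f_{i_1}(x) \in I_{-r}$ for some $r\in\{1,\ldots,k\}$ with $i_{s+1}=+$, or $f_{i_s}\circ\cdots\circ f_{i_1}(x) \in I_r$ for some $r\in\{1,\ldots,k\}$ with $i_{s+1}=-$. Since $f_-|_{I_{-r}}$ and $f_+|_{I_{-r}}$ have distinct slopes (and likewise on $I_r$) the choice $i_{s+1}=+$ is equivalent to $f_{i_{s+1}}|_{I_{-r}} = f_+|_{I_{-r}} = G^+_r \circ F_{-r,-1}$, and analogously $i_{s+1}=-$ on $I_r$ is equivalent to $f_{i_{s+1}}|_{I_r} = G^-_r \circ F_{r,1}$. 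This establishes both directions of the equivalence.

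The proof is essentially bookkeeping, so there is no real obstacle; the only point requiring a moment of care is to confirm that the composition identity $F_{-1,-r}\circ F_{-r,-1} = \id|_{I_{-r}}$ (and its counterpart on the positive side) applies in this setting, which it does because all indices involved share the same sign, so the composition rule for $F_{j,j'}$ stated in Section~\ref{sec:l=1} is available.
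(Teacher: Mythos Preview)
Your approach is correct and essentially the same as the paper's (which just says the result follows from Lemmas~\ref{lem:jump}, \ref{lem:F} and the definitions). Your computation $G^+_r\circ F_{-r,-1}=f_+|_{I_{-r}}$ and $G^-_r\circ F_{r,1}=f_-|_{I_r}$ via the composition rule $F_{j',j''}\circ F_{j,j'}=F_{j,j''}$ is exactly the right reduction, and the appeal to Lemma~\ref{lem:jump}(b) then finishes the argument.

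One small correction: on $I_{-r}$ for $r\in\{1,\ldots,k\}$ the maps $f_-$ and $f_+$ actually have the \emph{same} slope $\rho$ (by Lemma~\ref{lem:intervals}(c), $I_{-r}\subset[x_+,x_-]$, where $f_-(x)=\rho x$ and $f_+(x)=(1-\rho)+\rho x$). They are distinct because $f_-(x)<x<f_+(x)$ on $(0,1)$, not because of slope; this is all you need for the equivalence $i_{s+1}=+\iff f_{i_{s+1}}|_{I_{-r}}=f_+|_{I_{-r}}$.
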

\begin{proof}
Follows directly from Lemmas~\ref{lem:jump}, \ref{lem:F} and the definitions of the maps $F_{j,j'}$, $G^\pm_r$. 
\end{proof}

\begin{lem}\label{lem:traj}
A trajectory $\{f_{i_n} \circ \cdots \circ f_{i_1}(x)\}_{n=0}^\infty$ of a point $x \in I_j$, $j \in \Z^*$, jumps over the central interval $($exactly$)$ at the times 
$s_1, \ldots, s_m$, for some $0 \le s_1 < \cdots < s_m < n$, $0 \le m \le n$, 
if and only if 
\[
f_{i_n} \circ \cdots \circ f_{i_1}|_{I_j}= 
\begin{cases}
F_{-1,j'} \circ G^-_{r_1}\circ G^+_{r_2}\circ \cdots \circ G^-_{r_{m-1}}\circ G^+_{r_m} \circ F_{j,-1}& \text{for } j < 0,\; m \text{ even}\\
F_{1,j'} \circ G^+_{r_1}\circ G^-_{r_2}\circ \cdots \circ G^+_{r'_{m-2}} \circ G^-_{r_{m-1}}\circ G^+_{r_m}\circ F_{j,-1}& \text{for } j < 0,\; m \text{ odd}\\
F_{1,j'} \circ G^+_{r_1}\circ G^-_{r_2}\circ \cdots \circ G^+_{r_{m-1}}\circ G^-_{r_m} \circ F_{j,1}& \text{for } j > 0,\; m \text{ even}\\
F_{-1,j'} \circ G^-_{r_1}\circ G^+_{r_2}\circ \cdots \circ G^-_{r_{m-2}} \circ G^+_{r_{m-1}}\circ G^-_{r_m}\circ F_{j,1}& \text{for } j > 0,\; m \text{ odd}
\end{cases}
\]
for some $j' \in \Z^*$ and $r_1, \ldots, r_m \in \{1, \ldots, k\}$, where $\sgn(j) = \sgn(j')$ when $m$ is even and $\sgn(j) \neq \sgn(j')$ when $m$ is odd. Moreover, in this case we have
\[
f_{i_n} \circ \cdots \circ f_{i_1}(I_j) = I_{j'; r_1, \ldots, r_n} 
\]
and
\[
f_{i_n} \circ \cdots \circ f_{i_1}(x) = 
\begin{cases}
\rho^{-j'-1} \phi_{r_1} \circ \cdots \circ \phi_{r_m}(\rho^{j+1}x) & \text{for }j<0,\,  m \text{ even}\\
\II(\rho^{j'-1} \phi_{r_1} \circ \cdots \circ \phi_{r_m}(\rho^{j+1}x)) & \text{for }j < 0, \, m \text{ odd}\\
\rho^{-j'-1} \phi_{r_1} \circ \cdots \circ \phi_{r_m}(\rho^{-j+1}\II(x)) & \text{for }j > 0,\, m \text{ even}\\
\II(\rho^{j'-1} \phi_{r_1} \circ \cdots \circ \phi_{r_m}(\rho^{-j+1}\II(x))) & \text{for }j>0, \, m \text{ odd}
\end{cases}.
\]
\end{lem}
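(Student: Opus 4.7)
My plan is induction on $m$, the number of jumps over the central interval, using the base case $m=0$ supplied by Lemma~\ref{lem:F} and peeling off the last jump at the inductive step with Lemma~\ref{lem:G}.

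\emph{Base case} ($m=0$): By Lemma~\ref{lem:F}, $f_{i_n}\circ\cdots\circ f_{i_1}|_{I_j} = F_{j,j'}$ with $\sgn(j)=\sgn(j')$. Using the composition identity $F_{j',j''}\circ F_{j,j'} = F_{j,j''}$ noted after the definition of $F_{j,j'}$, one writes this as $F_{\sigma,j'}\circ F_{j,\sigma}$ with $\sigma = \sgn(j)\cdot 1$, which matches the claimed normal form with an empty product of $G$-factors. The image claim and the explicit point formula then reduce to \eqref{eq:F} and \eqref{eq:F-phi}.

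\emph{Inductive step:} Assume the statement for $m-1\ge 0$. Decompose
\[
f_{i_n}\circ\cdots\circ f_{i_1} = Q\circ f_{i_{s_m+1}}\circ P, \quad P = f_{i_{s_m}}\circ\cdots\circ f_{i_1},\ Q = f_{i_n}\circ\cdots\circ f_{i_{s_m+2}}.
\]
The prefix $P$ has exactly $m-1$ jumps, so the inductive hypothesis provides its normal form and in particular forces $P(x) \in I_{\varepsilon r_1}$ for some $r_1 \in \{1,\ldots,k\}$ and sign $\varepsilon\in\{-,+\}$, with $\varepsilon = \sgn(j)$ iff $m-1$ is even. Lemma~\ref{lem:G} identifies $f_{i_{s_m+1}}|_{I_{\varepsilon r_1}} = G^{-\varepsilon}_{r_1}\circ F_{\varepsilon r_1,\varepsilon}$, landing in $I_{-\varepsilon}$. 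Since $Q$ has no jumps, Lemma~\ref{lem:F} gives $Q = F_{-\varepsilon,j'}$ for the appropriate $j'$ with $\sgn(j')=-\varepsilon$. The rightmost $F$-factor in the inductive normal form of $P$ is of the form $F_{\varepsilon,\varepsilon r_1}$ (on the left of $F_{j,-\varepsilon\cdot 1}$... one actually reads off that the inductive formula ends, to the left, with $F_{-\varepsilon,\varepsilon r_1}$ so the sign of $\varepsilon$ is flipped here appropriately in the inductive step). The absorption identity $F_{\varepsilon r_1,\varepsilon}\circ F_{\varepsilon,\varepsilon r_1} = \id|_{I_\varepsilon}$ then cancels the adjacent $F$-pieces, and concatenating the three parts yields exactly the claimed normal form for $m$ jumps, with the new label $r_1$ inserted as the leftmost $G$-index and the previous $m-1$ indices shifted to $r_2,\ldots,r_m$.

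Having the composition formula, the image identity $f_{i_n}\circ\cdots\circ f_{i_1}(I_j) = I_{j';r_1,\ldots,r_m}$ is an inductive consequence of \eqref{eq:F} and \eqref{eq:G}. The explicit point formula is then obtained by substituting \eqref{eq:F-phi} for each $F$-factor and \eqref{eq:G-phi} for each $G$-factor in the normal form: each $G^+_r$ contributes $\II\circ \phi_r$ and each $G^-_{r'}$ contributes $\phi_{r'}\circ\II$, so the interior involutions pair off via $\II^2=\id$, leaving $\phi_{r_1}\circ\cdots\circ\phi_{r_m}$ applied to $\rho^{j+1}x$ (when $j<0$) or $\rho^{-j+1}\II(x)$ (when $j>0$), with a possible outer $\II$ dictated by the parity of $m$.

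The main obstacle is combinatorial bookkeeping rather than mathematical difficulty: the normal form depends on both $\sgn(j)$ and the parity of $m$, yielding four cases that must each be verified; moreover, the labels $r_1,\ldots,r_m$ appear in reverse chronological order in the formula (the first jump in time corresponds to the rightmost $G$-factor $G^\pm_{r_m}$), so the relabeling between the inductive hypothesis and the target statement must be handled carefully. Once these conventions are pinned down, each of the four cases reduces to a direct calculation from Lemmas~\ref{lem:F} and~\ref{lem:G} and the definitions of $F_{j,j'}$ and $G^\pm_r$ together with the relations \eqref{eq:F-phi}, \eqref{eq:phi-f}, \eqref{eq:G-phi}, \eqref{eq:F} and \eqref{eq:G}.
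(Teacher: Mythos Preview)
Your induction on $m$ using Lemma~\ref{lem:F} for the base case and Lemma~\ref{lem:G} to peel off the final jump is precisely the argument the paper intends when it says the lemma ``follows directly from Lemmas~\ref{lem:F} and~\ref{lem:G}, and \eqref{eq:F-phi}, \eqref{eq:F}, \eqref{eq:G-phi}, \eqref{eq:G}''; you have simply written out the details. One expositional point: in your inductive step you write ``rightmost $F$-factor'' when you mean the \emph{leftmost} (outermost) factor $F_{\pm 1,\,\varepsilon r_1}$ of $P$, and the parenthetical self-correction that follows is garbled; the cancellation you need is exactly $F_{\varepsilon r_1,\,\varepsilon}\circ F_{\varepsilon,\,\varepsilon r_1}=\id|_{I_\varepsilon}$, which is what you use, so the argument is sound once the wording is cleaned up.
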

\begin{proof} Follows directly from Lemmas~\ref{lem:F} and~\ref{lem:G}, and \eqref{eq:F-phi}, \eqref{eq:F}, \eqref{eq:G-phi}, \eqref{eq:G}.
\end{proof}

\begin{defn}
For $x \in (0,1)$ let $\omega_\infty(x)$ be the set of limit points of all trajectories of $x$ under $\{f_-, f_+\}$, which jump over the central interval infinitely many times, i.e.
\begin{align*}
\omega_\infty(x) = \{&\lim_{s \to \infty} f_{i_{n_s}}\circ \cdots \circ f_{i_1}(x): 
i_1, i_2, \ldots \in \{-,+\},\, n_s \to \infty \text{ as } s \to \infty\\ &\text{and }\{f_{i_n}\circ \cdots \circ f_{i_1}(x)\}_{n = 0}^\infty \text{ jumps over the central interval infinitely many times}\}.
\end{align*}
\end{defn}

\begin{prop}\label{prop:omega} For every $x \in (0,1)$,
\[
\omega_\infty(x) = \Lambda \cup \{0,1\}.
\]
\end{prop}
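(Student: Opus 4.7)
My plan is to prove the two inclusions separately.

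For $\omega_\infty(x)\subset\Lambda\cup\{0,1\}$, I will take a trajectory $\{f_{i_n}\circ\cdots\circ f_{i_1}(x)\}_{n\ge 0}$ that jumps at times $s_1<s_2<\cdots\to\infty$ and a subsequential limit $y=\lim f_{i_{n_\ell}}\circ\cdots\circ f_{i_1}(x)$. By Lemma~\ref{lem:jump}(a) combined with $f(I)\subset I$, the tail from time $s_1+1$ lies in $I$, starting in some $I_{j_1}$, so I can apply Lemma~\ref{lem:traj} to this tail. The conclusion is that for every $n>s_1$ the position is trapped in some interval $I_{j_n^*;r_1,\ldots,r_{m(n)}}$ whose decoration $r_1,\ldots,r_{m(n)}$ is determined by past jump labels only and whose length $\rho^{|j_n^*|-1+r_1+\cdots+r_{m(n)}}$ tends to $0$ (because $m(n)\to\infty$ and each $r_i\ge1$). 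Passing to a sub-subsequence of $n_\ell$, I will split into two cases: if $\{j^*_{n_\ell}\}$ is bounded I fix $j^*_{n_\ell}=j^*$ and the nested intervals shrink to the single point $x_{j^*;r_1,r_2,\ldots}\in\Lambda_{j^*}\subset\Lambda$; if $|j^*_{n_\ell}|\to\infty$ the intervals accumulate at $0$ or $1$. Either way $y\in\Lambda\cup\{0,1\}$.

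For the reverse inclusion, using Lemma~\ref{lem:inI} I may assume $x\in I_{j_0}$. For $y\in\{0,1\}$ I will build an oscillating trajectory: each cycle uses $f_-$ to descend from $I_{-1}$ to $I_{-N}$, then $\lceil N/k\rceil$ applications of $f_+$ (the last a jump) to reach $I_1$, then $f_+$ up to $I_N$ and $f_-$ back to $I_{-1}$ (another jump), and I iterate with $N\to\infty$, so the trajectory jumps infinitely often with both endpoints as limit points. For $y=x_{j^*;s_1,s_2,\ldots}\in\Lambda_{j^*}$, the key observation from Lemma~\ref{lem:f(I)} is that each jump \emph{prepends} its label, so after $m$ jumps with temporal labels $L_1,\ldots,L_m$ the post-jump position lies in $I_{\epsilon;L_m,\ldots,L_1}$. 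I will choose
\[
(L_1,L_2,L_3,L_4,L_5,L_6,L_7,\ldots)=(s_1,\,s_2,s_1,\,s_3,s_2,s_1,\,s_4,s_3,s_2,s_1,\ldots),
\]
the concatenation of the reversed prefixes of $(s_1,s_2,\ldots)$, and between consecutive jumps I will use the $F$-maps from Lemma~\ref{lem:intervals} to move from the current post-jump interval to the required $I_{\pm L_{j+1}}$. With $m_k=k(k+1)/2$, the last $k$ labels then read $(s_k,\ldots,s_1)$ in temporal order, so the position at time $s_{m_k}+1$ sits in $I_{\epsilon_k;s_1,\ldots,s_k,L_{m_k-k},\ldots,L_1}\subset I_{\epsilon_k;s_1,\ldots,s_k}$, where $\epsilon_k=\sgn(j_0)\cdot(-1)^{m_k}$. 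Along the subsequence of $k$ for which the parity of $m_k$ makes $\epsilon_k=\sgn(j^*)$ (both parities occur infinitely often in $\{k(k+1)/2\}$), one additional $F_{\epsilon_k,j^*}$ step places the trajectory in $I_{j^*;s_1,\ldots,s_k}$, within $\rho^{|j^*|-1+s_1+\cdots+s_k}|I_{-1}|$ of $y$. The full trajectory jumps infinitely often and has $y$ as a limit point.

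The hard part will be this construction for $y\in\Lambda$, because the natural nested cover $\{I_{j^*;s_1,\ldots,s_k}\}_k$ shrinking to $y$ does not correspond to a single infinite sequence of jump labels: since labels are prepended, reaching $I_{j^*;s_1,\ldots,s_k}$ demands a \emph{block} of $k$ jumps in the reversed order $(s_k,\ldots,s_1)$, and the blocks for different $k$ are incompatible with one another. Concatenating all reversed prefixes into a single infinite label sequence circumvents this by producing infinitely many `good' windows, one per $k$; the parity alignment between $\sgn(j_0)$, $\sgn(j^*)$ and $m_k$ comes for free because both parities of $m_k$ are abundant. A secondary, routine point is that Lemma~\ref{lem:traj} is stated for $x\in I$, so Lemma~\ref{lem:inI} is invoked in both directions to reduce to this case.
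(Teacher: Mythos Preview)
Your argument for $\omega_\infty(x)\subset\Lambda\cup\{0,1\}$ contains an error in Case~1. You claim that when $j^*_{n_\ell}=j^*$ is constant, the intervals $I_{j^*;r_1,\ldots,r_{m(n_\ell)}}$ are nested and shrink to a single point $x_{j^*;r_1,r_2,\ldots}$. But as you yourself observe later, jumps \emph{prepend} labels: after $m$ jumps with temporal labels $L_1,\ldots,L_m$ the decoration is $(L_m,\ldots,L_1)$, so passing from $m$ to $m+1$ jumps changes the \emph{first} coordinate of the decoration, not the last. Hence $I_{j^*;L_m,\ldots,L_1}$ and $I_{j^*;L_{m+1},L_m,\ldots,L_1}$ are not nested, and there is no well-defined infinite sequence $(r_1,r_2,\ldots)$ to speak of. The fix is immediate and is exactly what the paper does (without any case split): each such interval meets $\Lambda$ and has length at most $\rho^{m(n_\ell)}|I_{-1}|\to 0$, so any limit point lies in $\overline\Lambda=\Lambda\cup\{0,1\}$.

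Your construction for the reverse inclusion is correct but substantially more elaborate than the paper's. The paper does not concatenate reversed prefixes into a single infinite label sequence or track the parity of $m_k=k(k+1)/2$. Instead, for each even $n>0$ it applies the block
\[
F^{(n)}=F_{-1,j'}\circ G^-_{r_1}\circ G^+_{r_2}\circ\cdots\circ G^+_{r_n}\circ F_{j',-1}
\]
(for $j'<0$; symmetrically for $j'>0$). This block starts from whatever sub-interval of $I_{j'}$ the trajectory currently occupies, performs $n$ jumps with temporal labels $r_n,r_{n-1},\ldots,r_1$, and returns to $I_{j'}$; since the $n$ most recently prepended labels are then exactly $r_1,\ldots,r_n$, the image lies in $I_{j';r_1,\ldots,r_n}$ regardless of the decoration carried over from earlier blocks. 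Thus the ``incompatibility of blocks'' you flagged as the hard part dissolves: one simply overwrites the relevant prefix at each stage by redoing it in full. Your detour via $F_{\epsilon_k,j^*}$ at the end of parity-selected blocks also works, but the extra bookkeeping is avoidable.
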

\begin{proof} 
First, we prove $\omega_\infty(x) \subset \Lambda\cup \{0,1\}$ for $x \in (0,1)$. By Lemma~\ref{lem:jump}(a), we can assume $x \in I$. Take $y \in \omega_\infty(x)$. Then $y = \lim_{s \to \infty} f_{i_{n_s}}\circ \cdots \circ f_{i_1}(x)$, where $n_s \to \infty$ and the trajectory $\{f_{i_n} \circ \cdots \circ f_{i_1}(x)\}_{n=0}^\infty$ jumps over the central interval infinitely many times. By Lemma~\ref{lem:traj}, we have 
\[
f_{i_{n_s}} \circ \cdots \circ f_{i_1}(x) \in I_{j(s); r_1(s), \ldots, r_{m(s)}(s)}
\]
for some $j(s) \in \Z^*$, $m(s) \ge 0$, $r_1(s), \ldots, r_{m(s)}(s) \in \{1, \ldots, k\}$, where $m(s) \to \infty$ as $s \to \infty$. Since $|I_{j(s); r_1(s), \ldots, r_{m(s)}(s)}| \le \rho^{m(s)} \to 0$ as $s \to \infty$, $I_{j(s); r_1(s), \ldots, r_{m(s)}(s)} \cap \Lambda \neq 0$, we have $y \in \overline\Lambda = \Lambda\cup \{0,1\}$. In this way we have showed $\omega_\infty(x) \subset \Lambda\cup \{0,1\}$. 

Now we prove $\Lambda\cup \{0,1\}\subset \omega_\infty(x)$ for $x \in (0,1)$. By Lemma~\ref{lem:inI}, we can assume $x \in I_j$, $j \in \Z^*$.  
Since the system is symmetric, we can assume $j < 0$. Take $y \in \Lambda$. Then $y = x_{j'; r_1, r_2, \ldots}$ for some $j' \in \Z^*$, $r_1, r_2, \ldots \in \{1, \ldots, k\}$. Let
\[
F^{(0)} = 
\begin{cases}
F_{j,j'}&\text{if } j' < 0\\
F_{1,j'}\circ G^+_1\circ F_{j,-1}&\text{if } j' > 0
\end{cases}
\]
and note that $F^{(0)}(x) \in I_{j'}$. Define
\[
F^{(n)} = 
\begin{cases}
F_{-1,j'} \circ G^-_{r_1}\circ G^+_{r_2}\circ \cdots \circ G^-_{r_{n-1}}\circ G^+_{r_n} \circ F_{j',-1}&\text{if } j' < 0\\
F_{1,j'} \circ G^+_{r_1}\circ G^-_{r_2}\circ \cdots \circ G^+_{r_{n-1}}\circ G^-_{r_n}\circ F_{j',1}&\text{if } j' > 0
\end{cases}
\]
for even $n > 0$. Then $F^{(n)}$ is well-defined on $I_{j'}$. Using \eqref{eq:F} and \eqref{eq:G} inductively, we see 
\[
F^{(n)} \circ \cdots \circ F^{(2)}\circ F^{(0)}(x) \in I_{j'; r_1, \ldots, r_n}
\]
for every even $n >0$. Since $|I_{j'; r_1, \ldots, r_n}| \le \rho^n \to 0$ as $n \to \infty$ and $\bigcap_{n \text{ even}} I_{j'; r_1, \ldots, r_n} = \{y\}$, the trajectory defined by $\cdots\circ F^{(n)}\cdots \circ F^{(2)}\circ F^{(0)}(x)$ has $y$ as a limit point and, by Lemma~\ref{lem:traj}, jumps over the central interval infinitely many times. This shows $\Lambda\subset \omega_\infty(x)$.

Take now $y \in \{0,1\}$ and define
\[
F^{(0)} = 
\begin{cases}
F_{j,-1}&\text{if } y = 0\\
G^+_1\circ F_{j,-1}&\text{if } y = 1
\end{cases}
\]
and
\[
F^{(n)} = 
\begin{cases}
F_{-1,-n-1} \circ G^-_1\circ G^+_1\circ \cdots \circ G^-_1\circ G^+_1\circ F_{-n+1,-1} &\text{if } y = 0\\
F_{1,n+1} \circ G^+_1\circ G^-_1 \circ \cdots \circ G^+_1\circ G^-_1\circ F_{n-1,1} &\text{if } y = 1
\end{cases}
\]
for even $n > 0$. Then, arguing as previously, we see that
\[
F^{(n)} \circ \cdots \circ F^{(2)}\circ F^{(0)}(x) \in 
\begin{cases}
I_{-n-1} & \text{if } y = 0\\
I_{n+1} & \text{if } y = 1
\end{cases}
\]
for even $n > 0$, the trajectory defined by $\cdots\circ F^{(n)}\cdots \circ F^{(2)}\circ F^{(0)}(x)$ has $y$ as its limit point and jumps over the central interval infinitely many times. This implies $\Lambda \cup \{0,1\} \in \omega_\infty(x)$.
\end{proof}

\begin{prop}\label{prop:X}
We have
\[
\Lambda = f_-(\Lambda) = f_+(\Lambda).
\]
Moreover, the system $\{f_-, f_+\}$ is minimal in $\Lambda$.
\end{prop}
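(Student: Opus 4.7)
\medskip

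\noindent\emph{Proof proposal.} The plan is to prove the invariance first and then derive minimality from the construction already developed in the proof of Proposition~\ref{prop:omega}.

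For the invariance, the strategy is to combine the action of $f_\pm$ on the nested intervals $I_{j; r_1,\ldots,r_n}$ given by Lemma~\ref{lem:f(I)} with the self-similarity $\Lambda_{-1} = \bigcup_{r=1}^k \phi_r(\Lambda_{-1})$. Using the piecewise-linear formulas from Lemma~\ref{lem:intervals}(c) and the definition of $\Lambda_j$, a direct computation gives
\[
f_-(\Lambda_j) =
\begin{cases}
\Lambda_{j-1} & \text{for } j \leq -1,\\
\phi_j(\Lambda_{-1}) & \text{for } j \in \{1,\ldots,k\},\\
\Lambda_{j-k} & \text{for } j \geq k+1.
\end{cases}
\]
For instance, when $j \in \{1,\ldots,k\}$ one has $I_j \subset [0,x_-]$, so $f_-(x)=\rho x$ on $I_j$, and
\[
f_-(\Lambda_j) \;=\; \rho\,\II(\rho^{j-1}\Lambda_{-1}) \;=\; \rho-\rho^j\Lambda_{-1} \;=\; \phi_j(\Lambda_{-1}).
\]
Taking the union over $j\in\Z^*$ and invoking the fixed-point relation $\bigcup_{r=1}^k \phi_r(\Lambda_{-1}) = \Lambda_{-1}$ gives $f_-(\Lambda) = \Lambda$. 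The equality $f_+(\Lambda) = \Lambda$ then follows from the symmetry $f_+ = \II \circ f_- \circ \II$ together with $\II(\Lambda) = \Lambda$, which is immediate from $\Lambda_{-j} = \II(\Lambda_j)$.

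For minimality, I would combine the construction inside the proof of Proposition~\ref{prop:omega} with the fact that the sets $I_{j; r_1,\ldots,r_n}\cap\Lambda$ form a neighborhood basis of each point of $\Lambda$ in the subspace topology. The latter follows from the estimate $|I_{j;r_1,\ldots,r_n}|\le\rho^{|j|-1+r_1+\cdots+r_n}\le\rho^n\to 0$, together with the pairwise disjointness of the cylinders $I_{j;r_1,\ldots,r_n}$ at each level~$n$, which is guaranteed by the disjoint type assumption (it implies the strong separation condition for $\{\phi_r\}_{r=1}^k$). Fix $x\in\Lambda$ and a nonempty relatively open $U\subset\Lambda$. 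Then $U$ contains $I_{j;r_1,\ldots,r_n}\cap\Lambda$ for suitable $j\in\Z^*$ and $r_1,\ldots,r_n\in\{1,\ldots,k\}$. Writing $x\in I_{j_0}\cap\Lambda$ and (after using symmetry) assuming $j_0<0$, the composition $F^{(n)}\circ\cdots\circ F^{(2)}\circ F^{(0)}$ constructed in the proof of Proposition~\ref{prop:omega} sends $x$ into $I_{j;r_1,\ldots,r_n}$. Since each $F^{(i)}$ is a composition of $f_-$ and $f_+$, this produces an orbit of $x$ under $\{f_-,f_+\}$ that enters $U$, while remaining in $\Lambda$ by the invariance just proved.

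The main obstacle is the self-similarity step in proving $f_-(\Lambda)=\Lambda$: the inclusion $f_-(\Lambda)\subset\Lambda$ on its own leaves $\Lambda_{-1}$ covered only by the pieces $\phi_j(\Lambda_{-1})=f_-(\Lambda_j)$ for $j=1,\ldots,k$, and one genuinely needs the defining fixed-point relation of $\Lambda_{-1}$ under the IFS $\{\phi_r\}_{r=1}^k$ to conclude that these pieces already exhaust $\Lambda_{-1}$. All the other verifications reduce to routine bookkeeping using Lemmas~\ref{lem:intervals}, \ref{lem:f(I)} and~\ref{lem:traj}.
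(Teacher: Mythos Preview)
Your proposal is correct and follows essentially the same route as the paper's proof, just with more detail spelled out. The paper simply cites Lemma~\ref{lem:f(I)} for the invariance (using the explicit action $f_\pm(x_{j;r_1,r_2,\ldots})$ on points of $\Lambda$, where the self-similarity of $\Lambda_{-1}$ is implicit in the surjectivity of the symbolic index map) and invokes Proposition~\ref{prop:omega} directly for minimality, whereas you unpack both steps by computing $f_-(\Lambda_j)$ via the linear formulas and by reusing the $F^{(n)}$ construction from inside that proof; the substance is the same.
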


\begin{proof}
The first assertion follows directly from Lemma~\ref{lem:f(I)}, while Proposition~\ref{prop:omega} implies minimality.
\end{proof}

\subsection*{Singularity of \boldmath $\mu$}

\begin{prop}\label{prop:supp} We have 
\[
\supp \mu = \Lambda \cup \{0, 1\},\quad \mu(\Lambda) = 1.
\]
\end{prop}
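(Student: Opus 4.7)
The plan is to obtain the two inclusions $\supp\mu \subset \Lambda \cup \{0,1\}$ and $\Lambda \cup \{0,1\} \subset \supp \mu$ separately, combining Lemma~\ref{lem:supp} with the minimality of the system on $\Lambda$. The identity $\mu(\Lambda) = 1$ will fall out of the first inclusion.

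For the first inclusion, I would apply Lemma~\ref{lem:supp} to $X = \Lambda$. We have $\Lambda \subset (0,1)$ because $\Lambda \subset \bigcup_{j\in\Z^*} I_j$ and this union sits inside $[x_+', 1-x_+']$ where $x_+' = \inf I_{-k} = x_+ > 0$ (and symmetrically on the right). Forward invariance $f(\Lambda) \subset \Lambda$ is exactly Proposition~\ref{prop:X}. The only point that requires care is that $\Lambda$ is closed as a subset of $(0,1)$: each $\Lambda_j$ is compact (being the attractor of a contracting IFS, rescaled), the intervals $I_j \supset \Lambda_j$ are pairwise disjoint by Lemma~\ref{lem:intervals}(b), and $|I_j| \to 0$ with $I_j$ collapsing to $0$ as $j \to -\infty$ and to $1$ as $j\to+\infty$. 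Hence any sequence $y_n \in \Lambda$ with $y_n \to y \in (0,1)$ must lie, from some index on, in a single $\Lambda_j$, which gives $y \in \Lambda_j \subset \Lambda$. Lemma~\ref{lem:supp} then yields $\supp\mu \subset \Lambda \cup \{0,1\}$ and $\mu(\Lambda)=1$.

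For the reverse inclusion I would use minimality. The first observation is that $\supp\mu$ is forward invariant under each $f_i$: indeed, from $\mu = p_- (f_-)_*\mu + p_+ (f_+)_*\mu$ we get $\mu(U) \geq p_i\,\mu(f_i^{-1}(U))$ for every open $U$ and every $i \in \{-,+\}$, so whenever $x \in \supp\mu$ and $U$ is an open neighbourhood of $f_i(x)$, the preimage $f_i^{-1}(U)$ is an open neighbourhood of $x$ with positive $\mu$-measure, whence $\mu(U)>0$ and $f_i(x) \in \supp\mu$. Set $S = \Lambda \cap \supp\mu$. Then $S$ is closed in $\Lambda$, it is forward invariant (intersection of two forward invariant sets), and it is non-empty since $\mu(S) = \mu(\Lambda)=1$. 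By the minimality of $\{f_-,f_+\}$ on $\Lambda$ (Proposition~\ref{prop:X}), no proper non-empty closed forward invariant subset of $\Lambda$ exists, so $S=\Lambda$, i.e.~$\Lambda \subset \supp\mu$. As $\supp\mu$ is closed in $[0,1]$, it then contains the closure $\overline{\Lambda}$; by the same accumulation argument as above, $\overline{\Lambda} = \Lambda \cup \{0,1\}$. Combining the two inclusions gives the claim.

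The only genuinely delicate step is the verification that $\Lambda$ is closed in $(0,1)$ (so that Lemma~\ref{lem:supp} applies) together with the correct identification of $\overline{\Lambda}$ in $[0,1]$; both rely on the geometric fact, already encoded in the construction, that the Cantor pieces $\Lambda_j$ are compact and cluster only at the two fixed points. Everything else is a standard forward-invariance-plus-minimality argument.
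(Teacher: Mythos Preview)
Your approach is essentially the same as the paper's: apply Lemma~\ref{lem:supp} with $X=\Lambda$ (using Proposition~\ref{prop:X} for invariance) to get $\supp\mu\subset\Lambda\cup\{0,1\}$ and $\mu(\Lambda)=1$, then use minimality on $\Lambda$ for the reverse inclusion; the paper simply cites Proposition~\ref{prop:min->supp} for this second step, whereas you reprove it inline via forward invariance of $\supp\mu$.

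Two small slips to fix. First, the claim that $\bigcup_{j\in\Z^*}I_j\subset[x_+,1-x_+]$ is false: the intervals $I_j$ for $|j|>k$ accumulate at $0$ and $1$, so this union is \emph{not} bounded away from the endpoints. The inclusion $\Lambda\subset(0,1)$ is still immediate, since each $I_j$ lies in $(0,1)$; just drop the incorrect justification. Second, in your closedness argument, a sequence $y_n\to y\in(0,1)$ need not eventually lie in a \emph{single} $\Lambda_j$; rather, a neighbourhood of $y$ in $(0,1)$ meets only finitely many $I_j$ (because $I_j\to 0$ or $1$ as $|j|\to\infty$), so after passing to a subsequence you land in one compact $\Lambda_{j_0}$ and conclude $y\in\Lambda_{j_0}\subset\Lambda$.
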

\begin{proof} By Proposition~\ref{prop:X}, we have $f(\Lambda) = \Lambda$.
Moreover, $\Lambda$ is closed in $(0,1)$. Hence, Lemma~\ref{lem:supp} implies $\supp \mu \subset \Lambda \cup \{0, 1\}$ and $\mu(\Lambda) = 1$. On the other hand, the system is minimal in $\Lambda$ by Proposition~\ref{prop:X}, so Proposition~\ref{prop:min->supp} gives $\supp \mu = \overline{\Lambda} = \Lambda \cup \{0,1\}$.
\end{proof}
\begin{prop}\label{prop:dimX}
\[
\dim_H \Lambda = \dim_B \Lambda = \frac{\log \eta}{\log \rho} < 1,
\]
where $\eta \in (1/2,1)$ is the unique solution of the equation $\eta^{k+1} - 2 \eta + 1 = 0$.
\end{prop}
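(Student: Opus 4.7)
The plan is to identify $\Lambda_{-1}$ as the attractor of a self-similar iterated function system satisfying the strong separation condition, invoke the classical Moran formula to compute its Hausdorff and box dimensions, and then transfer the conclusion to $\Lambda$ via countable stability of Hausdorff dimension together with a direct covering estimate for the box dimension.

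First I would verify that the IFS $\{\phi_r\}_{r=1}^k$ on $I_{-1}$ consists of contracting similarities of ratios $\rho^r$ and satisfies the strong separation condition. Writing $I_{-1} = [\alpha,\beta]$ with
\[
\alpha = \frac{\rho-\rho^2}{1-\rho^{k+1}}, \qquad \beta = \frac{\rho-\rho^{k+1}}{1-\rho^{k+1}},
\]
a direct computation gives $\phi_r(I_{-1}) = [\rho-\rho^r\beta,\,\rho-\rho^r\alpha]$, with $\phi_1(\beta)=\alpha$ and $\phi_k(\alpha)=\beta$, so the extreme images anchor the endpoints of $I_{-1}$. Two consecutive images $\phi_r(I_{-1})$ and $\phi_{r+1}(I_{-1})$ are disjoint precisely when $\rho\beta<\alpha$, which simplifies to $\rho^{k+1}-2\rho+1>0$, i.e.\ to the disjoint type condition; the remaining pairs are then automatically separated by monotonicity in $r$.

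Next, by the Moran formula for self-similar sets with the strong separation condition (see e.g.\ \cite[Theorem~9.3]{falconer}), the attractor $\Lambda_{-1}$ satisfies $\dim_H \Lambda_{-1} = \dim_B \Lambda_{-1} = s$, where $s$ is the unique real number with $\sum_{r=1}^k \rho^{rs} = 1$. Summing the geometric series, this rearranges to $\rho^{(k+1)s} - 2\rho^s + 1 = 0$, so $\eta := \rho^s$ coincides with the unique root of $\eta^{k+1} - 2\eta + 1 = 0$ in $(1/2,1)$, yielding $s = \log\eta/\log\rho$. Since $\rho<\eta$ (automatic for disjoint type AM-systems with $l=1$ by Remark~\ref{rem:l=1}), we obtain $s<1$.

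Finally I would pass from $\Lambda_{-1}$ to $\Lambda = \bigcup_{j\in\Z^*} \Lambda_j$. Each $\Lambda_j$ is a similar copy of $\Lambda_{-1}$ of ratio $\rho^{|j|-1}$, so $\dim_H \Lambda_j = s$ and the countable stability of Hausdorff dimension immediately gives $\dim_H \Lambda = s$. The bound $\dim_B \Lambda \ge \dim_H \Lambda = s$ is trivial. For the upper box dimension, I would replace $\Lambda$ by its closure $\overline\Lambda = \Lambda \cup \{0,1\}$ (the same box dimension) and for small $\varepsilon>0$ split the index set at $M_\varepsilon := \lceil \log(|I_{-1}|/\varepsilon)/\log(1/\rho)\rceil$: for $|j|>M_\varepsilon$ each $\Lambda_j$ has diameter below $\varepsilon$ and is covered by one $\varepsilon$-interval (contributing $O(\log(1/\varepsilon))$ intervals in total), while for $|j|\le M_\varepsilon$ each $\Lambda_j$ is covered by $\le C(\rho^{|j|-1}/\varepsilon)^s$ intervals, whose sum over $|j|\le M_\varepsilon$ is a geometric series bounded by $C\varepsilon^{-s}/(1-\rho^s)$. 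This yields $N_\varepsilon(\overline\Lambda) \le C\varepsilon^{-s}$ and hence $\dim_B \Lambda \le s$. The only non-routine step is keeping track of the countable union in this last covering estimate; everything else reduces to unwinding the geometric setup established in Lemma~\ref{lem:intervals}.
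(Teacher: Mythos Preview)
Your argument is correct and follows the paper's approach closely for the core steps: strong separation of $\{\phi_r\}$, Moran's formula giving $\dim_H\Lambda_{-1}=\dim_B\Lambda_{-1}=\log\eta/\log\rho$, and countable stability for $\dim_H\Lambda$. The difference lies in the upper box dimension of $\Lambda$: the paper writes $\Lambda$ as the image of $A\times\Lambda_{-1}$ (and its reflection) under the Lipschitz map $\psi(\rho^j,x)=\rho^j x$, where $A=\{\rho^j:j\ge0\}$ has $\dim_B A=0$, and invokes the product inequality $\dim_B\psi(A\times\Lambda_{-1})\le\dim_B A+\dim_B\Lambda_{-1}$; this is slicker but relies on an auxiliary fact. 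Your direct covering estimate is more hands-on and equally valid, though there is a small imprecision: for $|j|>M_\varepsilon$ there are infinitely many indices, so ``one $\varepsilon$-interval each'' does not literally give $O(\log(1/\varepsilon))$ intervals. What actually holds is that $\bigcup_{j<-M_\varepsilon}\Lambda_j\cup\{0\}\subset[0,\rho^{M_\varepsilon}\sup I_{-1}]$, an interval of length $O(\varepsilon)$, and symmetrically near $1$, so the entire tail is covered by $O(1)$ intervals. With this correction your bound $N_\varepsilon(\overline\Lambda)\le C\varepsilon^{-s}$ stands.
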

\begin{proof} By definition, the maps $\phi_r\colon I_{-1} \to I_{-1}$, $r = 1, \ldots, k$, are contractions and
\[
\phi_r(I_{-1}) = \left[\rho - \rho^r \frac{\rho - \rho^{k+1}}{1 - \rho^{k+1}} ,\rho - \rho^r \frac{\rho-\rho^2}{1 - \rho^{k+1}} \right].
\]
Using \eqref{eq:desc}, we check that $\sup \phi_r(I_{-1}) < \inf \phi_{r+1}(I_{-1})$ for $r = 1, \ldots, k-1$. Consequently, $\{\phi_r\}_{r = 1}^k$ is an iterated function system of contracting similarities with scales $\rho, \ldots, \rho^k$, respectively, satisfying the Strong Separation Condition, i.e.~$\overline{\phi_r(I_{-1})} = \phi_r(I_{-1})$, $r = 1, \ldots, k$, are pairwise  disjoint. Therefore, its limit set $\Lambda_{-1}$ is a Cantor set and its Hausdorff (and box) dimension is equal to the unique positive number $d$ satisfying
\[
\rho^d + \cdots + \rho^{kd} = 1
\]
(see e.g.~\cite[Theorem~9.3]{falconer}). This equation is equivalent to $\eta^{k+1} - 2 \eta + 1 = 0$ for $\eta = \rho^d$. Hence, 
\[
\dim_H \Lambda_{-1} = \dim_B \Lambda_{-1} =  d = \frac{\log \eta}{\log \rho}, 
\]
Since $\Lambda_j$, $j \in \Z^*$, are disjoint similar copies of $\Lambda_{-1}$, we have $\dim_H \Lambda = \dim_H \Lambda_{-1}$, as the Hausdorff dimension is countably stable, see e.g.~\cite[Section 3.2]{falconer}. To see $\dim_B \Lambda = \dim_B \Lambda_{-1}$ note that $\Lambda = \psi(A \times \Lambda_{-1}) \cup \II(\psi(A \times \Lambda_{-1}))$, where $A = \{ \rho^j : j \geq 0\}$ and $\psi:A \times \Lambda_{-1} \to [0,1]$ is given as $\psi(\rho^j, x) = \rho^jx$. Since $\psi$ is Lipschitz and $\dim_B A = 0$, we obtain $\dim_B\Lambda \leq \dim_B A + \dim_B \Lambda_{-1} = \dim_B \Lambda_{-1}$.

The condition \eqref{eq:desc}, equivalent to $\rho < \eta$, implies $\dim_H \Lambda_{-1} < 1$.
\end{proof}

Propositions~\ref{prop:supp} and \ref{prop:dimX} imply the following.

\begin{cor}\label{cor:X}
The measure $\mu$ is singular with $\dim_B (\supp \mu) = \dim_H (\supp \mu) < 1$.
\end{cor}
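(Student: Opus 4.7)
The plan is to combine the two preceding propositions in an essentially formal way. First I would invoke Proposition~\ref{prop:dimX} to get $\dim_H \Lambda = \frac{\log \eta}{\log \rho} < 1$, which in particular forces $\Leb(\Lambda) = 0$, since any set of Hausdorff dimension strictly less than $1$ in $\R$ has Lebesgue measure zero (its $1$-dimensional Hausdorff measure vanishes, and this dominates Lebesgue measure up to a constant).

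Next I would use Proposition~\ref{prop:supp}, which gives $\mu(\Lambda) = 1$. Combining this with $\Leb(\Lambda) = 0$ immediately yields singularity of $\mu$ with respect to Lebesgue measure: $\Lambda$ witnesses the required disjoint supports.

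For the dimension identities, I would use Proposition~\ref{prop:supp} to write $\supp \mu = \Lambda \cup \{0,1\}$. Since Hausdorff and box dimension are both insensitive to adding finitely many points (for Hausdorff this is countable stability, for box dimension this is immediate from the definition, as adding two points changes the $\varepsilon$-covering number by at most a constant), we obtain $\dim_H(\supp \mu) = \dim_H \Lambda$ and $\dim_B(\supp \mu) = \dim_B \Lambda$. Proposition~\ref{prop:dimX} then gives the equality $\dim_B(\supp \mu) = \dim_H(\supp \mu) = \frac{\log \eta}{\log \rho} < 1$.

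There is no real obstacle here; the corollary is a direct consequence of assembling the two propositions plus the elementary facts that sub-unit Hausdorff dimension implies Lebesgue-null and that dimensions are stable under adding isolated points.
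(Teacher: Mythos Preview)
Your proposal is correct and follows exactly the approach of the paper, which simply states that the corollary is implied by Propositions~\ref{prop:supp} and~\ref{prop:dimX}. You have merely spelled out the elementary details (sub-unit Hausdorff dimension implies Lebesgue-null, dimensions are unaffected by adding two points) that the paper leaves implicit.
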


\subsection*{Dimension of \boldmath $\mu$}

To determine the exact form of $\mu$, consider the coding map  for the IFS $\{\phi_r\}_{r=1}^k$ on $I_{-1}$ given by
\[ 
\pi_{-1} \colon \Sigma_k^+ \to \Lambda_{-1}, \qquad
\pi_{-1}(r_1, r_2, \ldots) = \lim \limits_{n \to \infty} \phi_{r_1} \circ \phi_{r_2} \circ \cdots \circ \phi_{r_n}(x) = x_{-1; r_1, r_2, \ldots}, 
\]
where $\Sigma_k^+ = \{1,\ldots, k\}^{\N}$ and $x$ is any point from $I_{-1}$. Note that $\pi_{-1}$ is a homeomorphism, since the IFS satisfies the strong separation condition.
It follows that $\Lambda$ is homeomorphic to $\Z^* \times \Sigma_k^+$ with the topology defined as the product of the discrete topology on $\Z^*$ and the standard (product) topology on $\Sigma_k^+$. The homeomorphism is given by
\[
\pi \colon \Z^* \times \Sigma_k^+ \to X, \qquad \pi(j, r_1, r_2, \ldots) = x_{j; r_1, r_2, \ldots}= 
\begin{cases}
\rho^{-j-1} \pi_{-1}(r_1, r_2, \ldots) &\text{for } j < 0 \\
\II(\rho^{j-1}\pi_{-1}(r_1, r_2, \ldots)) &\text{for } j > 0
\end{cases}.
\]
Let $\tilde f_-, \tilde f_+\colon \Z^* \times \Sigma_k^+ \to \Z^* \times \Sigma_k^+$
be the lifts by $\pi$ of $f_-|_\Lambda$, $f_+|_\Lambda$, respectively, i.e.
\begin{equation}\label{eq:tildef}
\pi \circ \tilde f_i = f_i \circ \pi, \qquad i \in \{-,+\}. 
\end{equation}
Lemma~\ref{lem:f(I)} implies
\begin{equation} \label{eq:tildef1}
\begin{aligned}
\tilde f_-(j, r_1, r_2,\ldots) &= 
\begin{cases}
(j-1, r_1, r_2, \ldots) &\text{for } j < 0\\
(-1, j, r_1, r_2, \ldots) &\text{for } 1 \le  j \le k\\
(j-k, r_1, r_2, \ldots) &\text{for } j > k
\end{cases}\\
\tilde f_+(j, r_1, r_2, \ldots) &= 
\begin{cases}
(j+k, r_1, r_2,  \ldots) &\text{for } j < -k\\
(1, -j, r_1, r_2,  \ldots) &\text{for } -k \le j \le -1\\
(j+1, r_1, r_2,  \ldots) &\text{for } j > 0
\end{cases}. 
\end{aligned}
\end{equation}
Due to \eqref{eq:tildef}, there is a one-to-one correspondence between stationary probability measures for the system $\{f_-, f_+\}$ on $\Lambda$ with probabilities $p_-, p_+$ and for the system $\{\tilde f_-, \tilde f_+\}$ with probabilities $p_-, p_+$, both considered on the $\sigma$-algebra of Borel sets. Since there is a unique stationary probability measure $\mu$ for $\{f_-, f_+\}$ on $\Lambda$, there is also a unique stationary probability measure $\tilde{\mu}$ for $\{\tilde f_-, \tilde f_+\}$. Moreover, $\mu = \pi_* \tilde{\mu}$.

Now we determine the structure of the measure $\tilde \mu$.

\begin{prop}\label{prop:symbolic_measure}
There exist numbers $c_-, c_+ > 0$ and probabilistic vectors $\beta^- = (\beta_1^-, \ldots, \beta_k^-)$, $\beta^+ = (\beta_1^+, \ldots, \beta_k^+)$, such that $c_- \sum \limits_{j = 1}^\infty \eta_-^j + c_+ \sum \limits_{j = 1}^\infty \eta_+^j = 1$, where $\eta_-, \eta_+ \in (0,1)$ are the unique solutions of the equations 
\[
p_+\eta_-^{k+1} - \eta_-  + p_- = 0, \qquad p_-\eta_+^{k+1} - \eta_+  + p_+ = 0,
\]
respectively, and 
\[ 
\tilde \mu = \sum_{j \in \Z^*} \eta_j \delta_j \otimes \nu_j,
\]
where
\[
\eta_j = 
\begin{cases}
c_-\eta_-^{-j} & \text{for } j < 0\\
c_+\eta_+^j & \text{for } j > 0
\end{cases},\]
$\nu_j$ is a probability measure on $\Sigma_k^+$ given by
\[\nu_j =
\begin{cases}
\PP_{\beta^-} \otimes \PP_{\beta^+} \otimes \PP_{\beta^-} \otimes \PP_{\beta^+} \otimes \cdots & \text{for } j<0\\
\PP_{\beta^+} \otimes \PP_{\beta^-} \otimes \PP_{\beta^+} \otimes \PP_{\beta^-} \otimes \cdots & \text{for } j>0
\end{cases}, \qquad j \in \Z^*,
\]
and $\delta_j$ is the Dirac measure at $j$.

\end{prop}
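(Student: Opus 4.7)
The strategy is to guess the form of $\tilde\mu$ as stated, determine $\eta_\pm$, $\beta^\pm$, $c_\pm$ from the stationarity equation $\tilde\mu = p_-(\tilde f_-)_*\tilde\mu + p_+(\tilde f_+)_*\tilde\mu$, and conclude via uniqueness of the stationary measure.

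For existence of $\eta_\pm$, note that the polynomial $P(t) = p_+ t^{k+1} - t + p_-$ satisfies $P(0) = p_- > 0$, $P(1) = 0$ and $P'(1) = (k+1)p_+ - 1 > 0$ by \eqref{eq:p12}; since $P$ is strictly convex on $[0,\infty)$, it has a unique root $\eta_- \in (0,1)$, and symmetrically for $\eta_+ \in (0,1)$. The relation $p_+\eta_-^{k+1} = \eta_- - p_-$ yields the useful identity $p_+ \eta_-(1-\eta_-^k) = p_-(1-\eta_-)$, and likewise for $\eta_+$. I would then define $\beta^-_r = \eta_+^r/\sum_{s=1}^k \eta_+^s$ and $\beta^+_r = \eta_-^r/\sum_{s=1}^k \eta_-^s$ for $r = 1, \ldots, k$, and pick $c_-, c_+ > 0$ satisfying the two linear conditions $p_+ c_+ = p_- c_-$ and $c_-\eta_-/(1-\eta_-) + c_+\eta_+/(1-\eta_+) = 1$. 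This fixes all parameters of the candidate $\tilde\mu$.

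It remains to verify the stationarity equation, which by $\sigma$-additivity reduces to checking it on each cylinder $\{j\}\times A$. Inspection of \eqref{eq:tildef1} shows that, \emph{except when $j = \pm 1$}, the only preimages of $\{j\}\times\Sigma_k^+$ under $\tilde f_\pm$ lie in components where $\tilde f_\pm$ acts as a pure shift in the $j$-coordinate leaving the $\Sigma_k^+$-coordinate untouched. Consequently the $\Sigma_k^+$-marginal balances automatically, and the $j$-marginal collapses to $\eta_j = p_-\eta_{j+k} + p_+\eta_{j-1}$ for $j\ge 2$ (and the mirror identity for $j\le -2$), which, after dividing by $c_+\eta_+^j$ respectively $c_-\eta_-^{-j}$, is precisely the defining polynomial for $\eta_+$ respectively $\eta_-$.

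The delicate case is $j = 1$, where the preimages under $\tilde f_+$ of the components $\{-r\}\times\Sigma_k^+$, $r\in\{1,\ldots,k\}$, contribute terms $p_+\eta_{-r}\,\delta_1\otimes(\delta_r\otimes\nu^-)$, prepending the deterministic symbol $r$ to the $\Sigma_k^+$-factor. The balance to verify becomes $\eta_1\nu^+ = p_-\eta_{k+1}\nu^+ + p_+\sum_{r=1}^k \eta_{-r}(\delta_r\otimes\nu^-)$, where $\nu^\pm$ denotes the alternating product measure as in the statement. Comparing first-coordinate marginals and using $\eta_1-p_-\eta_{k+1} = p_+\sum_r\eta_{-r}$ (the $j=1$ instance of the $j$-marginal recurrence) forces $\beta^+_r \propto \eta_{-r}$, matching our definition; the tails agree as $\nu^-$ on both sides precisely thanks to the alternating product structure, and the total masses match by the coupling $p_+ c_+ = p_- c_-$. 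The case $j=-1$ is handled symmetrically. Finally, by Theorem~\ref{thm:stationary}, Proposition~\ref{prop:supp}, and the fact that $\pi$ is a Borel homeomorphism between $\Z^*\times\Sigma_k^+$ and $\Lambda$, there is a unique probability stationary measure $\tilde\mu$ for $\{\tilde f_-,\tilde f_+\}$, hence it must coincide with our ansatz. The main obstacle throughout is the bookkeeping of the prepending clauses in \eqref{eq:tildef1}, which is precisely what forces the alternating $\beta^-,\beta^+,\beta^-,\ldots$ pattern in $\nu_j$ and couples the constants on the two halves of the interval through the identity $p_+ c_+ = p_- c_-$.
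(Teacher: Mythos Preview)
Your proposal is correct and follows essentially the same route as the paper: propose the alternating-product ansatz, verify stationarity on cylinders (which the paper also reduces to the four cases $j<-1$, $j>1$, $j=-1$, $j=1$), and invoke uniqueness. Your formulas $\beta_r^- = \eta_+^r/\sum_s\eta_+^s$ and $\beta_r^+ = \eta_-^r/\sum_s\eta_-^s$ agree with the paper's $\beta_r^- = (p_-/p_+)\eta_+^r$, $\beta_r^+ = (p_+/p_-)\eta_-^r$ via the identity $\sum_{s=1}^k\eta_\pm^s = p_\mp/p_\pm$ (an immediate consequence of the defining polynomials), and your coupling $p_+c_+ = p_-c_-$ is exactly the paper's $c_-p_- = c_+p_+$.
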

\begin{proof}
Let 
\[
h^-(x) = p_+ x^{k+1} - x + p_-, \qquad h^+(x) = p_- x^{k+1} - x + p_+.
\]
Since $h^\pm$ are convex, $h^\pm(0) > 0$, $h^\pm(1) = 0$ and, by \eqref{eq:p12}, $(h^\pm)'(1) > 0$, the function $h^\pm$ has a unique zero in $(0, 1)$, which determines the values of $\eta_-$, $\eta_+$. Suppose that 
$c_\pm, \beta_1^\pm, \ldots, \beta_k^\pm > 0$ satisfy
\begin{equation}\label{eq:eq2}
c_- \sum \limits_{j = 1}^\infty \eta_-^j + c_+ \sum \limits_{j = 1}^\infty \eta_+^j = 1, \qquad 
\sum_{r = 1}^k \beta_r^- = 1, \qquad
\sum_{r = 1}^k \beta_r^+ = 1.
\end{equation}
Then the measure 
\[
\nu = \sum_{j \in \Z^*} \eta_j \delta_j \otimes \nu_j
\]
for $\eta_j, \nu_j$ as in Proposition~\ref{prop:symbolic_measure} is a probability measure on $\Z^* \times \Sigma_k^+$. 
Let
\[
[j, r_1, \ldots, r_n] = \{(j', r_1', r_2', \ldots) \in \Z^* \times \Sigma_k^+: j' = j, r_1' = r_1, \ldots, r_n' = r_n\}
\]
for $j \in \Z^*$, $n \ge 0$ and $r_1, \ldots, r_n \in \{1, \ldots, k\}$
be the cylinders in $\Z^* \times \Sigma_k^+$.
By definition,
\begin{equation}\label{eq:cyl}
\nu([j, r_1, \ldots, r_n]) =
\begin{cases}
c_- \eta_-^{-j} \beta_{r_1}^- \beta_{r_2}^+ \cdots \beta_{r_{n-1}}^-\beta_{r_n}^+ & \text{for } j < 0, \; n \text{ even}\\
c_- \eta_-^{-j} \beta_{r_1}^- \beta_{r_2}^+ \cdots \beta_{r_{n-2}}^-\beta_{r_{n-1}}^+\beta_{r_n}^- & \text{for } j < 0, \; n \text{ odd}\\
c_+ \eta_+^j \beta_{r_1}^+ \beta_{r_2}^- \cdots \beta_{r_{n-1}}^+\beta_{r_n}^- & \text{for } j > 0, \; n \text{ even}\\
c_+ \eta_+^j \beta_{r_1}^+ \beta_{r_2}^- \cdots \beta_{r_{n-2}}^+\beta_{r_{n-1}}^-\beta_{r_n}^+ & \text{for } j > 0, \; n \text{ odd}
\end{cases}.
\end{equation}
Now we prove that for some choice of the constants $c_\pm, \beta_1^\pm, \ldots, \beta_k^\pm > 0$ satisfying \eqref{eq:eq2} the measure $\nu$ is stationary for $\{\tilde f_-, \tilde f_+\}$ with probabilities $p_-, p_+$. Note that to show that $\nu$ is stationary, it is enough to check 
\begin{equation}\label{eq:stat}
\nu([j, r_1, \ldots, r_n]) = p_- \; \nu(\tilde f_-^{-1}([j, r_1, \ldots, r_n])) + p_+\; \nu(\tilde f_+^{-1}([j, r_1, \ldots , r_n]))
\end{equation}
for $j \in \Z^*$, even $n \in \N$ and $r_1, \ldots r_n \in \{1, \ldots, k\}$, because the corresponding cylinders $[j, r_1, \ldots, r_n]$ generate the $\sigma$-algebra of Borel sets in $\Z^* \times \Sigma_k^+$. 
By \eqref{eq:tildef1},
\begin{align*}
\tilde f_-^{-1}([j, r_1, \ldots, r_n]) &= 
\begin{cases}
[j+1, r_1, \ldots, r_n] & \text{for } j < -1\\
[r_1, r_2, \ldots, r_n] & \text{for } j = -1\\
[j+k, r_1, \ldots, r_n] & \text{for } j > 0
\end{cases},
\\
\tilde f_+^{-1}([j, r_1, \ldots, r_n]) &= 
\begin{cases}
[j-k, r_1, \ldots, r_n] & \text{for } j < 0\\
[-r_1, r_2,\ldots, r_n] & \text{for } j = 1\\
[j-1, r_1, \ldots, r_n] & \text{for } j > 1
\end{cases}.
\end{align*}
Using this together with \eqref{eq:cyl}, we check that \eqref{eq:stat} for even $n \in \N$ (split into four cases: $j < -1$, $j > 1$, $j = -1$, $j = 1$, respectively) is equivalent to the following system of equations:
\begin{equation}\label{eq:eq}
\begin{cases}
\eta_- = p_- + p_+ \eta_-^{k+1}\\
\eta_+ = p_+ + p_- \eta_+^{k+1}\\
c_- \eta_- \beta_r^- =
p_- c_+ \eta_+^r +
p_+ c_-  \eta_-^{k+1}\beta_r^-  & \text{for } r = 1, \ldots, k\\
c_+ \eta_+ \beta_r^+  =
p_+ c_-  \eta_-^r +
p_- c_+ \eta_+^{k+1} \beta_r^+ & \text{for } r = 1, \ldots, k
\end{cases}
\end{equation}
(where we write $r$ instead of $r_1$). 

Now we solve the system \eqref{eq:eq} together with \eqref{eq:eq2}.
The first two equations of \eqref{eq:eq} agree with the definitions of $\eta_-$, $\eta_+$. Substituting them, respectively, into the third and fourth ones, we obtain
\begin{equation}\label{eq:eq3}
c_- \beta_r^- = c_+ \eta_+^r, \qquad c_+ \beta_r^+ = c_- \eta_-^r.
\end{equation}
Summing this over $r \in \{1, \ldots, k\}$ and using the second and third equation of \eqref{eq:eq2}, we have
\[
c_- = c_+ \frac{\eta_+ - \eta_+^{k+1}}{1- \eta_+},\ c_+ = c_- \frac{\eta_- - \eta_-^{k+1}}{1- \eta_-}
\]
and substituting the second and first equation of \eqref{eq:eq} respectively, we arrive at a single equation
\[
c_-p_- = c_+p_+,
\]
which together with the first equation of \eqref{eq:eq2} gives
\[
c_- = \frac{p_+}{p_+\eta_-/(1-\eta_-) + p_-\eta_+/(1-\eta_+)}, \qquad
c_+ = \frac{p_-}{p_+\eta_-/(1-\eta_-) + p_-\eta_+/(1-\eta_+)}.
\]
Using \eqref{eq:eq3}, we finally obtain
\[
\beta_r^- = \frac{p_-}{p_+}\eta_+^r, \qquad \beta_r^+ = \frac{p_+}{p_-}\eta_-^r, \qquad r = 1, \ldots, k.
\]
The numbers $c_\pm, \beta_1^\pm, \ldots, \beta_k^\pm$ satisfy \eqref{eq:eq} and \eqref{eq:eq2}. In this way we showed that the system of equations \eqref{eq:eq} and \eqref{eq:eq2} has a unique solution for which the measure $\nu$ is stationary. By the uniqueness of such a measure, we have $\nu = \tilde \mu$.
\end{proof}

Finally, we determine the Hausdorff dimension of the measure $\mu$. Since $\mu|_{I_j} = \pi_* (\eta_j \delta_j \otimes \nu_j)$ for $j \in \Z^*$ by Proposition~\ref{prop:symbolic_measure}, we have
\[
\dim_H \mu = \sup_{j \in \Z^*} \dim_H \mu|_{I_j} = \sup_{j \in \Z^*} \dim_H \pi_*(\eta_j \delta_j \otimes \nu_j).
\]
Note that the measure $\pi_*(\eta_j \delta_j \otimes \nu_j)$, supported on the Cantor set $\Lambda_j$, is bi-Lipschitz isomorphic (after normalization) to the measure $\pi_*(\eta_{-1} \delta_{-1} \otimes \nu_{-1})$, which (after normalization) is the self-similar measure for the iterated function system $\{\phi_r \circ \phi_s\}_{r,s=1}^k$ with probabilities $(\beta_r^- \beta_s^+)_{r,s=1}^k$. It is well-known (see e.g.~\cite[Theorem 5.2.5]{edgar}) that the Hausdorff dimension of such a measure is equal to the ratio of the entropy of the measure and its Lyapunov exponent, i.e.
\begin{align*}
\dim_H \pi_*(\eta_j \delta_j \otimes \nu_j) &= \frac{\sum \limits _{r,s=1}^k \beta_r^-\beta_s^+ \log \beta_r^-\beta_s^+}{\sum \limits _{r,s=1}^k \beta_r^-\beta_s^+ \log \rho^{r+s}} = \frac{\sum \limits_{r=1}^{k} (\beta_r^-\log \beta_r^- + \beta_r^+\log \beta_r^+) }
{\sum \limits_{r=1}^{k} (\beta_r^- + \beta_r^+)\log  \rho^r}\\ 
&=  \frac{\sum \limits_{r=1}^k r\Big(\frac{p_+}{p_-} \eta_-^r\log \eta_-  + \frac{p_-}{p_+} \eta_+^r\log \eta_+\Big)}{\sum \limits_{r=1}^k r\Big( \frac{p_+}{p_-}  \eta_-^r + \frac{p_-}{p_+}\eta_+^r \Big)\log \rho}.
\end{align*}

\section{Proof of Theorem~\ref{thm:(k:l)}. Case $l > 1$}
\label{sec:l>1}

\subsection*{Preliminaries}

In Theorem~\ref{thm:(k:l)} we consider a symmetric AM-system $\{f_-, f_+\}$ of disjoint type  with probabilities $p_-, p_+$, positive Lyapunov exponents and a $(k:l)$-resonance for some relatively prime $k,l\in\N$, $k > l$.
In this section we deal with the case $l >1$. Our approach is similar to the case $l=1$, however the combinatorics of the obtained system of intervals is more complicated and produces Cantor sets which are attractors for infinite iterated function systems.

We have
\[
f_-(x)=
\begin{cases}
\rho^l x &\text{for }x\in[0,x_-]\\
\II(\rho^{-k}\II(x)) &\text{for }x\in (x_-, 1]
\end{cases}, \qquad
f_+(x)=
\begin{cases}
\rho^{-k} x &\text{for }x\in[0,x_+]\\
\II(\rho^l\II(x)) &\text{for }x\in (x_+, 1]
\end{cases},
\]
where $\rho \in (0,1)$, $k, l \in \N$, $1 < l < k$ and
\begin{alignat*}{2}
x_- &= \frac{1 - \rho^k}{1 - \rho^{k+l}}, &\qquad x_+ &= \II(x_-) = \frac{\rho^k - \rho^{k+l}}{1 - \rho^{k+l}},\\
f_-(x_-) &= \frac{\rho^l - \rho^{k+l}}{1 - \rho^{k+l}}, &\qquad f_+(x_+) &= \II(f_-(x_-)) = \frac{1 - \rho^l}{1 - \rho^{k+l}}.
\end{alignat*}
In particular, we have
\[
x_+ < f_-(x_-).
\]
A direct computation gives
\begin{equation}\label{eq:k,l}
\II(\rho^k \II (\rho^l x_-)) = x_-. 
\end{equation}
We assume that the system is of disjoint type, which is equivalent to 
\[
\rho^{k+l} - 2\rho^l + 1 > 0
\]
and also (by symmetry) to
\[
f_-(x_-) < \frac 1 2.
\]
Hence, since the system is symmetric, we have
\[
x_+ < f_-(x_-) < \frac 1 2 < f_+(x_+) < x_-.
\]

Consider the function $h(\rho) = \rho^{k+l} -2\rho^{k+1} + 2\rho -1$, $\rho \ge 0$. We have $h(0), h(1/2) < 0$, $h(1) = 0$, $h'(1) < 0$ and $h''$ has exactly one zero in $(0,+\infty)$. This implies that $h$ on $(0, 1)$ has a unique zero $\eta \in (1/2, 1)$, i.e. 
\[
\eta^{k+l} - 2\eta^{k+1} +2 \eta - 1 = 0
\]
and the assumption $\rho < \eta$ is equivalent to 
\begin{equation}\label{eq:ass2}
\rho^{k+l} - 2\rho^{k+1} +2\rho - 1 < 0
\end{equation}
and also to
\begin{equation}\label{eq:<1/2}
\rho x_- < \frac 1 2.
\end{equation}
In particular, this shows that the condition $\rho < \eta$ implies that the system is of disjoint type, which proves Remark~\ref{rem:l=1}.

Finally, notice that the positivity of the Lyapunov exponents of the system is equivalent to
\[
p_-, p_+ \in \Big( \frac{l}{k+l}, \frac{k}{k+l}\Big).
\]

\subsection*{Construction of the set \boldmath $\Lambda$}

Let us define the basic intervals $I_j \in \Z^*$ in the same manner as in the case $l=1$, i.e.
\[
I_{-1} = [\rho f_+ (x_+), \rho x_-] = [\rho f_+(x_+), \rho^{1-l}f_-(x_-)] =[\rho \II(\rho^l x_-), \rho x_-] = \left[ \frac{\rho-\rho^{1+l}}{1 - \rho^{k+l}}, \frac{\rho - \rho^{k+1}}{1 - \rho^{k+l}} \right]
\]
and note that by \eqref{eq:<1/2},
\begin{equation}\label{eq:1/2-kl}
\sup I_{-1} < \frac 1 2.
\end{equation}
For $j \in \Z^*$ let
\[
I_j = 
\begin{cases}
\rho^{-j-1} I_{-1} & \text{for } j < 0\\
\II(\rho^{j-1}I_{-1}) &\text{for } j > 0
\end{cases}.
\]
Let us now explain briefly the differences compared to the case $l=1$. Unlike previously, the union $\bigcup_{j \in \Z^*} I_j$ is no longer forward-invariant under $\{f_-, f_+\}$. More precisely, $f_+(I_{-k} \cup \ldots  \cup I_{-l}) \subset I_{l}$, but $f_+(I_{-l+1} \cup \ldots  \cup I_{-1})$ is situated between $I_l$ and $I_{l+1}$, inside a larger interval $J_l$ (see Lemma \ref{lem:f} and Figure \ref{fig:intervals-kl}). Therefore, our first step is extending the family $\{I_j\}_{j \in \Z^*}$ to a larger family $\{I_{\jj}\}_{\jj \in \JJ}$ consisting of similar copies of intervals $f_+(I_{-l+1}) , \ldots  , f_+(I_{-1})$ and their further iterates which are not contained in the intervals obtained in previous steps of the construction (see Figures~\ref{fig:intervals-kl2} and~\ref{fig:intervals-kl}). As a result, we obtain a forward-invariant family of intervals, which has infinitely many elements inside each of the (disjoint) intervals $J_j$. As before, we iterate the intervals from this family to produce a fully invariant and minimal union of disjoint Cantor sets. The corresponding iterated function system $\{\Phi_\rr\}_{\rr \in \RR}$ on $I_{-1}$ is generated by the action of $f_+$ on the interval $[x_+, f_-(x_-)]$, which maps some of the intervals $I_\jj$ into $I_l$. This infinite IFS has a Cantor attractor $\Lambda_{-1} \subset I_{-1}$, which is copied inside each of the intervals $I_\jj$ to form a suitable invariant minimal set $\Lambda \subset (0,1)$.

Let
\[
J_{-1} = [\rho\II(\rho x_-), \rho x_-]
\]
and note that 
\[
I_{-1} \subset J_{-1}, \qquad \sup I_{-1} = \sup J_{-1}.
\]
As previously, consider the maps
\[
\phi_r(x) = \rho \II(\rho^{r-1}x) = \rho - \rho^r x, \qquad r = 1,\ldots, k 
\]
for $x \in J_{-1}$. Recall that $\phi_r$ are orientation-reversing contracting similarities with $|\phi_r'| = \rho^r$ and $\phi_1 < \cdots < \phi_k$. 

\begin{lem}\label{lem:ifs} We have
\[
\phi_r(J_{-1}) \subset
\begin{cases}
J_{-1} \setminus I_{-1} &\text{for } r = 1, \ldots, l - 1\\
I_{-1} &\text{for } r = l, \ldots, k - 1
\end{cases},
\qquad
\phi_k(I_{-1}) \subset I_{-1}.
\]
Moreover, $\phi_r(J_{-1})$, $r = 1, \ldots, k$, are pairwise disjoint.
\end{lem}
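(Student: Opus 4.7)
The plan is to reduce every assertion of the lemma to an explicit inequality between the endpoints of the intervals involved, and then verify those inequalities using the two ingredients already recorded: the defining relation $x_-(1-\rho^{k+l}) = 1 - \rho^k$ (equivalently \eqref{eq:k,l}), and the bound $\rho x_- < 1/2$ from \eqref{eq:<1/2}, which is available under $\rho < \eta$.

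First I would record the explicit endpoints $J_{-1} = [\rho - \rho^2 x_-, \rho x_-]$ and $I_{-1} = [\rho - \rho^{l+1} x_-, \rho x_-]$, and note that since $\phi_r$ is orientation-reversing with slope $-\rho^r$,
\[
\phi_r(J_{-1}) = [\rho - \rho^{r+1} x_-,\; \rho - \rho^{r+1} + \rho^{r+2} x_-], \qquad \sup \phi_r(I_{-1}) = \rho - \rho^{r+1} + \rho^{r+l+1} x_-.
\]
After this setup every claim reads as a comparison of two such explicit numbers.

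For the inclusion $\phi_r(J_{-1}) \subset I_{-1}$ with $l \le r \le k-1$, the infimum inequality $\rho^{r+1} \le \rho^{l+1}$ is immediate from $r \ge l$, and the supremum inequality $\rho - \rho^{r+1} + \rho^{r+2} x_- \le \rho x_-$ only needs to be verified for the worst case $r = k-1$ (since the inequality is easier for smaller $r$, as $\rho^r$ increases). After substituting the value of $x_-$, that case simplifies to
\[
1 - 2\rho + \rho^{l+1} + \rho^{k+1} - \rho^{k+l} \ge 0,
\]
which I would split as $(1 - 2\rho + 2\rho^{k+1} - \rho^{k+l}) + \rho^{l+1}(1 - \rho^{k-l})$; the first summand is positive by $\rho < \eta$ and the second by $k > l$. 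The same supremum bound covers $\phi_r(J_{-1}) \subset J_{-1}$ for $1 \le r \le l-1$; the extra disjointness $\phi_r(J_{-1}) \cap I_{-1} = \emptyset$ rearranges to $x_-(\rho^{l-r} + \rho) < 1$, which at $r = l-1$ is exactly $2\rho x_- < 1$ and is even easier for $r < l-1$ since then $\rho^{l-r} < \rho$.

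The one genuinely special case is $\phi_k(I_{-1}) \subset I_{-1}$: the left endpoint condition $\rho^{k+1} \le \rho^{l+1}$ is trivial from $k > l$, while the right endpoint of $\phi_k(I_{-1})$, namely $\rho - \rho^{k+1} + \rho^{k+l+1} x_-$, equals $\rho x_- = \sup I_{-1}$ by a direct restatement of \eqref{eq:k,l}. Finally, pairwise disjointness of $\phi_r(J_{-1})$, $r = 1, \ldots, k$, reduces to comparing consecutive intervals: $\sup \phi_r(J_{-1}) = \rho - \rho^{r+1} + \rho^{r+2} x_-$ and $\inf \phi_{r+1}(J_{-1}) = \rho - \rho^{r+2} x_-$ are in the right order precisely when $2\rho x_- < 1$. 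I expect no serious obstacle; the only mildly delicate point is the supremum bound above, where one really needs to combine $\rho < \eta$ with the gap $k > l$ rather than either hypothesis alone.
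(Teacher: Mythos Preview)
Your proposal is correct and follows essentially the same approach as the paper: compute the endpoints of $\phi_r(J_{-1})$ and $\phi_r(I_{-1})$ explicitly and reduce each assertion to an elementary inequality verified via \eqref{eq:<1/2} (equivalently \eqref{eq:ass2}) and \eqref{eq:k,l}. The only cosmetic difference is in the check $\sup\phi_r(J_{-1}) \le \sup I_{-1}$ for $l \le r \le k-1$: you reduce by monotonicity to the worst case $r=k-1$ and split the resulting expression as $(1-2\rho+2\rho^{k+1}-\rho^{k+l}) + \rho^{l+1}(1-\rho^{k-l})$, whereas the paper keeps $r$ generic and bounds individual terms before invoking \eqref{eq:ass2}; both routes are equally direct.
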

\begin{proof}
By definition,
\[
\phi_r(J_{-1}) = [\rho\II(\rho^r x_-), \rho\II(\rho^r \II(\rho x_-))]\qquad
\phi_r(I_{-1}) = [\rho\II(\rho^r x_-), \rho\II(\rho^r \II(\rho^l x_-))]. 
\]
It is obvious that $\inf\phi_r(J_{-1}) \geq \inf J_{-1}$ for $r  = 1, \ldots, k$ and $\inf\phi_r(J_{-1}) \geq \inf I_{-1}$ for $r  = l, \ldots, k$. The inequality $\sup\phi_r(J_{-1}) < \inf I_{-1}$ for $r = 1, \ldots, l-1$ boils down to \eqref{eq:ass2}, while $\sup\phi_r(J_{-1}) \leq \sup I_{-1}$ for $r = l, \ldots, k-1$ is equivalent to $\rho^{k+l} + \rho^{k-r} + \rho - \rho^{k+1} - \rho^{k+l-r}+1 \leq 0$. For $l \leq r \leq k-1$ it is enough to have $\rho^{k+l}+2\rho-\rho^{k+1} - \rho^k - 1 \leq 0$ (as $\rho^{k-r} \leq \rho$ and $\rho^{k+l-r} \geq \rho^k$). By \eqref{eq:ass2} this can be reduced to $\rho^{k+1} - \rho^k \leq 0$ which is obviously true, since $\rho \in (0, 1)$. This proves the first assertion. To show $\phi_k(I_{-1}) \subset I_{-1}$, it is enough to notice that $\sup\phi_k(I_{-1}) = \sup I_{-1}$ holds due to \eqref{eq:k,l}. To check the disjointness of $\phi_r(J_{-1})$, we notice that the inequality $\sup \phi_r(J_{-1}) < \inf \phi_{r+1}(J_{-1})$, $r  = 1, \ldots, k-1$, is equivalent to \eqref{eq:ass2}. 
\end{proof}

For $j \in \Z^*$ let
\[
\JJ_j = \{j\} \times \Big( \{\emptyset\} \cup \bigcup_{n = 1}^\infty \{1, \ldots, l-1\}^n\Big), \qquad \JJ = \bigcup_{j \in \Z^*} \JJ_j. 
\]
We will denote the elements of $\JJ$ by $\jj = (j, j_1, \ldots, j_n)$, where  $j \in \Z^*$, $n \ge 0$, $j_1, \ldots, j_n \in \{1, \ldots, l-1\}$, with the convention that $j_1, \ldots, j_n$ for $n = 0$ is the empty sequence. 

For $\jj  = (j, j_1, \ldots, j_n) \in \JJ$ define
\[
I_\jj = I_{j,j_1, \ldots, j_n} = 
\begin{cases}
\rho^{-j-1} \phi_{j_1} \circ \cdots \circ \phi_{j_n}(I_{-1}) &\text{for } j < 0\\
\II(\rho^{j-1} \phi_{j_1} \circ \cdots\circ \phi_{j_n}(I_{-1})) &\text{for } j > 0
\end{cases},
\qquad I = \bigcup_{\jj \in \JJ} I_\jj.
\]
Note that this notation is compatible with our previous definition of $I_j$ for $j \in \Z^*$.
Furthermore, for $j \in \Z^*$ let
\[
J_j = 
\begin{cases}
\rho^{-j-1} J_{-1} & \text{for } j < 0\\
\II(\rho^{j-1} J_{-1}) & \text{for } j > 0
\end{cases}, \qquad
J = \bigcup_{j\in\Z} J_j.
\]
The following lemmas describe the combinatorics of the intervals $I_\jj, \jj \in \JJ$.
\begin{lem}\label{lem:intervals-kl}
The following statements hold.
\begin{itemize}
\item[$($a$)$] $I_{-j,j_1, \ldots, j_n} = \II(I_{j,j_1, \ldots, j_n})$, $J_{-j} = \II(J_j)$ for $j \in \Z^*$, $n \ge 0$, $j_1, \ldots, j_n \in \{1, \ldots, l-1\}$.
\item[$($b$)$] The segments $J_j$, $j\in\Z^*$, are pairwise disjoint. 
\item[$($c$)$] For $j \in \Z^*$, the segments $I_\jj$, $\jj \in \JJ_j$, are pairwise disjoint subsets of $J_j$. 
\item[$($d$)$] For $j \in \Z^*$, we have $\inf J_j = \inf I_{j,1}$, $\sup J_j = \sup I_j$ for $j < 0$ and
$\inf J_j = \inf I_j$, $\sup J_j = \sup I_{j,1}$ for $j > 0$. In particular, 
\[
J_j = \conv \bigcup_{\jj \in \JJ_j} I_\jj.
\]
\item[$($e$)$] Let $j \in \Z^*$. Then for $j < 0$ $($resp.~$j > 0)$, the segments $I_{j,j_1}$, $j_1 = 1, \ldots, l-1$, are situated in $J_j$ in the increasing $($resp.~decreasing$)$ order with respect to $j_1$, to the left $($resp.~right$)$ of $I_j$.

\item[$($f$)$]
Let $j \in \Z^*$, $j_1, \ldots, j_n \in \{1, \ldots, l-1\}$ for $n\ge 1$. Then for $j < 0$ and even $n$ or $j > 0$ and odd $n$ $($resp.~$j < 0$ and odd $n$ or $j > 0$ and even $n)$, the segments 
$I_{j,j_1, \ldots, j_{n+1}}$, $j_{n+1} = 1, \ldots, l-1$ are situated in $J_j$ in the increasing $($resp.~decreasing$)$ order with respect to $j_{n+1}$, between $I_{j,j_1, \ldots, j_n}$ and $I_{j,j_1, \ldots, j_{n-1}, j_n+1}$ if $j_n < l-1$, and between $I_{j,j_1, \ldots, j_n}$ and $I_{j,j_1, \ldots, j_{n-1}}$ if $j_n = l-1$. 
\item[$($g$)$] $\inf I_{-k} = x_+$, $\sup I_{-l} = f_-(x_-)$, $\inf I_l = f_+(x_+)$, $\sup I_k = x_-$.
\end{itemize}
See Figures~{\rm \ref{fig:intervals-kl2}} and~{\rm \ref{fig:intervals-kl}}.
\end{lem}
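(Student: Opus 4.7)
The plan is to verify the seven assertions in sequence, leveraging Lemma~\ref{lem:ifs} and the hypothesis $\rho<\eta$ (equivalently~\eqref{eq:<1/2}) throughout. Parts~(a) and~(g) are essentially definitional: (a) is immediate because the $j<0$ and $j>0$ cases of $I_{\jj}$ and $J_j$ are defined by applying $\II$ and the scaling factors $\rho^{-j-1}$, $\rho^{j-1}$ are symmetric in $|j|$. Part~(g) unpacks to arithmetic verifications such as $\inf I_{-k} = \rho^{k-1}\inf I_{-1} = \rho^{k}\II(\rho^l x_-) = x_+$, via the explicit formulas for $x_\pm$ and $f_\pm(x_\pm)$. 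For part~(d), the single identity $\phi_1(\sup I_{-1}) = \rho-\rho\cdot\rho x_- = \rho\II(\rho x_-) = \inf J_{-1}$, together with $\sup I_{-1} = \rho x_- = \sup J_{-1}$, gives the two endpoint equalities at $j=-1$; scaling by $\rho^{-j-1}$ extends them to all $j<0$ and (a) handles $j>0$. The equality $J_j=\conv\bigcup_{\jj\in\JJ_j}I_\jj$ then follows from part~(c) once the endpoints of $J_j$ are shown to be the extreme endpoints of $I_{j,1}$ and $I_j$.

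For part~(b), I reduce the pairwise disjointness of $\{J_j\}_{j\in\Z^*}$ to two inequalities: $\rho\sup J_{-1}<\inf J_{-1}$ (which gives $\sup J_{j-1}<\inf J_j$ for all $j\le-1$ after multiplying by $\rho^{-j-1}$) and $\sup J_{-1}<\tfrac12$ (separating $J_{-1}$ and $J_1$ via~(a)). Both unfold to $2\rho x_-<1$, i.e.~\eqref{eq:<1/2}. For part~(c), I proceed by induction on $n$ using Lemma~\ref{lem:ifs}. Since $\phi_r(J_{-1})\subset J_{-1}$ for $r=1,\ldots,l-1$, iteration yields $\phi_{j_1}\circ\cdots\circ\phi_{j_n}(I_{-1})\subset J_{-1}$, hence $I_\jj\subset J_j$. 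For the disjointness of distinct $\jj,\jj'\in\JJ_j$, I compare them at the first index $m$ where they differ: if $j_m\neq j_m'$, then Lemma~\ref{lem:ifs} gives $\phi_{j_m}(J_{-1})\cap\phi_{j_m'}(J_{-1})=\emptyset$, and this disjointness is preserved under the common outer prefix $\phi_{j_1}\circ\cdots\circ\phi_{j_{m-1}}$; if instead $\jj'$ properly extends $\jj$ by $(j_{n+1},\ldots)$ with $j_{n+1}\in\{1,\ldots,l-1\}$, then $\phi_{j_{n+1}}(J_{-1})\cap I_{-1}=\emptyset$ from Lemma~\ref{lem:ifs} separates $I_{\jj'}$ from $I_\jj$.

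Parts~(e) and~(f) are proved together by induction on $n$, tracking the orientation of $\phi_{j_1}\circ\cdots\circ\phi_{j_n}$ and using that $\phi_1<\phi_2<\cdots<\phi_{l-1}$ as functions on $J_{-1}$ (since each $\phi_r$ is orientation-reversing and $\phi_r(x)=\rho-\rho^r x$ increases in $r$ at positive $x$). The base case~(e) records that $\phi_{j_1}(I_{-1})$ lie in $J_{-1}\setminus I_{-1}$, in increasing order of $j_1$ and to the left of $I_{-1}$, exactly as in Lemma~\ref{lem:ifs}. For the inductive step, applying $\phi_{j_1}\circ\cdots\circ\phi_{j_n}$ to the level-one configuration flips the order once per factor, producing the parity dichotomy in~(f). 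The main obstacle is the bookkeeping of the ``right neighbor'' of $I_{j,j_1,\ldots,j_n}$: it is $I_{j,j_1,\ldots,j_{n-1},j_n+1}$ when $j_n<l-1$, but it is $I_{j,j_1,\ldots,j_{n-1}}$ when $j_n=l-1$, a ``carry'' analogous to positional notation. This forces the induction to carry not only the location of $I_\jj$ but also the identification of the adjacent gap in the image of $\phi_{j_1}\circ\cdots\circ\phi_{j_n}(J_{-1})$ at every level, and the saturation case $j_n=l-1$ is where the recursion unwinds one layer of composition before proceeding; verifying consistency of this ``carrying rule'' across all parities of $n$ and signs of $j$ is the delicate point of the argument.
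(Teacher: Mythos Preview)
Your proposal is correct and follows essentially the same route as the paper's proof: reduce by symmetry to $j=-1$, use Lemma~\ref{lem:ifs} for the containment and disjointness in (c), deduce (b) from \eqref{eq:<1/2} (equivalently \eqref{eq:ass2}), and handle (e)--(f) by tracking the orientation of the composition $\phi_{j_1}\circ\cdots\circ\phi_{j_n}$ together with the ordering $\phi_1<\cdots<\phi_{l-1}$. Your write-up is more explicit than the paper's terse version (in particular your endpoint computations for (d) and (g) and your description of the ``carry'' in (f)), but the underlying argument is the same.
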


\begin{figure}[ht!]
\begin{center}
\scalebox{0.85}{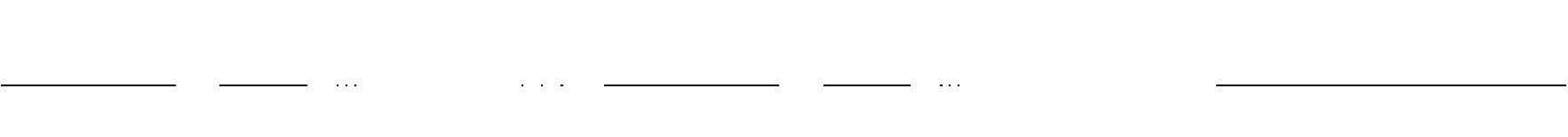}
\end{center}
\caption{A schematic view of the location of the intervals $I_{j,j_1, \ldots, j_n}$ within $J_j$ for $j < 0$.}\label{fig:intervals-kl2}
\end{figure}

\begin{proof} The assertion (a) is straightforward.
To show (b), it is enough to use \eqref{eq:1/2-kl} and check $\sup J_{j-1} < \inf J_j$ for $j < 0$ (and use the symmetry of the system). By a direct computation, the latter inequality is equivalent to \eqref{eq:ass2}. By symmetry and the definition of $I_\jj$ and $J_j$, showing (c)--(f) we can assume $j = -1$. First, we prove (c). 
Since $I_{-1} \subset J_{-1}$, Lemma~\ref{lem:ifs} implies $I_\jj \subset J_{-1}$ for  $\jj \in \JJ_{-1}$. To show the disjointness of $I_\jj$, suppose that $I_{-1,j_1, \ldots, j_n} \cap 
I_{-1,j_1', \ldots, j'_{n'}} \neq \emptyset$ for some distinct $(-1,j_1, \ldots, j_n), (-1,j_1', \ldots, j'_{n'}) \in \JJ_{-1}$. We can assume $n' \geq n$. Applying suitable sequence of inverses of maps $\phi_r$ to both segments, we can suppose $j_1 \neq j'_1$ or $I_{-1,j_1', \ldots, j'_{n'}} = I_{-1}$. In the first case we have a contradiction with the last assertion of Lemma~\ref{lem:ifs}, while the second case contradicts with the first assertion of it. This proves (c). The first part of (d) is straightforward. Together with (c), it shows the second part. The assertion (e) follows from (c) and the fact $\phi_1 < \cdots < \phi_{l-1}$. The first part of (f) holds by a direct checking. In turn, together with the fact that the maps $\phi_r$ reverse the orientation and $\phi_1 < \cdots < \phi_{l-1}$, it proves the second part by induction. The assertion (g) is straightforward. 
\end{proof}

The following lemma is a direct consequence of the definition of the maps $f_\pm$ and Lemma~\ref{lem:intervals-kl}. See Figure~{\rm \ref{fig:intervals-kl}}.

\begin{lem}\label{lem:f}
We have
\begin{align*}
f_-(x) &= 
\begin{cases}
\rho^l x & \text{for } x \in I_k \cup \bigcup_{j = -\infty}^{k-1} J_j\\
\II(\rho^{-k} \II(x)) & \text{for } x \in \bigcup_{j = k}^\infty J_j \setminus I_k
\end{cases},\\
f_+(x) &= 
\begin{cases}
\rho^{-k} x & \text{for } x \in \bigcup_{j = -\infty}^{-k} J_j\setminus I_{-k}\\
\II(\rho^l \II(x)) & \text{for } x \in I_{-k} \cup \bigcup_{j = -k+1}^\infty J_j
\end{cases}.
\end{align*}
Moreover, for $(j, j_1, \ldots, j_n) \in \JJ$, we have:
\begin{alignat*}{2}
f_+(I_{j,j_1, \ldots, j_n}) &= I_{j+k,j_1, \ldots, j_n} &\quad &\text{for } j < -k,\\
f_+(I_{-k,j_1, \ldots, j_n}) &= I_{j_1, j_2, \ldots, j_n} &\quad &\text{for } n > 0,\\
f_+\Big(\conv\Big(I_{-k} \cup \bigcup_{j=-k+1}^{-l} J_j \Big)\Big) &= I_l,&&\\
f_+(I_{j,j_1, \ldots, j_n}) &= I_{l, -j, j_1 \ldots, j_n} &\quad &\text{for } -l+1 \leq j \leq -1,\\
f_+(I_{j,j_1, \ldots, j_n}) &= I_{j+l,j_1, \ldots, j_n} &\quad &\text{for } j > 0.
\end{alignat*}

Analogously,
\begin{alignat*}{2}
f_-(I_{j,j_1, \ldots, j_n}) &= I_{j-l,j_1, \ldots, j_n} &\quad &\text{for } j < 0,\\
f_-(I_{j,j_1, \ldots, j_n}) &= I_{-l, j, j_1 \ldots, j_n} &\quad &\text{for } 1 \leq j \leq l-1,\\
f_-\Big(\conv\Big(I_k \cup \bigcup_{j=l}^{k-1} J_j\Big) \Big) &= I_{-l},&&\\
f_-(I_{k,j_1, \ldots, j_n}) &= I_{-j_1, j_2, \ldots, j_n} &\quad &\text{for } n > 0,\\
f_-(I_{j,j_1, \ldots, j_n}) &= I_{j-k,j_1, \ldots, j_n} &\quad &\text{for } j > k.
\end{alignat*}
\end{lem}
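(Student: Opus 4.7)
The plan is to reduce each claim to two ingredients: first, the piecewise definition of $f_\pm$, which switches branches at $x_-$ and $x_+$; second, the combinatorial description of $I_\jj$ and $J_j$ from Lemma~\ref{lem:intervals-kl}, particularly the location and ordering statements in parts (b)--(g).

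First I would verify the two piecewise formulas for $f_\pm$ by locating each of the sets $J_j$ (and $I_k$, $I_{-k}$) relative to the breakpoints $x_-$, $x_+$. By Lemma~\ref{lem:intervals-kl}(g), $\sup I_k = x_-$ and $\inf I_{-k} = x_+$. Combined with the disjointness and increasing order of the $J_j$ (part (b)) and the fact that within $J_k$ (respectively $J_{-k}$) the interval $I_k$ (respectively $I_{-k}$) occupies the extremal position (parts (d) and (e) together with their symmetric counterparts), one sees that $J_j \subset [0,x_-]$ for $j \le k-1$, that $I_k \subset [0,x_-]$ while $J_k \setminus I_k \subset (x_-,1]$, that $J_j \subset (x_-,1]$ for $j \ge k+1$, and analogously for $f_+$ about $x_+$. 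Substituting into the definition of $f_\pm$ gives the first two claims.

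Next I would verify the action on each $I_{j,j_1,\ldots,j_n}$ case by case, exploiting the scaling $I_{j,j_1,\ldots,j_n} = \rho^{-j-1}\phi_{j_1}\circ\cdots\circ\phi_{j_n}(I_{-1})$ for $j<0$ (and its mirror image under $\II$ for $j>0$), together with the identity $\phi_r(y) = \rho\II(\rho^{r-1}y) = \rho - \rho^r y$. The routine cases are $j < -k$ and $j > 0$ for $f_+$ (and their counterparts for $f_-$): the interval lies entirely on one side of $x_+$, so $f_+$ is either $y \mapsto \rho^{-k}y$ or $y \mapsto \II(\rho^l\II(y))$, and direct substitution into the scaling formula yields the claimed image. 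The convex hull formula $f_+(\conv(I_{-k}\cup\bigcup_{j=-k+1}^{-l} J_j)) = I_l$ is obtained by noting that $f_+$ is continuous and affine on this convex set (which lies above $x_+$), so it suffices to compute the images of the endpoints $x_+$ and $\sup J_{-l} = \rho^l x_-$ using Lemma~\ref{lem:intervals-kl}(g); these equal $\inf I_l = f_+(x_+)$ and $\sup I_l$ by direct calculation.

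The delicate cases are those in which the symbolic index structure genuinely changes: $j = -k$ with $n > 0$, where the head coordinate $-k$ is deleted and $j_1$ is promoted to the main index; and $-l+1 \le j \le -1$, where new coordinates $l$ and $-j$ are prepended. Both identifications rest on the same algebraic trick: conversion between the reflection branch of $f_\pm$ and the generator $\phi_r$ via $\rho\II(\rho^{r-1}y) = \phi_r(y)$. For $j = -k$, $n > 0$, the interval lies in $J_{-k}\setminus I_{-k}$, so $f_+$ acts as multiplication by $\rho^{-k}$; rewriting $\rho^{-k}\cdot\rho^{k-1}\phi_{j_1}\circ\cdots\circ\phi_{j_n}(I_{-1}) = \II(\rho^{j_1-1}\phi_{j_2}\circ\cdots\circ\phi_{j_n}(I_{-1}))$ identifies the image with $I_{j_1,j_2,\ldots,j_n}$ (recall $j_1 \in \{1,\ldots,l-1\} \subset \Z^*$ is positive). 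For $-l+1 \le j \le -1$, the interval $J_j$ sits above $x_+$ (as $J_j \subset (f_-(x_-),\tfrac12)$), so $f_+ = \II\circ(\rho^l\cdot)\circ\II$, and the identity converts the output into $\II(\rho^{l-1}\phi_{-j}\circ\phi_{j_1}\circ\cdots\circ\phi_{j_n}(I_{-1})) = I_{l,-j,j_1,\ldots,j_n}$. Finally, the formulas for $f_-$ follow from the $f_+$ formulas by the symmetry $\II\circ f_- = f_+\circ\II$ together with Lemma~\ref{lem:intervals-kl}(a). The main obstacle is purely bookkeeping: tracking the orientation reversals introduced by $\II$ and by the orientation-reversing maps $\phi_r$ simultaneously, especially when identifying the image as a specific $I_\jj$ in the index-changing cases above.
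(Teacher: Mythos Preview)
Your proposal is correct and follows essentially the same approach as the paper, which simply states that the lemma is ``a direct consequence of the definition of the maps $f_\pm$ and Lemma~\ref{lem:intervals-kl}'' without further elaboration. Your write-up is a detailed expansion of that one-line justification, and the key algebraic identity $\rho^{-1}\phi_{j_1}(y) = \II(\rho^{j_1-1}y)$ you isolate for the index-changing cases is exactly the mechanism the paper is relying on implicitly.
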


\begin{figure}[ht!]
\begin{center}
\scalebox{0.9}{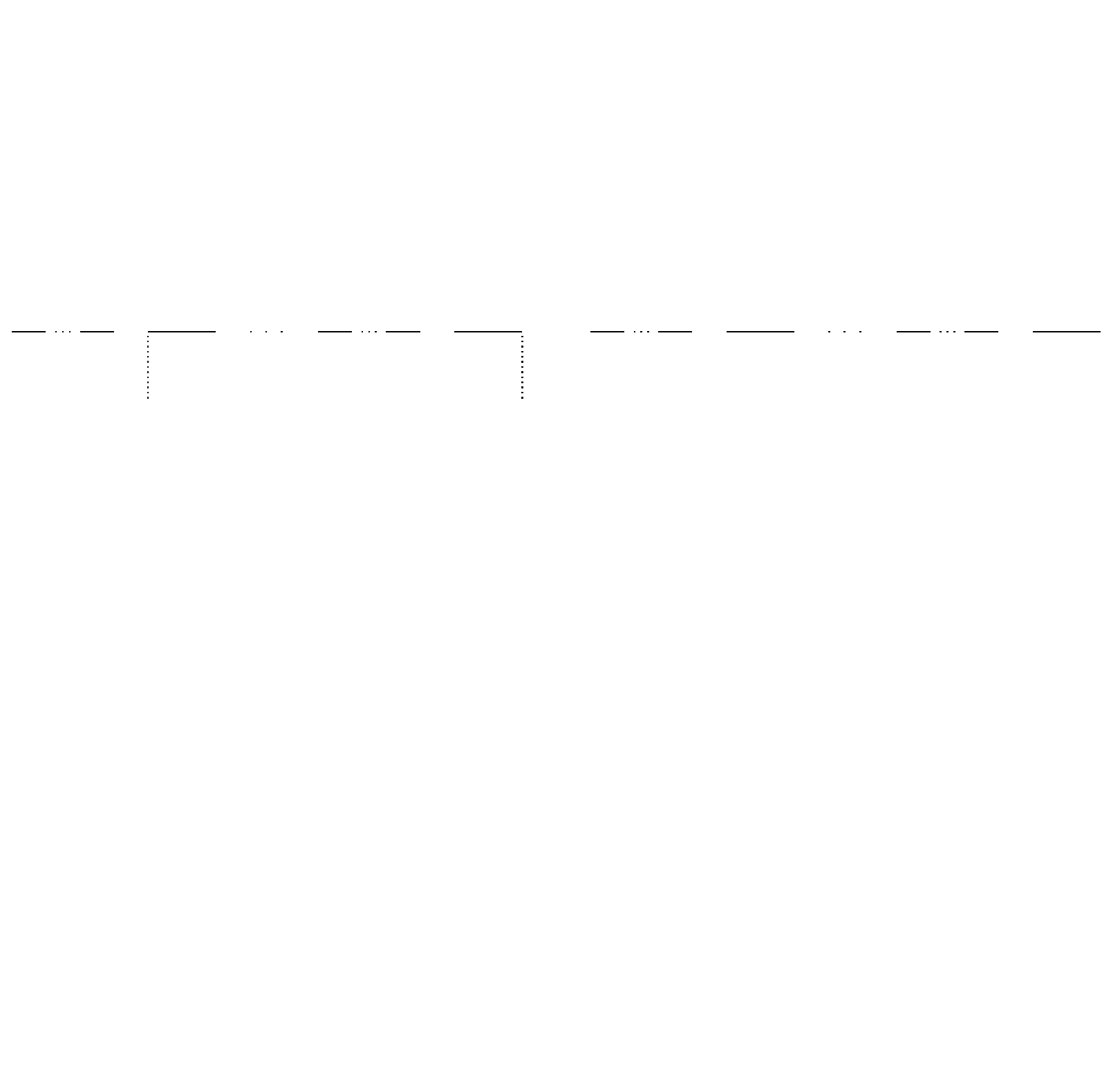}
\end{center}
\caption{A schematic view of the action of $f_+$ on the intervals $I_\jj$.}\label{fig:intervals-kl}
\end{figure}

In particular, Lemma~\ref{lem:f} implies $f(I) \subset I$. More precisely, for every $i \in \{-,+\}$ and $\jj \in \JJ$, 
\[
f_i(I_\jj) \subset I_{\jj'} \qquad\text{for some } \jj' = \jj'(i,\jj) \in \JJ. 
\]

Let
\[
\RR = \{l, \ldots, k\} \cup \bigcup_{n =1}^\infty \{l, \ldots, k-1\}\times\{1, \ldots, l-1\}^n.
\]
We will denote the elements of $\RR$ by $\rr = (r, r_1, \ldots, r_n)$, $n \ge 0$, where $r \in \{l, \ldots, k\}$ in the case $n = 0$, $r \in \{l, \ldots, k-1\}$ in the case $n > 0$ and $r_1, \ldots, r_n \in \{1, \ldots, l-1\}$,  with the convention that $r_1, \ldots, r_n$ for $n = 0$ is the empty sequence. Note that 
\[
\RR \subset \JJ.
\]
For $\rr = (r, r_1, \ldots, r_n) \in \RR$ define the maps
\[
\Phi_\rr = 
\begin{cases}
\phi_r &\text{for } n = 0\\
\phi_r \circ \phi_{r_1} \circ \cdots \circ\phi_{r_n} &\text{for } n > 0
\end{cases}
\]
on the interval $I_{-1}$. By Lemma~\ref{lem:ifs}, 
\[
\Phi_\rr \colon I_{-1} \to I_{-1}, \qquad \rr \in \RR,
\]
so the family $\{\Phi_\rr\}_{\rr \in \RR}$ is a countable infinite iterated function system of contractions in $I_{-1}$ satisfying $\lim \limits_{s \to \infty} |\phi_{\rr_s} (I_{-1})| = 0$ for any sequence $(\rr_s)_{s=1}^\infty$ of mutually distinct elements of $\RR$. Moreover, the definition of $\Phi_\rr$ implies
\[
\{\Phi_\rr(I_{-1})\}_{\rr \in \RR} = \{\phi_r(I_\jj) : r \in\{l, \ldots, k-1\}, \jj\in\JJ_{-1}\} \cup \{\phi_k(I_{-1})\}.
\]
This together with Lemma~\ref{lem:ifs} implies that $\Phi_\rr(I_{-1})$, $\rr \in \RR$, are pairwise disjoint. Similarly as before, we are interested in the limit set of this system. As the family $\{\Phi_\rr\}_{\rr \in \RR}$  is infinite, there are two limit sets one can consider:
\begin{equation}\label{eq:L_def} L = \bigcap \limits_{m=1}^{\infty} \bigcup_{\rr_1, \ldots, \rr_m \in \RR} \Phi_{\rr_1} \circ \cdots \circ \Phi_{\rr_m} (I_{-1})
\end{equation}
and its closure
\[ \Lambda_{-1} = \overline{L}.\]
It is easy to see that they satisfy 
\[
L = \bigcup_{\rr\in \RR}\Phi_\rr(L), \qquad \Lambda_{-1} = \overline{\bigcup_{\rr\in \RR}\Phi_\rr(\Lambda_{-1})} = \bigcap_{m = 1}^\infty  \overline{\bigcup_{\rr_1, \ldots, \rr_m \in \RR} \Phi_{\rr_1} \circ \cdots \circ \Phi_{\rr_m} (I_{-1})}
\]
(see e.g.~\cite[Section 2]{MU}). As our goal is to find the minimal attractor of the system $\{f_-, f_+ \}$ (which equals also the support of $\mu$), we will focus on the $\Lambda_{-1}$. However, we will use the set $L$ in the proof of Proposition \ref{prop:dimX-kl}, as it is better suited for calculating the Hausdorff dimension.

For $\jj = (j,j_1, \ldots, j_n) \in \JJ$ let
\[
\Lambda_\jj = \Lambda_{j,j_1, \ldots, j_n} = 
\begin{cases}
\rho^{-j-1} \phi_{j_1, \ldots, j_n}(\Lambda_{-1}) &\text{for } j < 0\\
\II(\rho^{j-1} \phi_{j_1, \ldots, j_n}(\Lambda_{-1})) &\text{for } j > 0
\end{cases}, \qquad \Lambda = \overline{\bigcup_{\jj\in \JJ} \Lambda_\jj} \cap (0,1),
\]
where we write
\[
\phi_{j_1, \ldots, j_n} = \phi_{j_1} \circ \cdots \circ \phi_{j_n}.
\]
Obviously, $\Lambda_\jj \subset I_\jj$ for $j \in \JJ$ and $\Lambda \subset \bigcup_{j \in \Z*} J_j$. Furthermore, for $m \ge 0$ and $\rr_1, \ldots, \rr_m \in \RR$ let
\[
I_{\jj; \rr_1, \ldots, \rr_m} = 
\begin{cases}
\rho^{-j-1} \phi_{j_1, \ldots, j_n}(\Phi_{\rr_1} \circ \cdots \circ \Phi_{\rr_m} (I_{-1})) &\text{for } j < 0\\
\II(\rho^{j-1} \phi_{j_1, \ldots, j_n}(\Phi_{\rr_1} \circ \cdots \circ \Phi_{\rr_m} (I_{-1}))) &\text{for } j > 0
\end{cases}
\]
(for $m = 0$ the set $I_{\jj; \rr_1, \ldots, \rr_m}$ is equal to $I_\jj$).
As $|\Phi_\rr'| \le \rho$, for $\jj \in \JJ$, $\rr_1, \rr_2, \ldots \in \RR$ we have
\[
|I_{\jj; \rr_1, \ldots, \rr_m}| \le \rho^m \to 0 \text{ as } m \to \infty,
\]
so
\[
\bigcap_{m=1}^\infty I_{\jj; \rr_1, \ldots, \rr_m} = \{x_{\jj; \rr_1, \rr_2, \ldots}\}
\]
for a point $x_{\jj; \rr_1, \rr_2, \ldots} \in \Lambda$ and
\[
\Lambda = \overline{\bigcup_{\jj \in \JJ} \bigcap_{m=1}^\infty \overline{\bigcup_{\rr_1, \ldots, \rr_m \in \RR} I_{\jj; \rr_1, \ldots, \rr_m}}} \cap (0,1)= 
\overline{\left\{ x_{\jj; \rr_1, \rr_2, \ldots}: \: \jj \in \JJ, \rr_1, \rr_2, \ldots \in \RR\right\}}\cap (0,1).
\]

\subsection*{Description of trajectories}

Lemma~\ref{lem:f} implies the following.

\begin{lem}\label{lem:f(I)-kl} For $(j, j_1, \ldots, j_n) \in \JJ$,  $\rr_1, \rr_2, \ldots, \in \RR$ and $m \ge 0$, we have:
\[
f_-(I_{(j, j_1, \ldots, j_n); \rr_1, \ldots, \rr_m}) = 
\begin{cases}
I_{(j-l,j_1, \ldots, j_n); \rr_1, \ldots, \rr_m } &\text{for } j < 0\\
I_{(-l,j,j_1, \ldots, j_n); \rr_1, \ldots, \rr_m } &\text{for } 1 \leq j \leq l-1\\
I_{-l; (j, j_1, \ldots, j_n), \rr_1, \ldots, \rr_m } &\text{for } l \leq j \leq k-1 \text{ or } j = k,\, n = 0 \\
I_{(-j_1, j_2, \ldots, j_n); \rr_1, \ldots, \rr_m } &\text{for } j = k, \, n > 0\\
I_{(j-k,j_1, \ldots, j_n); \rr_1, \ldots, \rr_m } &\text{for } j > k
\end{cases},
\]
\[
f_+(I_{(j, j_1, \ldots, j_n); \rr_1, \ldots, \rr_m}) = 
\begin{cases}
I_{(j+k,j_1, \ldots, j_n); \rr_1, \ldots, \rr_m } &\text{for } j < -k\\
I_{(j_1, j_2, \ldots, j_n); \rr_1, \ldots, \rr_m } &\text{for } j = -k, \, n > 0\\
I_{l; (-j, j_1, \ldots, j_n), \rr_1, \ldots, \rr_m} &\text{for }j = -k, \, n = 0 \text{ or } -k+1 \leq j \leq -l\\
I_{(l,-j,j_1, \ldots, j_n); \rr_1, \ldots, \rr_m }&\text{for } -l+1 \leq j \leq -1\\
I_{(j+l,j_1, \ldots, j_n); \rr_1, \ldots, \rr_m } &\text{for } j > 0
\end{cases},
\]
and
\[
f_-(x_{(j, j_1, \ldots, j_n); \rr_1, \rr_2, \ldots}) = 
\begin{cases}
x_{(j-l,j_1, \ldots, j_n); \rr_1, \rr_2, \ldots} &\text{for } j < 0\\
x_{(-l,j,j_1, \ldots, j_n); \rr_1, \rr_2, \ldots} &\text{for } 1 \leq j \leq l-1\\
x_{-l; (j, j_1, \ldots, j_n), \rr_1, \rr_2, \ldots} &\text{for } l \leq j \leq k-1\text{ or } j = k, \, n = 0 \\
x_{(-j_1, j_2, \ldots, j_n); \rr_1, \rr_2, \ldots} &\text{for } j = k, \, n > 0\\
x_{(j-k,j_1, \ldots, j_n); \rr_1, \rr_2, \ldots} &\text{for } j > k
\end{cases},
\]
\[
f_+(x_{(j, j_1, \ldots, j_n); \rr_1, \rr_2, \ldots}) = 
\begin{cases}
x_{(j+k,j_1, \ldots, j_n); \rr_1, \rr_2, \ldots} &\text{for } j < -k\\
x_{(j_1, j_2, \ldots, j_n); \rr_1, \rr_2, \ldots} &\text{for } j = -k, \, n > 0\\
x_{l; (-j, j_1, \ldots, j_n), \rr_1, \rr_2, \ldots} &\text{for } j = -k, \, n = 0 \text{ or } -k+1 \leq j \leq -l\\
x_{(l,-j,j_1, \ldots, j_n); \rr_1, \rr_2, \ldots}&\text{for } -l+1 \leq j \leq -1\\
x_{(j+l,j_1, \ldots, j_n); \rr_1, \rr_2, \ldots} &\text{for } j > 0
\end{cases}.
\]
\end{lem}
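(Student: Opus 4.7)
The plan is to proceed by case analysis parallel to that of Lemma~\ref{lem:f}. Since each refined interval $I_{\jj;\rr_1,\ldots,\rr_m}$ lies inside $I_\jj\subset J_j$, Lemma~\ref{lem:f} dictates which branch of $f_\pm$ to use on it, and one only needs to recognize the image as another element of the same indexing scheme. The two algebraic identities that drive the whole computation are
\[
\rho^l\II(\rho^{j-1}y)=\rho^{l-1}\phi_j(y), \qquad \II(\rho^{-k}\II(\rho^{k-1}\phi_{j_1}(z)))=\rho^{j_1-1}z,
\]
both of which are immediate consequences of the definition $\phi_r(y)=\rho\II(\rho^{r-1}y)=\rho-\rho^r y$ from~\eqref{eq:phi-f} combined with $\II\circ\II=\id$.

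I would then split the argument into four types of cases, treating $f_-$ in detail and using the symmetry of the system for $f_+$. In the \emph{non-jump cases} ($j<0$ or $j>k$ for $f_-$), the map acts on $J_j$ by $\cdot\mapsto\rho^l\cdot$, and the scaling factor simply shifts the leading index by $-l$, giving $(j-l,j_1,\ldots,j_n)$. In the \emph{inner jump cases} $1\le j\le l-1$, the same linear branch applies, but the first identity above rewrites $\rho^l\II(\rho^{j-1}\cdot)$ as $\rho^{l-1}\phi_j(\cdot)$, which amounts to prepending $-l$ to the index tuple and moving $j$ into the $\phi$-chain. The \emph{outer jump cases} $l\le j\le k-1$ and $j=k,\,n=0$ use the same computation, but now $(j,j_1,\ldots,j_n)\in\RR$ and $\Phi_{(j,j_1,\ldots,j_n)}=\phi_j\circ\phi_{j_1,\ldots,j_n}$, so the new $\phi_j$ is absorbed into a $\Phi$-factor and the data transfer from the $\jj$-coordinate to the $\rr$-coordinate, producing $I_{-l;(j,j_1,\ldots,j_n),\rr_1,\ldots,\rr_m}$. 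Finally, in the \emph{unpacking case} $j=k$, $n>0$, the interval $I_\jj$ lies in $J_k\setminus I_k$, where $f_-(x)=\II(\rho^{-k}\II(x))$; the second identity above shows that this map cancels the outer $\phi_{j_1}$ and yields $I_{(-j_1,j_2,\ldots,j_n);\rr_1,\ldots,\rr_m}$.

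The point formulas then follow by intersecting the nested sequences $\{I_{\jj;\rr_1,\ldots,\rr_m}\}_{m\ge 0}$, whose diameters tend to zero. The only genuine obstacle I expect is the combinatorial bookkeeping required to verify in each case that the resulting index tuple really lives in $\JJ$ or $\RR$; in particular, in the unpacking case one uses that $j_1\in\{1,\ldots,l-1\}$ makes $-j_1$ a valid negative index, while in the outer jump case the constraint $j\in\{l,\ldots,k-1\}$ (or $(j,n)=(k,0)$) is precisely what ensures $(j,j_1,\ldots,j_n)\in\RR$.
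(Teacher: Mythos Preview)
Your approach is exactly what the paper intends: it states only that Lemma~\ref{lem:f} implies the result, and your case analysis is the natural unpacking of that statement. Two small slips to fix. First, in the ``unpacking'' case $j=k$, $n>0$, the point of $I_\jj$ has the form $x=\II(\rho^{k-1}\phi_{j_1}(z))$ (the outer $\II$ comes from the definition of $I_\jj$ for $j>0$), so the relevant identity is
\[
f_-(x)=\II\big(\rho^{-k}\II(\II(\rho^{k-1}\phi_{j_1}(z)))\big)=\II(\rho^{-1}\phi_{j_1}(z))=\rho^{j_1-1}z,
\]
not the version you wrote, which evaluates to $2-\rho^{-k}-\rho^{j_1-1}z$. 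Second, for $j>k$ the relevant branch of $f_-$ is $x\mapsto\II(\rho^{-k}\II(x))$, not $x\mapsto\rho^l x$, and the index shift is by $-k$, not $-l$; this matches the case $I_{(j-k,\ldots)}$ in the statement. With these corrections your argument goes through.
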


The next lemma follows directly from Lemma~\ref{lem:f}.

\begin{lem}\label{lem:jump-kl} The following statements hold.
\begin{itemize}

\item[(a)] If a trajectory $\{f_{i_N}\circ \cdots \circ f_{i_1}(x)\}_{N = 0}^\infty$, for $x \in (0,1)$, jumps over the central interval at the time $s$, for $s \ge 0$, then $f_{i_{s+1}} \circ \cdots \circ f_{i_1}(x) \in I_{-l} \cup I_l$.
\item[(b)] A trajectory $\{f_{i_N}\circ \cdots \circ f_{i_1}(x)\}_{N = 0}^\infty$, for $x \in J$, jumps over the central interval at the time $s$, for $s \ge 0$, if and only if
\begin{alignat*}{4}
&f_{i_s} \circ \cdots \circ f_{i_1}(x) \in I_{-k} \cup \bigcup_{j=-k+1}^{-l} J_j, \qquad &i_{s+1} &= +, \qquad
&f_{i_{s+1}} \circ \cdots \circ f_{i_1}(x) &\in I_l\\
&\text{or}\\
&f_{i_s} \circ \cdots \circ f_{i_1}(x) \in I_k \cup \bigcup_{j=l}^{k-1} J_j, \qquad &i_{s+1} &= -,\qquad 
&f_{i_{s+1}} \circ \cdots \circ f_{i_1}(x) &\in I_{-l}.
\end{alignat*}
\end{itemize}
In particular, for given $\jj \in \JJ$ and $i_1, i_2, \ldots \in \{-, +\}$, for all $x \in I_\jj$ the trajectories $\{f_{i_N}\circ \cdots \circ f_{i_1}(x)\}_{N = 0}^\infty$ jump over the central interval at the same times.
\end{lem}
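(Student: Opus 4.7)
The plan is to deduce both parts directly from Lemma \ref{lem:f} together with the monotonicity relations $f_-(x) < x < f_+(x)$ for $x \in (0,1)$. The first step is the observation that any jump between the two components $[0, f_-(x_-)]$ and $[f_+(x_+), 1]$ of the complement of the central interval forces a particular symbol: since $f_+(y) > y$ on $(0,1)$, the map $f_+$ preserves $[f_+(x_+), 1]$, so a right-to-left jump must use $f_-$, and symmetrically a left-to-right jump must use $f_+$. This already fixes $i_{s+1}$.

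For part (a), I would consider a right-to-left jump and set $y = f_{i_s} \circ \cdots \circ f_{i_1}(x) \in [f_+(x_+), 1]$. If $y > x_-$, Lemma \ref{lem:f} gives $f_-(y) = 1 - \rho^{-k}(1-y) > f_-(x_-)$ (using monotonicity of $f_-$), so $f_-(y) \notin [0, f_-(x_-)]$, contradicting the jump. Hence $y \in [f_+(x_+), x_-]$, on which Lemma \ref{lem:f} reduces $f_-$ to multiplication by $\rho^l$; consequently $f_-(y) \in [\rho^l f_+(x_+), \rho^l x_-] = I_{-l}$, using the definition $I_{-l} = \rho^{l-1} I_{-1}$ with $I_{-1} = [\rho f_+(x_+), \rho x_-]$. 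The left-to-right case is symmetric via $\II$.

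For part (b), restricting to $x \in J$, Lemma \ref{lem:f} ensures that every iterate remains in $J$. Combined with (a), the pre-image $y$ of a right-to-left jump lies in $[f_+(x_+), x_-] \cap J$. Lemma \ref{lem:intervals-kl} identifies this intersection: $J_l$ starts at $\inf I_l = f_+(x_+)$, the intervals $J_l, \ldots, J_{k-1}$ (followed by $I_k \subset J_k$) exhaust $J$ up to $\sup I_k = x_-$, while $J_{k+1}, J_{k+2}, \ldots$ sit strictly to the right of $x_-$; thus $[f_+(x_+), x_-] \cap J = I_k \cup \bigcup_{j=l}^{k-1} J_j$. Conversely, for any $y$ in this set, $y \in [f_+(x_+), x_-]$ and Lemma \ref{lem:f} gives $f_-(y) \in I_{-l} \subset [0, f_-(x_-)]$, which is a jump. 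The other direction is handled analogously.

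The closing ``in particular'' clause follows from Lemma \ref{lem:f}: the image $f_{i_N} \circ \cdots \circ f_{i_1}(I_\jj)$ is a single $I_{\jj_N}$ with $\jj_N$ depending only on $\jj$ and $(i_1, \ldots, i_N)$, and the jump criterion in (b) depends only on this symbolic index and on $i_{N+1}$, not on the particular point of $I_\jj$. The only nontrivial bookkeeping is the identification $[f_+(x_+), x_-] \cap J = I_k \cup \bigcup_{j=l}^{k-1} J_j$ (and its mirror image), which is the main obstacle to writing the argument down cleanly but is nevertheless a routine consequence of parts (b), (d) and (g) of Lemma \ref{lem:intervals-kl}.
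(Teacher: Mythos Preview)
Your approach matches the paper's, which simply records that the lemma follows from Lemma~\ref{lem:f}; your arguments for part~(a), for the ``if'' direction of~(b), and for the closing clause about $I_\jj$ are correct.

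The gap is the assertion that every iterate of a point of $J$ remains in $J$. Lemma~\ref{lem:f} establishes $f(I)\subset I$ (as recorded immediately after it), but not $f(J)\subset J$. For a concrete failure take $l=2$, $k=5$, and let $x$ lie in the open gap of $J_{-5}$ between $\rho^4\phi_1(J_{-1})$ and $I_{-5}$; this gap is nonempty since $\rho x_-<1/2$. Then $x<x_+$, so $f_+(x)=\rho^{-5}x$ lands in the gap $(\sup J_1,\inf J_2)$ between $J_1$ and $J_2$, which lies inside the central interval. A second application of $f_+$ (now acting as $y\mapsto\II(\rho^2\II(y))$) sends this to the gap $(\sup J_3,\inf J_4)$, which lies in the right component $[f_+(x_+),1]$ but is disjoint from $I_5\cup J_2\cup J_3\cup J_4$. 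Applying $f_-$ there still lands in $I_{-2}$, so a jump occurs at this step while the condition in~(b) fails. Your argument---and, as this example shows, the statement of~(b) itself---is repaired by taking $x\in I$ rather than $x\in J$: every iterate then stays in $I\subset J$, and your identification $[f_+(x_+),x_-]\cap J=I_k\cup\bigcup_{j=l}^{k-1}J_j$ gives exactly what is needed for the ``only if'' direction. This is also the only way the lemma is applied in the sequel (Lemmas~\ref{lem:F2-kl} and~\ref{lem:G2-kl} concern points of $I$), so the discrepancy is harmless for the paper.
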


Since $k,l$ are relatively prime, there exist $N_1, N_2 > 0$ such that $N_1 l - N_2 k  = 1$. Let
\[
F_- = f_+^{N_2} \circ f_-^{N_1}, \qquad F_+ = f_-^{N_2} \circ f_+^{N_1}.
\]
Then
\begin{alignat*}{3}
F_-(J_j) &= J_{j-1}, &\qquad F_-(x) &= \rho x &\qquad\text{for }x \in J_j, \; j < 0,\\
F_+(J_j) &= J_{j+1}, &\qquad F_+(x) &= \II\rho\II( x) &\qquad\text{for }x \in J_j, \; j > 0.
\end{alignat*}
For $j, j' \in \Z^*$ such that $\sgn(j) = \sgn(j')$, define
\[
F_{j,j'}\colon J_j \onto J_{j'}, \qquad
F_{j, j'} =
\begin{cases}
F_-^{j-j'}|_{J_j} &\text{for }j < 0, \;  j' \leq j\\
f_+^{\lceil (j'-j)/k\rceil} \circ F_-^{j-j'-k\lceil (j'-j)/k\rceil}|_{J_j} &\text{for } j > 0, \; j' > j\\
F_+^{j'-j}|_{J_j} &\text{for } j < 0, \;  j' \geq j\\
f_-^{\lceil (j-j')/k\rceil} \circ F_+^{j'-j-k\lceil (j-j')/k\rceil}|_{J_j} &\text{for } j > 0, \; j' < j
\end{cases}.
\]
We have $F_{j,j'} = f_{i_N}\circ \cdots \circ f_{i_1}|_{J_j}$ for some $i_1, \ldots, i_N \in \{-,+\}$, $N \ge 0$, and, by Lemma~\ref{lem:f}, 
\begin{equation}\label{eq:F-phi-kl}
F_{j, j'}(x) =
\begin{cases}
\rho^{j-j'}x  &\text{for } j < 0\\
\II(\rho^{-j+j'}\II(x)) &\text{for } j > 0
\end{cases}
\end{equation}
for $x \in J_j$. In particular, this implies
\[
F_{j, j} = \id|_{J_j}, \qquad F_{j', j''}  \circ  F_{j, j'} = F_{j, j''}
\]
for $j, j', j'' \in \Z^*$ such that $\sgn(j) = \sgn(j') =\sgn(j'')$. By Lemma~\ref{lem:f(I)-kl}, 
\begin{equation}\label{eq:F-kl}
\begin{aligned}
F_{j, j'}(I_{(j, j_1, \ldots, j_n); \rr_1, \ldots, \rr_m}) &= I_{(j', j_1, \ldots, j_n); \rr_1, \ldots, \rr_m},\\ 
F_{j, j'}(x_{(j, j_1, \ldots, j_n); \rr_1, \rr_2, \ldots}) &= x_{(j', j_1, \ldots, j_n); \rr_1, \rr_2, \ldots}
\end{aligned}
\end{equation}
for $j_1, \ldots, j_n \in \{1, \ldots, l-1\}$, $\rr_1, \rr_2, \ldots \in \RR$, $n,m \ge 0$.

\begin{lem}\label{lem:inI-kl} For every $x \in (0,1)$ there exists $i_1, \ldots, i_n \in \{-,+\}$, $n \ge 0$, such that $f_{i_n} \circ \cdots \circ f_{i_1}(x) \in I$. 
\end{lem}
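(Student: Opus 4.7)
Plan: By Lemma~\ref{lem:f}, $f_+$ maps the interval $K := \conv(I_{-k} \cup \bigcup_{j=-k+1}^{-l} J_j) = [x_+, f_-(x_-)]$ onto $I_l \subset I$. Hence it suffices to produce a composition $f_{i_n} \circ \cdots \circ f_{i_1}$ sending $x$ into $K$, after which one more application of $f_+$ places the iterate in $I$.

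By the $\II$-symmetry of the system we may assume $x \in (0, 1/2]$. Since $f_-$ restricted to $[0, x_-] \supset [0, 1/2]$ is multiplication by $\rho^l$, iterating $f_-$ finitely many times brings $x$ into $(0, x_+]$, on which $f_+$ is multiplication by $\rho^{-k}$. In the logarithmic coordinate $u := -\log x / |\log\rho|$, $f_-$ acts on $\{x \le x_-\}$ as the translation $u \mapsto u + l$ and $f_+$ acts on $\{x \le x_+\}$ as $u \mapsto u - k$. A direct computation from the explicit formulas for $x_\pm$ and $f_-(x_-)$ shows that $K$ corresponds to a $u$-interval of length
\[
\frac{\log(f_-(x_-)/x_+)}{|\log\rho|}
\]
lying strictly between $k - l$ and $k$: the lower bound follows from $k > l$, while the upper bound (equivalently $\log(f_-(x_-)/x_+) < k|\log\rho|$) reduces to the disjoint-type assumption $\rho^{k+l} - 2\rho^l + 1 > 0$.

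The composition $f_+^m \circ f_-^n$, applied to a point in $(0, x_+]$, shifts $u$ by $nl - mk$. Since $\gcd(k,l) = 1$ and the pair $(n+k, m+l)$ yields the same shift, every integer is of the form $nl - mk$ with $n, m \in \N$. Because the $u$-interval corresponding to $K$ has length $> k - l \ge 1$, we can choose $n, m$ so that the shifted value falls into this interval; the upper bound $<k$ on the length guarantees that the last $f_+$ application still occurs within its linear regime $\{x \le x_+\}$, so the regime requirements of all intermediate iterates are automatically satisfied. Thus $f_+^m \circ f_-^n(x) \in K$, and one more $f_+$ completes the proof.

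The main obstacle is respecting the linear regimes of $f_\pm$ throughout the procedure; the inequality $\log(f_-(x_-)/x_+) < k|\log\rho|$, which is exactly the disjoint-type condition in disguise, is what makes the steering fit inside the two linear regions.
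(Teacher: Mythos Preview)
Your argument is correct. The paper proceeds differently: it splits into the cases $x\in J_j$ and $x\in(0,1)\setminus J$. In the first case it invokes the already-constructed transfer maps $F_{j,j'}\colon J_j\to J_{j'}$ (built from $F_\pm=f_+^{N_2}\circ f_-^{N_1}$ with $N_1 l-N_2 k=1$) to move $x$ into $J_{-l}\subset K$, then applies $f_+$; in the gap case it exhibits the short word $f_-\circ f_+^{\lfloor -j/k\rfloor+1}$ landing directly in $I_{-l}$.

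Your route is more self-contained: by passing to the logarithmic coordinate you bypass both the case analysis and the auxiliary maps $F_{j,j'}$, and the two inequalities $k-l<\text{length}(K)<k$ (in $u$-units) do all the work---the lower bound guarantees an integer shift $nl-mk$ hitting the target, the upper bound guarantees that the penultimate $f_+$ is still applied in $\{x\le x_+\}$, whence monotonicity of the $u$-shift handles all earlier iterates. The paper's version, on the other hand, reuses machinery that is needed anyway for the symbolic description of trajectories in Lemmas~\ref{lem:F2-kl}--\ref{lem:traj-kl}, so it is better integrated with the surrounding section. Both ultimately rest on the same arithmetic fact $\gcd(k,l)=1$.
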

\begin{proof} If $x \in J_j$ for $j < 0$ (resp.~$j > 0$), then it is enough to notice that by Lemma~\ref{lem:f}, we have $f_+ \circ F_{j,-l}(x) \in I_l$ (resp.~$f_- \circ F_{j,l}(x) \in I_{-l}$). Suppose $x \in (0,1) \setminus J$. Enumerate the components of $(0,1) \setminus J$ by $U_j$, $j \in \Z$, such that $U_j$ is the gap between $J_{j-1}$ and $J_j$ for $j < 0$, $U_0$ is the gap between $J_{-1}$ and $J_1$, and $U_j$ is the gap between $J_j$ and $J_{j+1}$ for $j > 0$. Since the system is symmetric, we can assume $x \in U_j$, $j \le 0$. Then, by Lemma~\ref{lem:f}, we have $f_-\circ f_+^{\lfloor \frac{-j}{k} \rfloor + 1}(x) \in I_{-l}$.
\end{proof}

Define 
\[
G^-_j\colon J_1 \to J_{-1}, \qquad G^+_j\colon J_{-1} \to J_1, \qquad j \in \{1, \dots, l-1\},
\]
by
\[
G^-_j = F_{-l,-1} \circ f_- \circ F_{1,j}, \qquad G^+_j = F_{l,1} \circ f_+ \circ F_{-1,-j}.
\]
Note that $G^\pm_j = f_{i_N}\circ \cdots \circ f_{i_1}|_{J_{\mp 1}}$ for some  $i_1, \ldots, i_N \in \{-,+\}$, $N \ge 0$. By \eqref{eq:F-phi-kl}, we have
\begin{equation}\label{eq:G-phi-kl}
G^-_j = \phi_j \circ \II|_{J_1}, \qquad G^+_j = \II \circ \phi_j 
\end{equation}
and by Lemma~\ref{lem:f(I)-kl} and \eqref{eq:F-kl},
\begin{equation}\label{eq:G-kl}
\begin{aligned}
G^-_j(I_{(1, j_1, \ldots, j_n); \rr_1, \ldots, \rr_m}) &= I_{(-1, j, j_1, j_2, \ldots, j_n); \rr_1, \ldots, \rr_m},\\
G^+_j(I_{(-1, j_1, \ldots, j_n); \rr_1, \ldots, \rr_m}) &= I_{(1, j, j_1, j_2, \ldots, j_n); \rr_1, \ldots, \rr_m},\\
G^-_j(x_{(1, j_1, \ldots, j_n); \rr_1, \rr_2, \ldots}) &= x_{(-1, j, j_1, j_2, \ldots, j_n); \rr_1, \rr_2, \ldots},\\
G^+_j(x_{(-1, j_1, \ldots, j_n); \rr_1, \rr_2, \ldots}) &= x_{(1, j, j_1, j_2, \ldots, j_n); \rr_1, \rr_2, \ldots}
\end{aligned}
\end{equation}
for $j_1, \ldots, j_n \in \{1, \ldots, l-1\}$, $\rr_1, \rr_2, \ldots \in \RR$, $n,m \ge 0$.

Define also
\[
H_-\colon \bigcup_{\{(1, j_1, \ldots, j_n) \in \JJ: n >0\}} I_{1, j_1, \ldots, j_n}\to J_{-1}, \qquad 
H_+\colon \bigcup_{\{(-1, j_1, \ldots, j_n) \in \JJ: n >0\}}I_{-1, j_1, \ldots, j_n} \to J_1
\]
by
\[
H_-|_{I_{1, j_1, \ldots, j_n}} = F_{-j_1,-1} \circ f_- \circ F_{1,k}|_{I_{1, j_1, \ldots, j_n}}, \qquad
H_+|_{I_{-1, j_1, \ldots, j_n}} = F_{j_1,1} \circ f_+ \circ F_{-1,-k}|_{I_{-1, j_1, \ldots, j_n}}.
\]
Again, $H_\pm = f_{i_N}\circ \cdots \circ f_{i_1}|_{I_{\mp 1, j_1, \ldots, j_n}}$ for some $i_1, \ldots, i_N \in \{-,+\}$, $N \ge 0$. By \eqref{eq:F-phi-kl} and \eqref{eq:G-phi-kl},
\begin{equation}\label{eq:H-phi-kl}
\begin{aligned}
H_-|_{I_{1, j_1, \ldots, j_n}} &= (G^+_{j_1})^{-1}|_{I_{1, j_1, \ldots, j_n}} = \phi_{j_1}^{-1}\circ \II|_{I_{1, j_1, \ldots, j_n}},\\
H_+|_{I_{-1, j_1, \ldots, j_n}}  &= (G^-_{j_1})^{-1}|_{I_{-1, j_1, \ldots, j_n}} = \II \circ \phi_{j_1}^{-1}|_{I_{1, j_1, \ldots, j_n}}
\end{aligned}
\end{equation}
while Lemma~\ref{lem:f(I)-kl} and \eqref{eq:F-kl} give
\begin{equation}\label{eq:H-kl}
\begin{aligned}
H_-(I_{(1, j_1, \ldots, j_n); \rr_1, \ldots, \rr_m}) &= I_{(-1, j_2, \ldots, j_n); \rr_1, \ldots, \rr_m},\\
H_+(I_{(-1, j_1, \ldots, j_n); \rr_1, \ldots, \rr_m}) &= I_{(1, j_2, \ldots, j_n); \rr_1, \ldots, \rr_m},\\
H_-(x_{(1, j_1, \ldots, j_n); \rr_1, \rr_2, \ldots}) &= x_{(-1, j_2, \ldots, j_n); \rr_1, \rr_2, \ldots},\\
H_+(x_{(-1, j_1, \ldots, j_n); \rr_1, \rr_2, \ldots}) &= x_{(1, j_2, \ldots, j_n); \rr_1, \rr_2, \ldots}
\end{aligned}
\end{equation}
for $j, j_1, \ldots, j_n \in \{1, \ldots, l-1\}$, $n > 0$, $\rr_1, \rr_2, \ldots \in \RR$, $m \ge 0$.

We introduce the following notation. For $\jj = (j, j_1, \ldots, j_n)\in \JJ$ we set $\jj < 0$ (resp.~$\jj > 0$) if $j < 0$ (resp.~$j > 0$). We also write $-\jj = (-j, j_1, \ldots, j_n)$ and set $\sgn(\jj) = \sgn(j)$, $n(\jj) = n$. 

For $\jj =  (j, j_1, \ldots, j_n), \jj' =  (j', j'_1, \ldots, j'_{n'})\in \JJ$, such that $\sgn(\jj) = \sgn(\jj')$ and $n(\jj)-n(\jj')$ is even, or $\sgn(\jj) \neq \sgn(\jj')$ and $n(\jj)-n(\jj')$ is odd, define
\[
F_{\jj,\jj'}\colon I_\jj \onto I_{\jj'}
\]
by
\[
F_{\jj, \jj'} = 
\begin{cases}
F_{-1,j'} \circ G^-_{j_1'} \circ G^+_{j_2'} \circ \cdots \circ G^-_{j_{n'-1}'} \circ G^+_{j'_{n'}} \circ (H_- \circ H_+)^{n/2} \circ F_{j,-1}\\
F_{1,j'} \circ G^+_{j_1'} \circ G^-_{j_2'} \circ \cdots \circ G^+_{j_{n'-2}'}\circ G^-_{j_{n'-1}'} \circ G^+_{j'_{n'}} \circ (H_- \circ H_+)^{n/2} \circ F_{j,-1}\\
F_{1,j'} \circ G^+_{j_1'} \circ G^-_{j_2'} \circ \cdots \circ G^+_{j_{n'-1}'} \circ G^-_{j'_{n'}} \circ H_+ \circ (H_- \circ H_+)^{\lfloor n/2\rfloor} \circ F_{j,-1}\\
F_{-1,j'} \circ G^-_{j_1'} \circ G^+_{j_2'} \circ \cdots \circ G^-_{j_{n'-2}'}\circ G^+_{j_{n'-1}'} \circ G^-_{j'_{n'}} \circ H_+ \circ (H_- \circ H_+)^{\lfloor n/2\rfloor} \circ F_{j,-1}\\
F_{1,j'} \circ G^+_{j_1'} \circ G^-_{j_2'} \circ \cdots \circ G^+_{j_{n'-1}'} \circ G^-_{j'_{n'}} \circ (H_+ \circ H_-)^{n/2} \circ F_{j,1}\\
F_{-1,j'} \circ G^-_{j_1'} \circ G^+_{j_2'} \circ \cdots \circ G^-_{j_{n'-2}'}\circ G^+_{j_{n'-1}'} \circ G^-_{j'_{n'}} \circ (H_+ \circ H_-)^{n/2} \circ F_{j,1}\\
F_{-1,j'} \circ G^-_{j_1'} \circ G^+_{j_2'} \circ \cdots \circ G^-_{j_{n'-1}'} \circ G^+_{j'_{n'}} \circ H_- \circ (H_+ \circ H_-)^{\lfloor n/2\rfloor} \circ F_{j,1}\\
F_{1,j'} \circ G^+_{j_1'} \circ G^-_{j_2'} \circ \cdots \circ G^+_{j_{n'-2}'}\circ G^-_{j_{n'-1}'} \circ G^+_{j'_{n'}} \circ H_- \circ (H_+ \circ H_-)^{\lfloor n/2\rfloor} \circ F_{j,1}
\end{cases}
\]
for
\[
\begin{cases}
\jj < 0,\; n(\jj) \text{ even}, \; n(\jj') \text{ even}\\
\jj < 0,\; n(\jj) \text{ even}, \; n(\jj') \text{ odd}\\
\jj < 0,\; n(\jj) \text{ odd}, \; n(\jj') \text{ even}\\
\jj < 0,\; n(\jj) \text{ odd}, \; n(\jj') \text{ odd}\\
\jj > 0,\; n(\jj) \text{ even}, \; n(\jj') \text{ even}\\
\jj > 0,\; n(\jj) \text{ even}, \; n(\jj') \text{ odd}\\
\jj > 0,\; n(\jj) \text{ odd}, \; n(\jj') \text{ even}\\
\jj > 0,\; n(\jj) \text{ odd}, \; n(\jj') \text{ odd}
\end{cases},
\]
respectively. Note that in the case $n = n' = 0$ the definition of $F_{\jj, \jj'} = F_{j, j'}$ agrees with the previous one. We have $F_{\jj, \jj'} = f_{i_N}\circ \cdots \circ f_{i_1}|_{I_\jj}$ for some $i_1, \ldots, i_N \in \{-,+\}$, $N \ge 0$. 

By \eqref{eq:F-phi-kl}, \eqref{eq:G-phi-kl} and \eqref{eq:H-phi-kl},
\begin{equation}\label{eq:F2-phi-kl}
F_{\jj, \jj'}(x) =
\begin{cases}
\rho^{-j'-1} (\phi_{j'_1, \ldots, j'_{n'}} \circ
\phi_{j_1, \ldots, j_n}^{-1}(\rho^{j+1} x)) &\text{if } \jj < 0, \; \jj' < 0\\
\II(\rho^{j'-1} (\phi_{j'_1, \ldots, j'_{n'}} \circ \phi_{j_1, \ldots, j_n}^{-1} (\rho^{j+1} x))) &\text{if } \jj < 0, \; \jj' > 0\\
\rho^{-j'-1} (\phi_{j'_1, \ldots, j'_{n'}} \circ \phi_{j_1, \ldots, j_n}^{-1} (\rho^{-j+1} \II(x))) &\text{if } \jj > 0, \; \jj' < 0\\
\II(\rho^{j'-1} (\phi_{j'_1, \ldots, j'_{n'}} \circ \phi_{j_1, \ldots, j_n}^{-1}(\rho^{-j+1} \II(x)))) &\text{if } \jj > 0, \; \jj' > 0
\end{cases}
\end{equation}
for $x \in I_\jj$. In particular, this gives
\[
F_{\jj, \jj} = \id|_{I_\jj}, \qquad F_{\jj', \jj''}  \circ  F_{\jj, \jj'} = F_{\jj, \jj''}
\]
for suitable $\jj, \jj', \jj'' \in \Z^*$. Moreover, \eqref{eq:F-kl}, \eqref{eq:G-kl} and \eqref{eq:H-kl} imply
\begin{equation}\label{eq:F2-kl}
F_{\jj, \jj'}(I_{\jj; \rr_1, \ldots, \rr_m}) = I_{\jj'; \rr_1, \ldots, \rr_m}, \qquad
F_{\jj, \jj'}(x_{\jj; \rr_1, \rr_2, \ldots}) = x_{\jj'; \rr_1, \rr_2, \ldots}
\end{equation}

for $\rr_1, \rr_2, \ldots \in \RR$, $m \ge 0$.

\begin{lem}\label{lem:F2-kl}
A trajectory $\{f_{i_N} \circ \cdots \circ f_{i_1}(x)\}_{N=0}^\infty$ of a point $x \in I_\jj$, $\jj \in \JJ$, does not jump over the central interval at any time $0 \le s < N$, for some $N \ge 0$, if and only if 
\[
f_{i_N} \circ \cdots \circ f_{i_1}|_{I_\jj} = F_{\jj, \jj'}
\]
for $\jj'\in \JJ$ such that $f_{i_N} \circ \cdots \circ f_{i_1}(x) \in I_{\jj'}$, where $\sgn(\jj) = \sgn(\jj')$ and $n(\jj)-n(\jj')$ is even, or $\sgn(\jj) \neq \sgn(\jj')$ and $n(\jj)-n(\jj')$ is odd.
\end{lem}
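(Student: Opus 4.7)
My plan is proof by induction on $N$, paralleling the short argument for Lemma~\ref{lem:F} in the case $l=1$. The base case $N=0$ is immediate: $\jj'=\jj$ and the claim reduces to $\id|_{I_\jj}=F_{\jj,\jj}$, which holds by construction of $F_{\jj,\jj'}$.

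For the inductive step, I would use Lemma~\ref{lem:f(I)-kl} to track the sequence of intermediate indices $\jj_0=\jj, \jj_1, \ldots, \jj_N=\jj'$ with $f_{i_s}\circ\cdots\circ f_{i_1}(I_\jj)\subset I_{\jj_s}$, and invoke Lemma~\ref{lem:jump-kl}(b) to split each transition $\jj_s\mapsto\jj_{s+1}$ into a jump-step or a non-jump step. A direct inspection of the non-jump rows of Lemma~\ref{lem:f(I)-kl} shows that such a transition either preserves both $\sgn(\jj_s)$ and $n(\jj_s)$, or flips the sign and changes $n$ by $\pm 1$; in either case the admissibility condition in the statement is preserved, so it is a genuine invariant of no-jump trajectories. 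In contrast, each jump-row of Lemma~\ref{lem:f(I)-kl} absorbs the $\jj_s$-data into the $\rr$-sequence and anchors $\jj_{s+1}$ at $\pm l$, which visibly alters the slope-structure of the composition.

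For the ``only if'' direction, assuming no jumps occur, I would verify for each non-jump row of Lemma~\ref{lem:f(I)-kl} that $f_{i_{s+1}}|_{I_{\jj_s}} = F_{\jj_s,\jj_{s+1}}|_{I_{\jj_s}}$ as affine maps. This is a routine but tedious check using \eqref{eq:F-phi-kl}, \eqref{eq:G-phi-kl}, \eqref{eq:H-phi-kl}, and \eqref{eq:F2-phi-kl}; for instance, for $\jj_s=(j,j_1,\ldots,j_n)$ with $1\le j\le l-1$ and $i_{s+1}=-$, both $f_-$ and $F_{\jj_s,\jj_{s+1}}$ reduce to the map $x\mapsto\rho^l x$ on $I_{\jj_s}$ after $\phi_j\circ\phi_{j_1,\ldots,j_n}\circ\phi_{j_1,\ldots,j_n}^{-1}$ telescopes. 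Combining the inductive hypothesis with the evident composition rule $F_{\jj_s,\jj_{s+1}}\circ F_{\jj_0,\jj_s}=F_{\jj_0,\jj_{s+1}}$ on $I_{\jj_0}$ (read off from \eqref{eq:F2-phi-kl}) then yields the identity at step $N$. The ``if'' direction follows by contraposition: should any jump occur, the slope of the composed map acquires an extra factor $\rho^{\pm k}$, as opposed to the $\rho^{\pm l}$ contributions of non-jump steps recorded in Lemma~\ref{lem:f}, and since $\gcd(k,l)=1$, this pushes the slope-exponent of the composition into a residue class modulo $l$ inaccessible to any $F_{\jj_0,\jj_N}$.

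The main obstacle is precisely this converse direction, especially ruling out that a later combination of jumps and non-jump steps conspires to restore the composed map to the $F_{\jj_0,\jj'}$ form. The slope-exponent bookkeeping above handles the generic case, but the delicate scenario is the jump of type ``$j=\pm k$, $n>0$'' in Lemma~\ref{lem:f(I)-kl}, which superficially preserves the admissibility of $\jj'$. Here I would check directly via \eqref{eq:F2-phi-kl} that the slope-exponent of the correct $F_{\jj_0,\jj_N}$ differs from the composed slope by exactly $k+l$, confirming the mismatch and closing the argument.
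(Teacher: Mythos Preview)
Your ``only if'' direction is correct in outline but far more laborious than needed. The paper dispenses with all case-checking and induction via a single observation: once Lemmas~\ref{lem:f} and~\ref{lem:jump-kl} confirm that a no-jump trajectory maps $I_\jj$ \emph{onto} $I_{\jj'}$ with the stated parity condition, both $f_{i_N}\circ\cdots\circ f_{i_1}|_{I_\jj}$ and $F_{\jj,\jj'}$ are increasing affine bijections $I_\jj\to I_{\jj'}$, so their composition $(F_{\jj,\jj'})^{-1}\circ f_{i_N}\circ\cdots\circ f_{i_1}|_{I_\jj}$ is an increasing affine self-homeomorphism of $I_\jj$, hence the identity.

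Your ``if'' direction contains a genuine error. The claimed dichotomy --- jump steps contribute a slope factor $\rho^{\pm k}$, non-jump steps $\rho^{\pm l}$ --- is false. By Lemma~\ref{lem:jump-kl}(b), a jump via $f_+$ occurs only on $I_{-k}\cup\bigcup_{j=-k+1}^{-l}J_j$, where $x\ge x_+=\inf I_{-k}$, so $f_+$ has slope $\rho^l$ there (Lemma~\ref{lem:f}); symmetrically for $f_-$. Meanwhile non-jump steps can have either slope: e.g.\ $f_+$ on $J_j$ with $j<-k$ has slope $\rho^{-k}$. Thus the slope exponent modulo~$l$ carries no information about whether a jump occurred, and the residue-class argument collapses. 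Relatedly, the case ``$j=\pm k$, $n>0$'' you flag as a delicate jump is not a jump at all: for $\jj_s=(-k,j_1,\ldots,j_n)$ with $n>0$, Lemma~\ref{lem:intervals-kl}(e) places $I_{\jj_s}$ to the left of $I_{-k}$, i.e.\ left of $x_+$, so $f_+(x)=\rho^{-k}x$ there and the image $I_{j_1,\ldots,j_n}\subset J_{j_1}$ lies to the left of $f_+(x_+)=\inf I_l$, hence no jump by Definition~\ref{defn:jumps}.

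The correct contrapositive is simpler and purely combinatorial. If a jump occurs at time $s$, then by the jump rows of Lemma~\ref{lem:f(I)-kl} the image $f_{i_{s+1}}\circ\cdots\circ f_{i_1}(I_\jj)$ equals $I_{\pm l;\rr}$ for some $\rr\in\RR$, and inspection of Lemma~\ref{lem:f(I)-kl} shows that every subsequent step preserves or lengthens the $\rr$-tail. Hence $f_{i_N}\circ\cdots\circ f_{i_1}(I_\jj)=I_{\jj';\rr_1,\ldots,\rr_m}$ with $m\ge1$, a \emph{proper} subinterval of $I_{\jj'}$, so the composition cannot equal the surjection $F_{\jj,\jj'}$.
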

\begin{proof} If a trajectory $\{f_{i_N} \circ \cdots \circ f_{i_1}(x)\}_{N=0}^\infty$ of $x \in I_\jj$ does not jump over the central interval at any time $0 \le s < N$, then by Lemmas~\ref{lem:f} and~\ref{lem:jump-kl},
\[
f_{i_N} \circ \cdots \circ f_{i_1}(I_\jj) = I_{\jj'},
\]
where $\jj'\in \JJ$ such that $\sgn(\jj) = \sgn(\jj')$ and $n(\jj)-n(\jj')$ is even, or $\sgn(\jj) \neq \sgn(\jj')$ and $n(\jj)-n(\jj')$ is odd. Consequently, $F_{\jj, \jj'}$ is defined on $I_\jj$ and $(F_{\jj, \jj'})^{-1} \circ f_{i_N} \circ \cdots \circ f_{i_1}|_{I_\jj}$ is an increasing affine homeomorphism from $I_\jj$ onto itself, so it is identity. Therefore, $f_{i_N} \circ \cdots \circ f_{i_1}|_{I_\jj} = F_{\jj, \jj'}$. The other implication follows from Lemmas~\ref{lem:f} and~\ref{lem:jump-kl} and the definitions of the maps $F_{j,j'}, G_j^\pm, H_\pm$. 

\end{proof}

Define, for $\rr \in \RR$,
\begin{alignat*}{3}
&G^{+,-}_\rr\colon I_1 \to I_{-1}, &\quad &G^{-,+}_\rr\colon I_{-1} \to I_1 &\qquad &\text{for $n(\rr)$ even},\\
&G^{-,-}_\rr\colon I_{-1} \to I_{-1}, &\quad &G^{+,+}_\rr\colon I_1 \to I_1 &\qquad &\text{for $n(\rr)$ odd},
\end{alignat*}
setting
\begin{alignat*}{2}
G^{+,-}_\rr &= F_{-l,-1} \circ f_- \circ F_{1, \rr}, \qquad
&G^{-,+}_\rr &= F_{l,1} \circ f_+ \circ F_{-1, -\rr},\\
G^{-,-}_\rr &= F_{-l,-1} \circ f_- \circ F_{-1, \rr}, \qquad  
&G^{+,+}_\rr &= F_{l,1} \circ f_+ \circ F_{1, -\rr}.
\end{alignat*}
Note that $G^{\pm,\pm}_\rr = f_{i_N}\circ \cdots \circ f_{i_1}|_{I_{\pm 1}}$ for some $i_1, \ldots, i_N \in \{-,+\}$, $N \ge 0$. 
By Lemma~\ref{lem:f} and~\eqref{eq:F2-phi-kl}, we have
\begin{equation}\label{eq:G2-phi-kl}
\begin{aligned}
G^{+,-}_\rr &= \Phi_\rr \circ \II|_{I_1}, &\qquad 
G^{-,+}_\rr &= \II \circ \Phi_\rr,\\
G^{-,-}_\rr &= \Phi_\rr, &\qquad 
G^{+,+}_\rr &= \II \circ \Phi_\rr \circ \II|_{I_1},
\end{aligned}
\end{equation}
while by Lemma~\ref{lem:f(I)-kl} and~\eqref{eq:F2-kl},
\begin{equation}\label{eq:G2-kl}
\begin{aligned}
G^{+,-}_\rr(I_{1; \rr_1, \ldots, \rr_m}) &= I_{-1; \rr, \rr_1, \ldots, \rr_m}, &\qquad 
G^{+,-}_\rr(x_{1; \rr_1, \rr_2, \ldots}) &= x_{-1; \rr, \rr_1, \rr_2, \ldots}\\
G^{-,+}_\rr(I_{-1; \rr_1, \ldots, \rr_m}) &= I_{1; \rr, \rr_1, \ldots, \rr_m}, &\qquad 
G^{-,+}_\rr(x_{-1; \rr_1, \rr_2, \ldots}) &= x_{1; \rr, \rr_1, \rr_2, \ldots}\\
G^{-,-}_\rr(I_{-1; \rr_1, \ldots, \rr_m}) &= I_{-1; \rr, \rr_1, \ldots, \rr_m}, &\qquad 
G^{-,-}_\rr(x_{-1; \rr_1, \rr_2, \ldots}) &= x_{-1; \rr, \rr_1, \rr_2, \ldots}\\
G^{+,+}_\rr(I_{1; \rr_1, \ldots, \rr_m}) &= I_{1; \rr, \rr_1, \ldots, \rr_m}, &\qquad 
G^{+,+}_\rr(x_{1; \rr_1, \rr_2, \ldots}) &= x_{1; \rr, \rr_1, \rr_2, \ldots}
\end{aligned}
\end{equation}
for $\rr_1, \rr_2, \ldots \in \RR$, $m \geq 0$.

\begin{lem}\label{lem:G2-kl}
A trajectory $\{f_{i_N} \circ \cdots \circ f_{i_1}(x)\}_{N=0}^\infty$ of a point $x \in I$ jumps over the central interval at the time $s$, for some $s \ge 0$, if and only if one of the four following possibilities:
\begin{alignat*}{5}
&f_{i_s} \circ \cdots \circ f_{i_1}(x) \in I_{-\rr}, &&\qquad
f_{i_{s+1}}|_{I_{-\rr}} &&= F_{1,l} \circ  G^{-,+}_\rr \circ F_{-\rr,-1}, &&\qquad &&n(\rr) \text{ even},\\
&f_{i_s} \circ \cdots \circ f_{i_1}(x) \in I_{-\rr}, &&\qquad
f_{i_{s+1}}|_{I_{-\rr}} &&= F_{1,l} \circ  G^{+,+}_\rr \circ F_{-\rr,1},&&\qquad &&n(\rr) \text{ odd},\\
&f_{i_s} \circ \cdots \circ f_{i_1}(x) \in I_\rr, &&\qquad
f_{i_{s+1}}|_{I_\rr} &&= F_{-1,-l} \circ  G^{+,-}_\rr \circ F_{\rr,1},&&\qquad &&n(\rr) \text{ even},\\
&f_{i_s} \circ \cdots \circ f_{i_1}(x) \in I_\rr, &&\qquad
f_{i_{s+1}}|_{I_\rr} &&= F_{-1,-l} \circ  G^{-,-}_\rr \circ F_{\rr,-1}&&\qquad &&n(\rr) \text{ odd},
\end{alignat*}
holds for some $\rr \in \RR$.
\end{lem}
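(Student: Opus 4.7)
My plan is to read off both directions of the equivalence from Lemma~\ref{lem:jump-kl}(b), using the combinatorial structure of $\JJ$ and $\RR$ to translate the location conditions into conditions in terms of $\rr$, and then to verify the asserted formulas by plugging the definitions of $G^{\pm,\pm}_\rr$ into the composition and collapsing consecutive $F_{\jj,\jj'}$ maps via the composition property $F_{\jj',\jj''}\circ F_{\jj,\jj'}=F_{\jj,\jj''}$ (together with $F_{\jj,\jj}=\mathrm{id}$).

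\smallskip

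First, I would analyze which intervals $I_\jj$ can host the start of a jump. Since $x\in I$ and $f(I)\subset I$, we have $f_{i_s}\circ\cdots\circ f_{i_1}(x)\in I_\jj$ for some $\jj\in\JJ$. By Lemma~\ref{lem:jump-kl}(b), a jump with $i_{s+1}=+$ requires $I_\jj\subset I_{-k}\cup\bigcup_{j=-k+1}^{-l}J_j$. Using Lemma~\ref{lem:intervals-kl}(d), the second piece equals the disjoint union of all $I_\jj$ with $\jj\in\JJ_j$ for $j\in\{-k+1,\ldots,-l\}$, while the first piece is the single interval $I_{(-k,\emptyset)}$. Matching these to the definition of $\RR$, one sees that such $\jj$ are exactly those of the form $\jj=-\rr$ for some $\rr\in\RR$: the restriction $r\in\{l,\ldots,k-1\}$ when $n(\rr)>0$ is forced by the fact that the intervals $I_{(-k,j_1,\ldots,j_n)}$ with $n>0$ lie in $J_{-k}\setminus I_{-k}$ and so are \emph{not} in the union. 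The symmetric argument handles the $i_{s+1}=-$ case and yields $\jj=\rr$ with $\rr\in\RR$. This exhausts the possibilities for when a jump can occur.

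\smallskip

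Next, for each of the four cases I would verify the asserted formula for $f_{i_{s+1}}|_{I_{\pm\rr}}$. The argument is the same in each: unfolding the definition of $G^{\pm,\pm}_\rr$ gives an expression of the form $F_{\pm l,\pm 1}\circ f_\pm\circ F_{\pm 1,\mp\rr}$, so inserting it into the composition $F_{\mp 1,\mp l}\circ G^{\pm,\pm}_\rr\circ F_{\pm\rr,\pm 1}$ from the lemma produces, after using $F_{\mp 1,\mp l}\circ F_{\mp l,\mp 1}=\mathrm{id}$ on $I_{\mp l}$ (or the analogous identity on $J_l$) and $F_{\pm 1,\mp\rr}\circ F_{\mp\rr,\pm 1}=\mathrm{id}$ on $I_{\mp\rr}$, exactly $f_\pm|_{I_{\pm\rr}}$. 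The two identities involving $F$ follow directly from the explicit formulas \eqref{eq:F2-phi-kl} (a one-line computation: the $\phi$-factors cancel and the $\rho$-powers multiply to $\rho^0$). The choice between $G^{-,+}_\rr$ and $G^{+,+}_\rr$ (and similarly on the other side) is dictated by the parity of $n(\rr)$: this is precisely the parity condition required for $F_{\pm 1,\mp\rr}$ to be defined, so the four cases assemble into a consistent verification.

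\smallskip

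Finally, the converse direction is immediate: if $f_{i_{s+1}}|_{I_{\pm\rr}}$ has one of the stated forms, then by the same algebraic cancellation it equals $f_\pm|_{I_{\pm\rr}}$, and the image $f_{i_{s+1}}(I_{\pm\rr})\subset I_{\mp l}$ (visible from Lemma~\ref{lem:f(I)-kl}) together with $I_{\pm\rr}\subset I_{\pm k}\cup\bigcup_{j=\pm l}^{\pm(k-1)}J_{\pm j}$ satisfies the hypothesis of Lemma~\ref{lem:jump-kl}(b), so a jump occurs at time $s$. The main obstacle I anticipate is purely bookkeeping: making sure in each of the four parity/sign combinations that the intermediate $F_{\jj,\jj'}$ used in the decomposition satisfies the parity compatibility built into its definition, so that the cancellation $F_{\jj',\jj}\circ F_{\jj,\jj'}=\mathrm{id}$ is legitimate. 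This is best handled by treating the four cases in parallel with a single table of sign/parity data, rather than writing out each case separately.
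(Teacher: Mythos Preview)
Your proposal is correct and is essentially a fleshed-out version of the paper's one-line proof (``Follows directly from Lemma~\ref{lem:jump-kl}''): you identify the intervals $I_\jj$ contained in $I_{-k}\cup\bigcup_{j=-k+1}^{-l}J_j$ as exactly $\{I_{-\rr}:\rr\in\RR\}$, and then collapse the displayed compositions to $f_\pm|_{I_{\pm\rr}}$ using the identities $F_{\jj',\jj''}\circ F_{\jj,\jj'}=F_{\jj,\jj''}$ and $F_{\jj,\jj}=\id$, which is precisely the intended mechanism. A small wording fix: the $I_\jj$ with $\jj\in\JJ_j$ do not partition $J_j$ (there are gaps), so ``equals the disjoint union'' should be replaced by ``the intervals $I_\jj$ contained in $\bigcup_{j=-k+1}^{-l}J_j$ are exactly those with $\jj\in\bigcup_{j=-k+1}^{-l}\JJ_j$''; this does not affect the argument.
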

\begin{proof} Follows directly from Lemma~\ref{lem:jump-kl}.

\end{proof}

\begin{lem}\label{lem:traj-kl}
A trajectory $\{f_{i_N} \circ \cdots \circ f_{i_1}(x)\}_{N=0}^\infty$ of a point $x \in I_\jj$, $\jj \in \JJ$, jumps over the central interval $($exactly$)$ at the times 
$s_1, \ldots, s_m$, for some $0 \le s_1 < \cdots < s_m < N$, $0 \le m \le N$, 
if and only if
\[
f_{i_N} \circ \cdots \circ f_{i_1}|_{I_\jj} = 
F_{t', \jj'} \circ G^{\sigma_1, \sigma_0}_{\rr_1} \circ \cdots \circ G^{\sigma_m, \sigma_{m-1}}_{\rr_m} \circ F_{\jj, t},
\]
for some $\rr_1, \ldots, \rr_m \in \RR$, where $\sigma_s \in \{-, +\}$, $s = 0, \ldots, m,\ t, t' \in \{-1,1\}$, are defined by backward induction as
\begin{align*}
\sigma_m &= 
\begin{cases}
- &\text{if } \jj < 0, \, n(\jj) \text{ is even, or } \jj > 0, \, n(\jj) \text{ is odd}\\
+ &\text{if } \jj < 0, \, n(\jj) \text{ is odd, or } \jj > 0, \, n(\jj) \text{ is even}
\end{cases},\ \sgn(t)=\sigma_m\\
\sigma_{s-1} &= 
\begin{cases}
- &\text{if } \sigma_s = -, \, n(\rr_s) \text{ is odd, or } \sigma_s = +, \, n(\rr_s) \text{ is even}\\
+ &\text{if } \sigma_s = -, \, n(\rr_s) \text{ is even, or } \sigma_s = +, \, n(\rr_s) \text{ is odd}
\end{cases} \qquad \text{for } s = m, \ldots, 1,
\end{align*}
and
\[
\sgn(t) = \sigma_m, \qquad \sgn (t') = \sigma_0
\]
with $\jj' \in \JJ$ such that $\sgn(\jj') = \sigma_0$ and $n(\jj')$ is even, or 
$\sgn(\jj') \neq \sigma_0$ and $n(\jj')$ is odd. Moreover, in this case we have
\[
f_{i_N} \circ \cdots \circ f_{i_1}(I_\jj) = I_{\jj'; \rr_1, \ldots, \rr_m}
\]
and
\begin{align*}
&f_{i_N} \circ \cdots \circ f_{i_1}(x)\\ 
&=\begin{cases}
\rho^{-j'-1}(\phi_{j'_1, \ldots, j'_n} \circ \Phi_{r_1} \circ \cdots \circ \Phi_{r_m}\circ 
\phi_{j_1, \ldots, j_n}^{-1}(\rho^{j+1}x)) & \text{for }\jj<0,\, \jj'<0\\
\II(\rho^{j'-1}(\phi_{j'_1, \ldots, j'_n}\circ  \Phi_{r_1} \circ \cdots \circ \Phi_{r_m}\circ 
\phi_{j_1, \ldots, j_n}^{-1}(\rho^{j+1}x))) & \text{for }\jj < 0, \, \jj'>0\\
\rho^{-j'-1}(\phi_{j'_1, \ldots, j'_n} \circ  \Phi_{r_1} \circ \cdots \circ \Phi_{r_m}\circ 
\phi_{j_1, \ldots, j_n}^{-1}(\rho^{-j+1}\II(x))) & \text{for }\jj > 0,\, \jj'<0\\
\II(\rho^{j'-1}(\phi_{j'_1, \ldots, j'_n} \circ  \Phi_{r_1} \circ \cdots \circ \Phi_{r_m}\circ 
\phi_{j_1, \ldots, j_n}^{-1}(\rho^{-j+1}\II(x)))) & \text{for }\jj>0, \, \jj'>0
\end{cases},
\end{align*}
where $\jj = (j, j_1, \ldots, j_n)$, $\jj' = (j', j'_1, \ldots, j'_{n'})$.
\end{lem}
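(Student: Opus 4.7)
The plan is to proceed by induction on $m$, the number of jumps over the central interval. For the base case $m = 0$, Lemma~\ref{lem:F2-kl} directly gives $f_{i_N} \circ \cdots \circ f_{i_1}|_{I_\jj} = F_{\jj, \jj'}$ with the required sign/parity compatibility. Since there are no $G$-factors to toggle signs, one has $\sigma_0 = \sigma_m$, so $t = t'$ in the proposed factorization, and the composition rule $F_{t, \jj'} \circ F_{\jj, t} = F_{\jj, \jj'}$ collapses the expression to the single $F$-map produced by Lemma~\ref{lem:F2-kl}.

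For the inductive step, I localize at the first jump time $s_1$. Lemma~\ref{lem:F2-kl} applied on the block $[1, s_1]$ yields $f_{i_{s_1}} \circ \cdots \circ f_{i_1}|_{I_\jj} = F_{\jj, \tilde\jj}$ for a unique $\tilde\jj \in \JJ$. The jump at $s_1$ then forces, via Lemma~\ref{lem:G2-kl}, that $\tilde\jj = \sigma_m \rr_m$ for some $\rr_m \in \RR$ (where $\sigma_m$ is fixed by $(\sgn(\jj), n(\jj))$ as prescribed), and that $f_{i_{s_1+1}}|_{I_{\tilde\jj}}$ decomposes as a post-jump $F$-map landing in $I_{\sigma_{m-1} l}$, the core factor $G^{\sigma_m, \sigma_{m-1}}_{\rr_m}$, and a pre-jump $F_{\tilde\jj, \sigma_m}$. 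Absorbing $F_{\tilde\jj, \sigma_m}$ into $F_{\jj, \tilde\jj}$ via the composition rule produces $F_{\jj, t}$ with $t = \sigma_m$. The tail trajectory starts in $I_{\sigma_{m-1} l}$ and exhibits exactly $m - 1$ further jumps, so the inductive hypothesis applies.

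The gluing step is where the shape of the factorization crystallises: the post-jump $F_{\sigma_{m-1}, \sigma_{m-1} l}$, the leading no-jump $F$-segment of the tail (running from $I_{\sigma_{m-1} l}$ to some $I_{\jj''}$ with $\jj'' = \pm \rr_{m-1}$), and the pre-jump $F$ of the next $G$-factor telescope to $\id|_{I_{\sigma_{m-1}}}$ by repeated use of $F_{\jj_1, \jj_2} \circ F_{\jj_0, \jj_1} = F_{\jj_0, \jj_2}$. This leaves the $G$-factors concatenated directly, producing the claimed form. The inductive rule $\sigma_{s-1} = \sigma_s$ iff $n(\rr_s)$ is odd encodes precisely the fact that the pre-jump $F_{\pm \rr_s, \pm 1}$ of Lemma~\ref{lem:G2-kl} preserves or flips the sign accordingly, which is exactly what is needed for the domain of each $G^{\sigma_s, \sigma_{s-1}}_{\rr_s}$ to match the codomain of its predecessor.

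Once the factorization is established, $f_{i_N} \circ \cdots \circ f_{i_1}(I_\jj) = I_{\jj'; \rr_1, \ldots, \rr_m}$ follows at once by iterating \eqref{eq:F2-kl} and \eqref{eq:G2-kl}. The explicit formula for $f_{i_N} \circ \cdots \circ f_{i_1}(x)$ in each of the four sign cases is then obtained by substituting \eqref{eq:F2-phi-kl} and \eqref{eq:G2-phi-kl} into the factored expression; the interior reflections $\II$ introduced by successive $G$-factors cancel pairwise, while the outer ones are absorbed into the extremal $F_{\jj, t}$ and $F_{t', \jj'}$, leaving the clean chain $\Phi_{\rr_1} \circ \cdots \circ \Phi_{\rr_m}$ flanked by $\phi_{j_1, \ldots, j_n}^{-1}$ and $\phi_{j'_1, \ldots, j'_{n'}}$ as displayed. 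The main obstacle throughout is the sign/parity bookkeeping across the eight cases of $F_{\jj, \jj'}$ and the four cases of $G^{\pm, \pm}_\rr$; verifying that these cases are glued compatibly at every junction is the only genuinely delicate part, and it is precisely what the inductive definition of $\sigma_s$ is designed to encode.
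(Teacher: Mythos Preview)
Your proposal is correct and follows essentially the same approach as the paper, which simply states that the assertions ``follow directly from Lemmas~\ref{lem:F2-kl} and~\ref{lem:G2-kl}, and \eqref{eq:F2-phi-kl}, \eqref{eq:F2-kl}, \eqref{eq:G2-phi-kl}, \eqref{eq:G2-kl}''; your induction on $m$ is the natural way to make that one-line proof explicit. One small notational slip: the claim $\tilde\jj = \sigma_m \rr_m$ is only correct when $n(\rr_m)$ is even; in general $\tilde\jj = \pm\rr_m$ with the sign determined by the parity of $n(\rr_m)$ (consult the four cases of Lemma~\ref{lem:G2-kl}), but this does not affect the substance of your argument since the pre-jump $F$ still composes with $F_{\jj,\tilde\jj}$ to give $F_{\jj,t}$ with $\sgn(t)=\sigma_m$.
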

\begin{proof} The definitions of $\sigma_s$, $t, t'$ and the conditions for $\jj'$ imply that all the considered maps are well-defined. The assertions of the lemma follow directly from Lemmas~\ref{lem:F2-kl} and~\ref{lem:G2-kl}, and \eqref{eq:F2-phi-kl}, \eqref{eq:F2-kl}, \eqref{eq:G2-phi-kl}, \eqref{eq:G2-kl}.
\end{proof}

\begin{lem}\label{lem:boldF}
For every $\jj, \jj' \in \JJ$, $m \geq 0$ and $\rr_1, \ldots, \rr_m \in \RR$, there exists a map 
\[
\FFF_{\jj, \jj'; \rr_1, \ldots, \rr_m}\colon I_\jj \to I_{\jj';\rr_1, \ldots, \rr_m}
\]
such that $\FFF_{\jj, \jj'; \rr_1, \ldots, \rr_m} = f_{i_N}\circ \cdots \circ f_{i_1}|_{I_\jj}$ for some $i_1, \ldots, i_N \in \{-,+\}$, $N \ge 0$ and any trajectory of $x \in \JJ$ defined by $\cdots \circ f_{i_N}\circ \cdots \circ f_{i_1}(x)$ jumps over the central interval at the times 
$s_1, \ldots, s_{m+1}$, for some $0 \le s_1 < \cdots < s_{m+1} < N$.
\end{lem}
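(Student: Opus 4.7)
The plan is to bootstrap from Lemma~\ref{lem:traj-kl}, which already characterises compositions that jump a prescribed number of times over the central interval. The catch in that lemma is that, for a fixed $\jj$ and a fixed code sequence, the terminal label $\jj'$ is subject to parity constraints (its sign and the parity of $n(\jj')$ are determined by $\sigma_0$, which itself is fixed by $\sigma_m(\jj)$ and the parities of $n(\rr_s)$). The role of the extra $(m+1)$-st jump in Lemma~\ref{lem:boldF} is precisely to relax those constraints so that \emph{every} $\jj' \in \JJ$ becomes reachable, at the price of only shrinking the image within the prescribed target.

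Concretely, I would apply Lemma~\ref{lem:traj-kl} with $p = m+1$ jumps and the code sequence $(\rr_1, \ldots, \rr_m, \rr_\star)$ for an auxiliary $\rr_\star \in \RR$ to be chosen. That lemma delivers a composition of $f_\pm$'s of the form $F_{t', \jj'} \circ G^{\sigma_1,\sigma_0}_{\rr_1} \circ \cdots \circ G^{\sigma_m,\sigma_{m-1}}_{\rr_m} \circ G^{\sigma_{m+1},\sigma_m}_{\rr_\star} \circ F_{\jj, t}$, which by construction jumps exactly at the $m+1$ moments when the $G$-maps are applied and sends $I_\jj$ onto $I_{\jj'; \rr_1, \ldots, \rr_m, \rr_\star}$. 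I would then choose the parity of $n(\rr_\star)$ so that the resulting $\sigma_0$ matches the prescribed $\jj'$ in the sense of Lemma~\ref{lem:traj-kl}; both parities are available inside $\RR$, for instance via $(l) \in \RR$ (length $0$, even) and $(l,1) \in \RR$ (length $1$, odd), both well-defined thanks to $k > l \geq 2$. This single choice forces the terminal label of the $m+1$-jump composition to be the given $\jj'$ and defines $\FFF_{\jj,\jj';\rr_1,\ldots,\rr_m}$.

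It remains to check that the image $I_{\jj'; \rr_1, \ldots, \rr_m, \rr_\star}$ sits inside $I_{\jj'; \rr_1, \ldots, \rr_m}$. Unpacking the definition, this reduces to the inclusion $\Phi_{\rr_\star}(I_{-1}) \subset I_{-1}$, which is hard-wired into the definition of $\RR$ and Lemma~\ref{lem:ifs}: for every $\rr \in \RR$ the outermost factor $\phi_r$ of $\Phi_\rr$ satisfies $\phi_r(J_{-1}) \subset I_{-1}$ when $r \in \{l,\ldots,k-1\}$, and $\phi_k(I_{-1}) \subset I_{-1}$; intermediate factors $\phi_{r_i}$, $r_i \in \{1,\ldots,l-1\}$, keep us inside $J_{-1}$. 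Appending $\rr_\star$ at the \emph{end} of the code list (rather than anywhere else) is what makes this containment automatic, since in the expression $\phi_{j'_1,\ldots,j'_{n'}}(\Phi_{\rr_1}\circ\cdots\circ\Phi_{\rr_m}\circ\Phi_{\rr_\star}(I_{-1}))$ the extra map acts innermost and only shrinks the argument.

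The main obstacle is really just the parity bookkeeping, since Lemma~\ref{lem:traj-kl} splits into eight cases depending on $\sgn(\jj)$, $\sgn(\jj')$ and the parities of $n(\jj), n(\jj')$; but once the recursion $\sigma_{s-1} = \sigma_s$ when $n(\rr_s)$ is odd and $\sigma_{s-1} \neq \sigma_s$ when $n(\rr_s)$ is even is organised, the lemma reduces to a direct repackaging of Lemma~\ref{lem:traj-kl} using the single degree of freedom afforded by the extra code $\rr_\star$.
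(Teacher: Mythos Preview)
Your proposal is correct and follows essentially the same approach as the paper: both apply Lemma~\ref{lem:traj-kl} with $m+1$ jumps, appending an auxiliary code $\rr_\star \in \{l,\ (l,1)\}$ whose parity is chosen so that the resulting $\sigma_0$ matches the parity constraints required by the target $\jj'$, and then observe that $I_{\jj';\rr_1,\ldots,\rr_m,\rr_\star} \subset I_{\jj';\rr_1,\ldots,\rr_m}$. The paper carries out the parity bookkeeping explicitly (introducing $t,t',p$ and verifying $\sigma_0=\sgn(t')$), but the underlying construction is identical to yours.
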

\begin{proof}
Since the system is symmetric, we can assume $\jj < 0$.

Let
\[
t = 
\begin{cases}
-1 &\text{if } n(\jj) \text{ is even}\\
1 &\text{if } n(\jj)\text{ is odd}
\end{cases},
\qquad
t' = 
\begin{cases}
-1 &\text{if } \jj' < 0, \: n(\jj') \text{ is even}, \:\text{or } \jj' > 0, \: n(\jj')\text{ is odd}\\
1 &\text{if } \jj' < 0, \: n(\jj') \text{ is odd}, \:\text{or } \jj' > 0, \: n(\jj')\text{ is even}
\end{cases}
\]
and
\[
p = \#\{s \in \{1, \ldots, m\}: n(\rr_s) \text{ is even}\}.
\]
Define $\rr_{m+1} \in \RR$ by
\[
\rr_{m+1} = 
\begin{cases}
l &\text{if } t = t', \, p \text{ is odd, or }
t \neq t', \, p \text{ is even}\\
(l, 1) &\text{if } t = t', \: p \text{ is even, or }
t \neq t', \, p \text{ is odd}
\end{cases}.
\]
We have
\begin{equation}\label{eq:n(r')}
n(\rr_{m+1}) \text{ is } 
\begin{cases}
\text{even} &\text{if } t = t', \: p \text{ is odd, or }
t \neq t', \, p \text{ is even}\\
\text{odd} &\text{if } t = t', \: p \text{ is even, or }
t \neq t', \, p \text{ is odd}
\end{cases}.
\end{equation}
Furthermore, define $\sigma_s \in \{-, +\}$, $s = 0, \ldots, m+1$ by 
\begin{align*}
\sigma_{m+1} &= \sgn(t),\\
\sigma_{s-1} &= 
\begin{cases}
- &\text{if } \sigma_s = -, \, n(\rr_s) \text{ is odd, or } \sigma_s = +, \, n(\rr_s) \text{ is even}\\
+ &\text{if } \sigma_s = -, \, n(\rr_s) \text{ is even, or } \sigma_s = +, \, n(\rr_s) \text{ is odd}
\end{cases} \qquad \text{for } s = m+1 , \ldots, 1.
\end{align*}
By the definition of $t$, we have 
\[
\sigma_{m+1} = 
\begin{cases}
- &\text{if } n(\jj) \text{ is even}\\
+ &\text{if } n(\jj) \text{ is odd}
\end{cases}.
\]
Note that $\sigma_{s-1} \neq \sigma_s$ if and only if $n(\rr_s)$ is even. Therefore, as $\sigma_{m+1} = \sgn(t)$ we obtain
\[
\sigma_0 = 
\begin{cases}
\sgn(t) &\text{if } p \text{ is even}, \, n(\rr_{m+1}) \text{ is odd, or } p \text{ is odd}, \, n(\rr_{m+1}) \text{ is even}\\
-\sgn(t) &\text{if } p \text{ is even}, \, n(\rr_{m+1}) \text{ is even, or } p \text{ is odd}, \, n(\rr_{m+1}) \text{ is odd}
\end{cases},
\]
where $-\sgn(t) = -$ (resp.~$+$) if $\sgn(t) = +$ (resp.~$-$). This together with \eqref{eq:n(r')} implies 
\[
\sigma_0 = \sgn(t').
\]
Moreover, by the definition of $t'$, we have $\sgn(\jj') = \sigma_0$ and $n(\jj')$ is even, or $\sgn(\jj') \neq \sigma_0$ and $n(\jj')$ is odd.
This implies that if we define
\[
\FFF_{\jj, \jj'; \rr_1, \ldots, \rr_m} = F_{t', \jj'} \circ G^{\sigma_1, \sigma_0}_{\rr_1} \circ \cdots \circ G^{\sigma_{m+1}, \sigma_m}_{\rr_{m+1}} \circ F_{\jj, t},
\]
then by Lemma~\ref{lem:traj-kl} (with $m$ replaced by $m+1$), $\FFF_{\jj, \jj'; \rr_1, \ldots, \rr_m}$ is well-defined on $I_\jj$ and  $\FFF_{\jj, \jj'; \rr_1, \ldots, \rr_m} = f_{i_N}\circ \cdots \circ f_{i_1}|_{I_\jj}$ for some $i_1, \ldots, i_N \in \{-,+\}$, $N \ge 0$. Moreover, any trajectory of $x \in \JJ$ defined by $\cdots \circ f_{i_N}\circ \cdots \circ f_{i_1}(x)$ jumps over the central interval at the times $s_1, \ldots, s_{m+1}$, for some $0 \le s_1 < \cdots < s_{m+1} < N$. By \eqref{eq:F2-kl} and \eqref{eq:G2-kl}, 
\[
\FFF_{\jj, \jj'; \rr_1, \ldots, \rr_m}(I_\jj) = I_{\jj';\rr_1, \ldots, \rr_{m+1}} \subset I_{\jj';\rr_1, \ldots, \rr_m}.
\]
\end{proof}

\begin{prop}\label{prop:omega-kl} For every $x \in (0,1)$,
\[
\omega_\infty(x) = \overline{\Lambda} = \Lambda \cup \{0,1\}.
\]
\end{prop}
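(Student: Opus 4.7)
The plan is to mimic the proof of Proposition~\ref{prop:omega} from the case $l=1$, modified for the infinite IFS $\{\Phi_\rr\}_{\rr \in \RR}$ and the closure in the definition of $\Lambda$. The two inclusions $\omega_\infty(x) \subset \overline{\Lambda}$ and $\overline{\Lambda} \subset \omega_\infty(x)$ will be proved separately.

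For the forward inclusion, I would take $y \in \omega_\infty(x)$ realised as $y = \lim_s z_{n_s}$ along a trajectory with infinitely many jumps. By Lemma~\ref{lem:jump-kl}(a), after the first jump the trajectory enters $I$, and by Lemma~\ref{lem:f} it remains in $I$ thereafter. Using Lemma~\ref{lem:traj-kl} together with the description of the non-jumping segments via the maps $F_{\jj,\jj'}$, the point after $m$ jumps lies in some interval $I_{\jj; \rr_1, \ldots, \rr_m}$. A direct estimate, combining $\rho^{-j-1}\rho^{j_1+\cdots+j_n}\le 1$ for $\jj<0$ (and the symmetric bound for $\jj>0$) with $|\Phi_\rr'|\le \rho$, would yield the uniform bound $|I_{\jj; \rr_1, \ldots, \rr_m}| \le \rho^m |I_{-1}|$. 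Passing to a subsequence where the number of jumps strictly before $n_s$ tends to infinity, $z_{n_s}$ lies within $\rho^{m(s)}|I_{-1}| \to 0$ of a point $x_{\jj; \rr_1, \rr_2, \ldots} \in \Lambda$, giving $y \in \overline{\Lambda}$.

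For the reverse inclusion, I would first invoke Lemma~\ref{lem:inI-kl} to replace $x$ by an iterate and assume $x \in I_{\jj_0}$ for some $\jj_0 \in \JJ$. Given $y \in \overline{\Lambda}$, pick a sequence $y_k \to y$ with $y_k \in \bigcup_\jj \Lambda_\jj$, writing $y_k = x_{\jj^{(k)}; \rr_1^{(k)}, \rr_2^{(k)}, \ldots}$; for $y \in \{0, 1\}$ one instead takes $|j^{(k)}| \to \infty$ of the appropriate sign so that $\Lambda_{\jj^{(k)}}$ shrinks to $y$. Choose $m_k \to \infty$ with $\rho^{m_k}|I_{-1}| < 1/k$. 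Using Lemma~\ref{lem:boldF} iteratively, form the infinite concatenation
\[
\cdots \circ \FFF_{\jj^{(k)}, \jj^{(k+1)}; \rr_1^{(k+1)}, \ldots, \rr_{m_{k+1}}^{(k+1)}} \circ \cdots \circ \FFF_{\jj_0, \jj^{(1)}; \rr_1^{(1)}, \ldots, \rr_{m_1}^{(1)}},
\]
which is realised by an actual trajectory of $\{f_-, f_+\}$ starting at $x$. Each $\FFF_{\jj^{(k-1)}, \jj^{(k)}; \ldots}$ is a composition of $f_\pm$ defined on all of $I_{\jj^{(k-1)}}$, which contains the landing sub-interval of the previous stage; by Lemma~\ref{lem:boldF} it contributes at least one jump, and its image lies in $I_{\jj^{(k)}; \rr_1^{(k)}, \ldots, \rr_{m_k}^{(k)}}$, a subinterval of diameter $<1/k$ containing $y_k$. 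Thus the trajectory jumps infinitely often, and its landing points lie within $1/k + |y_k - y|$ of $y$ and converge to $y$, so $y \in \omega_\infty(x)$.

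The main obstacle will be the approximation step in the reverse inclusion: because $\Lambda$ is defined as a closure, it can contain points not of the form $x_{\jj; \rr_1, \rr_2, \ldots}$ (for instance, limits obtained by composing $\phi_{j_1}, \ldots, \phi_{j_n}$ with $j_i \in \{1, \ldots, l-1\}$), so the target $y_k$ must be switched between successive stages rather than kept fixed. The key technical device that makes this work is Lemma~\ref{lem:boldF}, which allows us to pass from any basic interval $I_\jj$ to any other $I_{\jj'}$ via a prescribed tail of $\rr$-jumps, and thereby remain inside the nested Cantor structure throughout the construction.
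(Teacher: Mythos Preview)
Your proposal is correct and follows essentially the same approach as the paper's proof: the forward inclusion via Lemma~\ref{lem:traj-kl} and the shrinking of $I_{\jj;\rr_1,\ldots,\rr_m}$, and the reverse inclusion by concatenating maps $\FFF$ from Lemma~\ref{lem:boldF} while switching targets $y_k\to y$ at each stage. The only notable difference is cosmetic: the paper treats $y\in\{0,1\}$ by an explicit alternating construction $\FFF_{2s-1,-2s},\FFF_{-2s,2s+1},\ldots$, whereas you absorb this case into the same approximation scheme by taking $|j^{(k)}|\to\infty$, which is slightly cleaner.
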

\begin{proof} 
First, we prove $\omega_\infty(x) \subset \Lambda \cup \{0,1\}$ for $x \in (0,1)$. By Lemma~\ref{lem:jump-kl}(a), we can assume $x \in I$. Take $y \in \omega_\infty(x)$. We have $y = \lim_{s \to \infty} f_{i_{N_s}}\circ \cdots \circ f_{i_1}(x)$, where $N_s \to \infty$ and the trajectory $\{f_{i_N} \circ \cdots \circ f_{i_1}(x)\}_{N=0}^\infty$ jumps over the central interval infinitely many times. By Lemma~\ref{lem:traj-kl}, 
\[
f_{i_{N_s}} \circ \cdots \circ f_{i_1}(x) \in I_{\jj(s); \rr_1(s), \ldots, \rr_{m(s)}(s)}
\]
for some $\jj(s) \in \JJ$ and $\rr_1(s), \ldots, \rr_{m(s)}(s) \in \RR$, where $m(s) \to \infty$ as $s \to \infty$. Moreover, $|I_{\jj(s); \rr_1(s), \ldots, \rr_{m(s)}(s)}| \le \rho^{m(s)} \to 0$ as $s \to \infty$ and $I_{\jj(s); \rr_1(s), \ldots, \rr_{m(s)}(s)} \cap \Lambda \neq 0$. Hence, $y \in \overline\Lambda = \Lambda \cup \{0,1\}$, which shows $\omega_\infty(x) \subset \Lambda \cup \{0,1\}$. 

Now we prove $\Lambda \cup \{0,1\}\subset \omega_\infty(x)$ for $x \in (0,1)$. By Lemma~\ref{lem:inI-kl}, we can assume $x \in I_\jj$ for some $\jj \in \JJ$.  
Take $y \in \Lambda$. Then $y = \lim_{s \to \infty} x_{\jj'(s); \rr_1(s), \rr_2(s), \ldots}$ for some $\jj'(s) \in \JJ$ and $\rr_1(s), \rr_2(s), \ldots \in \RR$, $s \ge 0$. Using Lemma~\ref{lem:boldF}, define inductively
\begin{align*}
F^{(0)} &= \FFF_{\jj, \jj'(0)},\\
F^{(s)} &= \FFF_{\jj'(s-1), \jj'(s); \rr_1(s), \ldots, \rr_s(s)} \qquad \text{for } s > 0.
\end{align*}
By Lemma~\ref{lem:boldF}, the trajectory of $x$ under $\{f_-, f_+\}$ defined by $\cdots\circ F^{(s)}\cdots\circ F^{(0)}(x)$ is well-defined and jumps over the central interval infinitely many times. Moreover, 
\[
F^{(s)} \circ \cdots \circ  F^{(0)}(I_\jj) \subset I_{\jj'(s); \rr_1(s), \ldots, \rr_s(s)},
\]
so
\[
|F^{(s)} \circ \cdots \circ  F^{(0)}(x) - y| \leq |I_{\jj'(s); \rr_1(s), \ldots, \rr_s(s)}| + |y - x_{\jj'(s); \rr_1(s), \rr_2(s), \ldots}| \to 0
\]
as $s \to \infty$, since $|I_{\jj'(s); \rr_1(s), \ldots, \rr_s(s)}| \le \rho^s \to \infty$. Hence, $y$ is a limit point of this trajectory.

Take now $y \in\{0,1\}$. Then, by Lemma~\ref{lem:boldF}, we see 
\begin{align*}
\FFF_{2s-1,-2s}\circ \cdots \circ \FFF_{-2,3}\circ \FFF_{1,-2}\circ \FFF_{\jj,1}(x) &\in I_{-2s},\\
\FFF_{-2s,2s+1}\circ \FFF_{2s-1,-2s}\circ \cdots \circ \FFF_{-2,3}\circ \FFF_{1,-2}\circ \FFF_{\jj,1}(x)&\in I_{2s+1}
\end{align*}
for $s > 0$, the trajectory defined by
\[
\cdots\circ \FFF_{-2s,2s+1}\circ \FFF_{2s-1,-2s}\circ \cdots \circ \FFF_{-2,3}\circ \FFF_{1,-2}\circ \FFF_{\jj,1}(x)
\]
jumps over the central interval infinitely many times and has $y$ as its limit point. Hence, $\Lambda\cup \{0,1\}\subset \omega_\infty(x)$.

\end{proof}

\begin{prop}\label{prop:X-kl}
We have
\[
\Lambda = f_-(\Lambda) = f_+(\Lambda).
\]
Moreover, the system $\{f_-, f_+\}$ is minimal in $\Lambda$.
\end{prop}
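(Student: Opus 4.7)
My plan is to mimic the short proof of Proposition~\ref{prop:X} given in the $l=1$ case: the invariance $f_-(\Lambda) = f_+(\Lambda) = \Lambda$ will be extracted from the symbolic description of the action of $f_\pm$ on the pieces $\Lambda_\jj$ provided by Lemma~\ref{lem:f(I)-kl}, and the minimality will follow from Proposition~\ref{prop:omega-kl}. The new subtlety, compared with the case $l=1$, is that the IFS $\{\Phi_\rr\}_{\rr\in\RR}$ is infinite, so $\Lambda_{-1}$ is defined as a closure rather than as a bare attractor; consequently the surjectivity $\Lambda\subset f_i(\Lambda)$ requires an extra continuity step to survive this closure.

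For the invariance, the forward inclusion $f_i(\Lambda)\subset\Lambda$ is immediate from Lemma~\ref{lem:f(I)-kl}: each $\Lambda_\jj$ is mapped into some $\Lambda_{\jj'}$, first on the dense set of coded points $x_{\jj;\rr_1,\rr_2,\ldots}$ and then by continuity on the whole of $\Lambda_\jj$, so $f_i\bigl(\bigcup_\jj\Lambda_\jj\bigr)\subset\bigcup_\jj\Lambda_\jj$; closing up and intersecting with $(0,1)$ then gives $f_i(\Lambda)\subset\Lambda$, because $f_i$ is a homeomorphism of $[0,1]$ fixing $\{0,1\}$. For the reverse inclusion I would go through the five branches of Lemma~\ref{lem:f(I)-kl} and verify that every $\jj'\in\JJ$ is attained as a target: targets with $j'_0\le -l-1$ arise from the rule ``$j<0$''; targets of the form $(-l,j'_1,\ldots,j'_{n'})$ with $n'\ge 1$ from the rule ``$1\le j\le l-1$''; the base piece $\Lambda_{(-l)}$ from the sub-piece rule ``$l\le j\le k-1$ or $j=k,\,n=0$'', whose images over $\rr\in\RR$ are dense in $\Lambda_{(-l)}$ by the very definition of $\Lambda_{-1}$; targets with $-(l-1)\le j'_0\le -1$ from the wraparound rule ``$j=k,\,n>0$''; and positive targets from ``$j>k$''. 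The $f_+$-case is symmetric. Applying $f_i(\overline A)=\overline{f_i(A)}$ for the homeomorphism $f_i$ and intersecting with $(0,1)$ then yields $f_i(\Lambda)=\Lambda$.

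For minimality, Proposition~\ref{prop:omega-kl} gives $\omega_\infty(x)=\Lambda\cup\{0,1\}$ for every $x\in\Lambda$. By the invariance just proved, the forward orbit of $x$ under $\{f_-,f_+\}$ stays in $\Lambda$, while its closure in $[0,1]$ contains $\omega_\infty(x)\supset\Lambda$; intersecting with $(0,1)$ shows that the orbit is dense in $\Lambda$, i.e.\ the system is minimal in $\Lambda$.

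The main obstacle I anticipate is the combinatorial bookkeeping in the reverse inclusion: one must carefully confirm that the five branches of Lemma~\ref{lem:f(I)-kl} and their $f_+$-counterparts together cover every $\jj'\in\JJ$, and that the sub-pieces parameterised by the first coordinate $\rr\in\RR$ indeed have dense union in $\Lambda_{(\pm l)}$ thanks to the infinite IFS structure. Once this has been recorded case by case, the closure step for the invariance and the extraction of minimality from Proposition~\ref{prop:omega-kl} are both routine.
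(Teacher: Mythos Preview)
Your proposal is correct and follows essentially the same approach as the paper: the paper's proof simply states that the invariance follows directly from Lemma~\ref{lem:f(I)-kl} and that minimality follows from Proposition~\ref{prop:omega-kl}. Your additional care with the closure in the surjectivity step (the density of $\bigcup_{\rr\in\RR}\Lambda_{-l;\rr}$ in $\Lambda_{-l}$) is a legitimate point the paper leaves implicit, and your case-by-case coverage of $\JJ$ is exactly the bookkeeping that unpacks the phrase ``follows directly''.
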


\begin{proof}
The first assertion follows directly from Lemma~\ref{lem:f(I)-kl}, while Proposition~\ref{prop:omega-kl} implies minimality.
\end{proof}

\subsection*{Singularity of \boldmath$\mu$}

\begin{prop}\label{prop:supp-kl} We have 
\[
\supp \mu = \Lambda \cup \{0, 1\},\quad \mu(\Lambda) = 1.
\]
\end{prop}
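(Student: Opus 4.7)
The plan is to follow the proof of Proposition~\ref{prop:supp} from the $l=1$ case almost verbatim, with Proposition~\ref{prop:X-kl} supplying the inputs that Proposition~\ref{prop:X} supplied there. Since the structural work (building $\Lambda$, establishing forward invariance, and proving minimality) has already been done in Sections~\ref{sec:l=1} and~\ref{sec:l>1}, this final step should be purely a combination of earlier results.

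First I would check that $\Lambda$ is closed as a subset of $(0,1)$. This is immediate from the definition
\[
\Lambda = \overline{\bigcup_{\jj \in \JJ} \Lambda_\jj} \cap (0,1),
\]
since $\Lambda$ is the intersection of a closed set in $\R$ with $(0,1)$. Combined with the relation $f(\Lambda) = \Lambda$ from Proposition~\ref{prop:X-kl}, which in particular gives $f(\Lambda) \subset \Lambda$, Lemma~\ref{lem:supp} applies with $X = \Lambda$ and yields
\[
\supp \mu \subset \Lambda \cup \{0,1\}, \qquad \mu(\Lambda) = 1,
\]
which is half of the claim.

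For the reverse inclusion I would invoke minimality of $\{f_-,f_+\}$ on $\Lambda$ (again from Proposition~\ref{prop:X-kl}) together with Proposition~\ref{prop:min->supp}: since $\supp \mu \subset \Lambda \cup \{0,1\}$ and $\mu(\{0,1\})=0$, the support has non-empty intersection with $\Lambda$, and minimality forces this intersection to be dense in $\Lambda$. Taking closures in $[0,1]$ then gives $\supp \mu \supseteq \overline{\Lambda}$. To finish, I would note that $\overline{\Lambda} = \Lambda \cup \{0,1\}$, because the intervals $I_{-j}$ accumulate at $0$ and $I_j$ at $1$ as $j \to \infty$ (from Lemma~\ref{lem:intervals-kl} one sees $|\inf I_{-j}| \to 0$ and $\sup I_j \to 1$), and these intervals contain points of $\Lambda$. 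Combining the two inclusions yields $\supp \mu = \Lambda \cup \{0,1\}$.

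No step here should be a real obstacle — all the technical content is packaged into Proposition~\ref{prop:X-kl}, Lemma~\ref{lem:supp}, and Proposition~\ref{prop:min->supp}. The only minor subtlety is making sure the application of Proposition~\ref{prop:min->supp} is phrased correctly given that $\supp\mu$ may contain the points $0$ and $1$ while $\Lambda$ does not: this is handled by working on the set $\Lambda$ itself (where minimality lives) and then passing to the closure in $[0,1]$ at the end.
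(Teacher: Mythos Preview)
Your proposal is correct and matches the paper's approach exactly: the paper's proof simply says ``Similarly as for the case $l=1$, it is enough to use Proposition~\ref{prop:X-kl},'' and the $l=1$ argument it points to is precisely the combination of Lemma~\ref{lem:supp} (for $\supp\mu\subset\Lambda\cup\{0,1\}$ and $\mu(\Lambda)=1$) with Proposition~\ref{prop:min->supp} and minimality (for the reverse inclusion). Your added remarks about $\Lambda$ being closed in $(0,1)$ and about handling the endpoints when applying Proposition~\ref{prop:min->supp} are just the details the paper leaves implicit.
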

\begin{proof}
Similarly as for the case $l=1$, it is enough to use Proposition~\ref{prop:X-kl}.
\end{proof}

\begin{prop}\label{prop:dimX-kl}
\[
\dim_H \Lambda =  \frac{\log \eta}{\log \rho} < 1,
\]
where $\eta \in (1/2, 1)$ is the unique solution of the equation $\eta^{k+l} -2 \eta^{k+1} + 2 
\eta - 1 =0$.
\end{prop}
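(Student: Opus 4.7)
The plan is to recognize $\Lambda_{-1} = \overline{L}$ as the closure of the limit set of the countably infinite iterated function system $\{\Phi_\rr\}_{\rr \in \RR}$ of similarities on $I_{-1}$ and apply the Bowen formula for infinite conformal IFS. By construction, $|\Phi_\rr'| = \rho^{|\rr|}$ with $|\rr| = r + r_1 + \cdots + r_n$ for $\rr = (r, r_1, \ldots, r_n) \in \RR$. An induction based on Lemma~\ref{lem:ifs} shows that $\{\Phi_\rr(I_{-1})\}_{\rr \in \RR}$ consists of pairwise disjoint subsets of $I_{-1}$, so the system satisfies the Strong Separation Condition. The Mauldin--Urba\'nski theory then yields
\[
\dim_H \Lambda_{-1} = \dim_H L = d,
\]
where $d \ge 0$ is the unique number satisfying the Bowen equation $\sum_{\rr \in \RR} |\Phi_\rr'|^d = 1$.

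The second step is an explicit evaluation of this series. Setting $\eta = \rho^d$, splitting according to $n(\rr) = 0$ versus $n(\rr) \ge 1$, and using the standard identities
\[
\sum_{r=l}^{k}\eta^r = \frac{\eta^l - \eta^{k+1}}{1-\eta}, \qquad \sum_{r=l}^{k-1}\eta^r = \frac{\eta^l - \eta^k}{1-\eta}, \qquad S(\eta) := \sum_{j=1}^{l-1}\eta^j = \frac{\eta - \eta^l}{1-\eta},
\]
together with the geometric sum $\sum_{n \ge 1}S(\eta)^n = \frac{S(\eta)}{1 - S(\eta)}$, the Bowen equation becomes
\[
\frac{\eta^l - \eta^{k+1}}{1-\eta} + \frac{\eta^l - \eta^k}{1-\eta}\cdot \frac{S(\eta)}{1-S(\eta)} = 1.
\]
Using the identity $1-S(\eta) = \frac{1 - 2\eta + \eta^l}{1-\eta}$, clearing denominators and cancelling a factor of $1-\eta$ reduces this directly to
\[
\eta^{k+l} - 2\eta^{k+1} + 2\eta - 1 = 0,
\]
so $\eta$ coincides with the unique root in $(1/2,1)$ fixed in the preliminaries and $d = \log\eta/\log\rho$.

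To pass from $\Lambda_{-1}$ to $\Lambda$ I would invoke countable stability: each $\Lambda_\jj$, $\jj \in \JJ$, is (up to reflection) a similar copy of $\Lambda_{-1}$, so $\dim_H \bigcup_{\jj \in \JJ}\Lambda_\jj = d$. The extra points produced by the closure in the definition of $\Lambda$ are either accumulation points arising from $|j| \to \infty$ (accumulating at $\{0,1\}$ and thus removed by intersecting with $(0,1)$) or accumulation points within a fixed $J_j$ obtained by letting $n(\jj) \to \infty$ with bounded $j$; the latter belong to similar copies of the attractor of the finite sub-IFS $\{\phi_1, \ldots, \phi_{l-1}\}$, whose Hausdorff dimension $s$ satisfies $S(\rho^s) = 1$. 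Since convergence of the Bowen series at $d$ forces $S(\rho^d) < 1$ and $s \mapsto S(\rho^s)$ is strictly decreasing, $s < d$, so these additional limit sets do not contribute to the dimension. The bound $d < 1$ follows immediately from $\rho < \eta < 1$, which gives $0 > \log\eta > \log\rho$ and hence $d = \log\eta/\log\rho \in (0,1)$.

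The main obstacles I expect are the induction verifying SSC for the full infinite IFS $\{\Phi_\rr\}_{\rr \in \RR}$ from the single-step disjointness of Lemma~\ref{lem:ifs}, and the careful algebraic reduction from the Bowen series to the polynomial equation; once these are dispatched, the remainder is a textbook application of Mauldin--Urba\'nski theory and countable stability of Hausdorff dimension.
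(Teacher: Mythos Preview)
Your proposal is correct and follows essentially the same approach as the paper: both compute $\dim_H L$ via the Bowen/pressure equation for the infinite self-similar IFS $\{\Phi_\rr\}_{\rr\in\RR}$, reduce the resulting series to the polynomial $\eta^{k+l}-2\eta^{k+1}+2\eta-1=0$, and handle the passage to the closure by bounding the dimension of the accumulation set via the finite sub-IFS $\{\phi_1,\ldots,\phi_{l-1}\}$. The only minor point to tighten is that you should keep the two closure steps separate---first $\overline{L}=\Lambda_{-1}$ inside $I_{-1}$ (where the asymptotic boundary $L(\infty)$ sits in $\bigcup_{r=l}^{k-1}\phi_r(K)$), and then $\overline{\bigcup_\jj\Lambda_\jj}\cap(0,1)=\Lambda$ (where the new points are the scaled copies $\rho^jK$ and their reflections)---rather than conflating them; the paper treats these as two distinct applications of $\dim_H K<d$.
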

\begin{proof}  Our first goal is to determine the dimension of $\Lambda_{-1}$. We begin with calculating the dimension of the $L$ defined in \eqref{eq:L_def}. Recall that $\{\Phi_\rr\}_{\rr \in \RR}$ is an iterated function system of contracting similarities on $I_{-1}$, satisfying the Strong Separation Condition.
It is well-known (see e.g.~\cite[Theorem~3.15]{MU}) that for such systems $\dim_H L$ is equal to the (unique) zero of the topological pressure function
\[
P(t) = \lim_{n\to\infty} \frac 1 n \log \sum_{\rr_1, \ldots, \rr_n \in \RR} \|(\Phi_{\rr_1} \circ \cdots \circ \Phi_{\rr_n})'\|^t,
\]
provided the system is regular (i.e.~zero of the pressure function exists). Since $\Phi_\rr$ are affine, we have
\begin{equation}\label{eq:pressure}
\begin{aligned}
P(t) &= \log \sum_{\rr\in\RR} \|\Phi_\rr'\|^t\\
&= \log \left(\sum_{r=l}^k|\phi_r'|^t + \sum_{n=1}^\infty\sum_{r=l}^{k-1} \sum_{r_1, \ldots, r_n = 1}^{l-1} |(\phi_r \circ \phi_{r_1} \circ \cdots \circ \phi_{r_n})'|^t \right)\\
&= \log \left(|\phi_k'|^t + \sum_{r=l}^{k-1}  |\phi_r'|^t \sum_{n=0}^\infty \Big(\sum_{r_1=1}^{l-1}  |\phi_{r_1}'|^t \Big)^n\right)\\
&=\log \left( \rho^{kt} + \sum_{r=l}^{k-1}  \rho^{rt} \sum_{n=0}^\infty \Big(\sum_{r_1=1}^{l-1}  \rho^{r_1t} \Big)^n\right)\\
&= \log \frac{\rho^{lt}-2\rho^{(k+1)t}+\rho^{(k+l)t}}{1-2\rho^t+\rho^{lt}}
\end{aligned}
\end{equation}
provided $\rho^t + \cdots + \rho^{(l-1)t} < 1$, which is equivalent to $\rho^{lt}-2\rho^t+1 > 0$. Since by Lemma~\ref{lem:ifs}, $\phi_r(J_{-1})$, $r = 1, \ldots, k$ are pairwise disjoint subset of $J_{-1}$, we have $\rho^t + \cdots + \rho^{(l-1)t} < 1$ for $t=1$. It follows that $\rho^{lt}-2\rho^t+1 > 0$ for $t \in (t_0, 1]$, where $t_0 = \inf\{t>0: P(t) < \infty\} \in (0,1)$ is the unique solution of the equation $\rho^{lt_0}-2\rho^{t_0}+1 = 0$. Moreover, the condition $P(1) < 0$ is equivalent to 
\[
\frac{\rho^l-2\rho^{k+1}+\rho^{k+l}}{1-2\rho+\rho^l} < 1,
\]
which is the same as \eqref{eq:ass2}. Since $t \mapsto P(t)$ is strictly  decreasing and continuous whenever it is finite (see \cite{MU}) and $\lim_{t \to t_0^+} P(t) = +\infty$, we see that there exists $d \in (t_0, 1)$ such that $P(d) = 0$. By \eqref{eq:pressure}, we have $\eta = \rho^d$, so
\[
\dim_H L =  d = \frac{\log \eta}{\log \rho} < 1. 
\]
We will prove now that $\dim_H \Lambda_{-1} = \dim_H L$, i.e.~taking the closure does not increase the Hausdorff dimension of $L$. To that end, let $L(\infty)$ be the ``asymptotic boundary'' of the system $\{ \Phi_\rr\}_{\rr \in \RR}$, i.e.~the set of all limit points of sequences $(x_s)_{s=1}^\infty$, where $x_s \in \Phi_{\rr_s}(I_{-1})$ and $\{\rr_s\}_{s=1}^\infty$ consists of mutually distinct elements of $\RR$. It follows from \cite[Lemma~2.1]{MU} that
\[ 
\Lambda_{-1} = \overline{L} = L \cup \bigcup \limits_{m=0}^{\infty}\; \bigcup \limits_{\rr_1, \ldots , \rr_m \in \RR} \Phi_{\rr_1}\circ \cdots \circ \Phi_{\rr_m}(L(\infty)). 
\]
As the above sum is countable and the transformations $\Phi_{\rr}$ are bi-Lipschitz, we obtain
\[ 
\dim_H\Lambda_{-1} = \max \{ \dim_H L, \dim_H L(\infty) \}. 
\]
Using Lemmas~\ref{lem:ifs} and~\ref{lem:intervals-kl}, it is easy to see that
\[
L(\infty) = \bigcup \limits_{r=l}^{k-1} \phi_r(K),
\]
where $K$ is the limit set of the iterated function system $\{\phi_r\}_{r = 1}^{l-1}$ on $J_{-1}$. By Lemma~\ref{lem:ifs}, this system satisfies the Strong Separation Condition, so its box and Hausdorff dimension are both equal to the unique solution $t_0 \in (0,1)$ of the equation $\rho^{lt_0}-2\rho^{t_0}+1 = 0$. As noted above, we have $t_0 < d$, hence $\dim_H\Lambda_{-1} = d$. By Lemma~\ref{lem:intervals-kl}, the sets $\Lambda_\jj$, $\jj \in \JJ$, are disjoint similar copies of $\Lambda_{-1}$, so $\dim_H \bigcup_{\jj \in \JJ} \Lambda_\jj = \dim_H \Lambda_{-1}$. To end the proof, note that $\Lambda \setminus \bigcup_{\jj \in \JJ} \Lambda_\jj = \bigcup_{j >0 } \big( \rho^j K \cup \II( \rho^j K) \big)$, hence
\[ 
\dim_H \big( \Lambda \setminus \bigcup_{\jj \in \JJ} \Lambda_\jj \big)= t_0 < d.
\]
Finally, this implies $\dim_H \Lambda = d$.
\end{proof}

The following proposition gives some information about the structure of the measure $\mu$ in the case of equal probabilities $p_-, p_+$.

\begin{prop}\label{prop:struct} Suppose $p_- = p_+ = 1/2$. Then for $\jj = (j, j_1, \ldots, j_n) \in \JJ$, $\rr_1, \ldots, \rr_m$, $m \ge 0$, we have
\[
\mu(I_{\jj; \rr_1, \ldots, \rr_m}) =  m_\jj \beta_{\rr_1} \cdots \beta_{\rr_m},
\]
for
\begin{align*}
m_\jj = m_{j, j_1, \ldots, j_n} &=  \mu(I_\jj) = A_{1,j_1, \ldots, j_n} \lambda_1^{|j|}+ \cdots + A_{p,j_1, \ldots, j_n} \lambda_p^{|j|}\\
&+ A_{p+1,j_1, \ldots, j_n} \lambda_{p+1}^{|j|} + \overline{A_{p+1,j_1, \ldots, j_n}} \:\overline{\lambda_{p+1}}^{|j|}+\cdots + A_{q,j_1, \ldots, j_n} \lambda_q^{|j|}+ \overline{A_{q,j_1, \ldots, j_n}} \:\overline{\lambda_q}^{|j|},
\end{align*}
where $A_{1,j_1, \ldots, j_n}, \ldots , A_{p,j_1, \ldots, j_n} \in \R$, $A_{p+1,j_1, \ldots, j_n}, \ldots , A_{q,j_1, \ldots, j_n} \in \C$, moreover $\lambda_1, \ldots, \lambda_p$ $($resp. $\lambda_{p+1}, \overline{\lambda_{p+1}}, \ldots, \lambda_q, \overline{\lambda_q})$ are real $($resp.~non-real$)$ roots of the polynomial $x^{k+l}-2x^l-1$ of moduli smaller than $1$ and
\[
\beta_\rr = \frac{m_\rr}{2m_l - m_{l+k}}.
\]
\end{prop}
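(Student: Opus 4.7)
The plan is to follow the strategy of Proposition~\ref{prop:symbolic_measure} (the $l=1$ case): lift the dynamics to the symbolic space $\JJ \times \RR^\N$ via the coding map $\pi(\jj,\rr_1,\rr_2,\ldots) = \bigcap_{m\ge 0} I_{\jj;\rr_1,\ldots,\rr_m}$ (well-defined since these are nested intervals of lengths at most $\rho^m$), propose the product ansatz $\tilde{\nu} = \sum_{\jj\in\JJ} m_\jj\,\delta_\jj \otimes \PP_\beta$ where $\PP_\beta$ is the Bernoulli measure on $\RR^\N$ with weights $(\beta_\rr)_{\rr\in\RR}$, and verify stationarity of $\tilde{\nu}$ for the lifted step skew product of $\{f_-,f_+\}$ using Lemma~\ref{lem:f(I)-kl}. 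By uniqueness in Theorem~\ref{thm:stationary}, this will identify $\pi_*\tilde{\nu}$ with $\mu$ and yield the product formula $\mu(I_{\jj;\rr_1,\ldots,\rr_m}) = m_\jj\beta_{\rr_1}\cdots\beta_{\rr_m}$.

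To derive $\beta_\rr$, I would apply stationarity at the cylinder $I_{-l;\rr,\rr_1,\ldots,\rr_m}$. Lemma~\ref{lem:f(I)-kl} identifies $f_-^{-1}(I_{-l;\rr,\rr_1,\ldots,\rr_m}) = I_{\rr;\rr_1,\ldots,\rr_m}$ and $f_+^{-1}(I_{-l;\rr,\rr_1,\ldots,\rr_m}) = I_{-l-k;\rr,\rr_1,\ldots,\rr_m}$, and combined with the symmetry $m_{-\jj}=m_\jj$ (which follows from $\II_*\mu=\mu$, itself a consequence of $p_-=p_+$, the system symmetry $\II\circ f_- = f_+\circ\II$, and uniqueness) this gives $m_l\beta_\rr = \tfrac12 m_\rr + \tfrac12 m_{l+k}\beta_\rr$, hence $\beta_\rr = m_\rr/(2m_l - m_{l+k})$. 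For the masses $m_\jj$, the extension-preserving cases of Lemma~\ref{lem:f(I)-kl} (those in which $f_\pm^{-1}(I_\jj)$ is again an interval $I_{\jj'}$ with the same tail) produce the linear recurrence $2m_{(j,j_1,\ldots,j_n)} = m_{(j+k,j_1,\ldots,j_n)} + m_{(j-l,j_1,\ldots,j_n)}$ valid for $j > k$ (and symmetrically for $j < -k$). Finiteness of $\mu$ forces $m_\jj \to 0$ as $|j| \to \infty$, which eliminates the root $\lambda=1$ of the characteristic polynomial and leaves the displayed linear combination over roots of modulus less than one; the tail-dependent coefficients $A_{i,j_1,\ldots,j_n}$ are fixed by initial conditions at small $|j|$, where the recurrence couples across tails via the extension-changing cases of Lemma~\ref{lem:f(I)-kl}.

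The main technical obstacle will be the bookkeeping needed to verify stationarity in every branch of Lemma~\ref{lem:f(I)-kl} and to establish the normalisations $\sum_\rr \beta_\rr = 1$ and $\sum_\jj m_\jj = 1$. The identity $\sum_{\rr\in\RR} m_\rr = 2m_l - m_{l+k}$ that gives the first should follow from applying stationarity to $I_{-l}$ itself: by Proposition~\ref{prop:supp-kl}, $\mu$ is carried by $\Lambda$, and the asymptotic boundary of the infinite IFS $\{\Phi_\rr\}_{\rr\in\RR}$ inside $I_{-l}$ has Hausdorff dimension strictly smaller than $\dim_H\Lambda_{-1}$ (as observed in the proof of Proposition~\ref{prop:dimX-kl}), so it carries no $\mu$-mass, yielding the disjoint decomposition $\mu(I_{-l}) = \sum_\rr \mu(I_{-l;\rr})$ needed to close the system. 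Once stationarity and normalisation are verified, Theorem~\ref{thm:stationary} identifies $\pi_*\tilde{\nu}$ with $\mu$, completing the proof.
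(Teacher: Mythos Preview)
Your overall strategy coincides with the paper's: set $m_\jj=\mu(I_\jj)$, define $\beta_\rr=m_\rr/(2m_l-m_{l+k})$, build the product measure on cylinders, verify stationarity (which reduces, via \eqref{eq:f^-1-kl} and the symmetry $m_{-\jj}=m_\jj$, to stationarity identities already satisfied by $\mu$), and invoke uniqueness. The paper works directly with the intervals $I_{\jj;\rr_1,\ldots,\rr_m}$ rather than passing through the coding map, but this is only a cosmetic difference; the recurrence, its characteristic polynomial, and the use of summability to discard roots of modulus $\ge 1$ are identical.

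There is, however, one genuine gap in your plan. Your justification for $\sum_{\rr}\beta_\rr=1$ (equivalently, for $\mu(I_{-l})=\sum_\rr\mu(I_{-l;\rr})$, or for $\mu(\Lambda\setminus I)=0$) via the dimension inequality $\dim_H L(\infty)<\dim_H\Lambda_{-1}$ is not valid: a strict dimension drop for a subset says nothing about its mass under an \emph{a priori} unknown measure $\mu$, since you have no lower bound on $\dim_H\mu$ at this stage. The paper obtains the identity $m_l=\tfrac12\sum_\rr m_\rr+\tfrac12 m_{l+k}$ by applying stationarity to $I_l$ and using Proposition~\ref{prop:supp-kl}, but the point that the residual set $B=\Lambda\setminus\bigcup_{\jj}I_\jj$ is $\mu$-null deserves an explicit argument. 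The correct one is dynamical, not dimensional: from $f_\pm(I)\subset I$ and $f_\pm(\Lambda)=\Lambda$ one gets $f_\pm^{-1}(B)\subset B$; iterated stationarity then forces $\mu\big(B\setminus (f_{i_n}\circ\cdots\circ f_{i_1})^{-1}(B)\big)=0$ for every word $i_1\cdots i_n$; and Lemma~\ref{lem:inI-kl} shows that every $x\in B$ lies in one of these sets, so $B$ is a countable union of $\mu$-null sets. Replace the dimension argument by this, and the rest of your outline goes through.
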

\begin{proof} Let $m_\jj = \mu(I_\jj)$ for $\jj = (j, j_1, \ldots, j_n) \in \JJ$ and define $\beta_\rr$ for $\rr \in \RR$ as in the proposition. 
Note that the assumption $p_- = p_+ = 1/2$ and the uniqueness of $\mu$ imply (recall that $-\jj = (-j, j_1, \ldots , j_n)$ for $\jj = (j, j_1, \ldots , j_n)$)
\begin{equation}\label{eq:sym-eta}
m_{-\jj} = m_\jj.
\end{equation}
Furthermore, by Lemma~\ref{lem:f} and the stationarity of $\mu$, for every fixed $j_1, \ldots, j_n$ we have
\[
m_{j+k, j_1, \ldots, j_n} = 2m_{j, j_1, \ldots, j_n} - m_{j-l, j_1, \ldots, j_n}
\]
for every $j \in \N$, $j \geq l+1$. This defines a linear difference equation with characteristic polynomial $x^{k+l}-2x^l-1$. It is well-known (see e.g.~\cite{difference-equations}) that a solution of such an equation has the form 
\begin{align*}
m_{j, j_1, \ldots, j_n} &= A_{1,j_1, \ldots, j_n} \lambda_1^j+ \cdots + A_{p,j_1, \ldots, j_n} \lambda_p^j\\
&+ A_{p+1,j_1, \ldots, j_n} \lambda_{p+1}^j + \overline{A_{p+1,j_1, \ldots, j_n}} \:\overline{\lambda_{p+1}}^j+\cdots + A_{q,j_1, \ldots, j_n} \lambda_q^j+ \overline{A_{q,j_1, \ldots, j_n}} \:\overline{\lambda_q}^j,
\end{align*}
$j \in \N$, where $\lambda_1, \ldots, \lambda_p$ $($resp.~$\lambda_{p+1}, \overline{\lambda_{p+1}}, \ldots, \lambda_q, \overline{\lambda_q})$ are real $($resp.~non-real$)$ roots of the characteristic polynomial and $A_{1,j_1, \ldots, j_n}, \ldots , A_{p,j_1, \ldots, j_n} \in \R$, $A_{p+1,j_1, \ldots, j_n}, \ldots , A_{q,j_1, \ldots, j_n} \in \C$. Since $\sum_{j = 1}^\infty m_{j, j_1, \ldots, j_n} \le \mu(I) = 1$, in fact we take into account only the roots of moduli smaller than $1$. This proves that $m_\jj$ has the form described in the proposition. 

To show $\mu(I_{\jj; \rr_1, \ldots, \rr_m}) =  m_\jj \beta_{\rr_1} \cdots \beta_{\rr_m}$, note that by Lemma~\ref{lem:f} and the stationarity of $\mu$, 
\[
m_l = \frac 1 2 \sum_{\rr \in \RR} m_\rr + \frac 1 2 m_{l+k},
\]
which together with Proposition~\ref{prop:supp-kl} implies
\[
\beta_\rr > 0, \qquad \sum_{\rr \in \RR}\beta_\rr = 1.
\]
Let $\nu(I_{\jj; \rr_1, \ldots, \rr_m}) = m_\jj \beta_{\rr_1} \cdots \beta_{\rr_m}$ for $\jj \in \JJ$, $\rr_1, \ldots, \rr_m$, $m \ge 0$. Since the family of sets $I_{\jj; \rr_1, \ldots, \rr_m}$ generates the $\sigma$-algebra of Borel sets in $\Lambda$, $\nu$ extends to a Borel probability measure on $\Lambda$. Therefore, by the uniqueness of the stationary measure, to prove the proposition it is sufficient to check that $\nu$ is stationary. It is enough to verify 
\begin{equation}\label{eq:stat-I}
\nu(I_{\jj; \rr_1, \ldots, \rr_m}) = \frac{1}{2} \nu(f^{-1}_-(I_{\jj; \rr_1, \ldots, \rr_m})) + \frac{1}{2} \nu(f^{-1}_+(I_{\jj; \rr_1, \ldots, \rr_m})).
\end{equation}

By Lemma~\ref{lem:f(I)-kl}, for $\jj = (j, j_1, \ldots, j_n) \in \JJ$, we have 
\begin{equation}\label{eq:f^-1-kl}
\begin{aligned}
f_-^{-1}(I_{(j, j_1, \ldots, j_n); \rr_1, \ldots, \rr_m}) &=
\begin{cases}
I_{(j+l,j_1, \ldots, j_n); \rr_1, \ldots, \rr_m} &\text{for } j \le -l-1\\
I_{(j_1, \ldots, j_n); \rr_1, \ldots, \rr_m} &\text{for } j = -l, \, n >0\\
I_{\rr_1; \rr_2, \ldots, \rr_m} &\text{for } j = -l, \, n = 0 \\
I_{(k, -j, j_1, \ldots, j_n); \rr_1, \ldots, \rr_m} &\text{for } -l+1 \le j \le -1\\
I_{(j+k,j_1, \ldots, j_n); \rr_1, \ldots, \rr_m} &\text{for } j >0
\end{cases},\\
f_+^{-1}(I_{(j, j_1, \ldots, j_n); \rr_1, \ldots, \rr_m}) &=
\begin{cases}
I_{(j-k,j_1, \ldots, j_n); \rr_1, \ldots, \rr_m} &\text{for } j < 0\\
I_{(-k, j, j_1, \ldots, j_n); \rr_1, \ldots, \rr_m} &\text{for } 1 \le j \le l-1\\
I_{-\rr_1; \rr_2, \ldots, \rr_m} &\text{for } j = l, \, n = 0 \\
I_{(-j_1, j_2, \ldots, j_n); \rr_1, \ldots, \rr_m} &\text{for } j=l, \, n > 0\\
I_{(j-l,j_1, \ldots, j_n); \rr_1, \ldots, \rr_m} &\text{for } j \ge l+1
\end{cases}.
\end{aligned}
\end{equation}
By \eqref{eq:sym-eta} and \eqref{eq:f^-1-kl}, the statement \eqref{eq:stat-I} is equivalent to the systems of equations
\[
\begin{cases}
m_{j, j_1, j_2, \ldots, j_n}= \frac{1}{2}m_{k, j, j_1, j_2, \ldots, j_n} + \frac{1}{2}m_{j+k, j_1, j_2, \ldots, j_n} & \text{for } 1 \leq j \le l-1\\
m_l \beta_\rr= \frac{1}{2}m_\rr + \frac{1}{2}m_{l+k}\beta_\rr &\\
m_{l, j_1, j_2, \ldots, j_n}= \frac{1}{2}m_{j_1, j_2, \ldots, j_n} + \frac{1}{2}m_{l+k, j_1, j_2, \ldots, j_n} & \text{for } n > 0\\
m_{j, j_1, j_2, \ldots, j_n}= \frac{1}{2}m_{j-l, j_1, j_2, \ldots, j_n} + \frac{1}{2}m_{j+k, j_1, j_2, \ldots, j_n} & \text{for } j \ge l+1\\
\end{cases},
\]
where $(j, j_1, j_2, \ldots, j_n) \in \JJ$ and $\rr \in \RR$. The second equation is equivalent to the definition of $\beta_\rr$ and the remaining ones hold due to \eqref{eq:sym-eta}, \eqref{eq:f^-1-kl} for $m=0$, and the fact that $\mu$ is stationary.

\end{proof}

\section{Proof of Theorem~\ref{thm:conjugacy}}\label{sec:conj}

Let $\Lambda(f)$ and $\Lambda(g)$ be the sets constructed in Section~\ref{sec:l=1} (in the case $l=1$) or Section~\ref{sec:l>1} (in the case $l>1$) for the systems $\{f_-, f_+\}$ and $\{g_-, g_+ \}$, respectively. Following the notation used in these sections, we have 
\begin{align*}
\Lambda(f) &= \{ x_{j; r_1, r_2, \ldots}^{(f)}: \: j \in \Z^*, r_1, r_2, \ldots \in \{1, \ldots, k\}\},\\
\Lambda(g) &= \{ x_{j; r_1, r_2, \ldots}^{(g)}: \: j \in \Z^*, r_1, r_2, \ldots \in \{1, \ldots, k\}\}
\end{align*}
in the case $l = 1$ and
\begin{align*}
\Lambda(f) &= \overline{\Big\{ x_{\jj; \rr_1, \rr_2, \ldots}^{(f)}: \: \jj \in \JJ, \rr_1, \rr_2, \ldots \in \RR\Big\}}\cap (0,1),\\
\Lambda(g) &= \overline{\Big\{ x_{\jj; \rr_1, \rr_2, \ldots}^{(g)}: \: \jj \in \JJ, \rr_1, \rr_2, \ldots \in \RR\Big\}}\cap (0,1)
\end{align*}
in the case $l > 1$. We define the conjugating homeomorphism $h$ setting 
\[
h(x_{j; r_1, r_2, \ldots}^{(f)}) = x_{j; r_1, r_2, \ldots}^{(g)}
\]
in the case $l = 1$ and
\[
h(x_{\jj; \rr_1, \rr_2, \ldots}^{(f)}) = x_{\jj; \rr_1, \rr_2, \ldots}^{(g)}
\]
in the case $l > 1$ (with a unique continuous extension to $\Lambda$). By the definition of $\Lambda(f), \Lambda(g)$, the map $h$ is an increasing homeomorphism between $\Lambda(f)$ and $\Lambda(g)$, while Lemmas~\ref{lem:f(I)} and~\ref{lem:f(I)-kl} imply that it conjugates $\{f_-,f_+\}|_{\Lambda(f)}$ to $\{g_-,g_+\}|_{\Lambda(g)}$. It is easy to see that $h$ can be extended to an increasing homeomorphism of $[0,1]$ conjugating $\{f_-,f_+\}$ to $\{g_-,g_+\}$, such that $h$ is affine on each component of $(0,1) \setminus \Lambda(f)$. For completeness, below we present a detailed construction for the case $l = 1$, leaving the case $l > 1$ to the reader.

From the considerations preceding Proposition~\ref{prop:symbolic_measure}, it follows that $\{f_-, f_+\}|_{\Lambda(f)}$ and $\{g_-, g_+\}|_{\Lambda(g)}$ are both conjugated to the system $\{\tilde f_-, \tilde f_+\}$ acting on $\Z^* \times \Sigma_k$. Hence, there exists a homeomorphism $h \colon \Lambda(f) \to \Lambda(g)$ conjugating $\{f_-, f_+\}$ on $\Lambda(f)$ to  $\{g_-, g_+ \}$ on $\Lambda(g)$. We claim that $h$ can be extended in a continuous and equivariant manner to the interval $[0,1]$. To show this, we describe the structure of the complement of $\Lambda(f)$ in $[0,1]$. 

Like in the proof of Lemma~\ref{lem:inI}, let
\[
U_0 = (f_-(x_-), f_+(x_+)) = (f_-(x_-), \II(f_-(x_-))) = \left( \frac{\rho - \rho^{k+1}}{1 - \rho^{k+1}}, \frac{1 - \rho}{1 - \rho^{k+1}} \right)
\]
and for $j \in \Z^*$ define
\[
U_j = 
\begin{cases}
\rho^{-j} U_0 &\text{for } j < 0\\
\II(\rho^j U_0) &\text{for } j > 0
\end{cases}.
\]
By Lemma~\ref{lem:intervals}, the following statements hold.
\begin{enumerate}[\rm (a)]
\item $U_{-j} = \II(U_j)$ for $j \in \Z$.
\item The sets $U_j$, $j \in \Z$, are pairwise disjoint and together with $I_j$, $j \in \Z^*$, form a partition of $(0,1)$, where $U_j$ is the gap between $I_{j-1}$ and $I_j$ for $j < 0$, $U_0$ is the gap between $I_{-1}$ and $I_1$, and $U_j$ is the gap between $I_j$ and $I_{j+1}$ for $j > 0$.
\item $f_-(U_j) = U_{j-1}$ for $j \le 0$, $f_-(I_1 \cup U_1 \cup \cdots \cup I_{k-1} \cup U_{k-1} \cup I_k) = I_{-1}$, $f_-(U_j) = U_{j-k}$ for $j \ge k$.
\item $f_+(U_j) = U_{j+k}$ for $j \le -k$, $f_+(I_{-k} \cup U_{-k+1} \cup \cdots \cup I_{-2} \cup U_{-1} \cup I_{-1}) = I_1$, $f_+(U_j) = U_{j+1}$ for $j \ge 0$.
\end{enumerate}

For $s=1, \ldots, k-1$, define
\[ 
U_{-1}^s = f_(U_s) = \rho(U_s) \subset I_{-1}. 
\]
Note that $U_{-1}^s$ are the gaps between cylinders of the first order for the iterated function system $\{ \phi_1, \ldots, \phi_k\}$ on $I_{-1}$. More precisely, $U_{-1}^1,\ldots, U_{-1}^{k-1}$ together with $I_{-1;1}, \ldots, I_{-1;k}$ form a partition of $I_{-1}$, and are situated in the order
\[ 
I_{-1;1}, U_{-1}^1, I_{-1;2}, U_{-1}^2, \ldots, I_{-1;k-1}, U_{-1}^{k-1}, I_{-1;k}. 
\]
For $j\in \Z^*$, $s \in \{1, \ldots, k-1\}$ and $r_1, \ldots, r_n \in \{1, \ldots, k\}$, $n \geq 0$, define
\[ 
U^s_{j; r_1,\ldots, r_n} = 
\begin{cases}
\rho^{-j-1} \phi_{r_1} \circ \cdots \circ \phi_{r_n} (U_{-1}^s) &\text{for } j<0\\
\II(\rho^{j-1} \phi_{r_1} \circ \cdots \circ \phi_{r_n} (U_{-1}^s)) &\text{for } j>0
\end{cases},
\]
where $\phi_{r_1} \circ \ldots \circ \phi_{r_n} = \id$, $U^s_{j; r_1,\ldots, r_n} = U_j^s $ for $n=0$, which agrees with the previous definition for $j=-1$.
Note that for a fixed $j \in \Z^*$, the collection of disjoint intervals $\{ U^s_{j; r_1, \ldots, r_n} : 1 \leq s \leq k-1,\ n \geq 0,\ r_1, \ldots, r_n \in \{ 1, \ldots, k \}   \}$ forms the complement of the Cantor set $\Lambda_j$ and
\[ 
(0,1) \setminus \Lambda(f) = \bigcup \limits_{j \in \Z} U_j \cup
\bigcup \limits_{j \in \Z^*} \bigcup_{s=1}^{k-1} \;
\bigcup \limits_{ n=0}^{\infty} \; \bigcup \limits_{r_1, \ldots, r_n = 1}^k U^s_{j; r_1, \ldots, r_n}  
\]
with the union being disjoint. We can carry the same construction for the system $\{g_-, g_+\}$, yielding a decomposition
\[ 
(0,1) \setminus \Lambda(g) = \bigcup \limits_{j \in \Z} U_j \cup
\bigcup \limits_{j \in \Z^*} \bigcup_{s=1}^{k-1} \;
\bigcup \limits_{ n=0}^{\infty} \; \bigcup \limits_{r_1, \ldots, r_n = 1}^k
V^s_{j; r_1, \ldots, r_n} 
\]
for analogously defined $V_j, V^s_{j; r_1, \ldots, r_n}$.
By Lemma \ref{lem:intervals}, for $j \in \Z$,
\begin{equation}
\begin{aligned}\label{eq:Uj}
f_-(U_j) &=
\begin{cases}
U_{j-1} & \text{for } j \leq 0 \\
U^j_{-1} & \text{for } 1 \leq j \le k-1\\
U_{j-k} & \text{for } j \geq k
\end{cases},&\quad 
f_+(U_j) &=
\begin{cases}
U_{j+k} & \text{for } j \leq -k \\
U^{-j}_{1} & \text{for } -k +1 \le j \leq -1\\
U_{j+1} & \text{for } j \geq 0
\end{cases},\\
g_-(V_j) &=
\begin{cases}
V_{j-1} & \text{for } j \leq 0 \\
V^j_{-1} & \text{for } 1 \leq j \le k-1\\
V_{j-k} & \text{for } j \geq k
\end{cases},&\quad 
g_+(U_j) &= 
\begin{cases}
V_{j+k} & \text{for } j \leq -k \\
V^{-j}_{1} & \text{for } -k+1 \le j \leq -1\\
V_{j+1} & \text{for } j \geq 0
\end{cases}
\end{aligned}
\end{equation}
and for $j \in \Z^*$, $s \in \{1, \ldots, k-1\}$, $r_1, \ldots, r_n \in \{1, \ldots, k\}$, $n \ge 0$, 
\begin{equation}\label{eq:Ujl}
\begin{aligned} 
f_-(U^s_{j; r_1, \ldots, r_n}) &=
\begin{cases}
U^s_{j-1; r_1, \ldots, r_n} & \text{ for } j \leq 0 \\
U^s_{-1; j, r_1, \ldots, r_n} & \text{ for } 1 \leq j \leq k-1\\
U^s_{j-k; r_1, \ldots, r_n} & \text{ for } j \geq k
\end{cases},\\
f_+(U^s_{j; r_1, \ldots, r_n}) &=
\begin{cases}
U^s_{j+k; r_1, \ldots, r_n} & \text{ for } j \leq -k \\
U^s_{1; -j, r_1, \ldots, r_n} & \text{ for } -k+1 \leq j \leq -1\\
U^s_{j+1; r_1, \ldots, r_n} & \text{ for } j \geq 0
\end{cases},\\
g_-(V^s_{j; r_1, \ldots, r_n}) &=
\begin{cases}
V^s_{j-1; r_1, \ldots, r_n} & \text{ for } j \leq 0 \\
V^s_{-1; j, r_1, \ldots, r_n} & \text{ for } 1 \leq j \leq k-1\\
V^s_{j-k; r_1, \ldots, r_n} & \text{ for } j \geq k
\end{cases},\\
g_+(V^s_{j; r_1, \ldots, r_n}) &=
\begin{cases}
V^s_{j+k; r_1, \ldots, r_n} & \text{ for } j \leq -k \\
V^s_{1; -j, r_1, \ldots, r_n} & \text{ for } -k+1 \leq j \leq -1\\
V^s_{j+1; r_1, \ldots, r_n} & \text{ for } j \geq 0
\end{cases}.
\end{aligned}
\end{equation}
We can now extend $h$ to an increasing homeomorphism of $[0,1]$ as follows: on $U_j,\ j \in \Z$, we define $h$ to be the unique affine increasing homeomorphism such that $h(U_j) = V_j$ and on $U^s_{j; r_1, \ldots, r_n}$, $j \in \Z^*, s \in \{1, \ldots, k-1\}$, $n \ge 0$, $r_1, \ldots, r_n \in \{1, \ldots, k\}$, we set $h$ to be the unique affine increasing homeomorphism such that $h(U^s_{j; r_1, \ldots, r_n}) = V^s_{j; r_1, \ldots, r_n}$. Finally, we set $h(0) = 0, h(1) = 1$. It is easy to see that $h$ is a homeomorphism of $[0,1]$. Using $(\ref{eq:Uj})$ and $(\ref{eq:Ujl})$ we see that
\[ 
f_{\pm}(U_j) = h^{-1} \circ g_{\pm} \circ h (U_j) \qquad \text{and} \qquad f_{\pm}(U^s_{j;  r_1, \ldots, r_n}) =  h^{-1} \circ g_{\pm} \circ h (U^s_{j; r_1, \ldots, r_n}).   
\]
Since $f_{\pm}$ and $h^{-1} \circ g_{\pm} \circ h$ are both affine and increasing on each of the above intervals, we have $f_{\pm} = h^{-1} \circ g_{\pm} \circ h$ on each of them.

\section{Proof of Theorem~\ref{thm:res-full}}\label{sec:res-full}

We consider a symmetric AM-system with probabilities $p_- = p_+ = 1/2$ and positive Lyapunov exponents, which exhibits $(5:2)$-resonance and satisfies $\rho  = \eta$. The latter condition is equivalent to
\begin{equation}\label{eq:rho=eta}
\rho^7 - 2\rho^6 +2\rho - 1 = 0
\end{equation}
and to $\rho x_-=1/2$. Note that this implies $f_-(x_-) = \rho^2 x_- < 1/2$, so the system is of disjoint type (see the beginning of the proof of Theorem~\ref{thm:(k:l)} in the case $l > 1$).

Define segments $J_j$, $j \in \Z^*$ as in the case $\rho < \eta$. We have 
\[
J_j = 
\begin{cases}
[\rho^{-j}/2, \rho^{-j+1}/2] &\text{for } j < 0\\
[\II(\rho^j/2), \II(\rho^{j-1}/2)] &\text{for } j > 0
\end{cases},
\]
so the segments $J_j$ have pairwise disjoint interiors, each two consecutive intervals (according to the order in $\Z^*$) have a common endpoint and $\bigcup_{j \in \Z^*}J_j = (0,1)$. Similarly, defining maps $\phi_r$ and intervals $I_{\jj}$, $\jj \in \JJ$ as in the case $\rho < \eta$ and proceeding as in the proofs of Lemmas~\ref{lem:ifs} and~\ref{lem:intervals-kl}, we check that for each $j \in \Z^*$, the intervals $I_{\jj}$, $\jj \in \JJ_j$ are contained in $J_j$, have disjoint interiors and satisfy $\sum_{\jj \in \JJ_j} |I_{\jj}| = |J_{\jj}|$. Analogously, we can define maps $\Phi_\rr$, $\rr \in \RR$, intervals $I_{\jj; \rr_1, \ldots, \rr_m}$ and sets $\Lambda_\jj, \Lambda$ in the same way as in the case $\rho < \eta$. The maps $\Phi_\rr$  form an iterated function system in $I_{-1}$, such that the intervals $\Phi_\rr(I_{-1})$ have disjoint interiors and $\sum_{\rr \in \RR} |\Phi_\rr(I_{-1})| = \sum_{\rr \in \RR} |I_{-1;\rr}| = |I_{-1}|$.
Hence, $\Lambda_{-1} = I_{-1}$ and the pressure \eqref{eq:pressure} satisfies $P(1) = 0$. The combinatorics of the intervals $I_{\jj; \rr_1, \ldots, \rr_m}$ is the same as in the case $\rho < \eta$, so Lemmas~\ref{lem:f} and~\ref{lem:f(I)-kl} and Propositions~\ref{prop:omega-kl} and~\ref{prop:X-kl} still hold. We have $\Lambda_\jj = I_\jj$ for $\jj \in \JJ$ and $\Lambda = (0,1)$.

By Theorem~\ref{thm:stationary}, there exists a unique stationary measure $\mu$, and Proposition~\ref{prop:X-kl} implies $\supp \mu = \Lambda \cup \{0, 1\} = [0,1]$. By Proposition~\ref{prop:nonatom}, the measure $\mu$ is non-atomic. Hence, the measure of the endpoints of the intervals $I_{\jj; \rr_1, \ldots, \rr_m}$ is zero. In particular, Proposition~\ref{prop:struct} holds in this case with the same proof. 

The above facts show that $\{\Phi_\rr\}_{\rr \in \RR}$ is a countable iterated function system of contracting similarities on $I_{-1}$ satisfying the Open Set Condition, with the attractor $\Lambda_{-1} = I_{-1}$. By Proposition~\ref{prop:struct}, the probability measure 
\[
\mu_{-1} = \frac{\mu|_{I_{-1}}}{\mu(I_{-1})}
\]
is the self-similar measure for this system with probabilities $\beta_\rr$, $\rr \in \RR$. 

To prove Theorem~\ref{thm:res-full}, we show $\dim_H \mu < 1$. Since by Proposition~\ref{prop:struct}, the measure $\mu$ is a countable linear combination of $\mu_{-1}$ and its similar copies $\mu|_{I_\jj}$, $\jj \in \JJ$, it is sufficient to show $\dim_H \mu_{-1} < 1$. Let 
\[
h(\mu_{-1}) = -\sum_{\rr \in \RR}\beta_\rr \log \beta_\rr
\]
be the entropy of $\mu_{-1}$. The proof splits into two cases depending whether $h(\mu_{-1})$ is finite or infinite. To shorten the proof, we do not determine which case actually takes place, but we consider both possibilities. 

Suppose first that $h(\mu_{-1})$ is infinite. Then we have $\dim_H \mu_{-1} \leq t_0 < 1$, where $t_0 = \inf\{t> 0: P(t) < \infty\}$ is the unique solution of the equation $\rho^{lt_0}-2\rho^{t_0}+1 = 0$ (see the proof of Proposition~\ref{prop:dimX-kl}). This fact follows from \cite[Proposition~3.1]{baker-jurga}, which is based on \cite[Theorem~4.1]{kifer-peres}. Actually, the mentioned results in \cite{baker-jurga,kifer-peres} are formulated for a more specific class of iterated function systems, but the proofs are valid in the general case of self-similar systems on the interval.

Suppose now that $h(\mu_{-1})$ is finite. Recall that the self-similar iterated function system $\{\Phi_\rr\}_{\rr\in\RR}$ on $I_{-1}$ is regular with the attractor $\Lambda_{-1} = I_{-1}$. In particular, 
the normalized Lebesgue measure $\mathcal L = \Leb|_{I_{-1}}/|I_{-1}|$ is the Gibbs and equilibrium state for the geometrical potential in dimension $1$ and also the $1$-conformal measure for this system on $I_{-1}$ (see \cite[Section 4.4]{MUbook}). Moreover, the Lyapunov exponent 
\[
\chi(\mathcal L) = \sum_{\rr \in \RR}\|\Phi_\rr'\| \log \|\Phi_\rr'\|
\]
of the measure $\mathcal L$ is finite, since (similarly as in \eqref{eq:pressure}) by the definition of the set $\RR$ in the considered case,
\[
\chi(\mathcal L) = \sum_{r = 2}^4 \sum_{n=1}^\infty \rho^{r+n} \log (\rho^{r+n}) + 
\sum_{r = 2}^5 \rho^r \log (\rho^r) > -\infty.
\]
In such a situation \cite[Theorem 4.4.7]{MUbook} (see also \cite[Theorem 4.6]{HMU}) asserts that either the self-similar measure $\mu_{-1}$ is equal to $\mathcal L$ or $\dim_H \mu_{-1} < \dim_H \Lambda_{-1} = 1$. Therefore, to end the proof of the theorem, it is sufficient to show $\mu_{-1} \neq \mathcal L$.

Suppose $\mu_{-1} =\mathcal L$. Then
\begin{equation}\label{eq:mu-lambda}
\frac{\mu(I_{-1;r})}{\mu(I_{-1})} = \frac{|I_{-1;r}|}{|I_{-1}|} = \rho^r
\end{equation}
for $r \in \{2,3,4,5\}$. Consider the characteristic polynomial $x^{k+l}-2x^l+1$ from Proposition~\ref{prop:struct}. In the considered case it has the form
\[
h(x) = x^7-2x^2+1 = (x-1)(x^3+x^2-1)(x^3+x+1).
\]
Computing the derivatives, we check that the polynomial $x^3+x^2-1$ has a unique real root $\alpha \in (0, 1)$, while $x^3+x+1$ has a unique real root $\beta \in (-1, 0)$. By Viete's formulas for these polynomials, the remaining non-real roots of $h$ have moduli greater than $1$. Therefore, Proposition~\ref{prop:struct} implies that for $j \in \Z^*$ and $r \in \{2,3,4,5\}$,
\begin{equation}\label{eq:A,B}
\mu(I_j) = A \alpha^{|j|} + B \beta^{|j|}, \qquad \mu(I_{-1;r}) = \frac{\mu(I_{-1}) \mu(I_r)}{2 \mu(I_2) - \mu(I_7)}
\end{equation}
for some $A, B \in \R$. Since $\mu(I_j) > 0$, we have $(A,B) \neq (0,0)$.

By \eqref{eq:mu-lambda} and \eqref{eq:A,B},
\[
q(A \alpha^r + B \beta^r) = \rho^r, \qquad r = 2, 3, 4, 5,
\]
where $q = 2 \mu(I_2) - \mu(I_7) > 0$. This implies $A \alpha^{r+1} + B \beta^{r+1} = \rho (A \alpha^r + B \beta^r)$ for $r = 2, 3, 4$, which gives
\[
\Big(\frac{\alpha}{\beta}\Big)^rA(\alpha-\rho) = B(\rho-\beta), \qquad r = 2, 3, 4.
\]
We have $(A,B) \neq (0,0)$. Moreover, $\rho \neq \beta$ because $\rho > 0$, $\beta < 0$. If $\rho = \alpha$, then by \eqref{eq:rho=eta} and the definition of $\alpha$,
\[
\rho^6-\rho^5-\rho^4-\rho = \frac{\rho^7-2\rho^6+2\rho-1}{\rho - 1} + \rho^3+\rho^2-1 = 0 
\]
which is impossible since $\rho \in (0,1)$. Hence, $A, B, \alpha - \rho, \rho - \beta \neq 0$ and we can write
\[
\Big(\frac{\alpha}{\beta}\Big)^r = \frac{B(\rho-\beta)}{A(\alpha-\rho)}, \qquad r = 2, 3, 4,
\]
which implies $\alpha = \beta$ and makes a contradiction. This ends the proof of Theorem~\ref{thm:res-full}.

\bibliography{singular}

\end{document}